\pgfplotsset{compat=1.18}
\definecolor{rose}{rgb}{0.93, 0.23, 0.51}
\definecolor{ref}{rgb}{0.29, 0.59, 0.82}
\definecolor{bovert}{RGB}{200,255,200}
\definecolor{lightblue}{RGB}{137, 207, 240}
\newtheorem{lem}{Lemma}[section]
\newtheorem{cor}[lem]{Corollary}
\newtheorem{thm}[lem]{Theorem}
\newtheorem{prop}[lem]{Proposition}
\newtheorem{fact}[lem]{Fact}
\declaretheorem[name=Theorem,numberwithin=section]{theorem}
\declaretheorem[name=Corollary,numberwithin=section]{corollary}
\theoremstyle{definition}
\newtheorem{definition}[lem]{Definition}
\newtheorem*{definition*}{Definition}
\newtheorem{ex}[lem]{Example}
\newtheorem{examples}[lem]{Examples}
\newtheorem{rmk}[lem]{Remark}
\newtheorem{question}[lem]{Question}
\newtheorem*{question*}{Question}
\newtheorem*{warning*}{Warning to the reader}
\numberwithin{equation}{section}
\newcommand{\N}{\mathbf N}
\newcommand{\Z}{\mathbf Z}
\newcommand{\R}{\mathbf R}
\newcommand{\g}{\mathfrak{g}}
\newcommand{\p}{\mathfrak{p}}
\newcommand{\RP}{\mathbf{RP}}
\newcommand{\Ein}{\mathbf{Ein}}
\newcommand{\Eintilde}{\smash{\widetilde{\mathbf{Ein}}}^{1,n}}
\newcommand{\dS}{\mathbf{dS}}
\newcommand{\D}{\mathbf D}
\newcommand{\B}{\mathbf B}
\newcommand{\C}{\mathbf C}
\renewcommand{\S}{\mathbf S}
\renewcommand{\H}{\mathbf H}
\renewcommand{\P}{\mathbf P}
\newcommand{\hol}{\mathsf{hol}}
\newcommand{\dev}{\mathsf{dev}}
\newcommand{\dc}{\mathsf{cd}}
\newcommand{\ghyp}{\mathsf{g}_{\operatorname{hyp}}}
\newcommand{\Conv}{\operatorname{Conv}}
\newcommand{\kob}{\mathsf{Kob}}
\newcommand{\Hcal}{\mathcal{H}}
\newcommand{\Conf}{\mathrm{Conf}}
\newcommand{\Sim}{\mathrm{Sim}}
\newcommand{\Span}{\mathrm{Span}}
\newcommand{\Iso}{\mathrm{Isom}}  
\newcommand{\PPL}{\mathsf{PPL}}
\newcommand{\Ric}{\mathsf{Ric}}
\renewcommand{\qed}{$\hfill\blacklozenge$}
\renewcommand{\b}{\mathbf{b}}
\newcommand{\PO}{\operatorname{PO}}
\newcommand{\PSL}{\operatorname{PSL}}
\renewcommand{\O}{\operatorname{O}}
\newcommand{\W}{\mathcal{W}}
\newcommand{\dvol}{d\text{Vol}}
\newcommand{\Ccal}{\mathcal{C}}
\newcommand{\Bcal}{\mathcal{B}}
\newcommand{\Kcal}{\mathcal{K}}
\newcommand{\Mtilde}{\smash{\widetilde{M}}}
\newcommand{\RPunTilde}{\smash{\widetilde{\RP}}^1}
\newcommand{\Mhat}{\smash{\hat{M}}}
\newcommand{\st}{\text{st}}
\title[On Markowitz's pseudodistance]{On Markowitz's pseudodistance for conformal manifolds}
\author{Adam Chalumeau}
\date{}
\begin{document}

\begin{abstract} 
    In the 1980s, M. J. Markowitz introduced a conformally invariant pseudodistance on pseudo-Riemannian manifolds, inspired by the Kobayashi metric in projective geometry. This construction relies on a distinguished class of parametrized lightlike geodesics, called projectively parametrized. We begin by reviewing the fundamental properties of this pseudodistance and provide several families of examples where it is non-degenerate and, in some cases, complete. In particular, we investigate three classes of manifolds: closed manifolds, conformally convex domains of the Einstein universe, and globally hyperbolic, conformally flat, $C$-maximal spacetimes. For the first two classes, we obtain results analogous to those of Brody and Barth concerning the complex Kobayashi metric. Finally, we apply Markowitz’s pseudodistance to classify all quasi-homogeneous domains of the Einstein-de Sitter space, that is, a half-space of the Minkowski space bounded by a spacelike hyperplane. Up to conformal transformations, only finitely many such domains exist, and all of them turn out to be homogeneous.
\end{abstract}

\maketitle


\section{Introduction}
\sloppy

\renewcommand{\thetheorem}{\Alph{theorem}}
\renewcommand{\thecorollary}{\Alph{corollary}}
\setcounter{theorem}{0}
\setcounter{corollary}{5}

In complex geometry, the Kobayashi metric, defined via chains of holomorphic discs \cite{Kob_complexe}, is a standard tool for studying complex manifolds. This metric admits several analogs in other geometric settings, usually referred to as Kobayashi--Royden metrics. For instance, in real projective geometry, Kobayashi introduced a projectively invariant distance based on chains of projective geodesics \cite{kob_projectif}. This construction is most prominent in the case of proper convex domains of real projective space, where it coincides with the classical Hilbert metric. Further extensions include Kobayashi--Royden-like constructions in the flat conformal Riemannian setting \cite{KP,Apanasov}, and more recently in both flat and non-flat Riemannian geometries \cite{Drinovec_Drnov_ek_2023, gaussier2023kobayashimetricsriemannianmanifolds}, and on some locally homogeneous manifolds \cite{vanLimbeek_Zimmer,Blandine,these_blandine}.

The present paper investigates a related pseudodistance in the framework of conformal pseudo-Riemannian manifolds $(M,[g])$, where $M$ is a smooth manifold of dimension $n\geq 3$, and $[g]$ is a conformal class of pseudo-Riemannian metrics on $M$. Such manifolds naturally carry a family of parametrized lightlike (i.e. null) geodesics, invariant under projective reparametrizations, called \emph{projectively parametrized lightlike geodesics}.
Building on this, Markowitz \cite{markowitz_1981} introduced on every conformal manifold $(M,[g])$ a Kobayashi--Royden-like pseudodistance $\delta_M$, meaning that $\delta_M$ is symmetric and satisfies the triangle inequality, though it may fail to be definite. The construction proceeds as follows. Let $I=(-1,1)$ denote the unit interval in $\R$, equipped with the Poincaré distance $d_I$ associated to the metric $\frac{4dx^2}{(1-x^2)^2}$. Then $\delta_M$ is defined to be the largest pseudodistance on $M$ with the property that every projectively parametrized lightlike geodesic $(I,d_I)\to (M,\delta_M)$ is distance-decreasing, see Section~\ref{section_delta_M} for an explicit definition.

\begin{definition*}
    A conformal manifold $(M,[g])$ is \emph{Markowitz hyperbolic} if the Markowitz pseudodistance $\delta_M$ is a distance.
\end{definition*}

Part of the interest in Markowitz hyperbolicity lies in the fact that conformal maps are 1-Lipschitz for the Markowitz pseudodistance. In particular, the conformal group $\Conf(M)$ of any Markowitz hyperbolic conformal manifold $M$ acts properly on $M$, a property that does not hold for general conformal manifolds. More generally, for any Markowitz hyperbolic conformal manifolds $M$ and $N$, one can explicitly exhibit compact subsets of $\Conf(M,N)$, see Proposition~\ref{prop_compacité_app_conformes}. Hence, Markowitz hyperbolicity provides a powerful tool for understanding the dynamics of conformal transformations.

Markowitz has given sufficient conditions, in terms of the Ricci curvature, for a spacetime to be Markowitz hyperbolic \cite{markowitz_1981}, and he also computed the pseudodistance $\delta_M$ for the Einstein-de Sitter space, see \cite{Markowitz_warped_product}. Since then, Markowitz hyperbolicity has not received a lot of attention from the community until recently, when B. Galiay and the author \cite{Cha_Gal} studied the Markowitz pseudodistance for bounded domains of the Minkowski space $\R^{p,q}$. 

The first step in our study is to provide a general characterization of Markowitz hyperbolicity in terms of the infinitesimal functional $F_M$ -an infinitesimal version of $\delta_M$ defined on the bundle $C(TM)$ of lightlike tangent vectors, see Section~\ref{section_FM}- and in terms of the equicontinuity of the family of projectively parametrized lightlike geodesics, in close analogy with Royden's characterization in the complex setting \cite{Royden} (see Theorem~\ref{thm_hyp} and Theorem~\ref{thm_hyp_conf_plat}). 

\begin{theorem}
    \label{thm_A}
    Let $(M,[g])$ be a conformal manifold. Then the following properties are equivalent:
    \begin{enumerate}
        \item $M$ is Markowitz hyperbolic, that is, $\delta_M$ is a distance.
        \item $\delta_M$ induces the standard topology on $M$.
        \item $\delta_M$ is locally equivalent to a (hence any) Riemannian distance.
        \item $F_M$ is locally bounded below by a Riemannian norm restricted to $C(TM)$.
        \item The family $\PPL(I,M)$ of projectively parametrized lightlike geodesics $\gamma:I\to M$ is equicontinuous, with respect to some distance on $M$ that is locally equivalent to a Riemannian distance. 
        \item (If $M$ is conformally flat) The family $\Conf(\D^{p,q},M)$ of conformal maps $\D^{p,q}\to M$ is equicontinuous, with respect to some distance on $M$ that is locally equivalent to a Riemannian distance.
    \end{enumerate}
    In that case, the Markowitz distance $\delta_M$ is a length metric on $M$.
\end{theorem}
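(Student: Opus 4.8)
I want to prove the equivalence of (1)–(6) together with the final "length metric" assertion. The natural route is a cycle of implications, organized around the infinitesimal object $F_M$. I would first record the two "easy" general facts that should already be available from the earlier sections: (a) $\delta_M$ is always $1$-Lipschitz dominated by a Riemannian distance coming from *any* background metric in the conformal class via the existence of many projectively parametrized lightlike geodesics through each point in each lightlike direction (so $\delta_M \le C\, d_{\mathrm{Riem}}$ locally), and (b) $F_M$ is upper semicontinuous and $\delta_M$ is the "integrated" pseudodistance associated to $F_M$, i.e. $\delta_M(x,y) = \inf \int_0^1 F_M(\dot c)$ where the infimum runs over suitable chains of (reparametrized) lightlike paths — this is the conformal analogue of Royden's theorem and should be exactly Theorem~\ref{thm_hyp}/Theorem~\ref{thm_hyp_conf_plat} invoked here. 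Granting (a), the topology induced by $\delta_M$ is always coarser than or equal to the manifold topology, so the content of (1)$\Leftrightarrow$(2)$\Leftrightarrow$(3) is really the reverse domination $d_{\mathrm{Riem}} \lesssim \delta_M$ locally.

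**Main cycle.** The implications (3)$\Rightarrow$(2)$\Rightarrow$(1) are immediate. For (1)$\Rightarrow$(3), suppose $\delta_M$ is a genuine distance; I would argue by contradiction that if $\delta_M$ is not locally bounded below by $\varepsilon\, d_{\mathrm{Riem}}$ near some point $p$, then using the chain/integral description (b) one can build, through $p$, chains of projectively parametrized lightlike geodesics of $\delta_M$-length tending to $0$ but not staying in a small Riemannian ball, and a limiting/Arzelà–Ascoli argument (this is where equicontinuity enters) produces either a point at $\delta_M$-distance $0$ from $p$, contradicting definiteness, or a nonconstant projectively parametrized geodesic on which $F_M$ vanishes identically — and then concatenating rescaled copies of it shows $\delta_M$ is degenerate. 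This is the crux, and I expect it to be the main obstacle: one must be careful that $F_M \equiv 0$ along a geodesic really does propagate to $\delta_M(x,y)=0$ for two distinct points, which uses that the "unit disc" $(I,d_I)$ has infinite diameter so that a single geodesic ray of zero $F_M$-cost already collapses distances. The equivalence with (4) is then essentially the definition of the chain construction plus (b): $F_M$ bounded below by a Riemannian norm is the infinitesimal form of (3), and the integration procedure of (b) transfers bounds in both directions. For (5): equicontinuity of $\PPL(I,M)$ is, by a standard Arzelà–Ascoli argument on the space of geodesics (parametrized by an initial lightlike vector in the unit sphere bundle and a time in $I$), equivalent to a uniform local bound $|\dot\gamma(0)|_{\mathrm{Riem}} \le C$ for all $\gamma$ with $\gamma(0)$ in a compact set — and that uniform bound is precisely (4) after unwinding what $F_M(v)$ computes, namely the infimal "speed at which a projectively parametrized geodesic can realize direction $v$". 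Finally, for (6) in the conformally flat case: on $\D^{p,q}$ every projectively parametrized lightlike geodesic is the restriction of a conformal map $\D^{p,q}\to\D^{p,q}$ to a suitable null line (the model space is conformally homogeneous enough for this), so $\Conf(\D^{p,q},M)$ and $\PPL(I,M)$ generate the same family of maps up to precomposition by a fixed compact set of maps $I\to\D^{p,q}$, and equicontinuity of one family is equivalent to that of the other; I would isolate this as a lemma about the geometry of $\D^{p,q}$ proved earlier (Theorem~\ref{thm_hyp_conf_plat}).

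**The length-metric conclusion.** Once hyperbolicity holds, $\delta_M$ agrees with the length metric $\widehat{\delta_M}$ it induces: by the chain description (b), $\delta_M(x,y)$ is an infimum over chains of lightlike geodesic arcs, and each such chain is an actual rectifiable path in $(M,\delta_M)$ whose $\delta_M$-length is at most its cost; conversely the $\delta_M$-length of any path is $\ge \delta_M(x,y)$ by the triangle inequality, and approximating an arbitrary path by such chains (possible since the relevant geodesics through a point sweep out a neighborhood, by (a)) shows the length metric is no larger than $\delta_M$. Hence $\delta_M = \widehat{\delta_M}$, i.e. $\delta_M$ is intrinsic. I expect this last part to be short modulo the chain description; the real work is concentrated in the (1)$\Rightarrow$(3)/(4) step and in the clean statement of the $\D^{p,q}$ geometry needed for (6).
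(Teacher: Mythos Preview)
Your overall architecture matches the paper's: the cycle runs through $F_M$ and equicontinuity, the integrated description of $\delta_M$ (your fact (b), the paper's Theorem~\ref{thm_lien_dmark_Fmark}) is the bridge between (3) and (4), item (6) is delegated to a separate lemma about $\D^{p,q}$ (Theorem~\ref{thm_hyp_conf_plat}), and the length-metric claim follows directly from the chain description exactly as you say.

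The substantive discrepancy is that you have the two halves of (3) weighted backwards. You record the upper bound $\delta_M\le C\,d_{\mathrm{Riem}}$ as an ``easy general fact (a)'' and then declare the lower bound $d_{\mathrm{Riem}}\lesssim\delta_M$ to be ``the crux''. In the paper it is exactly the reverse. The upper bound is the main technical work of the proof of Theorem~\ref{thm_hyp}: one fixes a lightlike frame $(X_1,\dots,X_n)$ near a point, uses the associated ``lightlike exponential'' map $\theta_y(t_1,\dots,t_n)=\phi_{X_1}^{t_1}\circ\cdots\circ\phi_{X_n}^{t_n}(y)$, and compares carefully the Euclidean length of a broken path in the $t$-coordinates with the Riemannian length of its image to get $\tilde\delta\le\alpha\,d_g$ on a neighbourhood (here $\tilde\delta$ is the auxiliary infimum over piecewise-lightlike curves). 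Your one-line justification ``via the existence of many projectively parametrized lightlike geodesics through each point'' is the right intuition but hides all of this; you should not expect it to be available for free.

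Conversely, once that upper bound is in hand, the lower bound under hypothesis (1) is almost immediate and needs no Arzel\`a--Ascoli or limiting geodesics. The upper bound makes $\delta_M$ continuous for the manifold topology; if the lower bound failed near $p$, you would find $y_k$ with $d_{\mathrm{Riem}}(p,y_k)=r>0$ but $\delta_M(p,y_k)\to 0$, extract $y_k\to y$ on the compact Riemannian sphere, and conclude $\delta_M(p,y)=0$ with $y\neq p$, contradicting (1). Your proposed argument (``build chains of small $\delta_M$-length not staying in a small ball, apply Arzel\`a--Ascoli\ldots'') would work but is overkill, and the parenthetical ``this is where equicontinuity enters'' is dangerous: equicontinuity is (5), which you have not yet established from (1), so invoking it here is circular. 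The paper in fact routes the cycle as $(3)\Rightarrow(5)\Rightarrow(4)\Rightarrow(1)$ and then devotes the bulk of the $(1)\Rightarrow(3)$ paragraph to the upper bound; the step $(5)\Rightarrow(4)$ uses crucially Proposition~\ref{prop_topologie_PPL} (uniform convergence of projectively parametrized geodesics implies smooth convergence), which is the genuine replacement for the vague ``standard Arzel\`a--Ascoli'' you invoke for $(4)\Leftrightarrow(5)$.
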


In the last point, $\D^{p,q}$ refers to the pseudo-Riemannian \emph{diamond}, which is defined as $\D^{p,q}=\H^p\times\H^q$ endowed with the conformal class of the product metric $g=-g_{\H^p}+g_{\H^q}$. In addition to Theorem~\ref{thm_A}, we provide necessary conditions for the completeness of the Markowitz pseudodistance in terms of the infinitesimal functional, see Proposition~\ref{prop_cara_complete_hyperbolic}.

\subsection{Geometric characterization of Markowitz hyperbolicity}
Even though characterizations of Markowitz hyperbolicity have been given in Theorem~\ref{thm_A}, they are usually difficult to verify in practice. We now provide a simpler geometric property that is related to Markowitz hyperbolicity.
\begin{definition*}
    A conformal manifold $(M,[g])$ is \emph{totally conformally lightlike incomplete} if every projectively parametrized lightlike geodesic $\R\to M$ is constant.
\end{definition*}
For a conformal manifold $(M,[g])$, any two points on the image of some $\gamma\in\PPL(\R,M)$ are at zero Markowitz pseudodistance from one another. Hence, when $(M,[g])$ is Markowitz hyperbolic, such $\gamma$ must be constant. In other words, one has the implication
\begin{center}
    Markowitz hyperbolic $\Rightarrow$ totally conformally lightlike incomplete,
\end{center}
but the converse implication need not hold in general, see Figure~\ref{Omega_slip_non_Mhyp} in Section \ref{Section_total_conf_lightlike_incompletness}. The next theorems give general frameworks where the converse implication actually holds. 
\subsubsection{Compact case}
In complex geometry, Brody \cite{Brody} first proved that closed complex manifolds are Kobayashi-hyperbolic if and only if their entire curves are constant. Kobayashi and Wu gave an analogous statement in the projective case, in the projectively flat and non-projectively flat case, respectively \cite{kob_projectif,Wu}.
For the Markowitz pseudodistance, we obtain the following characterization (see Theorem~\ref{thm_compacité_fort}).

\begin{theorem}
    \label{thm_B1}
    Let $(M,[g])$ be a closed conformal manifold. If $M$ is totally conformally lightlike incomplete, then there exists a neighborhood $\W$ of $[g]$ for the $C^2$ topology, such that $(M,[g^\prime])$ is Markowitz hyperbolic for every $[g^\prime]\in \W$. In particular, $(M,[g])$ is Markowitz hyperbolic.
\end{theorem}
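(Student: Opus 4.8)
The plan is to argue by contradiction, extracting degeneration data from Theorem~\ref{thm_A} and running a Brody-type reparametrization of lightlike geodesics to produce a non-constant element of $\PPL(\R,(M,[g]))$, contradicting total conformal lightlike incompleteness. So suppose the statement fails; then there is a sequence of conformal classes $[g_k]\to[g]$ in the $C^2$ topology with $(M,[g_k])$ not Markowitz hyperbolic for all $k$. Fix an auxiliary Riemannian metric $h$ on the closed manifold $M$, used for all size estimates. By Theorem~\ref{thm_A}, the non-hyperbolicity of $(M,[g_k])$ is the failure of condition~(4): $F_{(M,[g_k])}$ is not locally bounded below by $|\cdot|_h$ on $C(TM)$. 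Unwinding this and using compactness of $M$, after passing to a subsequence there are $p_k\to p_\infty$ in $M$ and $g_k$-lightlike vectors $v_k\in T_{p_k}M$ with $|v_k|_h=1$, $F_{(M,[g_k])}(v_k)<1/k$, and $v_k\to v_\infty$ a $g$-lightlike vector of $h$-norm $1$. By the definition of $F$ (Section~\ref{section_FM}), each $v_k$ is realized by some $\gamma_k\in\PPL(I,(M,[g_k]))$ with $\gamma_k(0)=p_k$ and $|\gamma_k'(0)|_h=:\mu_k\to\infty$, where $'$ denotes differentiation with respect to the coordinate of $I=(-1,1)$.

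The core step is a Brody reparametrization adapted to the projective structure carried by a null geodesic. Let $N_k(t)=(1-t^2)\,|\gamma_k'(t)|_h$ on $I$, i.e.\ the ratio of the $h$-length element of $\gamma_k$ to the $d_I$-length element (up to the factor $2$). Precomposing $\gamma_k$ with an isometry $\psi$ of $(I,d_I)$ --- still a projective parametrization --- replaces $N_k$ by $N_k\circ\psi$, because the isometry relation $|\psi'|/(1-\psi^2)=1/(1-t^2)$ makes the two length-element ratios transform identically; thus $\sup_I N_k$ is invariant. To make it attained, first replace $\gamma_k$ by the restriction to $I$ of $t\mapsto\gamma_k(t/2)$ (relabel this curve $\gamma_k$ and keep the notation $N_k$): it is again a projective parametrization on $I$, with $h$-speed $\tfrac12\mu_k$ at $0$, and it extends smoothly to $[-1,1]$ because $[-\tfrac12,\tfrac12]$ is compactly contained in the original domain; hence $N_k$ is now continuous on $[-1,1]$, vanishes at $\pm1$, and attains a maximum $R_k\ge\tfrac12\mu_k\to\infty$ at an interior point $t_k$. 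Composing with the $d_I$-isometry carrying $0$ to $t_k$, and then rescaling by the Möbius map $t\mapsto t/R_k$, gives a projectively parametrized lightlike geodesic $\sigma_k\colon(-R_k,R_k)\to M$ for $[g_k]$ with $R_k\to\infty$, $|\sigma_k'(0)|_h=1$, and, because $N_k\le R_k$, the uniform bound $|\sigma_k'(t)|_h\le(1-t^2/R_k^2)^{-1}$; in particular $|\sigma_k'|_h\le 2$ on $[-R_k/2,R_k/2]$.

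We then pass to the limit. On every fixed interval $[-T,T]$, and for $k$ large, the maps $\sigma_k$ are $2$-Lipschitz into the compact metric space $(M,d_h)$, so Arzelà--Ascoli plus a diagonal extraction yield a subsequence converging locally uniformly to a continuous $\sigma\colon\R\to M$. Since the projectively parametrized lightlike geodesics of a metric $g'$ are exactly the solutions of a well-posed system whose coefficients depend continuously on the $2$-jet of $g'$, the $C^2$-convergence $g_k\to g$ together with local uniform convergence of the $\sigma_k$ forces, by the standard continuous-dependence estimates for ODEs, convergence in $C^1_{\mathrm{loc}}$ (and beyond) and makes $\sigma$ a solution of the corresponding system for $[g]$; hence $\sigma\in\PPL(\R,(M,[g]))$. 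Moreover $|\sigma'(0)|_h=\lim_k|\sigma_k'(0)|_h=1$, so $\sigma$ is non-constant, contradicting total conformal lightlike incompleteness of $(M,[g])$. Therefore no such sequence $[g_k]$ exists: some $C^2$-neighborhood $\W$ of $[g]$ consists entirely of Markowitz hyperbolic conformal classes, and taking $[g']=[g]$ proves the last assertion.

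I expect the main obstacle to be the Brody reparametrization itself, and within it two points: ensuring the speed function $N_k$ has an \emph{attained} maximum (handled above via the restrict-and-rescale device, legitimate because $I$ is open and $M$ compact), and checking that the defining system of $\PPL$ depends continuously on the metric in the $C^2$ topology and is stable under locally uniform limits of solutions --- this is precisely what pins down the $C^2$ hypothesis and what guarantees that the Brody limit $\sigma$ really is a projectively parametrized lightlike geodesic for $[g]$.
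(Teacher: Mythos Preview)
Your approach matches the paper's: contrapositive, extract $\PPL$ geodesics with diverging speed, Brody-reparametrize, pass to a limit. Your Brody step (with the restrict-to-$[-1/2,1/2]$ device so that $N_k$ attains its maximum) is correct and is precisely the content of the paper's Brody lemma.

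The gap is the passage from locally uniform convergence $\sigma_k\to\sigma$ to $\sigma\in\PPL(\R,(M,[g]))$ with $C^1_{\mathrm{loc}}$ convergence. You invoke ``standard continuous-dependence for ODEs,'' but the $\PPL$ condition is effectively third-order in $\sigma$: the reparametrized-geodesic equation $\nabla^k_{\sigma_k'}\sigma_k'=\phi_k\,\sigma_k'$ together with a Schwarzian constraint coupling $\phi_k$ to $\Ric_{g_k}(\sigma_k',\sigma_k')$. Uniform $C^0$ convergence of solutions does not bootstrap here: nothing in your setup bounds the scalars $\phi_k(0)$, hence nothing bounds $\sigma_k''(0)$, so you cannot feed convergent initial data into the first-order system on the $2$-jet. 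The paper isolates exactly this step as Proposition~\ref{prop_topologie_PPL} and proves it in two stages: first (Corollary~\ref{cor_conv_geod_avec_2jet}) an honest ODE continuous-dependence argument under the extra hypothesis that the $2$-jets $J^2\gamma_k(0)$ converge with nonzero limiting first derivative; second, a reduction to that case by precomposing with homographies $h_k$ fixing $0$ chosen to normalize $J^2(\gamma_k\circ h_k^{-1})(0)$, followed by a dynamical analysis of the attracting/repelling points of $(h_k)$ in $\PSL(2,\R)$ to show this normalization is compatible with the uniform limit. You correctly flag this as the main obstacle, but the one-line justification does not close it; either invoke Proposition~\ref{prop_topologie_PPL} or supply the normalization-and-dynamics argument.
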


In particular, the property ``$(M,[g])$ is Markowitz hyperbolic'' is open with respect to the $C^2$ topology on $[g]$. For real projective manifolds, Wu \cite{Wu} shows the equivalence between total incompleteness and Kobayashi hyperbolicity, but requires the completeness of an affine connection in the projective class. Here we need no additional assumption. The proof consists in using the standard reparametrization lemma of Brody and to show that a uniform limit of projectively parametrized lightlike geodesics is always projectively parametrized, and that the convergence is smooth. This requires a detailed study of the uniform and smooth topologies on $\PPL(I,M)$, see Section~\ref{Section_topologies_PPL}. 

\subsubsection{Conformally convex case}
The second class of conformal manifolds we study is the analog of convex domains of $\C^n$ or $\RP^n$. In complex geometry, Barth showed that a convex domain of $\C^n$ containing no complex line carries a complete Kobayashi metric \cite{Barth}. Similarly, in the projective setting, a properly convex domains of $\RP^n$ containing no line has a complete Kobayashi metric, and projective segments are geodesic of that metric \cite{kob_projectif}.

For the Markowitz pseudodistance, we study domains of the model space of conformal pseudo-
Riemannian geometry, called the \emph{Einstein universe}, and denoted $\Ein^{p,q}$. This space is the pseudo-Riemannian analog of the conformal sphere $\S^n$.
We say that a domain $\Omega \subset \Ein^{p,q}$ is \emph{conformally convex} if every $x\in\partial\Omega$ admits a supporting Möbius hyperplane, see Definition~\ref{definition_conf_conv}. For instance, any convex domain in one of the standard pseudo-Riemannian models of constant curvature -namely $\R^{p,q}$ for zero curvature, $\H^{p,q}$ for negative curvature, or $\dS^{p,q}$ for positive curvature- is conformally equivalent to a conformally convex domain in $\Ein^{p,q}$, see Example~\ref{exemple_de_conf_conv}.
Moreover, the class of conformally convex domains also contains dually convex domains of $\Ein^{p,q}$, which were previously introduced and studied by Zimmer in the broader setting of flag manifolds \cite{Zimpropqh}.
We obtain the following characterization for conformally convex domains (see Theorem~\ref{thm_conformement_convex}).

\begin{theorem}
\label{thm_B2}
    Let $\Omega\subset\Ein^{p,q}$ be a conformally convex domain. Then $\Omega$ is Markowitz hyperbolic if and only if $\Omega$ is totally conformally lightlike incomplete. In that case, the metric space $(\Omega,\delta_\Omega)$ is complete and every maximal $\gamma\in \PPL(I,\Omega)$ is an isometry $\gamma:(I,d_I)\to (\Omega,\delta_\Omega)$.
\end{theorem}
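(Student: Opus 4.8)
The implication ``Markowitz hyperbolic $\Rightarrow$ totally conformally lightlike incomplete'' holds for every conformal manifold (as recalled above), so the substance of the stated equivalence is the converse, and the completeness and geodesic assertions will emerge from the same argument. Assume $p,q\geq 1$ (otherwise $\Ein^{p,q}$ has no photons and the statement is vacuous), write $X=\Ein^{p,q}$, and use throughout that $\delta$ and its infinitesimal form $F$ decrease under conformal inclusions, so that $\delta_\Omega\geq\delta_{\mathcal D}$ and $F_\Omega\geq F_{\mathcal D}$ on $\Omega$ whenever $\Omega\subseteq\mathcal D\subseteq X$. The plan is to combine this monotonicity with a \emph{trapping} construction built from the supporting Möbius hyperplanes and with an explicit computation on model domains; from these, all three conclusions follow — Markowitz hyperbolicity via Theorem~\ref{thm_A}(4) (which then also gives that $\delta_\Omega$ is a length metric), completeness of $(\Omega,\delta_\Omega)$, and the isometry property of maximal elements of $\PPL(I,\Omega)$.

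Let $\gamma\in\PPL(I,\Omega)$ be maximal and let $\phi\cong\S^1$ be the photon circle carrying $\gamma(I)$. By total conformal lightlike incompleteness $\phi\not\subseteq\Omega$; moreover no connected component of the open set $\phi\cap\Omega$ is $\phi$ minus a point, because such an arc, being projectively a full affine line, would carry a non-constant projectively parametrized lightlike geodesic $\R\to\Omega$. Hence $\gamma(I)$ is a proper open sub-arc of $\phi$ with two \emph{distinct} endpoints $\gamma^{-},\gamma^{+}\in\partial\Omega$, and $\gamma$ extends continuously to $[-1,1]$ with $\gamma(\mp 1)=\gamma^{\mp}$. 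Pick supporting Möbius hyperplanes $H^{-}$ at $\gamma^{-}$ and $H^{+}$ at $\gamma^{+}$, let $\mathcal D^{\pm}$ be the connected component of $X\setminus H^{\pm}$ containing $\Omega$, and set $N=\mathcal D^{+}\cap\mathcal D^{-}$. Then $\Omega\subseteq N$, and $\gamma(I)$ — whose two endpoints already lie on $\partial N$ — is a maximal photon arc of $N$ as well.

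The remaining input concerns the models $N$. After classifying the complement of a Möbius hyperplane according to the causal type of the hyperplane, one finds that the domains $N$ relevant here are conformal to specific warped products: a half-space of a Minkowski patch bounded by a spacelike hyperplane — the Einstein-de Sitter space, for which Markowitz computed $\delta$ explicitly \cite{Markowitz_warped_product} — or a ``slab'' between two such. These domains are totally conformally lightlike incomplete (no non-constant null line of $\R^{p,q}$ survives the one-sided, resp.\ two-sided, bound on the relevant timelike coordinate), they are conformally flat hence Markowitz hyperbolic, and Markowitz's warped-product computation shows that along \emph{any} maximal $\sigma\in\PPL(I,N)$ one has $\delta_N(\sigma(s),\sigma(t))=d_I(s,t)$ for all $s,t\in I$; moreover $\partial N$ lies at infinite $\delta_N$-distance, and $F_N$ is bounded below on lightlike vectors by a Riemannian norm. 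Assembling: for $x\in\Omega$ and lightlike $v\in T_x\Omega$, running the maximal photon arc through $(x,v)$ and trapping it as in the previous paragraph, monotonicity and the model bound give $F_\Omega(v)\geq c(x)\,\|v\|$ for a positive continuous function $c$, so $\Omega$ is Markowitz hyperbolic by Theorem~\ref{thm_A}(4); a similar comparison against the half-space at an accumulation point of a sequence $x_k\to\partial\Omega$ gives $\delta_\Omega(x_0,x_k)\to\infty$, so $\delta_\Omega$-bounded subsets are relatively compact in $\Omega$ and $(\Omega,\delta_\Omega)$ is complete; and for maximal $\gamma$, the tautological bound $\delta_\Omega(\gamma(s),\gamma(t))\leq d_I(s,t)$ together with $\delta_\Omega(\gamma(s),\gamma(t))\geq\delta_N(\gamma(s),\gamma(t))=d_I(s,t)$ forces equality, i.e.\ $\gamma\colon(I,d_I)\to(\Omega,\delta_\Omega)$ is an isometry onto its image.

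The main obstacle is this last step's dependence on the models. First, the conformal bookkeeping: one must determine exactly which complements of Möbius hyperplanes (and which intersections of two of them) are totally conformally lightlike incomplete, so as to rule out the degenerate configurations — half-spaces bounded by timelike or lightlike hyperplanes, on which $\delta\equiv 0$ — which is precisely where total conformal lightlike incompleteness of $\Omega$ must be fed back in, to force the supporting hyperplanes $H^{+},H^{-}$ at the ends of a photon arc into position transverse enough for $N=\mathcal D^{+}\cap\mathcal D^{-}$ to be non-degenerate. Second, the explicit identity $\delta_N(\sigma(s),\sigma(t))=d_I(s,t)$ on the warped-product models must be established, generalizing Markowitz's computation for Einstein-de Sitter space. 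A further subtlety, exactly as in Barth's treatment of convex domains of $\C^n$, is that conformal convexity is a priori only the pointwise condition ``a supporting hyperplane at each boundary point'', so globalizing it into the traps used above needs care; where the trapping argument is delicate near degenerate boundary points, an alternative is a Brody-type compactness argument showing that failure of equicontinuity of $\Conf(\D^{p,q},\Omega)$, which by Theorem~\ref{thm_A}(6) is equivalent to non-hyperbolicity, would produce a non-constant projectively parametrized lightlike geodesic $\R\to\Omega$, contradicting the hypothesis.
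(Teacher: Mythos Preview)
Your trapping idea is natural, but the identification of the model domains $N=\mathcal D^+\cap\mathcal D^-$ is where the argument breaks. M\"obius hyperplanes come in three causal types, and nothing in the hypothesis forces the supporting hyperplanes $H^\pm$ at the endpoints $\gamma^\pm$ of a photon arc to be of a type whose complement is Markowitz hyperbolic. Indeed, for a dually convex domain every supporting hyperplane is a lightcone, and the complement of a lightcone is a Minkowski patch $\R^{p,q}$ on which $\delta\equiv 0$; so the inequality $\delta_\Omega\ge\delta_N$ yields nothing. Your claim that ``the domains $N$ relevant here are conformal to\ldots\ the Einstein--de Sitter space'' is simply false in general, and your own obstacle paragraph does not explain how total conformal lightlike incompleteness of $\Omega$ would upgrade the causal type of $H^\pm$ --- it does not. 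The same problem wrecks your completeness sketch: if $x_k\to x\in\partial\Omega$ and the supporting hyperplane at $x$ is a lightcone, the ``half-space'' you compare against has trivial $\delta$ and gives no divergence.

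The paper sidesteps this by working outward rather than inward: instead of embedding $\Omega\subset N$ and bounding $\delta_\Omega$ below by $\delta_N$, it builds \emph{projective maps} $f:\Omega\to P$ into one-dimensional or properly-convex projective targets and uses the Carath\'eodory-type inequality $\delta^{\kob}_P(f(x),f(y))\le\delta_\Omega(x,y)$. For the isometry statement, two supporting hyperplanes $\Hcal_1,\Hcal_2$ at $\gamma^-,\gamma^+$ (of arbitrary causal type) define a projective quotient $f:\Omega\to\P\big((H_1+H_2)/(H_1\cap H_2)\big)\cong I$ with $f\circ\gamma$ surjective, giving $\delta_\Omega(\gamma(s),\gamma(t))\ge d_I(s,t)$ directly. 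For completeness, the paper splits into two cases: if some supporting hyperplane misses $x\in\partial\Omega$, the same one-dimensional quotient traps a Cauchy sequence; if \emph{every} supporting hyperplane passes through $x$, one shows $C(x)\cap\Omega=\emptyset$, so $\Omega$ sits in a stereographic chart where it is genuinely convex, and a projection onto a line-free convex factor (with its Hilbert metric) finishes. For hyperbolicity, the paper argues by contraposition: if $F_\Omega$ is not bounded below, a sequence of photon segments in $\Omega$ Hausdorff-converges to a full photon $\Delta$, and conformal convexity then forces $\Delta\setminus\{\text{pt}\}\subset\Omega$ --- essentially the Brody-type alternative you mention at the end, which is the part of your sketch closest to the actual proof.
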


In particular, there exist many domains that carry a complete Markowitz distance, and these domains need not be convex in $\R^{p,q}$, nor simply connected, see for instance Figure~\ref{figure_conf_conv_domain}. This phenomenon contrasts with the projective Kobayashi metric, as domains of real projective space have a complete Kobayashi metric if and only if they are properly convex, see \cite[Prop. 12.2.3]{Goldman2022book}. It seems to be a hard problem to characterize the completeness of Markowitz distance of domains of $\Ein^{p,q}$, as there exists non conformally convex domains that have a complete Markowitz distance, see Proposition~\ref{prop_domain_non_conf_conv_but_complete}. 
Note that Theorem~\ref{thm_B2} was already known for dually convex domains that are moreover bounded in a suitable stereographic projection (and in particular totally conformally lightlike incomplete), see \cite[Lem. 4.7]{Cha_Gal}.

\subsubsection{Globally hyperbolic case} The last family of conformal manifolds that we consider is specific to Lorentzian signature $(1,n)$. We focus on \emph{globally hyperbolic spacetimes}, that is, Lorentzian manifolds admitting a topological hypersurface $\Sigma$ that intersects every inextensible causal curve exactly once. Such a hypersurface is called a \emph{Cauchy hypersurface}.  

Given two globally hyperbolic spacetimes $M$ and $N$, a \emph{Cauchy embedding} from $M$ to $N$ is a conformal map $f: M \to N$ that sends a Cauchy hypersurface of $M$ to a Cauchy hypersurface of $N$. A globally hyperbolic spacetime $M$ is said to be $C$-\emph{maximal} if every Cauchy embedding $f: M \to N$ is surjective.  
This notion was introduced by Rossi \cite{Clara} as a conformal analog of maximal spacetimes in Lorentzian geometry \cite{Choquet-Bruhat_Geroch}, and was later studied by Smaï, see \cite{SmaiThese, smai2023enveloping, smai2025futures}. Standard examples of $C$-maximal spacetimes include the universal cover of the Einstein universe and Minkowski spacetime.  

These spacetimes are of particular interest since Rossi proved that any globally hyperbolic conformally flat spacetime extends uniquely to a $C$-maximal one \cite{Clara, smai2023enveloping}. Therefore, in a certain sense, the study of conformally flat globally hyperbolic spacetimes reduces to the study of $C$-maximal ones. For these latter, we obtain the following characterization (see Theorem~\ref{thm_GHM}).

\begin{theorem}
\label{thm_B3}
    Let $M$ be a conformally flat, globally hyperbolic, $C$-maximal spacetime.
    Then $M$ is Markowitz hyperbolic if and only if $M$ is totally conformally lightlike incomplete.
\end{theorem}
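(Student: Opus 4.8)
The plan is to prove the non-trivial implication: assuming $M$ is totally conformally lightlike incomplete, show that $M$ is Markowitz hyperbolic. Since $M$ is conformally flat, globally hyperbolic, and $C$-maximal, I would try to leverage the general characterization from Theorem~\ref{thm_A}, in particular the equivalence between Markowitz hyperbolicity and the equicontinuity of the family $\Conf(\D^{1,n},M)$ of conformal maps from the diamond. So the goal becomes: derive equicontinuity of $\Conf(\D^{1,n},M)$ (equivalently, equicontinuity of $\PPL(I,M)$) from total conformal lightlike incompleteness.

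First I would set up the Cauchy structure: fix a Cauchy hypersurface $\Sigma \subset M$ and use the fact (Rossi--Sma\"i) that a conformally flat globally hyperbolic $C$-maximal spacetime embeds conformally and Cauchy-compatibly into the universal cover $\Eintilde$ of the Einstein universe, or is built as a quotient/enveloping of a developing image therein. I expect the key structural input is that $M$ can be realized as the "Cauchy development" of an open subset of a Cauchy hypersurface in $\Eintilde$, so that inextensible lightlike geodesics in $M$ correspond to lightlike geodesic segments in $\Eintilde$ that stay within the developing image. Then I would argue by contradiction via a Brody-type reparametrization: if $\PPL(I,M)$ is not equicontinuous, rescale a sequence of projectively parametrized lightlike geodesics to extract, in the $C^2$ topology on $\PPL$ developed in Section~\ref{Section_topologies_PPL}, a limiting projectively parametrized lightlike geodesic $\gamma:\R\to M$ that is non-constant. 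The hard part is ensuring the limit lands in $M$ and is genuinely defined on all of $\R$ — this is exactly where $C$-maximality and global hyperbolicity must be used, to prevent the limit geodesic from "escaping" through a conformal boundary, analogously to how Theorem~\ref{thm_B2} uses conformal convexity and Theorem~\ref{thm_B1} uses compactness. A non-constant $\gamma \in \PPL(\R,M)$ contradicts total conformal lightlike incompleteness, completing the proof.

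The main obstacle I anticipate is controlling the limit of the rescaled geodesics: unlike the compact case (Theorem~\ref{thm_B1}), $M$ is non-compact, so one must rule out the reparametrized sequence converging to a geodesic that "runs off to the edge" of $M$. I would handle this by working in the developing map picture inside $\Eintilde$: the developing images $\dev(\gamma_k)$ are lightlike geodesic arcs in $\Eintilde$, which is a (locally) compact homogeneous conformal model, so after passing to a subsequence they converge to a lightlike geodesic arc $\ell$ in $\Eintilde$; the Brody rescaling guarantees $\ell$ is defined on all of $\R$ with controlled (nonzero, bounded) speed at a basepoint. Then global hyperbolicity of $M$ — concretely, the fact that the image of a Cauchy hypersurface is "large" and the Cauchy development is saturated — together with $C$-maximality, is used to show the limiting arc $\ell$ actually lies in the developing image of $M$ and descends to a bona fide element of $\PPL(\R,M)$. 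The causal structure enters crucially here: a lightlike geodesic through a point of $M$ stays in the Cauchy development determined by $\Sigma$, and $C$-maximality says there is no strictly larger such development into which it could escape. Once this localization is done, the equicontinuity criterion of Theorem~\ref{thm_A}(5) gives hyperbolicity.

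A cleaner alternative, which I would pursue in parallel, is to bypass rescaling and directly estimate $F_M$ using the developing map: for a point $x \in M$ and a lightlike vector $v \in C(T_xM)$, the projectively parametrized lightlike geodesic through $(x,v)$ develops into $\Eintilde$ and, by total conformal lightlike incompleteness, cannot be extended indefinitely inside $M$ — it must exit a fixed neighborhood (or reach $\partial M$ in $\Eintilde$) in bounded affine-projective time. Quantifying "bounded exit time" against the Poincar\'e metric on $I$ yields a lower bound $F_M(x,v) \geq c(x)\|v\|$ for a Riemannian norm, which is criterion~\ref{thm_A}(4). The delicate point remains uniformity of $c(x)$ on compact sets, which again reduces to a compactness argument for the family of developed geodesic segments inside $\Eintilde$, controlled by the Cauchy hypersurface and $C$-maximality. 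Either way, the conceptual heart of the proof is transferring the Brody/Barth-type argument into the Einstein universe via the developing map, and using the rigidity of $C$-maximal globally hyperbolic spacetimes to keep everything confined to $M$.
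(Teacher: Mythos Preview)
Your Brody-rescaling route (Approach 1) has a real gap in the non-compact setting: even if the developed curves $\dev(\beta_k)$ converge in $\Eintilde$, nothing you have said forces the limit to lie in $\dev(M)$ rather than on its boundary. The phrase ``$C$-maximality is used to show the limiting arc lies in the developing image'' is the whole problem, not its solution: $C$-maximality is a statement about Cauchy embeddings, and does not by itself say that $\dev(M)$ is closed under limits of lightlike segments. Your second approach is closer in spirit, but the uniformity of $c(x)$ that you flag as ``delicate'' is exactly the missing idea, and you have not supplied it. Also note that the developing map $\dev:\widetilde M\to\Eintilde$ is \emph{not} in general injective, so the sentence ``$M$ embeds conformally and Cauchy-compatibly into $\Eintilde$'' is false as stated.

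The paper's argument is different from both of your proposals. It proves, unconditionally (no incompleteness hypothesis, no contradiction, no Brody rescaling), that $F_M$ is \emph{continuous} for any conformally flat GHMC spacetime; then ``$F_M$ positive and continuous $\Rightarrow$ Markowitz hyperbolic'' (Proposition~\ref{prop_cara_complete_hyperbolic}) together with ``totally conformally lightlike incomplete $\Leftrightarrow F_M$ positive'' (Proposition~\ref{prop_TCLI}) finishes. The continuity proof uses two ingredients you did not identify. First, a localization trick: for $a\ll b$ one has $F_M=F_U$ on the diamond $I(a,b)$, where $U=I^+(a)\cup I^-(b)$, because every maximal lightlike geodesic through a point of $I(a,b)$ stays in $U$ by causality. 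Second, and this is the heart, a theorem of Sma\"i (Theorem~\ref{thm_de_Rym}): $\dev$ is injective on $I^+(a)$ (resp.\ $I^-(b)$), and the image is a \emph{convex}, past-complete (resp.\ future-complete) domain of a Minkowski chart, disjoint from a spacelike hyperplane. A short gluing lemma then shows $\dev$ is injective on all of $U$, reducing to $F_V$ for $V=\dev(U)$; the explicit formula for $F_V$ (Proposition~\ref{Prop_exemple_calcul_F_M}) expresses it through the endpoint maps $v\mapsto x_\pm(v)$, and Sma\"i's \emph{convexity} is precisely what makes these endpoint maps continuous. This convexity is the structural input that plays the role your ``$C$-maximality keeps everything confined'' was gesturing at.
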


As a consequence, if $M$ is a conformally flat, globally hyperbolic, $C$-maximal (abbreviated GHMC) spacetime, then $\Conf(M)$ acts properly on $M$ as soon as $M$ is totally conformally lightlike incomplete, see Corollary~\ref{corollary_properness_GHMC}. This contrasts with known examples of GHMC spacetimes that admit a non-constant lightlike geodesic $\R \to M$, for which the group of conformal transformations can act non-properly. We mention two important such examples:
\begin{itemize}
    \item Any simply connected GHMC spacetime for which there exists a non-constant $\gamma \in \PPL(J,M)$, with $J \subset \RPunTilde$ containing a copy of $\R$ as a proper subset, is conformally equivalent to $\Eintilde$ \cite{Clara}, and in particular admits non-proper conformal actions. 
    \item There exist other GHMC spacetimes, not totally conformally lightlike incomplete, that also admit non-proper conformal actions. For instance, the spacetime $M = \H^k \times \dS^{1,n-k}$ is GHMC with non-constant lightlike geodesics $\R \to M$, see \cite{smai2023enveloping}. This spacetime admits non-proper conformal actions: by the Calabi--Markus phenomenon \cite{Calabi_Markus}, any non-compact isometric action on the $\dS^{1,n-k}$ factor is non-proper.
\end{itemize}

\subsection{An application to conformal Lorentzian geometry}
Kobayashi--Royden metrics are famously used as tools for the study of the dynamics of automorphism groups.  
A classical application is the Wong--Rosay theorem \cite{Wong,Rosay}, which characterizes the unit ball of $\C^n$ in terms of its group of biholomorphisms. For domains of $\Ein^{p,q}$, a similar characterization was proven in \cite{Cha_Gal}, where the authors study domains of the pseudo-Riemannian Einstein universe that are \emph{proper}, i.e., bounded in $\R^{p,q}$ under a suitable stereographic projection, and \emph{quasi-homogeneous}, meaning that the conformal group acts with compact quotient. In the Lorentzian case, they show that causal diamonds are the only such domains of $\Ein^{1,n}$.

In this paper, we study quasi-homogeneous domains in the Lorentzian Einstein universe $\Ein^{1,n}$, without assuming properness. We focus on domains $\Omega$ contained in an \emph{Einstein-de Sitter half-space}, that is, a half-space of $\R^{1,n}$ bounded by a spacelike hyperplane. This setting arises naturally from the broader study of quasi-homogeneous Markowitz hyperbolic domains of $\Ein^{1,n}$. Indeed, the Einstein-de Sitter half-space is itself Markowitz hyperbolic, so its quasi-homogeneous domains form a distinguished subclass of quasi-homogeneous Markowitz hyperbolic domains of $\Ein^{1,n}$. We are thus led to ask:

\begin{question}
    \label{question_HBD}
    What are all quasi-homogeneous domains of the Einstein-de Sitter half-space ?
\end{question}

Clearly, since diamonds of $\R^{1,n}$ are bounded, they lie inside some Einstein-de Sitter half-space and hence satisfy the above assumption. However, besides diamonds, there exist other homogeneous domains contained in an Einstein-de Sitter half-space, which we call \emph{homogeneous Bonsante domains}. For $0 \leq \ell \leq n$, a \emph{future} (resp. \emph{past}) \emph{homogeneous Bonsante domain} (abbreviated HB-domain) of index $\ell$ is the future (resp. past) of a spacelike subspace $F_\ell$ of dimension $\ell$, see Figure~\ref{Figure_Misner} in Section \ref{section_misner}. For instance, when $\ell=0$, the associated HB-domain is conformally equivalent to a diamond. We name these domains after F. Bonsante, who studied convex future-complete domains of $\R^{1,n}$ disjoint from a spacelike hyperplane \cite{Bonsante}. 
We show that, up to a conformal transformation, they are the only solutions to Question~\ref{question_HBD}.

\begin{theorem}
    \label{thm_application}
    Let $\Omega \subset \R^{1,n}$ be contained in an Einstein-de Sitter half-space. If $\Omega$ is quasi-homogeneous, then $\Omega$ is either a homogeneous Bonsante domain or a diamond (possibly with endpoints in $\partial \R^{1,n}$).
\end{theorem}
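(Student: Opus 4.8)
The plan is to convert quasi-homogeneity into a \emph{proper cocompact} action, then analyse the contraction dynamics of divergent sequences in that group acting on the Einstein universe to localise $\partial\Omega$, and finally reconstruct $\Omega$ from the shape of its limit set.

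\emph{Reductions.} Write $U=\{x_1>0\}\subset\R^{1,n}$ for the ambient Einstein--de Sitter half-space, $x_1$ being a timelike coordinate. As a convex domain of $\R^{1,n}$, $U$ is conformally a conformally convex domain of $\Ein^{1,n}$ by Example~\ref{exemple_de_conf_conv}; and $U$ is totally conformally lightlike incomplete, since along a lightlike line of $\R^{1,n}$ the coordinate $x_1$ is affine and non-constant, so after a projective reparametrisation $x_1\circ\gamma$ is a non-constant fractional-linear function of the parameter, which cannot take all its values in $(0,\infty)$; hence every element of $\PPL(\R,U)$, and a fortiori of $\PPL(\R,\Omega)$, is constant. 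By Theorem~\ref{thm_B2}, $U$ is Markowitz hyperbolic, and since $\Omega\hookrightarrow U$ is a conformal contraction we get $\delta_U|_\Omega\le\delta_\Omega$, so $\Omega$ is Markowitz hyperbolic as well. By Proposition~\ref{prop_compacité_app_conformes}, $\Conf(\Omega)$ then acts properly on $\Omega$, hence properly and cocompactly since $\Omega$ is quasi-homogeneous; and by Liouville's theorem (the dimension being at least $3$) the group $G:=\Conf(\Omega)$ is a closed, non-compact subgroup of $\PO(2,n+1)=\Conf(\Ein^{1,n})$.

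\emph{Localising the boundary.} Fix $p\in\Omega$ and let $\Lambda\subset\partial\Omega$ be the set of accumulation points of the orbit $G\cdot p$ in $\Ein^{1,n}$; it is non-empty because the orbit is non-compact, and disjoint from $\Omega$ by properness. We have $\Lambda\subset\overline\Omega\subset\overline U$, and $\partial U=\Sigma\cup\partial\R^{1,n}$ where $\Sigma$ is the spacelike conformal sphere compactifying the hyperplane $\{x_1=0\}$ and $\partial\R^{1,n}$ is the conformal boundary of Minkowski space, which is a single lightcone $C(i^0)$. For a divergent sequence $g_k\in G$, a Cartan decomposition $g_k=\ell_k a_k m_k$ with $a_k$ diverging in a closed Weyl chamber exhibits, after extraction, attracting and repelling isotropic subspaces $\Lambda^\pm\subset\Ein^{1,n}$ with $g_k\to\Lambda^+$ locally uniformly off a lower-dimensional set governed by $\Lambda^-$; applying this to points of $\Omega$ and using $G$-invariance of $\Omega$ gives $\Lambda^\pm\subset\Lambda$. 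Since the Witt index of $\R^{2,n+1}$ is $2$, each $\Lambda^\pm$ is a point, a photon, or a full lightcone; but $U$ contains no photon (being totally conformally lightlike incomplete), so any photon or lightcone occurring in $\Lambda$ lies in $\partial U$, hence — as $\Sigma$ is Riemannian — in $C(i^0)$, and a photon there is a generator of $C(i^0)$, passing through $i^0$.

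\emph{The dichotomy.} Suppose first that $\Lambda$ contains a photon. Then $i^0\in\overline\Omega$, and by pushing this structure through the cocompact action one shows that $\Omega$ is globally hyperbolic and future- or past-complete, hence conformally a convex regular domain disjoint from a spacelike hyperplane; cocompactness then forces its initial singularity $S$, a closed convex subset of $\{x_1=0\}\cong\R^n$, to be homogeneous under affine transformations, hence an affine subspace $F_\ell$, and Bonsante's classification \cite{Bonsante} identifies $\Omega=I^{\pm}(F_\ell)$ as a homogeneous Bonsante domain (for $\ell=0$ this is the degenerate diamond $I^{\pm}(\text{point})$). Suppose instead that $\Lambda$ contains no photon; then $\Lambda$ is a set of points, no lightcone occurs either, and one shows that $\Omega$ is conformally equivalent, via a Möbius transformation of $\Ein^{1,n}$ moving the extremal limit points to finite position, to a proper quasi-homogeneous domain of $\Ein^{1,n}$; by \cite{Cha_Gal} this is a diamond, so $\Omega$ itself is a diamond, possibly with endpoints in $\partial\R^{1,n}$.

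\emph{Main obstacle.} The crux is the reconstruction of $\Omega$ in both branches: the shape of the limit set $\Lambda$ does not by itself determine $\Omega$, because $\partial\Omega$ could a priori carry a complicated ``future'' part sitting above the Cauchy horizon of $\Lambda$. Excluding this — equivalently, establishing that $\Omega$ (or a domain naturally associated with it) is convex so that Bonsante's description applies, and in the second branch that $\Omega$ is genuinely proper after a Möbius transformation — is where the cocompactness of the $G$-action and the structure of its hyperbolic and parabolic one-parameter subgroups must be used in full strength, through a careful study of the $G$-orbits accumulating on $\partial\Omega$. One must also verify that the interface cases, notably $\ell=0$ (where the Bonsante domain is also a degenerate diamond) and $\ell=n$ (where it is $U$ itself), are consistent with both descriptions, so that the final list contains no outcomes beyond those stated.
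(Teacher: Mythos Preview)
Your outline diverges substantially from the paper's argument, and the part you label ``Main obstacle'' is not a residual detail but the entire content of the proof. You propose to read off the shape of $\Omega$ from the dynamical limit set $\Lambda$ of $G$ on $\Ein^{1,n}$, splitting into the cases ``$\Lambda$ contains a photon'' and ``it does not''. But the steps ``by pushing this structure through the cocompact action one shows that $\Omega$ is globally hyperbolic and future- or past-complete'' and ``one shows that $\Omega$ is conformally equivalent \dots\ to a proper quasi-homogeneous domain'' are assertions, not arguments: knowing that $i^0\in\overline\Omega$ or that $\Lambda$ consists of points does not, by itself, force $\Omega$ to be convex, future-complete, or proper. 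Likewise, the claim that the initial singularity $S$ is ``homogeneous under affine transformations, hence an affine subspace'' needs a mechanism: $G$ need not act transitively on $S$, and Bonsante's paper does not supply this step. Your dichotomy also blurs at the interface, since the HB-domain $\Omega_0=I^+(0)$ is itself a diamond with one ideal vertex, so the two branches are not disjoint.

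The paper takes a completely different route that never analyses $\Lambda$ or uses a Cartan decomposition. The key device is Lemma~\ref{lemme_classique}: if $\Omega\subset M$ with $M$ Markowitz hyperbolic and every element of $\Conf(\Omega)$ extends to $M$, then $\Omega=M$. This is applied twice, with $M$ the causally convex hull of $\Omega$ in $\Eintilde$ (giving causal convexity), and later with $M$ the affine convex hull (giving convexity). The other structural properties are obtained by explicit rescaling: one builds sequences $f_k(y)=2^k(y-a)+a$ about carefully chosen boundary points $a$ (found via a Lorentzian farthest-point construction, Step~1), composes with elements of $G$ returning to a compact set, and uses Proposition~\ref{prop_compacité_app_conformes} to extract a limit conformal map whose image is forced to contain a full future cone (Step~3, yielding a fixed ideal point and hence $G\subset\Sim(\R^{1,n})$) or to be a cone (Step~6). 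The HB-domain conclusion then follows by splitting off the maximal affine factor and showing the line-free part has a one-point singularity. This ``zoom in and take hulls'' strategy is what actually carries the weight; your limit-set sketch would need an independent argument of comparable depth to fill the acknowledged gap.
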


In particular, any quasi-homogeneous domain of $\R^{1,n}$ disjoint from at least one spacelike hyperplane is homogeneous. In the conclusion of Theorem~\ref{thm_application}, the domain $\Omega$ is either an HB-domain or conformally equivalent to a diamond of $\R^{1,n}$. In the latter case, there are three possible configurations for $\Omega$, see Figure~\ref{Figure_Modèles_affines_des_diamants}.

We deduce a classification of closed conformally flat Lorentzian manifolds whose developing map takes values in an Einstein-de Sitter half-space.

\begin{corollary}
    \label{corollaire_de_lapplication}
    Let $M$ be a closed conformally flat Lorentzian manifold. If the developing image of $M$ is contained in an Einstein-de Sitter half-space, then $\smash{\widetilde M}$ is conformally equivalent to a diamond of $\R^{1,n}$. In particular, the manifold $M$ is finitely covered by a product 
    $(\S^1\times N, [-dt^2\oplus\ghyp]),$
    where $N$ is a closed Riemannian manifold of constant negative curvature. 
\end{corollary}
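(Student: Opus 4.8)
The plan is to realize $\widetilde{M}$ as a quasi-homogeneous domain contained in an Einstein--de Sitter half-space and then invoke Theorem~\ref{thm_application}. Fix a developing map $\dev\colon\widetilde{M}\to\Ein^{1,n}$, write $\rho$ for its holonomy, and let $\Omega_0$ be an Einstein--de Sitter half-space with $\dev(\widetilde{M})\subset\Omega_0$. I would start by noting that $M$ is totally conformally lightlike incomplete: a non-constant $\gamma\in\PPL(\R,M)$ would lift to $\PPL(\R,\widetilde{M})$ and then develop to an element of $\PPL(\R,\Omega_0)$, whose underlying lightlike geodesic is a full affine null line of $\R^{1,n}$ --- impossible, since such a line always leaves any half-space bounded by a spacelike hyperplane. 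Thus $\Omega_0$, hence a fortiori $M$, is totally conformally lightlike incomplete, so Theorem~\ref{thm_B1} gives that $M$ --- and therefore $\widetilde{M}$ --- is Markowitz hyperbolic; as $M$ is closed, $\pi_1(M)$ acts cocompactly on $(\widetilde{M},\delta_{\widetilde{M}})$ by isometries, so this length metric is complete.

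Next I would push the cocompact action through $\dev$. The image $\Omega:=\dev(\widetilde{M})\subset\Omega_0\subset\R^{1,n}$ is a domain, contained in an Einstein--de Sitter half-space, and $\rho(\pi_1 M)\leq\Conf(\Omega)$ acts on $\Omega$ with compact quotient, since $\dev$ descends to a continuous surjection $M\to\Omega/\rho(\pi_1 M)$ and $\Omega/\Conf(\Omega)$ is a further quotient. Thus $\Omega$ is quasi-homogeneous and Theorem~\ref{thm_application} applies: $\Omega$ is a homogeneous Bonsante domain or a diamond. A homogeneous Bonsante domain $I^+(F_\ell)$ with $\ell\geq 1$ is excluded, because its conformal group is non-unimodular: for $\ell=n$ it is the group of Euclidean similarities $(\lambda,A,\vec a)$ acting on the half-space by the Poincaré extension $(\rho,\vec x)\mapsto(\lambda\rho,\lambda A\vec x+\vec a)$, with modular function $\lambda^{n}$; and for $0<\ell<n$ it contains, on $I^+(F_\ell)\cong(\text{solid future cone})\times\R^\ell$, a homothety acting isometrically (as a time translation) on the cone factor and by a dilation on the $\R^\ell$-factor, again giving modular function $\lambda^{\ell}\neq1$. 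Since a group admitting a discrete cocompact subgroup must be unimodular, the existence of $\rho(\pi_1 M)$ rules these out, so $\Omega$ is a (possibly degenerate) diamond; in particular $\Omega$ is conformally $(\R\times\H^{n},[-dt^2\oplus\ghyp])$ and is simply connected.

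It remains to see that $\dev$ is a homeomorphism onto $\Omega$, which (as $\Omega$ is simply connected) amounts to showing $\dev$ is a covering map --- and this is the step I expect to be the main obstacle. Here I would first upgrade the previous discussion to the statement that $\widetilde{M}$ is globally hyperbolic: pulling back via $\dev$ the Cauchy temporal function of $\Omega=\R\times\H^{n}$ gives a temporal function $\tau$ on $\widetilde{M}$ whose levels develop conformally into $\H^{n}$, and an inextensible causal curve of $\widetilde{M}$ along which $\tau$ stayed bounded would develop to a causal curve of $\R\times\H^{n}$ with bounded $t$-coordinate, hence --- by completeness of $\H^{n}$ --- converging to an interior point of $\Omega$, which (as $\dev$ is a local diffeomorphism) would let the curve be extended, a contradiction; so $\tau$ is Cauchy. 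One then appeals to the rigidity of conformally flat globally hyperbolic spacetimes (in the circle of ideas of Rossi and Smaï used for Theorem~\ref{thm_B3}), reducing to the fact that the conformal local diffeomorphism induced on the simply connected Cauchy hypersurface $\tau^{-1}(0)\to\H^{n}$ is a covering, hence a homeomorphism; propagating along the flow of $\tau$ then shows $\dev$ is a homeomorphism, so $\widetilde{M}$ is conformally a diamond of $\R^{1,n}$.

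Finally, with $\widetilde{M}\cong(\R\times\H^{n},[-dt^2\oplus\ghyp])$, the group $\Gamma:=\rho(\pi_1 M)$ is a discrete cocompact subgroup of $\Conf(\widetilde{M})$, whose identity component is $\R\times\Iso^{\circ}(\H^{n})$ (the $\R$-factor being the conformal dilation of the diamond, i.e.\ translation in $t$). A finite-index subgroup $\Gamma_0$ of $\Gamma$ lands in this identity component; projection to the central $\R$-factor is a homomorphism with cocompact, hence infinite cyclic, image, whose kernel projects to a cocompact (hence discrete) lattice $\Gamma_N\leq\Iso^{\circ}(\H^{n})$. After passing to a further finite-index subgroup (Selberg's lemma, which also kills torsion and splits the central extension) one gets $\Gamma'\cong\Z\times\Gamma_N$ acting by $(m,\lambda)\cdot(t,x)=(t+m\tau_0,\lambda x)$, so the corresponding finite cover of $M$ is $(\S^{1}\times N,[-dt^2\oplus\ghyp])$ with $N=\H^{n}/\Gamma_N$ a closed manifold of constant negative curvature. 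Apart from Step 3, everything is a direct combination of Theorems~\ref{thm_B1} and~\ref{thm_application} with standard facts on unimodularity and lattices in Lie groups.
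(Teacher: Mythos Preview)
Your overall architecture is right --- push the cocompact deck action through $\dev$, apply Theorem~\ref{thm_application}, rule out $\ell\geq 1$, and identify the quotient --- but Steps~4 and~5 are tangled and each has a genuine gap.

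\smallskip

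\textbf{Step 4 is premature.} The unimodularity argument needs $\rho(\pi_1 M)$ to be a \emph{lattice} in $\Conf(\Omega_\ell)$, i.e.\ discrete and cocompact. You have cocompactness of the action on $\Omega$, but at this point you do not know the holonomy is discrete: for a closed $(G,X)$-manifold whose developing map is merely a local diffeomorphism, the holonomy image need not be discrete. Discreteness only follows once $\dev$ is known to be a covering, which is your Step~5. So the argument is circular as written.

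\textbf{Step 5 is incomplete.} Two places need more than ``$\dev$ is a local diffeomorphism'': (i) when $\dev\circ\gamma$ converges to $p\in\Omega$, a local diffeomorphism does not by itself let you lift the limit and extend $\gamma$ in $\widetilde M$; (ii) the restriction $\tau^{-1}(0)\to\H^n$ is a conformal local diffeomorphism, but you give no reason it is a covering (you would again need some completeness on the source). The appeal to ``rigidity in the circle of ideas of Rossi and Sma\"i'' does not close either gap.

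\smallskip

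\textbf{The fix is the paper's route, and it removes both problems at once.} By Proposition~\ref{prop_equivalence_d_Omega_ell}, \emph{every} HB-domain $\Omega_\ell$ (not just the diamond) carries a $\Conf(\Omega_\ell)$-invariant complete Riemannian metric $g_\ell^+$. Pull it back by $\dev$: the result is $\pi_1(M)$-invariant, hence descends to the closed manifold $M$, hence is complete on $\widetilde M$; therefore $\dev$ is a Riemannian covering onto the contractible $\Omega_\ell$, so a diffeomorphism. This makes $\rho$ injective with discrete image, and \emph{now} your non-unimodularity computation (modular character $\lambda^{-\ell}$ on $\Conf(\Omega_\ell)$) legitimately forces $\ell=0$. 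The paper phrases this last step as a volume argument --- the flow $\phi_t(u,v)=(u,e^tv)$ pulls back to $M$ and rescales the $g_\ell^+$-volume by $e^{-t\ell}$ --- which is exactly your modular function in disguise. Your Step~1 (Markowitz hyperbolicity of $M$ via Theorem~\ref{thm_B1}) is correct but unnecessary once you use $g_\ell^+$; your Step~6 is fine.
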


Corollary F extends, in the Lorentzian setting, a result of \cite{Cha_Gal}, who classified closed conformally flat manifolds with proper development in the Einstein universe of arbitrary signature. That earlier work relied on Zimmer’s techniques \cite{Zimpropqh}, specific to proper domains, and in particular on the fact that proper quasi-homogeneous domains are dually convex. By contrast, the present situation is fundamentally different: quasi-homogeneous domains in the Einstein-de Sitter half-space may not be dually convex, since the half-space itself is not. Consequently, the proof of Theorem~\ref{thm_application} requires a different strategy from that of \cite{Cha_Gal}.

\subsection{Further directions}
\label{section_questions}

Corollary~\ref{corollaire_de_lapplication} suggests that there should be only a few closed conformally flat Lorentzian manifolds that are Markowitz hyperbolic. In fact, the only known examples are those finitely covered by the diamond.
This naturally leads to the following question:

\begin{question}
Let $M$ be a closed conformally flat Lorentzian manifold. If $M$ is totally conformally lightlike incomplete, is $\smash{\widetilde M}$ necessarily conformally equivalent to a diamond $\D^{1,n}=\R\times\H^n$?
\end{question}

Corollary~\ref{corollaire_de_lapplication} suggests that the answer should be positive. Indeed, it seems unlikely that a closed manifold could be both totally conformally lightlike incomplete and have a developing image intersecting any domain of $\Ein^{1,n}$ that projects to an Einstein-de Sitter half-space under some stereographic projection. Moreover, there is a close analogy with the projective setting: any compact, totally incomplete projectively flat manifold is covered by a proper convex domain of $\RP^n$, see \cite[Prop.~12.2.3]{Goldman2022book}.

\subsection{Organization of the paper} 
After a preliminary Section~\ref{section_preliminary}, we introduce the projective parameters on lightlike geodesics and study the topology of $\PPL(I,M)$ in Section~\ref{section_para_proj}. Markowitz hyperbolicity is discussed in Section~\ref{section_hyp_Mark}, where the first part of Theorem~\ref{thm_A} is proved. The second part of that theorem is established in Section~\ref{section_conf_plat}. We then give compactness criteria for the space of conformal maps in Section~\ref{section_compacité}. The proofs of Theorems~\ref{thm_B1},~\ref{thm_B2}, and~\ref{thm_B3} are given in Sections~\ref{section_Brody},~\ref{section_conf_plat}, and~\ref{section_GHM}, respectively. Finally, the proofs of Theorem~\ref{thm_application} and Corollary~\ref{corollaire_de_lapplication} are given in Section~\ref{section_application}.

\subsection{Acknowledgment} This paper is based in part on the author’s doctoral thesis. The author would like to express his sincere gratitude to his PhD advisor, Charles Frances, for his guidance and continued support.

\section{Preliminaries}
\label{section_preliminary}

\subsection{Conformal manifolds}

We begin by reviewing the context of conformal pseudo-Riemannian manifolds. A \emph{conformal manifold} is a pair $(M,[g])$, where $M$ is a smooth manifold of dimension\footnote{Throughout this paper, all results can be extended to dimension $n=2$, if one restricts to Lorentzian surfaces that are moreover endowed with a $(\Ein^{1,1},\PO(2,2))$-structure, see Remark \ref{rmk_dim_2}.} $n\geq 3$, and $[g]$ is the \emph{conformal class} of a pseudo-Riemannian metric $g$, namely 
$$[g]=\left\{e^f\cdot g\,\vert\,f\in C^\infty(M)\right\}.$$ 
The \emph{signature} of a conformal manifold is a pair of integers $(p,q)$ referring to the number of negative and positive signs for the metric $g$, respectively. 
A tangent vector $v\in TM$ is called \emph{lightlike} (resp. \emph{timelike}, \emph{spacelike}) if $g(v,v)$ is null (resp. negative, positive). These notions are independent of the choice of a metric in the conformal class. A curve is said to be \emph{lightlike} (resp. \emph{timelike}, \emph{spacelike}) if its tangent vector is lightlike (resp. timelike, spacelike) at every point. More generally, it makes sens to say that a smooth submanifold $\Sigma\subset (M,[g])$ has a certain signature. 

Concerning pseudo-Riemannian invariants, such as the Levi-Civita connection or the curvature tensor, most of them fail to be invariant under conformal changes. Hence, pseudo-Riemannian geodesics do not admit a conformal meaning. However, the notion of \emph{lightlike pregeodesic}, namely a lightlike curve that can be reparametrized into a geodesic, has a conformal meaning.

\begin{thm}
    Two conformally equivalent metrics have the same lightlike pregeodesics.
\end{thm}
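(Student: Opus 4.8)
The plan is to show that the property of being a lightlike pregeodesic (an unparametrized null geodesic) depends only on the conformal class, by examining how the geodesic equation transforms under a conformal rescaling $\hat g = e^{2\varphi} g$. First I would recall the classical transformation law for the Levi-Civita connections: if $\nabla$ and $\hat\nabla$ denote the connections of $g$ and $\hat g = e^{2\varphi}g$, then for vector fields $X, Y$,
\[
\hat\nabla_X Y = \nabla_X Y + d\varphi(X)\,Y + d\varphi(Y)\,X - g(X,Y)\,\mathrm{grad}_g\varphi .
\]
This is a standard computation (e.g. differentiating the Koszul formula), and I would either cite it or sketch it in one line.

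Next I would take a lightlike geodesic $\gamma$ for $g$, parametrized so that $\nabla_{\dot\gamma}\dot\gamma = 0$ and $g(\dot\gamma,\dot\gamma) = 0$. Plugging $X = Y = \dot\gamma$ into the transformation law, the crucial point is that the term $g(\dot\gamma,\dot\gamma)\,\mathrm{grad}_g\varphi$ vanishes precisely because $\gamma$ is \emph{lightlike}. Hence
\[
\hat\nabla_{\dot\gamma}\dot\gamma = 2\,d\varphi(\dot\gamma)\,\dot\gamma = 2\,(\varphi\circ\gamma)'\,\dot\gamma,
\]
which is proportional to $\dot\gamma$. This is exactly the condition for $\gamma$ to be a \emph{pregeodesic} for $\hat\nabla$, i.e. an unparametrized geodesic: its acceleration stays tangent to the curve. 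I would also note that $\hat g(\dot\gamma,\dot\gamma) = e^{2\varphi}g(\dot\gamma,\dot\gamma) = 0$, so $\gamma$ is still lightlike for $\hat g$. The argument is symmetric in $g$ and $\hat g$ (replace $\varphi$ by $-\varphi$), so the two metrics have exactly the same lightlike pregeodesics.

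To be fully rigorous I would recall why a curve $\gamma$ with $\nabla_{\dot\gamma}\dot\gamma = \lambda(t)\,\dot\gamma$ for some function $\lambda$ can be reparametrized to a genuine geodesic: solving the ODE $h'' = \lambda(h)\,h'$ — equivalently $(\log h')' = \lambda$ — produces a reparametrization $s = h(t)$ making the acceleration vanish, which is why such curves deserve the name "pregeodesic". The only mild subtlety, and the step I expect to require the most care, is being careful about the parametrization bookkeeping: "lightlike geodesic" for a fixed metric comes with a distinguished (affine) parametrization, but the statement is about the \emph{unparametrized} curves, so I must make sure the conclusion is phrased at the level of pregeodesics and that the reparametrization $h$ is indeed a diffeomorphism onto its image (which follows since $h' = \exp(\int\lambda) > 0$). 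Everything else is a direct substitution into the conformal transformation rule for $\nabla$.
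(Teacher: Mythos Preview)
Your argument is correct and is exactly the standard proof of this classical fact: the conformal transformation law for the Levi-Civita connection, together with the vanishing of $g(\dot\gamma,\dot\gamma)$, yields $\hat\nabla_{\dot\gamma}\dot\gamma$ proportional to $\dot\gamma$, hence $\gamma$ is a $\hat g$-pregeodesic.

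Note, however, that the paper does \emph{not} supply its own proof of this statement: it is quoted in the Preliminaries section as a known result, with no argument given. So there is nothing to compare your approach against; you have simply filled in the omitted (and well-known) proof.
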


A map $\varphi:(M,[g])\to (N,[g^\prime])$ between two conformal manifolds of the same signature is said to be \emph{conformal} if $\varphi^*g^\prime\in [g]$. Equivalently, a conformal map is a smooth map that preserves the type of vectors. Note that in particular, a conformal map is always a local diffeomorphism. We denote by $\Conf(M,N)$ the set of conformal maps from $M$ to $N$, and by $\Conf(M)$ the \emph{conformal group} of $M$, that is the group of conformal diffeomorphisms of $M$. 

\subsection{The Minkowski space} We call \emph{Minkowski space}, and denote by $\R^{p,q}$, the conformal manifold
$\R^{p,q}=(\R^{p+q},[\b])$, where $\b$ is the standard flat metric  of signature $(p,q)$ on $\R^{p+q}$, that is:
$$\b=-dx_1^2-\dots-dx_p^2+dx_{p+1}^2+\dots+dx_{p+q}^2.$$
Lightlike geodesics of the Minkowski space are straight lines with lightlike directions. The Minkowski space is acted on conformally by the \emph{similarity group}, which is the group of affine motions that preserve $[\b]$. This group is generated by translations, linear isometries and dilations, and it splits as the semi-direct product
$$\Sim(\R^{p,q})=\R_{>0}\O(p,q)\ltimes\R^{p,q},$$
where $\O(p,q)$ is the group of linear isometries of $(\R^{p,q},\b)$.  

\subsection{The Einstein universe} A central example of conformal manifold is the so-called \emph{Einstein universe}, which is the pseudo-Riemannian analog of the conformal sphere. The \emph{Einstein universe}, denoted $\Ein^{p,q}$, may be defined as the set of isotropic lines of $\R^{p+1,q+1}$, that is:
$$\Ein^{p,q}=\{[v]\in\P(\R^{p+1,q+1})\,\vert\,\b(v,v)=0\},$$
where $[v]$ is the line generated by $v\in \R^{p+1,q+1}$, and $\b$ denotes the standard metric on $\R^{p+1,q+1}$. 
In that model, the Einstein universe is a compact hypersurface of real projective space. One can see that the conformal class $[\b]$ defines a conformal class of pseudo-Riemannian metrics of signature $(p,q)$ on $\Ein^{p,q}$.

\begin{prop}
    \label{prop_rev_double}
    The Einstein universe $\Ein^{p,q}$ is conformally equivalent to $(\S^p\times \S^q/_{\pm \operatorname{Id}},[-g_{\S^p}+g_{\S^q}])$.
\end{prop}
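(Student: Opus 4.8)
The plan is to build the two-sheeted cover $\S^p\times\S^q\to\Ein^{p,q}$ explicitly and then track what it does to the conformal structure. Decompose a vector of $\R^{p+1,q+1}$ as $v=(a,b)$ with $a\in\R^{p+1}$ and $b\in\R^{q+1}$, so that $\b(v,v)=-\lVert a\rVert^2+\lVert b\rVert^2$ with $\lVert\cdot\rVert$ the Euclidean norm on the corresponding factor. A nonzero $v=(a,b)$ is isotropic exactly when $\lVert a\rVert=\lVert b\rVert$, and this common value is then positive; dividing $v$ by it shows that every point of $\Ein^{p,q}$ has a representative in $\S^p\times\S^q\subset\R^{p+1}\times\R^{q+1}$. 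Thus the projectivisation $v\mapsto[v]$ restricts to a surjection $\Pi\colon\S^p\times\S^q\to\Ein^{p,q}$. If $(a,b)$ and $(a',b')$ in $\S^p\times\S^q$ span the same line, then $(a',b')=\lambda(a,b)$ with $\lvert\lambda\rvert=1$, so the fibres of $\Pi$ are exactly the orbits of the fixed-point-free involution $(a,b)\mapsto(-a,-b)$. Hence $\Pi$ is a smooth two-sheeted covering map and induces a diffeomorphism $\S^p\times\S^q/_{\pm\operatorname{Id}}\xrightarrow{\ \sim\ }\Ein^{p,q}$.

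The next step is to recall how the conformal class on $\Ein^{p,q}$ arises and to reduce the statement to one metric computation. Write $\mathcal N=\{v\in\R^{p+1,q+1}\setminus\{0\}\,\vert\,\b(v,v)=0\}$ for the punctured isotropic cone and $\pi\colon\mathcal N\to\Ein^{p,q}$ for the line map. For $v\in\mathcal N$ one has $T_v\mathcal N=v^{\perp}$, and since $v$ is isotropic the kernel of $\b\vert_{T_v\mathcal N}$ is exactly the radial line $\R v=\ker d\pi_v$; hence for any local section $\sigma\colon U\to\mathcal N$ of $\pi$, the tensor $\sigma^*\b$ is a nondegenerate metric of signature $(p,q)$ on $U\subset\Ein^{p,q}$. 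Any other section over $U$ is $\sigma'=\lambda\sigma$ for some $\lambda\colon U\to\R^*$, and using $\b(\sigma,\sigma)\equiv 0$ (hence $\b(\sigma,d\sigma)\equiv 0$) one finds $\sigma'^*\b=\lambda^2\,\sigma^*\b$; so $[\sigma^*\b]$ is independent of $\sigma$, and this is precisely the conformal structure of $\Ein^{p,q}$.

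It remains to identify this class after pulling back by $\Pi$. Since $\S^p\times\S^q$ is a submanifold of $\mathcal N$ on which $\pi$ is a local diffeomorphism, composing the inclusion with a local inverse of $\Pi$ yields local sections of $\pi$; it follows that $\Pi$ pulls the conformal class of $\Ein^{p,q}$ back to the class of the restriction of $\b$ to $\S^p\times\S^q\subset\R^{p+1}\times\R^{q+1}$. But since the round metric $g_{\S^p}$ is the restriction to $\S^p$ of the Euclidean metric of $\R^{p+1}$ (and likewise for $g_{\S^q}$), and $\b$ is minus the first Euclidean metric plus the second, this restriction equals $-g_{\S^p}+g_{\S^q}$. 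A convenient way to package the computation is to parametrise $\mathcal N$ by $(r,u,w)\mapsto(ru,rw)$ on $\R_{>0}\times\S^p\times\S^q$: one gets $\b\vert_{\mathcal N}=r^2(-g_{\S^p}+g_{\S^q})$, a degenerate tensor whose null direction is $\partial_r=\ker d\pi$, so the conformal structure induced on $\mathcal N/\R_{>0}\cong\S^p\times\S^q$ is $[-g_{\S^p}+g_{\S^q}]$. Finally, the deck involution $(a,b)\mapsto(-a,-b)$ is an isometry of $-g_{\S^p}+g_{\S^q}$, so this class descends to the quotient, and $\Pi$ induces the claimed conformal equivalence. The only step needing real care is the scaling identity $\sigma'^*\b=\lambda^2\sigma^*\b$, where the isotropy of the cone is essential; everything else is bookkeeping about the double cover together with the elementary metric computation on a product of round spheres.
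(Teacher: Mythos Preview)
Your proof is correct and follows essentially the same approach as the paper: normalize isotropic vectors to land in $\S^p\times\S^q$, observe the resulting map is a double cover with deck group $\pm\operatorname{Id}$, and identify the induced conformal class. You are in fact more thorough than the paper, which omits the verification of the conformal structure that you carry out carefully via the section argument and the scaling identity $\sigma'^*\b=\lambda^2\sigma^*\b$.
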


\begin{proof}
    Let $\R^{p+1,q+1}=H_{p+1}\oplus H_{q+1}$ be an orthogonal splitting into a negative definite and a positive definite subspace, respectively. Let $S_p$ and $S_q$ denote the unit spheres of $H_{p+1}$ and $H_{q+1}$, respectively. Then, a line $x\in\Ein^{p,q}$ intersects $S_p+S_q$ at exactly two antipodal points $v_x$ and $-v_x$. The map $f: \Ein^{p,q}\to S_p+S_q/_{\pm \operatorname{Id}}$ given by $f(x)=\pm v_x$ gives the desired identification.
\end{proof}

\subsubsection{Photons and lightcones} The image of a maximal lightlike geodesic curve in the Einstein universe is called a \emph{photon}. In the projective model, a photon is exactly the projectivization $\P(\Pi)$ of a totally degenerate two-plane $\Pi\subset\R^{p+1,q+1}$. Equivalently, photons are the only projective lines of $\P(\R^{p+1,q+1})$ that are entirely contained in $\Ein^{p,q}$. A photon is therefore diffeomorphic to a circle, and carries naturally a projective structure induced from that of real projective space. 

Given a point $x\in \Ein^{p,q}$, the \emph{lightcone} of $x$, denoted $C(x)$, is the union of all photons containing $x$. It is precisely the intersection of the projective hyperplane $x^\perp$ with $\Ein^{p,q}$. It is a closed singular hypersurface of $\Ein^{p,q}$, that is smooth outside $x$. The lightcone of a point gives the following analog of the stereographic projection from the punctured sphere to the Euclidean space.

\begin{prop}[Stereographic projection]  
    \label{prop_projection_stereo}
    Let $x\in \Ein^{p,q}$. Then the complement of $C(x)$ in $\Ein^{p,q}$ is conformally diffeomorphic to $\R^{p,q}$.
\end{prop}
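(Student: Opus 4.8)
The plan is to realize both sides inside projective space and write down an explicit affine chart. Fix $x \in \Ein^{p,q}$ and choose a lightlike vector $u \in \R^{p+1,q+1}$ with $x = [u]$. Since $\b$ is nondegenerate, I can pick a lightlike vector $u' $ with $\b(u,u') = 1$ (complete $u$ to a hyperbolic pair), and let $W = \{u,u'\}^{\perp}$, a nondegenerate subspace of signature $(p,q)$. Then $\R^{p+1,q+1} = \R u \oplus \R u' \oplus W$, and every vector writes uniquely as $s\,u + t\,u' + w$ with $w \in W$; the quadratic form becomes $\b(su + tu' + w, \cdot) = 2st + \b_W(w,w)$ where $\b_W$ is the restriction, of signature $(p,q)$. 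The isotropic cone is $\{2st + \b_W(w,w) = 0\}$, and the hyperplane $x^{\perp} = u^{\perp}$ is exactly $\{t = 0\}$ in these coordinates; hence $C(x) = \Ein^{p,q} \cap \P(u^\perp) = \{[su + w] : \b_W(w,w) = 0\}$.

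Next I describe the complement. A point of $\Ein^{p,q} \setminus C(x)$ has $t \neq 0$, so after rescaling it has a unique representative with $t = 1$, namely $v(w) := -\tfrac12 \b_W(w,w)\, u + u' + w$ for $w \in W$, and the isotropy condition $2st + \b_W(w,w)=0$ with $t=1$ forces exactly $s = -\tfrac12\b_W(w,w)$. Thus $w \mapsto [v(w)]$ is a bijection from $W \cong \R^{p,q}$ onto $\Ein^{p,q} \setminus C(x)$; it is a smooth embedding because it is an affine section composed with the projectivization, which is a local diffeomorphism away from $0$. It remains to check this parametrization is conformal for $[\b]$. Here I would use the standard fact (which can be cited from the literature on $\Ein^{p,q}$, e.g.\ the construction of the conformal class in the excerpt preceding Proposition~\ref{prop_rev_double}) that the conformal metric on $\Ein^{p,q}$ in an affine chart $\{t = 1\}$ is obtained by restricting $\b$ to the cone and then pulling back along the chart: concretely, pulling $\b$ back along $w \mapsto v(w)$ gives $d(v(w))$ paired with itself, and since $dv = -\b_W(w, dw)\, u + dw$ one computes $\b(dv, dv) = \b_W(dw, dw)$ because the $u$-direction is isotropic and orthogonal to $W$ and to itself. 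So the pullback metric is literally $\b_W = \b|_{\R^{p,q}}$, which is the flat Minkowski metric; in particular the parametrization is an isometry onto $\R^{p,q}$ with a choice of representative metric, hence a fortiori a conformal diffeomorphism.

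The only genuinely delicate point is the last one: making precise what "the conformal class on $\Ein^{p,q}$" is in an affine chart and verifying the pullback computation. The conformal class is defined up to scale by choosing a nonzero local section $\sigma$ of the tautological bundle over $\Ein^{p,q}$ and setting $g_\sigma = \sigma^*\b$ restricted to $T\Ein^{p,q}$; different sections differ by a positive function squared, giving a well-defined conformal class. Taking $\sigma = v(w)$ on the chart $\{t=1\}$ is a legitimate choice of section, and the computation above shows $g_\sigma = \b_W$. One should also note the parametrization sends lightlike directions to lightlike directions, consistent with the statement that lightlike geodesics of $\R^{p,q}$ are the lightlike straight lines. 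This finishes the proof; everything else is bookkeeping about the hyperbolic-pair decomposition, which I would not spell out in full detail.
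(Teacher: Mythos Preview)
Your proof is correct and follows essentially the same approach as the paper: choose a hyperbolic pair $(u,u')$ with $x=[u]$, decompose $\R^{p+1,q+1}=\R u\oplus\R u'\oplus W$, and parametrize $\Ein^{p,q}\setminus C(x)$ by the affine section $w\mapsto [u'+w-\tfrac12\b_W(w,w)\,u]$ over $W\simeq\R^{p,q}$. You go further than the paper by actually computing the pullback $\b(dv,dv)=\b_W(dw,dw)$ and by spelling out why this gives a representative of the conformal class via a section of the tautological bundle, which the paper leaves implicit.
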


\begin{proof}
    Let $v\in \R^{p+1,q+1}$ such that $x=[v]$ and let $w\in \R^{p+1,q+1}$ be an isotropic vector such that $\b(v,w)=1$. For every $h\in H=\Span(v,w)^\perp\simeq\R^{p,q}$, we define 
    $\varphi(h)=[w+h-\b(h,h)v]$.
    Then the map $\varphi:H\to \Ein^{p,q}$ is a conformal embedding whose image is 
    $$\{[u]\in\P(\R^{p+1,q+1})\,\vert\,\b(u,v)\neq 0\}=\Ein^{p,q}\setminus C(x).$$
\end{proof}

\subsubsection{Conformal group of the Einstein universe} The group $\O(p+1,q+1)$ is the group of linear isometries of $\R^{p+1,q+1}$. Its action on lightlike lines yields a faithful conformal action of $\PO(p+1,q+1)=\O(p+1,q+1)/\{\pm\operatorname{Id}\}$ on $\Ein^{p,q}$. This action is transitive and the Einstein universe can be identified with the homogeneous space $\PO(p+1,q+1)/P$, where $P$ is the stabilizer of an isotropic line in $\R^{p+1,q+1}$. It turns out that conformal diffeomorphisms of the Einstein universe are all elements of $\PO(p+1,q+1)$. More generally, any local conformal transformation of $\Ein^{p,q}$ extends to an element of that group, see \cite{france_liouville}.

\begin{thm}[Liouville]
    Let $U,V$ be nonempty connected open subsets of $\Ein^{p,q}$, with $p+q\geq 3$. Then any conformal map $\varphi:U\to V$ extends uniquely to a map $\hat{\varphi}\in \PO(p+1,q+1)$.
\end{thm}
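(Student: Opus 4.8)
The plan is to reduce the assertion to a local statement in $\R^{p,q}$ and then to prove the \emph{flat Liouville theorem}. First I would observe that it is enough to show that $\varphi$ agrees, near every point of $U$, with some element of $\PO(p+1,q+1)$: one then covers $U$ by connected open sets $U_i$ on which $\varphi|_{U_i}=g_i|_{U_i}$ with $g_i\in\PO(p+1,q+1)$; on a nonempty overlap the element $g_ig_j^{-1}$ fixes an open subset of $\Ein^{p,q}$ pointwise and is therefore trivial -- its lift to $\O(p+1,q+1)$ acts by scalars, since when $p+q\geq 3$ no open subset of the nondegenerate quadric $\{\b(v,v)=0\}$ lies in a projective hyperplane -- so $g_i=g_j$, and by connectedness of $U$ all the $g_i$ coincide with a single $\hat\varphi$ extending $\varphi$; the same remark yields uniqueness. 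Then, trivialising neighbourhoods as open subsets of $\R^{p,q}$ via the stereographic projections of Proposition~\ref{prop_projection_stereo}, the problem becomes: a conformal diffeomorphism between connected open subsets of $\R^{p,q}$, $p+q\geq 3$, is locally the restriction of an element of $\PO(p+1,q+1)$, acting on $\R^{p,q}=\Ein^{p,q}\setminus C(x)$ by Möbius transformations, i.e. compositions of similarities and inversions.

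To prove this flat statement I would write $\varphi^{*}\b=e^{2\lambda}\b$, so that $\varphi$ is an isometry $(\Omega,e^{2\lambda}\b)\to(\Omega',\b)$ between flat manifolds, and feed this into the conformal transformation law for the Ricci tensor; this is precisely where the hypothesis $p+q\geq 3$ is used, namely to divide by the coefficient $p+q-2$. The conclusion -- after also exploiting the trace of that identity -- is that $u:=e^{-\lambda}$ is an affine-quadratic polynomial $u=\alpha+\langle\beta,x\rangle+\gamma\,\b(x,x)$ subject to $|\beta|^{2}=4\alpha\gamma$; equivalently $e^{2\lambda}=u^{-2}$ is exactly the conformal factor of a Möbius transformation of $\R^{p,q}$ (a similarity when $\gamma=0$, and the conjugate of one by an inversion otherwise). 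In particular $\lambda$, hence the metric $e^{2\lambda}\b$, is pinned down by its value and differential at any chosen $x_{0}\in\Omega$. I would then invoke the elementary fact that $\PO(p+1,q+1)$, being generated by translations, linear conformal maps, dilations and inversions, acts transitively on the admissible $2$-jets of conformal maps at a point of $\R^{p,q}$, and pick $\hat\psi\in\PO(p+1,q+1)$ with the same $2$-jet as $\varphi$ at $x_{0}$. Then $\hat\psi$ has the same conformal factor $e^{2\lambda}$ as $\varphi$, so $\varphi$ and $\hat\psi$ are isometries $(\Omega,e^{2\lambda}\b)\to(\R^{p,q},\b)$ sharing a $1$-jet at $x_{0}$; by the standard rigidity of isometries they agree on the connected set $\Omega$, which gives the desired local (in fact global on $\Omega$) statement.

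The main obstacle is the middle step: carrying out the conformal change computation correctly and identifying the solution space of the resulting overdetermined system with the Möbius conformal factors -- this is where the rigidity genuinely originates, and where $p+q\geq 3$ is indispensable (in dimension $2$ the relevant trace-free equation is vacuous and the theorem is false). A signature-uniform alternative that bypasses this computation is Cartan-geometric: every conformal manifold of dimension $\geq 3$ carries a canonical normal parabolic Cartan geometry modelled on $(\PO(p+1,q+1),P)$ with $P$ the stabiliser of an isotropic line, for which $\Ein^{p,q}$ is the flat model; a conformal map then lifts to a local automorphism of the flat Cartan geometry on $\Ein^{p,q}$, i.e. a local diffeomorphism of $\PO(p+1,q+1)$ preserving its Maurer--Cartan form, which must be the restriction of a left translation and therefore descends to the required $\hat\varphi\in\PO(p+1,q+1)$; see \cite{france_liouville} for a complete treatment.
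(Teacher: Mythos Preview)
The paper does not prove this statement at all; it is quoted as a classical result with a reference to \cite{france_liouville} and no argument is given. So there is nothing to compare your proposal against.

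That said, your sketch is a correct outline of the standard analytic proof. The reduction to a local statement via the observation that an element of $\PO(p+1,q+1)$ fixing an open set of $\Ein^{p,q}$ is trivial is fine (an open piece of the nondegenerate quadric spans $\R^{p+1,q+1}$). The core computation you describe is accurate: writing $u=e^{-\lambda}$, flatness of both source and target forces $\nabla^2 u=\frac{\Delta u}{p+q}\,\b$, and commuting third derivatives in the flat connection shows $\Delta u$ is constant, so $u=\alpha+\langle\beta,x\rangle+\gamma\,\b(x,x)$; feeding back gives $\b(\beta,\beta)=4\alpha\gamma$, which is exactly the condition singling out conformal factors of M\"obius maps. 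One small remark: in indefinite signature $|\beta|^2$ should be read as $\b(\beta,\beta)$ throughout, and the claim that $\PO(p+1,q+1)$ acts transitively on $2$-jets of local conformal maps deserves a word (it follows from the Iwasawa/Bruhat description of the stabiliser $P$, or from a direct count using translations, linear similarities and inversions). The Cartan-geometric alternative you mention at the end is closer in spirit to the cited reference \cite{france_liouville}, and has the advantage of treating all signatures uniformly without the curvature computation.
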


\subsection{Conformally flat geometry}

A conformal manifold is said to be \emph{conformally flat} if it is locally conformally equivalent to the Minkowski space $\R^{p,q}$. Since $\Ein^{p,q}$ is homogeneous and contains a conformal copy of $\R^{p,q}$, it is a conformally flat manifold. In fact, the Einstein universe is a model for conformally flat geometry, meaning that conformally flat manifolds are in one-to-one correspondence with geometric manifolds modeled on the homogeneous space $(\PO(p+1,q+1),\Ein^{p,q})$ in the sense of Ehresmann--Thurston. For a review of $(G,X)$-structures, we refer to \cite{Thurston}. As a consequence, a conformally flat structure on a manifold $M$ is determined by the data of a pair of maps
$$\dev:\Mtilde\to \Ein^{p,q} \text{ and }\hol:\pi_1(M)\to \PO(p+1,q+1),$$
where $\dev$ is a conformal map called the \emph{developing map}, and $\hol$ is a group homomorphism called the \emph{holonomy morphism}, that makes the developing map equivariant with respect to the $\pi_1(M)$-action on $\Mtilde$.

\subsection{Causality on spacetimes} We now turn to the Lorentzian signature, that is when $p=1$. Lorentzian signature will be denoted $(1,n)$, where $n+1$ refers to the manifold's dimension. A tangent vector in a conformal Lorentzian manifold $(M,[g])$ is \emph{causal} if it is either timelike or lightlike. A \emph{time orientation} on $(M,[g])$ is given by a continuous timelike vector field. A conformal Lorentzian manifold with a time orientation is called a \emph{conformal spacetime}. For a given time orientation $X$, causal vectors $v$ satisfying $g(v,X)<0$ (resp. $g(v,X)>0$) are called \emph{future causal vectors} (resp. \emph{past causal vectors}). This infinitesimal language can be translated to curves: a smooth causal curve $\gamma:I\to M$ is called \emph{future oriented} if its tangent vector $\gamma^\prime(t)$ is future for all $t\in I$.

\subsection{Causal relations} Given two points $x,y$ of a conformal spacetime $(M,[g])$, we say that $y$ is in the causal future of $x$ (resp. chronal future of $x$), and we write $x\leq y$ (resp. $x\ll y$), if there exists a future causal curve (resp. future timelike curve) in $M$ starting at $x$ and ending at $y$. The causal/chronal future and causal/chronal past of a point $x\in M$ will be denoted as follows
$$J^+(x)=\{y\in M\,\vert\,x\leq y\},\,\,\,\,\,\,\,\,\,\,\,\,\,\,\,\,\,\,I^+(x)=\{y\in M\,\vert\,x\ll y\},$$
$$J^-(x)=\{y\in M\,\vert\,y\leq x\},\,\,\,\,\,\,\,\,\,\,\,\,\,\,\,\,\,\,I^-(x)=\{y\in M\,\vert\,y\ll x\}.$$
For $x\leq y$, the \emph{causal diamond} (resp. \emph{chronal diamond}) generated by $x$ and $y$ is $J(x,y)=J^+(x)\cap J^-(y)$ (resp. $I(x,y)=I^+(x)\cap I^-(y)$). Finally, a domain $\Omega$ of a conformal spacetime $(M,[g])$ is called 
\begin{itemize}
    \item \emph{causally convex} if $J(x,y)\subset \Omega$ for every $x\leq y\in \Omega$.
    \item \emph{future complete} (resp. \emph{past complete}) if $J^+(x)\subset\Omega$ (resp.  $J^-(x)\subset\Omega$) for every $x\in \Omega$.
\end{itemize}

\section{Projective parameters of lightlike geodesics}
\label{section_para_proj}

Let $(M,[g])$ be an $n$-dimensional pseudo-Riemannian conformal manifold, with $n\geq 3$. Let $g$ be a metric in the conformal class of $M$, and let $\xi(t)$ be a lightlike geodesic of $(M,g)$, that is, a solution to $\nabla_{\xi^\prime}\xi^\prime=0$, where $\nabla$ is the Levi-Civita connection of $g$. A \emph{projective parameter} along $\xi(t)$ is a solution $u(t)$ of the differential equation 
\begin{equation}
\label{equation_parametrage_projectif}
    Su(t)=\frac{2}{n-2}\Ric_g(\xi^\prime(t),\xi^\prime(t)),\footnote{Note that our sign convention is different from \cite{markowitz_1981}. The sign choice in Markowitz's paper is due to an error in the proof of Proposition 2.10 of that paper.}
\end{equation}
where $Sf=\left(\frac{f^\prime}{f^{\prime\prime}}\right)^\prime-\frac{1}{2}\left(\frac{f^\prime}{f^{\prime\prime}}\right)^2$ denotes the \emph{Schwarzian derivative} of the function $f$ and $\Ric_g$ is the Ricci tensor of the metric $g$. We refer to \cite{Ovsienko_Tabachnikov_2004} for basic properties of the Schwarzian derivative. Let $g_1,g_2\in [g]$, and let $\xi_1:J_1\to M$ and $\xi_2:J_2\to M$ be two maximal affine geodesics for $g_1$ and $g_2$, respectively, such that $\xi_1^\prime(0)=\xi_2^\prime(0)\neq0$. These geodesics admit projective parameters $u_1$ and $u_2$, defined on neighborhoods of $t=0$. Choose such parameters so that $u_1(0)=u_2(0)=0$ and the first and second derivatives of $\gamma_1=\xi_1\circ u_1^{-1}$ and $\gamma_2=\xi_2\circ u_2^{-1}$ coincide at $t=0$. Then, the projectively parametrized lightlike geodesics $\gamma_1$ and $\gamma_2$ coincide on a neighborhood of the origin, see \cite{markowitz_1981}.

\begin{definition}
    \label{def_PPL}
    Let $(M,[g])$ be a conformal manifold, and $I\subset\R$ an interval. A curve $\gamma:I\to M$ is called a \emph{projectively parametrized lightlike geodesic} if for some (hence any) metric $g$ in the conformal class of $M$, there exists an affine geodesic $\xi:J\to M$ for $g$, and a projective parameter $u:J\to I$ along $\xi$, such that $\gamma(t)=\xi(u^{-1}(t))$ for all $t\in I$. 
\end{definition}

We denote by $\PPL(I,M)$ the set of projectively parametrized lightlike geodesics from $I$ to $M$. Given $\gamma\in\PPL(I,M)$, then for any homography $h:J\to I$, the curve $\gamma\circ h$ is again projectively parametrized. Conversely, if $\gamma$ is non-constant and $\gamma\circ h$ is projectively parametrized for some smooth parameter $h$, then $h$ is a homography. This explains the terminology \guillemotleft~projective parameter~\guillemotright.

\begin{examples}
    \label{exemple_para_proj}
    $\bullet$ Let $(M,g)$ be a pseudo-Riemannian manifold such that $\Ric_g(v,v)=0$ for every lightlike vector $v\in TM$. 
    Then any affine lightlike geodesic $\gamma:I\to (M,g)$ is a projectively parametrized lightlike geodesic of $(M,[g])$.

    $\bullet$ Let $M=(\S^p\times \S^q,[-g_{\S^p}+g_{\S^q}]])$ be the double cover of $\Ein^{p,q}$. For $g=g_{\S^p}+g_{\S^q}$, and $\xi=(\xi^p,\xi^q)$ is a product of unit-speed geodesics, Equation~(\ref{equation_parametrage_projectif}) becomes $Su(t)=2$, which is solved by $u=\tan$. Thus $\gamma=\xi\circ \tan^{-1}:\R\to M$ is projectively parametrized, and its endpoints are antipodal.  
\end{examples}

\begin{rmk}
    \label{rmk_dim_2}
    In dimension $n=2$, that is, for Lorentzian surfaces, projectively parametrized lightlike geodesics are not well defined, since Equation~(\ref{equation_parametrage_projectif}) is not well defined in that case. For a $(\Ein^{1,1},\PO(2,2))$-surface $\Sigma$, one can make sense of projectively parametrized lightlike geodesics as follows. A lightlike curve $\gamma:I\to \Sigma$ is \emph{projectively parametrized} if for some suitable chart $\varphi:U\to\R^{1,1}$, the curve $\varphi(\gamma)$ is a geodesic for the flat metric of $\R^{1,1}$. With this definition, all properties about projectively parametrized lightlike geodesics -- and hence about Markowitz pseudodistance -- remain true, if one replaces conformal transformations by $(\Ein^{1,1},\PO(2,2))$-maps.
\end{rmk}

\subsection{Geometric structure on lightlike geodesics} Let us explain how to extend the notion of projectively parametrized lightlike geodesics to curves defined on an interval of $\RPunTilde$. Let $(M,[g])$ be a conformal manifold and let $J$ be an open interval of $\RPunTilde$. A curve $\gamma:J\to M$ is said to be projectively parametrized if for every interval $I\subset \R$, and every projective immersion $h:I\to J$, the curve $\gamma\circ h:I\to M$ is projectively parametrized in the previous sense. 

\begin{prop}\label{prop_str_proj_sur_les_geod_de_lum}
    Let $(M,[g])$ be a conformal manifold and let $v\in TM$ be lightlike. Then there exists an open interval $J\subset \RPunTilde$ and an inextendible projectively parametrized lightlike geodesic $\gamma:J\to M$ that is tangent to $v$. The interval $J$ and the curve $\gamma$ are unique up to a projective reparametrization. Also, the curve $\gamma$ is determined by its 2-jet at any of its point.
\end{prop}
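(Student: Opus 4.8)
The plan is to build $\gamma$ by reparametrizing a maximal affine geodesic with a suitable solution of the Schwarzian equation~\eqref{equation_parametrage_projectif}, and then to deduce inextendibility and uniqueness from local comparison arguments. Concretely, I would fix a metric $g\in[g]$ and assume $v\neq 0$ (the case $v=0$ being handled by the constant map on $\RPunTilde$). Let $\xi:(a,b)\to M$ be the maximal affine lightlike $g$-geodesic with $\xi(0)$ the basepoint of $v$ and $\xi'(0)=v$, and set $q(t):=\tfrac{2}{n-2}\Ric_g(\xi'(t),\xi'(t))$, which is smooth on $(a,b)$. The crucial point is to linearize: Hill's equation $y''+\tfrac12 q\,y=0$ is linear with smooth coefficients, so all its solutions are defined on the whole of $(a,b)$; for a basis $(y_1,y_2)$ the Wronskian is a nonzero constant, so $(y_1(t),y_2(t))\neq(0,0)$ for every $t$, the map $t\mapsto[y_1(t):y_2(t)]\in\RP^1$ is a well-defined immersion, and $S(y_1/y_2)=q$ by the standard properties of the Schwarzian derivative \cite{Ovsienko_Tabachnikov_2004}. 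Since $(a,b)$ is simply connected, this lifts to an immersion $\Psi:(a,b)\to\RPunTilde$, hence a diffeomorphism onto an open interval $J:=\Psi((a,b))$; after post-composing with a projective transformation of $\RPunTilde$ I may take $\Psi(0)=0$ and $\Psi'(0)=1$. Then $\gamma:=\xi\circ\Psi^{-1}:J\to M$ satisfies $\gamma(0)=\xi(0)$ and $\gamma'(0)=\xi'(0)/\Psi'(0)=v$. That $\gamma$ is projectively parametrized follows because, given any interval $I\subset\R$ and projective immersion $h:I\to J$, the map $u:=h^{-1}\circ\Psi$ (on $\Psi^{-1}(h(I))$) is a bijection onto $I$ with $u^{-1}=\Psi^{-1}\circ h$ and, by the cocycle identity for the Schwarzian together with $Sh^{-1}=0$, satisfies $Su=S\Psi=q$; thus $u$ is a projective parameter along $\xi$ and $\gamma\circ h=\xi|_{\Psi^{-1}(h(I))}\circ u^{-1}\in\PPL(I,M)$.

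For inextendibility, suppose $\gamma$ extended to $\bar\gamma\in\PPL(\bar J,M)$ with $J\subsetneq\bar J$, pick an endpoint $b_*$ of $J$ interior to $\bar J$, and choose a projective immersion $\lambda:(c,d)\to\bar J$ whose image $V$ is a neighborhood of $b_*$ meeting $J$. Then $\bar\gamma\circ\lambda\in\PPL((c,d),M)$, so $\bar\gamma\circ\lambda=\eta\circ p^{-1}$ for a maximal affine $g$-geodesic $\eta:K\to M$ and a projective parameter $p:K\to(c,d)$, which is a bijection. On the nonempty open set $\lambda^{-1}(V\cap J)$ one has $\bar\gamma\circ\lambda=\gamma\circ\lambda=\xi\circ\Psi^{-1}\circ\lambda$; comparing with $\eta\circ p^{-1}$ and using that $\xi$ is locally injective, the change of parameter relating the affine geodesics $\eta$ and $\xi$ must be affine (a nonconstant affine geodesic stays an affine geodesic only under an affine reparametrization), so by uniqueness of geodesics $\eta=\xi\circ r$ on all of $K=r^{-1}((a,b))$ for a single affine $r$, and then $p=\lambda^{-1}\circ\Psi\circ r$ on the subinterval $W:=(\Psi\circ r)^{-1}(V\cap J)$ of $K$. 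Since $b_*$ is an endpoint of $J=(\Psi\circ r)(K)$, $W$ reaches an endpoint $s_*$ of $K$, and $(\Psi\circ r)(s)\to b_*$ as $s\to s_*$ inside $W$; hence $p(s)\to\lambda^{-1}(b_*)$, which is an interior point of $(c,d)$ because $b_*$ is interior to $V$. But $p$ is a monotone bijection onto $(c,d)$, so $p(s)$ tends to an endpoint of $(c,d)$ as $s\to s_*$, a contradiction. Hence $\gamma$ is inextendible.

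For uniqueness, let $\gamma'\in\PPL(J',M)$ be inextendible and tangent to $v$. By the local existence and uniqueness of projectively parametrized lightlike geodesics recalled before the Definition (see \cite{markowitz_1981}), two such geodesics tangent to a common nonzero lightlike vector agree near the relevant parameters after a projective reparametrization; so $\gamma'=\gamma\circ\theta$ near the base point of $\gamma'$ for some projective immersion $\theta$, which extends uniquely to a projective diffeomorphism $\tilde\theta$ of $\RPunTilde$. On the interval $\tilde\theta^{-1}(J)\cap J'$ the set where $\gamma'=\gamma\circ\tilde\theta$ is nonempty and closed, and it is also open since agreement on an open subinterval forces the full jets to match at an endpoint, after which the local uniqueness applies again; thus $\gamma'=\gamma\circ\tilde\theta$ on $\tilde\theta^{-1}(J)\cap J'$. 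Were this a proper subinterval of $J'$, then at an endpoint $t_2\in J'$ one would have $\tilde\theta(t_2)\in\partial J$, and $\gamma'\circ\tilde\theta^{-1}$ — projectively parametrized near $\tilde\theta(t_2)$ — would extend $\gamma$ past $\partial J$, contradicting inextendibility of $\gamma$; so $\gamma'=\gamma\circ\tilde\theta$ on $J'$ with $\tilde\theta(J')\subset J$. And if $\tilde\theta(J')\subsetneq J$, then $\gamma\circ\tilde\theta$ on the strictly larger interval $\tilde\theta^{-1}(J)$ would extend $\gamma'$, contradicting its inextendibility; so $\tilde\theta:J'\to J$ is a projective diffeomorphism. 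This is the claimed uniqueness, and in particular shows $J$ and $\gamma$ to be independent, up to projective reparametrization, of the choices of $g$, of the parametrization of $\xi$, and of the lift $\Psi$.

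I expect the inextendibility step to be the main obstacle: one must rule out a new boundary point $b_*$ of $J$ by showing that it would be at once an interior point of a projective chart of $\RPunTilde$ and a boundary value of the parameter range of the underlying maximal affine geodesic, which is impossible. Making this rigorous forces one to track carefully how $\bar\gamma$ near $b_*$ is expressed through $\xi$ and $\Psi$, using the rigidity of affine geodesics under reparametrization and of solutions of the Schwarzian equation; the same mechanism reappears in the uniqueness argument. The rest — the linearization trick, the elementary facts about the Schwarzian, and the gluing of local descriptions of $\PPL$ curves — is routine.
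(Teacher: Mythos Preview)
Your proof is correct and follows essentially the same approach as the paper: you endow the affine-parameter domain $(a,b)$ with a projective structure via solutions of the Schwarzian equation and take $\Psi$ to be its developing map into $\RPunTilde$, which is exactly what the paper does in the language of $(\RP^1,\PSL(2,\R))$-structures. The only difference is presentational: you make the linearization through Hill's equation explicit, whereas the paper packages the same construction as an atlas of local projective parameters and invokes a developing map. Your treatment is also more complete, since the paper's proof only constructs $\gamma$ and leaves inextendibility and uniqueness implicit in the uniqueness of the developing map up to $\PSL(2,\R)$, while you spell out both via local comparison arguments; these added arguments are sound, though the inextendibility step could be streamlined by noting directly that any affine $g$-geodesic underlying $\bar\gamma$ near $b_*$ must be an affine reparametrization of the maximal $\xi$, so the associated projective parameter already lies in $J=\Psi((a,b))$.
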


\begin{proof}
    Let $I\subset \R$ and $\xi:I\to M$ be an inextendible lightlike geodesic tangent to $v$. One may construct a $(\RP^1,\PSL(2,\R))$-structure on $I$ as follows. For every $t\in I$, let $u_t:I_x\to \R$ be a smooth immersion defined on a subinterval $I_x\subset I$ such that $\xi\circ u_t^{-1}$ is projectively parametrized. Then $\{u_t\}$ is an atlas on $I$ whose transition maps are homographies, i.e. elements of $\PSL(2,\R)$. Let $u:I\to \RPunTilde$ be a developing map for this geometric structure, and let $J\subset\RPunTilde$ be its image. Then the curve $\xi\circ u^{-1}$ is projectively parametrized. The fact that $\gamma$ is determined by its 2-jet follows from the discussion above Definition \ref{def_PPL}.
\end{proof}

The \emph{projective structure} of an unparametrized lightlike geodesic $\gamma$ is the projective structure of an interval $J\subset\RPunTilde$ such that $\gamma$ can be projectively parametrized by a curve $J\to M$.

\subsection{Convergence of lightlike geodesics}\label{Section_topologies_PPL} There are two natural topologies on the space $\PPL(I,M)$: the uniform topology and the smooth topology, the latter being stronger than the former. The aim of this section is to show that, in fact, these two topologies coincide. To that end, we will prove the following.

\begin{prop}
    \label{prop_topologie_PPL}
    Let $(g_k)$ be a sequence of pseudo-Riemannian metrics on $M$ converging to a metric $g$ in the $C^m$ topology, for $m\geq 2$. Let $(\gamma_k:I\to M)$ be a sequence of projectively parametrized lightlike geodesics for $(g_k)$. If $(\gamma_k)$ converges uniformly on compact subsets to a curve $\gamma:I\to M$, then $\gamma$ is a projectively parametrized lightlike geodesic for $g$, and $(\gamma_k)$ converges to $\gamma$ in the $C^{m+1}$ topology.
\end{prop}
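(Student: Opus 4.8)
The plan is to reduce everything to the defining ODE~\eqref{equation_parametrage_projectif} and to an application of continuous dependence of solutions of ODEs on parameters and initial conditions. First I would fix a point $t_0\in I$ at which the limit curve $\gamma$ is, a priori, merely continuous, and work on a small compact subinterval $K\ni t_0$ and in a coordinate chart $U$ around $\gamma(t_0)$; by uniform convergence on compacta, $\gamma_k(K)\subset U$ for $k$ large. The first step is to recover, for each $k$, the data of an affine $g_k$-geodesic $\xi_k$ together with a projective parameter $u_k$ realizing $\gamma_k=\xi_k\circ u_k^{-1}$, normalized (say at $t_0$) so that $u_k(t_0)=0$, $u_k'(t_0)=1$, $u_k''(t_0)=0$, and with the $1$-jet of $\xi_k$ at its base point prescribed by the $1$-jet of $\gamma_k$ at $t_0$. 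The point is that this whole package is governed by an ODE system with coefficients depending $C^{m-2}$-continuously (at worst) on the Christoffel symbols of $g_k$ and on $\Ric_{g_k}$ — both of which converge in $C^{m-2}$, hence in $C^0$, once $g_k\to g$ in $C^m$ with $m\ge 2$. Concretely, the geodesic equation for $\xi_k$ and the Schwarzian equation $Su_k = \tfrac{2}{n-2}\Ric_{g_k}(\xi_k',\xi_k')$ together form a fixed smooth ODE in the unknowns $(\xi_k,\xi_k',u_k,u_k',u_k'')$, with the metric entering only through coefficients that converge uniformly on $K$.

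The second step is to pass to the limit. Since $\gamma_k\to\gamma$ uniformly on $K$ and each $\gamma_k$ solves the second-order ODE obtained by eliminating the reparametrization (equivalently: $\gamma_k$ is a solution of the single ODE saying "I am a projectively parametrized $g_k$-lightlike geodesic", which one writes down explicitly in the chart as $\gamma_k'' = \Phi_{g_k}(\gamma_k,\gamma_k')$ for a smooth vector field $\Phi_{g_k}$ depending continuously on the $2$-jet of $g_k$), the uniform limit $\gamma$ of solutions with convergent initial $2$-jets is itself a solution of the limiting equation $\gamma'' = \Phi_g(\gamma,\gamma')$. Here one must first argue that the $1$-jets $(\gamma_k(t_0),\gamma_k'(t_0))$ converge: this follows because uniformly convergent solutions of a fixed (limiting) ODE have convergent derivatives — one bootstraps from $C^0$ to $C^1$ using the integral form of the equation and the local boundedness of $\Phi_{g_k}$ near the (compact) limiting trajectory. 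Once the initial $2$-jet converges and the coefficients converge in $C^0$, the standard continuous-dependence theorem for ODEs gives $\gamma_k\to\gamma$ in $C^2$ on $K$; and since the coefficients in fact converge in $C^{m-2}$, and a $C^2$ solution of an ODE with $C^{m-2}$ coefficients is automatically $C^m$ with bounds, differentiating the equation $m-1$ times and reapplying continuous dependence upgrades the convergence to $C^{m+1}$ on $K$. That $\gamma$ is projectively parametrized for $g$ is then immediate: it solves the defining equation $\gamma''=\Phi_g(\gamma,\gamma')$ on $K$, which by definition means it is (the restriction of) an affine $g$-geodesic composed with a projective parameter. Finally, since $t_0\in I$ was arbitrary and the conclusions are local, they hold on all of $I$ (with convergence uniform on compacta at every derivative order up to $m+1$), which gives the statement.

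The main obstacle I anticipate is the first step — writing the condition "$\gamma$ is projectively parametrized" as a single, honest, coordinate ODE $\gamma''=\Phi_g(\gamma,\gamma')$ with $\Phi_g$ depending \emph{continuously and locally} on $g$ — and checking the number of derivatives of $g$ that actually enter. The subtlety is that the unparametrized geodesic carries curvature (via $\Ric_g$, i.e.\ two derivatives of $g$), and one passes from the affine parameter to the projective parameter via a third-order (Schwarzian) relation; eliminating the reparametrization to get a \emph{second-order} equation for $\gamma$ itself requires some care, and one must verify that this elimination does not secretly raise the order or destroy the continuous dependence. A clean way to handle this, which I would adopt, is \emph{not} to eliminate the reparametrization but to keep the coupled system for the pair (geodesic, projective parameter) with its prescribed $2$-jet of $u$, apply continuous dependence to that system — whose coefficients manifestly involve only $\Gamma_{g_k}$ and $\Ric_{g_k}$, hence $C^{m-2}$ data — and then read off the convergence of $\gamma_k=\xi_k\circ u_k^{-1}$ by the chain rule. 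The remaining routine points are the bootstrap $C^0\Rightarrow C^1$ for the initial jets, the regularity upgrade $C^2\Rightarrow C^m$ for fixed-coefficient solutions, and the elliptic-regularity-free observation that "$C^{m-2}$ coefficients $\Rightarrow$ $C^{m+1}$ solution" for this explicit ODE; none of these should present genuine difficulty.
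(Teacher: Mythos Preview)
Your overall strategy---reduce to continuous dependence for the coupled ODE governing an affine geodesic together with its projective parameter---is exactly the one the paper uses in its Corollary~\ref{cor_conv_geod_avec_2jet}, and the obstacle you anticipate (that there is no honest second-order equation $\gamma''=\Phi_g(\gamma,\gamma')$ and one must instead carry the extra Schwarzian variable) is real and correctly diagnosed. However, there is a genuine gap at the step you flag as routine: the claim that the $1$-jets $(\gamma_k(t_0),\gamma_k'(t_0))$ converge ``by bootstrapping from $C^0$ to $C^1$ using the integral form of the equation and the local boundedness of $\Phi_{g_k}$ near the limiting trajectory'' is not justified. The right-hand side of the governing system is (at least) quadratic in $\gamma_k'$, so boundedness of $\Phi_{g_k}$ on a neighbourhood of the compact set $\gamma(K)\subset M$ says nothing about boundedness along $(\gamma_k,\gamma_k')$ unless you already know the velocities are bounded; the integral identity $\gamma_k'(t)-\gamma_k'(t_0)=\int_{t_0}^t\Phi(\gamma_k,\gamma_k')$ is therefore circular. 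Uniform convergence of solutions of a second- or third-order ODE simply does not force convergence of derivatives in general, and nothing in your outline rules out $\|\gamma_k'(t_0)\|\to\infty$ (nor the degenerate case $\gamma_k'(t_0)\to 0$ with constant limit, which also breaks your normalization since your coupled system then has degenerate initial data for $\xi_k$).

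This is precisely the difficulty around which the paper's proof is organized. The paper does \emph{not} attempt a direct bootstrap; instead it precomposes each $\gamma_k$ with a homography $h_k$ chosen so that the $2$-jets of $\alpha_k=\gamma_k\circ h_k^{-1}$ at $0$ converge with nonzero limiting velocity (using the full $\PSL(2,\R)$ freedom in the projective parameter), and only then applies the ODE continuous-dependence lemma to the $\alpha_k$. The remaining substantial work is to control the possibly divergent sequence $(h_k)\subset\PSL(2,\R)$: via a short dynamical argument on $\RP^1$, exploiting the assumed uniform convergence of $\gamma_k$ and the already-established fact that $\alpha'(0)\neq 0$, the paper shows that the repelling point of $(h_k)$ cannot be $0$, so $(h_k)$ converges on a neighbourhood of $0$ (to a homography or a constant), and hence $\gamma_k=\alpha_k\circ h_k$ converges in $C^{m+1}$ there. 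Your plan is missing this reparametrization-and-$\PSL(2,\R)$-dynamics step, which is where the actual content of the argument lies.
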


Applying the previous proposition to a constant sequence of metrics $(g_k=g)$ yields the equality of the previously mentioned topologies.

\begin{cor}
    \label{cor_topologie_PPL}
    Let $(M,[g])$ be a conformal manifold. Then the uniform topology on $\PPL(I,M)$ coincides with the smooth topology.\qed
\end{cor}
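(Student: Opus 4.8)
The plan is to deduce the corollary from Proposition~\ref{prop_topologie_PPL} by specializing to a constant sequence of metrics, the only genuine content beyond that proposition being a soft argument reducing equality of the two topologies to the agreement of their convergent sequences.

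First I would record the easy inclusion and set up the metric framework. Write $\tau_{\mathrm{sm}}$ and $\tau_{\mathrm{un}}$ for the smooth and the uniform topologies on $\PPL(I,M)$. Smooth convergence on compact subsets implies $C^0$ convergence on compact subsets, so $\tau_{\mathrm{un}}\subseteq\tau_{\mathrm{sm}}$; this is the trivial direction. Both topologies are metrizable: since $I$ is $\sigma$-compact and $M$ carries a Riemannian distance, the uniform-on-compacts topology on the subset $\PPL(I,M)\subset C^0(I,M)$ is induced by a metric, and likewise the standard $C^\infty$ topology on $\PPL(I,M)\subset C^\infty(I,M)$ (uniform convergence of all derivatives on compact subsets, read in local charts) is metrizable. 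In particular both are first countable, so it suffices to compare their convergent sequences.

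Next I would establish the substantial inclusion $\tau_{\mathrm{sm}}\subseteq\tau_{\mathrm{un}}$ via the claim: if $(\gamma_k)$ is a sequence in $\PPL(I,M)$ converging in $\tau_{\mathrm{un}}$ to some $\gamma\in\PPL(I,M)$, then $\gamma_k\to\gamma$ in $\tau_{\mathrm{sm}}$. This is precisely where Proposition~\ref{prop_topologie_PPL} enters, applied to the constant sequence $g_k=g$, which trivially converges to $g$ in the $C^m$ topology for every $m\ge 2$. The hypothesis of the proposition holds because $\gamma_k\to\gamma$ uniformly on compact subsets, and its conclusion gives $\gamma_k\to\gamma$ in the $C^{m+1}$ topology for each $m\ge 2$. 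As $m$ is arbitrary, all derivatives converge uniformly on compact subsets, which is exactly $\tau_{\mathrm{sm}}$-convergence; note that here $\gamma$ lies automatically in $\PPL(I,M)$ since the two topologies are defined on that fixed set, so no additional verification that the limit is projectively parametrized is required at this stage.

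Finally I would close the topological argument. Let $A\subset\PPL(I,M)$ be $\tau_{\mathrm{sm}}$-closed and let $\gamma$ lie in its $\tau_{\mathrm{un}}$-closure. By first countability of $\tau_{\mathrm{un}}$ there is a sequence $\gamma_k\in A$ with $\gamma_k\to\gamma$ in $\tau_{\mathrm{un}}$; by the claim $\gamma_k\to\gamma$ in $\tau_{\mathrm{sm}}$, and since $A$ is $\tau_{\mathrm{sm}}$-closed this forces $\gamma\in A$. Hence $A$ is $\tau_{\mathrm{un}}$-closed, giving $\tau_{\mathrm{sm}}\subseteq\tau_{\mathrm{un}}$ and, together with the first inclusion, the desired equality $\tau_{\mathrm{un}}=\tau_{\mathrm{sm}}$. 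The main obstacle is entirely absorbed into Proposition~\ref{prop_topologie_PPL}; everything else is the routine check that both topologies are metrizable, so that equality of topologies is equivalent to agreement of convergent sequences.
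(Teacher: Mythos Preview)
Your proof is correct and follows the same approach as the paper: apply Proposition~\ref{prop_topologie_PPL} with the constant sequence $g_k=g$ to upgrade uniform convergence to $C^{m+1}$ convergence for every $m\ge 2$, hence to smooth convergence. The paper states this in a single line, whereas you have spelled out the routine sequential/metrizability argument, which is fine.
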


The rest of this section is devoted to the proof of Proposition~\ref{prop_topologie_PPL}. We will make use of the following standard ODE regularity proposition.

\begin{prop}[{\cite[Chap 2, Thm. 3.2]{Hartman}}]\label{prop_equa_diff} Let $(f_k:\Omega\to \R^n)$ be a sequence of functions defined on an open subset $\Omega\subset\R\times \R^n$, such that $(f_k)$ converges uniformly on compact subsets of $\Omega$ to a function $f:\Omega\to \R^n$. Let $(t_k^0,y_k^0)\to (t^0,y^0)\in \Omega$ be a sequence of initial data, and let $y_k:(t_k^-,t_k^+)\to \R^n$ (resp. $y:(t^-,t^+)\to \R^n$) be the maximal solution to the ordinary differential equation
$$y_k(t_k^0)=y_k^0\text{ and }y_k^\prime=f_k(t,y_k),$$
(resp. $y(t^0)=y^0\text{ and }y^\prime=f(t,y)$). Then $\limsup_k t_k^-\leq t^-<t^+\leq \liminf_k t_k^+$ and 
$y_k\to y$
uniformly on compact subsets of $(t^-,t^+)$.
\end{prop}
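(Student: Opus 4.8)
The plan is to prove this classical continuous-dependence statement (a Kamke-type convergence theorem) by combining a priori bounds with the Arzel\`a--Ascoli theorem, deducing the maximal-interval inequalities at the end. I will use the standing hypotheses carried by the Hartman reference: the $f_k$, and hence their uniform limit $f$, are continuous on $\Omega$, and the limiting Cauchy problem $y'=f(t,y)$, $y(t^0)=y^0$ admits a unique maximal solution $y$ (which is what licenses speaking of \emph{the} solution). The first step is to replace the differential equations by the equivalent integral equations
\[
y_k(t)=y_k^0+\int_{t_k^0}^t f_k(s,y_k(s))\,ds,\qquad y(t)=y^0+\int_{t^0}^t f(s,y(s))\,ds,
\]
valid on their respective intervals of existence.

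Next I would fix a compact subinterval $[a,b]\subset(t^-,t^+)$ containing $t^0$ and show that $y_k$ is eventually defined on all of $[a,b]$ and converges uniformly there to $y$. Choosing $\varepsilon>0$ small enough that the closed tube $K=\{(t,z):t\in[a,b],\ |z-y(t)|\le\varepsilon\}$ is a compact subset of $\Omega$, the uniform convergence $f_k\to f$ on $K$ yields a bound $|f_k|\le M$ on $K$ for large $k$; along any subinterval where the graph of $y_k$ stays in $K$, the integral equation then forces $y_k$ to be $M$-Lipschitz, hence uniformly bounded and equicontinuous. The crucial point is a no-escape argument: if, along a subsequence, the graph of $y_k$ first touched $\partial K$ at some $\tau_k\in[a,b]$, the restrictions up to $\tau_k$ would be uniformly Lipschitz, so Arzel\`a--Ascoli would extract a further subsequence converging uniformly to a solution of the limiting integral equation; by uniqueness this limit is $y$, which stays in the open tube $|z-y(t)|<\varepsilon$, contradicting $|y_k(\tau_k)-y(\tau_k)|=\varepsilon$. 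Thus for $k$ large the graph of $y_k$ remains in $K$ over $[a,b]$, so $y_k$ is defined on $[a,b]$.

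With $\{y_k\}$ uniformly bounded and $M$-Lipschitz on $[a,b]$, I would then invoke Arzel\`a--Ascoli: every subsequence has a sub-subsequence $y_{k_j}\to\tilde y$ uniformly. Passing to the limit in the integral equation is justified since $t_k^0\to t^0$, $y_k^0\to y^0$, and
\[
|f_{k_j}(s,y_{k_j}(s))-f(s,\tilde y(s))|\le\sup_K|f_{k_j}-f|+\omega_f\!\left(|y_{k_j}(s)-\tilde y(s)|\right)\longrightarrow 0
\]
uniformly on $[a,b]$, where $\omega_f$ is a modulus of continuity of $f$ on $K$. Hence $\tilde y$ solves the limiting Cauchy problem and, by uniqueness, equals $y$. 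Since every subsequence has a sub-subsequence converging to the same limit $y$, the full sequence satisfies $y_k\to y$ uniformly on $[a,b]$; as $[a,b]\subset(t^-,t^+)$ was arbitrary, this gives uniform convergence on compact subsets of $(t^-,t^+)$.

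Finally, the interval inequalities drop out: for any $b<t^+$ the above shows $y_k$ is defined on $[t^0,b]$ for $k$ large, so $t_k^+>b$ and $\liminf_k t_k^+\ge b$; letting $b\uparrow t^+$ gives $t^+\le\liminf_k t_k^+$, and symmetrically $\limsup_k t_k^-\le t^-$, which combined with $t^-<t^+$ yields the asserted chain. I expect the main obstacle to be the no-escape step: lacking any Lipschitz hypothesis on the right-hand sides, one cannot bound the deviation $|y_k-y|$ by a direct Gronwall estimate, so the uniqueness of the limiting solution has to be exploited through compactness to prevent the approximating solutions from leaving the tube prematurely.
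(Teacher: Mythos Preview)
The paper does not prove this proposition; it is stated with a citation to Hartman's textbook and used as a black box. Your proposal is a correct rendition of the classical Kamke--Hartman argument (integral equations, a tube around the graph of the limiting solution, equicontinuity via the uniform bound on the right-hand sides, Arzel\`a--Ascoli plus uniqueness of the limit, and finally the interval inequalities), so there is nothing to compare against in the paper itself.
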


\begin{cor}
    \label{cor_conv_geod_avec_2jet}
    Let $(g_k)$ be a sequence of pseudo-Riemannian metrics on $M$ converging to a metric $g$ in the $C^m$ topology, for $m\geq 2$. Let $(\gamma_k:I_k\to M)$ be a sequence of projectively parametrized lightlike geodesics of $(M,[g_k])$, defined on intervals $I_k$ containing $0$. If the sequence of 2-jets $(J^2\gamma_k(0))$ converges and $\smash{\lim_k\gamma_k^\prime(0)\neq 0}$, then there exists an interval $I_\varepsilon=(-\varepsilon,\varepsilon)$ such that the curves $\gamma_k$ may be extended to projectively parametrized lightlike geodesics on $I_\varepsilon$, and $(\gamma_k)$ converges on compact subsets of $I_\varepsilon$, in the $C^{m+1}$ topology, to a curve $\gamma$ that is projectively parametrized for the metric $g$. 
\end{cor}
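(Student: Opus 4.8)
The plan is to reduce the statement to the continuous-dependence theorem for ordinary differential equations, Proposition~\ref{prop_equa_diff}, after first recasting the Schwarzian reparametrization equation \eqref{equation_parametrage_projectif} as an autonomous first-order system. Everything is local, so I work in a fixed chart of $M$; write $n=\dim M$. If $\xi$ is a $g$-affine lightlike geodesic, $u$ a projective parameter, $\gamma=\xi\circ u^{-1}$, and $w=\phi'/\phi$ with $\phi=(u^{-1})'$, then a direct computation from $\nabla^g_{\xi'}\xi'=0$ and from \eqref{equation_parametrage_projectif} shows that $(\gamma,w)$ solves
\begin{equation}
\label{eq:plan_system}
\nabla^{g}_{\gamma'}\gamma'=w\,\gamma',\qquad w'=\tfrac12\,w^{2}-\tfrac{2}{n-2}\,\Ric_{g}(\gamma',\gamma'),
\end{equation}
with $\gamma'$ lightlike and nowhere vanishing; conversely, integrating $w$ recovers $\phi$, hence $u$, hence $\xi$, so a non-constant curve $\gamma$ is a projectively parametrized lightlike geodesic for $g$ if and only if it admits such a companion function $w$. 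I would record two easy facts: \eqref{eq:plan_system} preserves the lightlike condition (the scalar $g(\gamma',\gamma')$ obeys the linear ODE $(g(\gamma',\gamma'))'=2w\,g(\gamma',\gamma')$), and $\gamma'$ cannot acquire a zero (by uniqueness for the linear equation governing the vector field $\gamma'$ along $\gamma$). Thus non-constant $\PPL$ geodesics for $g$ are exactly the projections of integral curves of an explicit vector field $X=X_g$ on the bundle $E=\{(x,v,w):x\in M,\ v\in T_xM\ \text{lightlike},\ w\in\R\}$, and the map $J^2\gamma(0)\mapsto(\gamma(0),\gamma'(0),w(0))$ — with $w(0)$ read off the first equation of \eqref{eq:plan_system} — identifies $2$-jets of such curves at $0$ with points of $E$ having $v\neq 0$. (Equivalently, one may eliminate $w$ and write \eqref{eq:plan_system} as a genuine third-order ODE $\gamma'''=\Phi_g(\gamma,\gamma',\gamma'')$ on $\{\gamma'\neq0\}$, which is why a $2$-jet is the correct Cauchy datum.)

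Next I track regularity. Since $g_k\to g$ in $C^m$ with $m\geq2$ and $g$ is nondegenerate, the Christoffel symbols $\Gamma(g_k)$ converge in $C^{m-1}$ and the Ricci tensors $\Ric_{g_k}$ converge in $C^{m-2}$; hence the vector fields $X_k=X_{g_k}$ defining \eqref{eq:plan_system} converge to $X=X_g$ in $C^{m-2}$, in particular uniformly on compact sets. For the Cauchy data: $\gamma_k(0)$ and $\gamma_k'(0)$ are the $0$- and $1$-jet parts of the convergent $2$-jets $J^2\gamma_k(0)$, and $w_k(0)$ is the unique scalar with $\gamma_k''(0)=w_k(0)\gamma_k'(0)-\Gamma(g_k)(\gamma_k'(0),\gamma_k'(0))$; since $\lim_k\gamma_k'(0)\neq0$ this is well defined and convergent, and the limit triple $(\gamma(0),\gamma'(0),w(0))\in E$ has $\gamma'(0)\neq0$, so its $X$-integral curve is a non-constant $\PPL$ geodesic for $g$.

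Finally I apply Proposition~\ref{prop_equa_diff} to the system $Y'=X_k(Y)$ with $Y=(\gamma,\gamma',w)$ and the above initial data. It produces $\varepsilon>0$ such that, for $k$ large, the maximal solutions are defined on $I_\varepsilon=(-\varepsilon,\varepsilon)$ — so the $\gamma_k$ extend to $\PPL$ geodesics of $(M,[g_k])$ on $I_\varepsilon$ — and $Y_k\to Y$ uniformly on compact subsets of $I_\varepsilon$, where $Y$ is the solution through the limiting data; in particular $\gamma_k\to\gamma$ uniformly on compacts with $\gamma$ a $\PPL$ geodesic for $g$. It then remains to upgrade the convergence. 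Differentiating $Y_k'=X_k(Y_k)$ repeatedly and feeding in $X_k\to X$ in $C^{m-2}$ together with the convergence already obtained improves $Y_k\to Y$ to convergence in $C^{m-1}$ on compacts of $I_\varepsilon$ (so $\gamma_k\to\gamma$ in $C^m$, since $\gamma_k'$ is a component of $Y_k$); and then the algebraic identity $\gamma_k''=w_k\gamma_k'-\Gamma(g_k)(\gamma_k',\gamma_k')$, whose right-hand side now converges in $C^{m-1}$, yields $\gamma_k\to\gamma$ in $C^{m+1}$, as required.

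The substantive step is the first one: correctly translating \eqref{equation_parametrage_projectif} into the autonomous system \eqref{eq:plan_system}, and checking that the resulting vector field on $E$ really parametrizes $\PPL$ geodesics — including the points that only a $2$-jet of data is needed and that $\gamma'\neq0$ is preserved. The rest is continuous dependence plus a short bootstrap; the only place requiring a little care is the borderline case $m=2$, where $X_k$ is merely $C^0$, so Proposition~\ref{prop_equa_diff} directly gives only $C^0$-convergence of $Y_k$, and the gain up to $C^{m+1}=C^3$ comes entirely from differentiating the structure equations of \eqref{eq:plan_system}.
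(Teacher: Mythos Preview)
Your proof is correct and takes a genuinely different route from the paper's. The paper decomposes each $\gamma_k$ as $\xi_k\circ u_k^{-1}$, with $\xi_k$ a $g_k$-affine geodesic and $u_k$ a projective parameter, and applies Proposition~\ref{prop_equa_diff} twice: once to the geodesic equation for $\xi_k$ (vector field of class $C^{m-1}$) and once to the Schwarzian ODE for $u_k$, rewritten as a first-order system in $(u_k,u_k',u_k'')$ (vector field of class $C^{m-2}$); convergence of $u_k''(0)$ is extracted from the identity $\xi_k''(0)=u_k''(0)\gamma_k'(0)+\gamma_k''(0)$, and $\gamma_k=\xi_k\circ u_k^{-1}$ is then obtained by composition. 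You instead package the pregeodesic and Schwarzian equations into the single autonomous system~\eqref{eq:plan_system} on $E$, apply Proposition~\ref{prop_equa_diff} once, and recover the last derivative from the algebraic relation $\gamma_k''=w_k\gamma_k'-\Gamma(g_k)(\gamma_k',\gamma_k')$, exploiting that $\Gamma(g_k)$ is one order smoother than $\Ric_{g_k}$. Your approach is more streamlined---no inversion of $u_k$, no composition step---while the paper's decomposition makes the two regularity scales (geodesic flow versus Ricci term) more visibly separate. Both arguments rest on the same key point: the $2$-jet $J^2\gamma_k(0)$ determines the full Cauchy data because $\nabla_{\gamma_k'}\gamma_k'$ is proportional to $\gamma_k'$, which pins down $w_k(0)$ in your setup and $u_k''(0)$ in the paper's.
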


\begin{proof}
    For all $k\geq 0$, we let $\xi_k:J_k\to M$ denote the maximal (affine) geodesic of $(M,g_k)$ with $J^1\xi_k(0)=J^1\gamma_k(0)$, where $J_k\subset\R$ be its maximal domain of definition. Let $\xi:J\to \R$ be the unique affine geodesic of $(M,g)$, such that $J^1\xi(0)=\lim_kJ^1\xi_k(0)$. Since $(g_k)$ converges to $g$ in the $C^{m}$ topology, it follows from Proposition~\ref{prop_equa_diff} that the sequence $(\xi_k)$ converges on compact subsets of $J$ to $\xi$ in the $C^{m+1}$ topology. Since $(\gamma_k)$ is projectively parametrized, there exists a subinterval $L_k\subset J_k$ containing $0$ and a projective parameter $u_k:L_k\to I_k$ along $\xi_k$ for the metric $g_k$, such that  $\xi_k=\gamma_k\circ u_k$ holds on $L_k$. As $u_k$ is a solution of $Su_k(t)=\frac{2}{n-2}\Ric_{g_k}(\xi_k^\prime(t),\xi_k^\prime(t))$, the vector $y_k=(u_k,u_k^\prime,u_k^{\prime\prime})$ is a solution of the Cauchy problem
    $$y_k(0)=(0,1,u_k^{\prime\prime}(0))\text{ and }y_k^\prime=f_k(t,y_k),$$ where $f_k(t,y)=\left(y_2,y_3,\frac{2}{n-2}\Ric_{g_k}(\xi_k^\prime(t),\xi_k^\prime(t))y_2+\frac{3y_3^2}{2y_2}\right)$. Since $(g_k)$ converges to $g$ in the $C^m$ topology, the sequence $(f_k)$ converges on compact subsets in the $C^{m-2}$ topology to 
    $$f(t,y)=\left(y_2,y_3,\frac{2}{n-2}\Ric_{g}(\xi^\prime(t),\xi^\prime(t))y_2+\frac{3y_3^2}{2y_2}\right).$$
    Now, differentiating $\xi_k=\gamma_k\circ u_k$ twice yields the equation 
    $$\xi_k^{\prime\prime}(0)=u_k^{\prime\prime}(0)\gamma_k^\prime(0)+\gamma_k^{\prime\prime}(0).$$
    Since $J^2\gamma_k(0)$ and $J^2\xi_k(0)$ both converge, and since $\lim_k\gamma_k^\prime(0)\neq 0$, the previous equation implies that the sequence $(u_k^{\prime\prime}(0))$ converges to some constant $a\in \R$, so $y_k(0)\to (0,1,a)$. Applying Proposition~\ref{prop_equa_diff}, the sequence $(y_k)$ converges on compact subsets in the $C^{m-1}$ topology to the maximal solution $y$ of the Cauchy problem $y(0)=(0,1,a)$ and $y^\prime=f(t,y)$. In particular, the sequence of projective parameters $(u_k)$ converges on compact subsets of $L$ to a projective parameter $u$ in the $C^{m+1}$ topology. Also, we can always extend the domain of definition of $u$ and of each $u_k$ if necessary and assume that there exists $\varepsilon>0$ such that $u(J)\supset I_\varepsilon=(-\varepsilon,\varepsilon)$, and $u_k(J_k)\supset I_\varepsilon$ for all $k\geq 0$.  In particular $\gamma_k=\xi_k\circ u_k^{-1}\to \xi\circ u^{-1}=\gamma$ in the $C^{m+1}$ topology.
\end{proof}

\begin{proof}[Proof of Proposition~\ref{prop_topologie_PPL}]
    Assume for simplicity that $0\in I$. Let us first show that we can find a sequence $(h_k)$ of homographies fixing $0$, such that $J^2(\gamma_k\circ h_k^{-1})(0)$ converges with $\lim_k(\gamma_k\circ h_k^{-1})^\prime(0)\neq 0$. We can always find a sequence of linear maps $(a_k)$, such that $J^1(\gamma_k\circ a_k)^\prime(0)$ converges to a nonzero vector. Let $k\geq 0$ and let $\varphi:U\subset M\to \R^{p,q}$ be some exponential chart at $\gamma_k(0)$ for the metric $(g_k)$. Then in the chart $(U,\varphi)$, the vectors $\gamma_k^\prime(0)$ and $\gamma_k^{\prime\prime}(0)$ are collinear. In particular one can find a unique homography $u_k$ such that $u_k(0)=0$, $u_k^\prime(0)=1$ and $(\gamma_k\circ a_k\circ u_k)^{\prime\prime}(0)=0$ in the chart $(U,\varphi)$. Then $h_k=(a_k\circ u_k)^{-1}$ gives the required sequence of homographies. We will write $\alpha_k=\gamma_k\circ h_k^{-1}$. From Corollary~\ref{cor_conv_geod_avec_2jet}, one can find $\varepsilon>0$ such that $\alpha_k$ converges on $I_\varepsilon=(-\varepsilon,\varepsilon)$ for the $C^{m+1}$ topology to a curve $\alpha:I_\varepsilon\to M$ satisfying $\alpha^\prime(0)\neq 0$. 
    
    Assume first that the sequence $(h_k)$ converges in $\PSL(2,\R)$. Then $(h_k)$ converges smoothly on compact subsets, so $\gamma_k=\alpha_k\circ h_k$ converges in the $C^{m+1}$ topology. We now assume that $(h_k)$ is a diverging sequence of $\PSL(2,\R)$. Up to extracting a subsequence, the sequence $(h_k)$ admits a North/South dynamic: there exist two points $x^+,x^-\in\RP^1=\R\cup\{\infty\}$ such that $(h_k)$ converges to $x^+$ uniformly on compact subsets of $\RP^1\setminus\{x^-\}$. We claim that $x^-\neq 0$. Assume by contradiction that $x^-=0$. 
    
    \emph{$\bullet$ Case 1: the sequence $(h_k)$ has a parabolic action, i.e. $x^+=0$}. In that case, for $\varepsilon>0$, the sequence of intervals $h_k(I)$ contains infinitely many times the interval $(0,\varepsilon)$, or the interval $(-\varepsilon,0)$. Assume for instance that $(0,\varepsilon)\subset h_k(I)$ for all but finitely many $k\geq 0$. Then for all $s\in (0,\varepsilon)$, one can find a sequence $t_k>0$ converging to $0$ such that $h_k(t_k)\to s$. Hence $\alpha(s)=\lim_k\alpha_k(h_k(t_k))=\lim_k\gamma_k(t_k)=\gamma(0)$. This contradicts the fact that $\alpha^\prime(0)\neq 0$.   

    \emph{$\bullet$ Case 2: the sequence $(h_k)$ has a hyperbolic action, i.e. $x^+\neq0$}. Then there exists $r>0$ such that $x^+\not \in (-r,r)$. As before, for every $s\in (-r,r)$, there exists $t_k\to 0$ such that $h_k(t_k)= s$. Thus $\alpha(s)=\lim_k\gamma_k(t_k)=\gamma(0)$, contradicting again the fact that $\alpha^\prime(0)\neq 0$.
    
    This shows that $x^-\neq 0$. In particular, one can find an interval $J\subset I$ containing $0$ such that $(h_k)$ converges smoothly on $J$ to a homography or the constant zero map. Therefore, a subsequence of $\gamma_k=\alpha_k\circ h_k$ converges on a neighborhood of $0$ and for the $C^{m+1}$ topology to a projectively parametrized lightlike geodesic of $g$. It follows that $\gamma$ is smooth and that for any $t\in I$, there exists a subsequence of $(\gamma_k)$ converging to $\gamma$ for the $C^{m+1}$ topology on a neighborhood of $t$. From a standard compactness argument, $\gamma$ is projectively parametrized for $g$, and $(\gamma_k)$ converges to $\gamma$.
\end{proof}

\section{Markowitz hyperbolicity}
\label{section_hyp_Mark}

In the rest of this paper, we will denote by $I$ the interval $(-1,1)$ endowed with its standard projective structure and with the hyperbolic metric $\rho_I=\frac{4dx^2}{(1-x^2)^2}$ and associated distance $d_I$. 

\subsection{The Markowitz pseudodistance}\label{section_delta_M}

Let $(M,[g])$ be a connected pseudo-Riemannian manifold and $x,y\in M$. A \emph{lightlike chain} from $x$ to $y$ is the data of: an integer $m\geq 1$, a finite sequence $(\gamma_k)_{1\leq k\leq m}\in \PPL(I,M)$ and two finite sequences of points $(s_k)_{1\leq k\leq m},(t_k)_{1\leq k\leq m}\in I$ such that 
$$\gamma_1(s_1)=x,\,\gamma_m(t_m)=y\text{ and }\gamma_k(t_k)=\gamma_{k+1}(s_{k+1}) \text{ for all }1\leq k\leq m-1.$$
We will write $\Ccal=(m,(\gamma_k),(s_k),(t_k))$ for such a lightlike chain. Note that there always exists at least one lightlike chain between two points of $M$. Indeed, the equivalence relation $x\sim y\iff$ ``there exists a lightlike chain between $x$ and $y$'' is open:
\begin{fact}
    \label{fact_existence_chaine}
    Every point $x\in M$ admits a neighborhood $U$, such that for any $y\in U$, the point $x$ can be joined to $y$ by at most $p+q=\dim(M)$ lightlike geodesic segments.
\end{fact}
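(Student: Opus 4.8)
The plan is to realise every point sufficiently close to $x$ as the endpoint of a broken lightlike geodesic with at most $n:=p+q=\dim M$ arcs, by setting up an endpoint map and applying the inverse function theorem. I fix once and for all an auxiliary metric $g\in[g]$ and work with its geodesics and Levi-Civita parallel transport; recall that lightlike $g$-geodesics are lightlike pregeodesics of $(M,[g])$, and that $g$-parallel transport, being a $g$-isometry, carries lightlike vectors to lightlike vectors. (If the signature were definite there would be no nonconstant lightlike curve, so the statement is only meaningful, and only holds, for indefinite signature; I therefore assume $p\ge 1$ and $q\ge 1$.)

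First I would record the elementary linear fact that the null cone $C(T_xM)\subset T_xM$ spans $T_xM$: in a $g_x$-orthonormal basis $e_1,\dots,e_n$ with $e_1,\dots,e_p$ timelike, the vectors $e_i\pm e_j$ with $i\le p<j$ are null, and they already span $T_xM$, since $e_i=\tfrac12\bigl((e_i+e_j)+(e_i-e_j)\bigr)$ and $e_j=\tfrac12\bigl((e_i+e_j)-(e_i-e_j)\bigr)$. I then fix null vectors $v_1,\dots,v_n\in C(T_xM)$ with $\Span(v_1,\dots,v_n)=T_xM$.

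Next I would define the endpoint map $F$ on a small ball $V$ around $0\in\R^n$ as follows: put $y_0=x$ and, inductively for $s=(s_1,\dots,s_n)\in V$, let $w_k\in T_{y_{k-1}}M$ be the $g$-parallel transport of $v_k$ from $x$ to $y_{k-1}$ along the broken geodesic constructed so far, set $y_k=\exp^{g}_{y_{k-1}}(s_k w_k)$, and finally $F(s)=y_n$. Choosing $V$ small enough that all arcs remain in a common totally normal neighbourhood of $x$, the map $F$ is well defined and smooth, being a composition of the geodesic flow and of parallel transport, both depending smoothly on their parameters. By construction each arc $y_{k-1}\to y_k$ is a lightlike $g$-geodesic segment — degenerate to a point when $s_k=0$ — because $w_k$ is $g$-null; hence any value of $F$ is joined to $x$ by at most $n$ lightlike geodesic segments.

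Finally I would compute $dF_0$. Setting all $s_j=0$ makes every intermediate point equal to $x$ and every parallel transport the identity, so $F(0,\dots,0,s_k,0,\dots,0)=\exp^{g}_x(s_k v_k)$ and thus $\partial_{s_k}F(0)=v_k$; hence $dF_0$ carries the standard basis of $\R^n$ to $v_1,\dots,v_n$, which span $T_xM$. As $\dim\R^n=\dim T_xM=n$, the differential $dF_0$ is an isomorphism, so by the inverse function theorem $F$ maps a neighbourhood of $0$ diffeomorphically onto a neighbourhood $U$ of $x$; every $y\in U$ is therefore joined to $x$ by at most $p+q$ lightlike geodesic segments, as claimed. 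There is no serious obstacle here: the only points demanding a little care are the smooth dependence and the domain of definition of $F$ near $0$ (handled by the shrinking just mentioned) and the spanning lemma, which is precisely where the indefiniteness of the signature enters.
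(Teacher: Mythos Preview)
Your proof is correct and follows essentially the same strategy as the paper: build an endpoint map for broken lightlike geodesics and apply the inverse function theorem at the origin. The only cosmetic difference is that the paper extends the null directions to nearby points via a local lightlike frame $(X_1,\dots,X_{p+q})$ rather than via parallel transport, but the computation of the differential at $0$ and the conclusion are identical.
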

\begin{proof}
Let $g$ be a metric in the conformal class of $M$. Let $x\in M$ and let $(X_1,\dots,X_{p+q})$ be a lightlike frame defined on a neighborhood of $x$. Given a vector field  $X$, we write $\phi_X^t$ for the map $\phi_X^t(z)=\exp_z(tX(z))$.
Let $\theta$ be the map defined on a neighborhood of $0\in\R^{p+q}$ by 
$\smash{\theta(t_1,\dots,t_{p+q})=\phi_{X_1}^{t_1}\circ\dots\circ\phi_{X_{p+q}}^{t_{p+q}}(x)}$.
For all $i\in\{1,\dots,p+q\}$, one has 
$\smash{\frac{\partial\theta}{\partial t_i}(0)=X_i}$, hence $\theta$ is a local diffeomorphism at $0$. By construction, any point in the image of $\theta$ can be joined to $x$ by at most $p+q$ lightlike geodesics. 
\end{proof}

The \emph{length} of a chain $\Ccal=(m,(\gamma_k),(s_k),(t_k))$ is defined to be
$L(\Ccal)=d_I(s_1,t_1)+\dots+d_I(s_m,t_m).$
The \emph{Markowitz pseudodistance} between two points $x$ and $y$ is then defined as
\begin{equation}
    \label{equation_def_d_mark}
    \delta_M(x,y)=\inf_\Ccal L(\Ccal),
\end{equation}
where the infimum runs over all lightlike chains from $x$ to $y$. From the previous fact, the quantity $\delta_M(x,y)$ is finite for all $x,y\in M$. One can check from the definition that $\delta_M$ is a pseudodistance: it is symmetric, satisfies the triangular inequality but need not separate points. 
The main advantage of the Markowitz pseudodistance is that it turns conformal maps into 1-Lipschitz maps. 
\begin{prop}
\label{proposition_naturalité}
    Let $f:M\to N$ be a conformal map. Then, for all $x,y\in M$, one has 
$$\delta_N(f(x),f(y))\leq \delta_M(x,y).$$
\end{prop}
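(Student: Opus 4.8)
The plan is to deduce the inequality directly from the chain definition \eqref{equation_def_d_mark} of $\delta_M$, by pushing lightlike chains forward along $f$. The crucial point, which I would isolate as a lemma, is that a conformal map carries projectively parametrized lightlike geodesics to projectively parametrized lightlike geodesics: if $\gamma\in\PPL(I,M)$ then $f\circ\gamma\in\PPL(I,N)$. Granting this, the rest is immediate. Given any lightlike chain $\Ccal=(m,(\gamma_k),(s_k),(t_k))$ from $x$ to $y$ in $M$, I would observe that $f_*\Ccal=(m,(f\circ\gamma_k),(s_k),(t_k))$ is a lightlike chain from $f(x)$ to $f(y)$ in $N$: each $f\circ\gamma_k$ lies in $\PPL(I,N)$ by the lemma, and the incidence relations $f(\gamma_k(t_k))=f(\gamma_{k+1}(s_{k+1}))$, $f(\gamma_1(s_1))=f(x)$, $f(\gamma_m(t_m))=f(y)$ hold trivially. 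Since the length $L(f_*\Ccal)=\sum_k\rho_I(s_k,t_k)$ depends only on the sequences $(s_k),(t_k)\subset I$, it equals $L(\Ccal)$. Hence $\delta_N(f(x),f(y))\le L(f_*\Ccal)=L(\Ccal)$ for every chain $\Ccal$ from $x$ to $y$, and taking the infimum over all such chains yields $\delta_N(f(x),f(y))\le\delta_M(x,y)$.

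To prove the lemma, I would fix an auxiliary metric $g_N$ in the conformal class of $N$ and set $g_M=f^*g_N$. By the definition of a conformal map, $g_M$ lies in the conformal class of $M$, and $f\colon(M,g_M)\to(N,g_N)$ is a local isometry — this uses only that $f$ is a local diffeomorphism. Local isometries send $g_M$-affine geodesics to $g_N$-affine geodesics and satisfy $\Ric_{g_M}=f^*\Ric_{g_N}$. Now, given $\gamma\in\PPL(I,M)$, I would use the ``hence any'' clause in the definition of a projectively parametrized geodesic to write $\gamma=\xi\circ u^{-1}$, where $\xi\colon J\to M$ is a $g_M$-affine geodesic with $\xi'$ lightlike and $u\colon J\to I$ is a projective parameter along $\xi$ for $g_M$, i.e.\ a solution of \eqref{equation_parametrage_projectif} for $g_M$. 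Then $f\circ\xi$ is a $g_N$-affine geodesic, its velocity $df(\xi')$ is again lightlike, and for every $t$ one has
\[
\Ric_{g_N}\bigl((f\circ\xi)'(t),(f\circ\xi)'(t)\bigr)=(f^*\Ric_{g_N})\bigl(\xi'(t),\xi'(t)\bigr)=\Ric_{g_M}\bigl(\xi'(t),\xi'(t)\bigr),
\]
so the same function $u$ solves \eqref{equation_parametrage_projectif} for $g_N$ along $f\circ\xi$; that is, $u$ is a projective parameter along $f\circ\xi$. Therefore $f\circ\gamma=(f\circ\xi)\circ u^{-1}$ is projectively parametrized for $g_N$, hence for any metric in the conformal class of $N$, so $f\circ\gamma\in\PPL(I,N)$.

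I do not expect a serious obstacle here: the whole argument is an unwinding of definitions, and the only step requiring a little care is the conformal invariance in the lemma. That invariance rests on the fact that \eqref{equation_parametrage_projectif} is set up so the choice of representative metric is immaterial, combined with the elementary behaviour of affine geodesics and of the Ricci tensor under the local isometry $f\colon(M,f^*g_N)\to(N,g_N)$. As an alternative to the chain argument, one could instead note that $\delta_N\circ(f\times f)$ is a pseudodistance on $M$ for which every element of $\PPL(I,M)$ is distance-decreasing from $(I,d_I)$ — because its image under $f$ lies in $\PPL(I,N)$, which is distance-decreasing for $\delta_N$ — and then invoke the fact that $\delta_M$ is the largest such pseudodistance.
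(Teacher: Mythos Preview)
Your proposal is correct and follows the same approach as the paper: push forward a lightlike chain by $f$ to obtain a lightlike chain of the same length, then take the infimum. The paper's proof is a single sentence stating exactly this, leaving implicit the lemma that $f\circ\gamma\in\PPL(I,N)$ whenever $\gamma\in\PPL(I,M)$; your detailed verification of that lemma via the pullback metric is a welcome elaboration.
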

\begin{proof}
    This follows from the definition of $\delta$, since the image of a lightlike chain from $x$ to $y$ by the map $f$ is a lightlike chain from $f(x)$ to $f(y)$ of the same length.
\end{proof}

\subsection{The infinitesimal functional}
\label{section_FM}
The Markowitz pseudodistance also admits an infinitesimal form, defined on the bundle $C(TM)$ of lightlike tangent vectors of a conformal manifold $M$. Let $(M,[g])$ be a conformal manifold, and let $v\in C(TM)$ be lightlike. Following \cite{markowitz_1981}, the \emph{infinitesimal functional} of $M$ at $v$ is defined as 
\begin{equation}
    F_M(v)=\inf_\gamma \sqrt{\rho_I(u,u)},
\end{equation}
where the infimum runs over all $\gamma\in\PPL(I,M)$, and vector $u\in TI$ such that $\gamma_*u=v$. This defines a map $F_M:C(TM)\to \R$ 
that is homogeneous in the sense that $F_M(\lambda v)=\vert\lambda\vert F_M(v)$ for all $\lambda\in \R$ and $v\in C(TM)$. Furthermore, the map $F_M$ is upper-semicontinuous, hence measurable and bounded on compact subsets, see \cite[Lem. 4.1]{markowitz_1981}. In particular, it is possible to integrate $F_M$ along lightlike curves. The infinitesimal functional can then be linked with the Markowitz pseudodistance in the following natural way:

\begin{thm}[{\cite[Thm. 4.8]{markowitz_1981}}]
\label{thm_lien_dmark_Fmark}
    Let $(M,[g])$ be a conformal spacetime. Then, for all $x,y\in M$, one has 
    $$\delta_M(x,y)=\inf_\gamma\int F_M(\gamma^\prime(t)) dt,$$
    where the infimum runs over all piecewise lightlike geodesic curves $\gamma$ from $x$ to $y$.
\end{thm}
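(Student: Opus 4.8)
The plan is to prove separately the two inequalities between $\delta_M(x,y)$ and the quantity $\Delta(x,y):=\inf_\gamma\int F_M(\gamma'(t))\,dt$, the infimum running over all piecewise lightlike geodesic curves $\gamma$ from $x$ to $y$. For the inequality $\Delta(x,y)\le\delta_M(x,y)$, I would start from an arbitrary lightlike chain $\Ccal=(m,(\gamma_k),(s_k),(t_k))$ from $x$ to $y$, and concatenate the restrictions $\gamma_k|_{[\min(s_k,t_k),\,\max(s_k,t_k)]}$ into a single piecewise lightlike geodesic curve $\gamma^\star$ from $x$ to $y$. Since $\gamma_k\in\PPL(I,M)$, the pair $(\gamma_k,\partial_\tau)$ is admissible in the infimum defining $F_M$ at the vector $d\gamma_k(\partial_\tau)$, so $F_M\big((\gamma_k)'(\tau)\big)\le\sqrt{\rho_I(\partial_\tau,\partial_\tau)}=\tfrac{2}{1-\tau^2}$; integrating over $[s_k,t_k]$ and using that $F_M$ is even gives $\int F_M((\gamma^\star)'(\tau))\,d\tau\le\sum_k d_I(s_k,t_k)=L(\Ccal)$. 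Hence $\Delta(x,y)\le L(\Ccal)$ for every chain, and taking the infimum over chains yields $\Delta(x,y)\le\delta_M(x,y)$.

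For the reverse inequality $\delta_M(x,y)\le\Delta(x,y)$, it suffices, by additivity over the geodesic pieces, to fix a single non-constant lightlike geodesic $\gamma:[a,b]\to M$ and prove $\delta_M(\gamma(a),\gamma(b))\le\int_a^b F_M(\gamma'(t))\,dt$. The crux is the local estimate: for each $t_0\in[a,b]$ and each $\varepsilon>0$ there is $\eta>0$ with
\[
\delta_M\big(\gamma(t'),\gamma(t'')\big)\ \le\ \big(F_M(\gamma'(t_0))+\varepsilon\big)\,|t''-t'|\qquad\text{for all }t',t''\in(t_0-\eta,t_0+\eta).
\]
To produce it, I use that $F_M(\gamma'(t_0))$, finite by \cite[Lem.~4.1]{markowitz_1981}, is an infimum: choose $\delta_{t_0}\in\PPL(I,M)$ and $u\in T_{s_0}I$ with $(\delta_{t_0})_*u=\gamma'(t_0)$ and $\sqrt{\rho_I(u,u)}<F_M(\gamma'(t_0))+\varepsilon$. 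Then $\delta_{t_0}(s_0)=\gamma(t_0)$ and $\delta_{t_0}$ is a lightlike geodesic tangent at $\gamma(t_0)$ to the direction of $\gamma'(t_0)$; since lightlike pregeodesics form a conformal notion and are uniquely determined by a point and a tangent direction (cf. Proposition~\ref{prop_str_proj_sur_les_geod_de_lum}), $\delta_{t_0}$ and $\gamma$ trace the same curve near $\gamma(t_0)$, so $\gamma=\delta_{t_0}\circ\phi$ on a neighborhood of $t_0$ for a smooth embedding $\phi$ into $I$ with $\phi(t_0)=s_0$. Differentiating gives $|\phi'(t_0)|\tfrac{2}{1-\phi(t_0)^2}=\sqrt{\rho_I(u,u)}<F_M(\gamma'(t_0))+\varepsilon$, and since the left-hand side is continuous in the variable, shrinking $\eta$ keeps the bound on $(t_0-\eta,t_0+\eta)$; then $\delta_M(\gamma(t'),\gamma(t''))\le d_I(\phi(t'),\phi(t''))\le\int_{t'}^{t''}|\phi'(\tau)|\tfrac{2}{1-\phi(\tau)^2}\,d\tau\le(F_M(\gamma'(t_0))+\varepsilon)|t''-t'|$, using $\delta_{t_0}$ restricted to the parameter interval as a length-one chain.

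To finish the second inequality I would note that $F_M\circ\gamma'$ is upper semicontinuous, hence bounded above on $[a,b]$, so the local estimate together with a Lebesgue-number and chaining argument over $[a,b]$ gives a global Lipschitz bound; in particular $g(t):=\delta_M(\gamma(a),\gamma(t))$ is Lipschitz, hence absolutely continuous, and $g(b)=g(b)-g(a)=\int_a^b g'(t)\,dt$. At every $t$ where $g$ is differentiable (a.e.), the local estimate with $t_0=t$ gives $|g'(t)|\le F_M(\gamma'(t))+\varepsilon$ for all $\varepsilon>0$, so $g'(t)\le F_M(\gamma'(t))$. Integrating yields $\delta_M(\gamma(a),\gamma(b))=g(b)\le\int_a^bF_M(\gamma'(t))\,dt$; summing over the geodesic pieces of a general piecewise lightlike geodesic $\gamma$ and taking the infimum over such $\gamma$ gives $\delta_M(x,y)\le\Delta(x,y)$, which completes the proof.

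The step I expect to be the main obstacle is the second inequality, and specifically reconciling the mere upper semicontinuity of $F_M$ — which obstructs a naive passage to the limit in Riemann sums of $F_M\circ\gamma'$ — with estimates uniform enough to assemble into a global bound. Routing everything through the single scalar function $g(t)=\delta_M(\gamma(a),\gamma(t))$, proving it Lipschitz, and bounding its almost-everywhere derivative pointwise by $F_M(\gamma'(t))$ is what circumvents this. The remaining subtlety is the local identification of the $\PPL$-curve nearly realizing $F_M(\gamma'(t))$ with $\gamma$ itself as an unparametrized curve, which rests on the conformal invariance and the point-plus-direction uniqueness of lightlike pregeodesics.
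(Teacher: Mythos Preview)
The paper does not supply a proof of this statement: it is quoted verbatim as \cite[Thm.~4.8]{markowitz_1981} and used as a black box thereafter. So there is nothing in the paper to compare your argument against.

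That said, your proposal is the standard Royden-style proof that the chain pseudodistance equals the integrated infinitesimal metric, and it is correct as outlined. The inequality $\Delta\le\delta_M$ is immediate from the definitions, exactly as you wrote. For $\delta_M\le\Delta$, the two substantive points are both handled properly: (i) the local identification of a near-optimal $\delta_{t_0}\in\PPL(I,M)$ with $\gamma$ as unparametrized curves near $\gamma(t_0)$ is justified by the fact that lightlike pregeodesics are conformally invariant and determined by a point and a direction (Proposition~\ref{prop_str_proj_sur_les_geod_de_lum}), and (ii) the passage from the local Lipschitz estimate to the global inequality via the absolutely continuous scalar function $g(t)=\delta_M(\gamma(a),\gamma(t))$ correctly sidesteps the upper-semicontinuity obstruction, since you only need $g'(t)\le F_M(\gamma'(t))$ at almost every $t$, not uniform control of $F_M\circ\gamma'$. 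The uniform Lipschitz bound on $g$ follows from upper semicontinuity of $F_M$ (hence boundedness on the compact image of $\gamma'$), so the a.e.\ differentiability step is legitimate. Summation over pieces and the infimum over $\gamma$ finish the argument.
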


Similarly to Proposition~\ref{proposition_naturalité}, the infinitesimal functional turns conformal maps into Lipschitz maps.

\begin{prop}
    \label{prop_naturalité_F_M}
    Let $f:M\to N$ be a conformal map. Then, for all $v\in C(TM)$, one has
    $$F_N(f_*v)\leq F_M(v).$$
    If $f$ is a covering map, then equality holds in the previous inequality.
\end{prop}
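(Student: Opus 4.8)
The plan is to mimic the proof of Proposition~\ref{proposition_naturalité}, replacing lightlike chains by pairs $(\gamma,u)$ consisting of a curve $\gamma\in\PPL(I,M)$ and a vector $u\in TI$ with $\gamma_*u=v$. Everything rests on the conformal naturality of the class of projectively parametrized lightlike geodesics. For the inequality $F_N(f_*v)\le F_M(v)$, the key point is that $f$ maps $\PPL(I,M)$ into $\PPL(I,N)$. To see this, fix the metric $g=f^*g_N$ in the conformal class of $M$; since the definition of $\PPL(I,M)$ is valid for \emph{any} metric of the conformal class, a geodesic $\gamma\in\PPL(I,M)$ can be written $\gamma=\xi\circ u^{-1}$ with $\xi:J\to M$ an affine geodesic of $(M,g)$ and $u$ a projective parameter along $\xi$. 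As $f:(M,f^*g_N)\to(N,g_N)$ is a local isometry, $f\circ\xi$ is an affine geodesic of $(N,g_N)$, and the identity $\Ric_{f^*g_N}(\xi',\xi')=\Ric_{g_N}\big((f\circ\xi)',(f\circ\xi)'\big)$ shows $u$ remains a projective parameter along $f\circ\xi$; hence $f\circ\gamma=(f\circ\xi)\circ u^{-1}\in\PPL(I,N)$. If moreover $\gamma_*u=v$, then $(f\circ\gamma)_*u=f_*v$, so $F_N(f_*v)\le\sqrt{\rho_I(u,u)}$, and taking the infimum over all admissible $(\gamma,u)$ yields $F_N(f_*v)\le F_M(v)$ (the case $F_M(v)=+\infty$ being trivial).

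Now suppose $f$ is a covering map and let us prove $F_M(v)\le F_N(f_*v)$. Let $x\in M$ be such that $v\in T_xM$ and set $y=f(x)$; take $\eta\in\PPL(I,N)$ together with $u\in T_{t_0}I$ satisfying $\eta_*u=f_*v$, so that $\eta(t_0)=y$. Write $\eta=\xi_N\circ u_N^{-1}$ with $\xi_N$ an affine geodesic of $(N,g_N)$ and $u_N$ a projective parameter, and let $s_0=u_N^{-1}(t_0)$, so $\xi_N(s_0)=y$. Lifting the path $\xi_N$ through the covering $f$, starting at $x$ at time $s_0$, produces a curve $\xi_M$ with $f\circ\xi_M=\xi_N$; since $f$ is a local isometry, $\xi_M$ is an affine geodesic of $(M,f^*g_N)$ and $u_N$ is still a projective parameter along it, so $\gamma:=\xi_M\circ u_N^{-1}$ lies in $\PPL(I,M)$, satisfies $f\circ\gamma=\eta$, and $\gamma(t_0)=x$. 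From $f\circ\gamma=\eta$ we get $f_*(\gamma_*u)=f_*v$, and injectivity of $df_x$ together with $\gamma(t_0)=x$ forces $\gamma_*u=v$. Hence $F_M(v)\le\sqrt{\rho_I(u,u)}$; taking the infimum over all such $(\eta,u)$ gives $F_M(v)\le F_N(f_*v)$, and combined with the first paragraph this is the desired equality.

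The one genuinely delicate step is the conformal invariance of the projective parametrization, used twice: $f\circ\gamma\in\PPL(I,N)$ in the first part, and the fact that the path-lift of an affine geodesic is again an affine geodesic carrying the same projective parameter in the second. Both follow from the defining equation~\eqref{equation_parametrage_projectif} once one observes that $f$ (resp. $f$ read through a local inverse) is a local isometry between suitable representatives of the two conformal classes. Everything else — path lifting along the covering $f$, injectivity of $df_x$, and keeping track of base points — is routine; and since $F_M$ and $F_N$ are everywhere finite (\cite[Lem.~4.1]{markowitz_1981}), the statement is not vacuous.
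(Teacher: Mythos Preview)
Your proof is correct and follows exactly the same strategy as the paper's: the inequality comes from $f$ sending $\PPL(I,M)$ into $\PPL(I,N)$, and the covering case from lifting elements of $\PPL(I,N)$ back through $f$. The paper dispatches both parts in a single sentence each, whereas you spell out the mechanism (choosing $g=f^*g_N$, using that $f$ is then a local isometry, tracking the projective-parameter ODE, and path-lifting via simple connectedness of the domain); this is the intended unpacking, not a different route.
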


\begin{proof}
    The inequality follows from the definition of $F$. When $f$ is a covering map, then any $\gamma\in\PPL(I,N)$ can be lifted to a $\tilde\gamma\in\PPL(I,M)$ by $f$, so equality holds.
\end{proof}

For later purposes, we give a formula of the infinitesimal functional for a domain of the double cover of $\Ein^{p,q}$.
\begin{prop}
        \label{Prop_exemple_calcul_F_M}
    Let $\Omega\subset \S^p\times \S^q$ be a connected domain. Let $x\in \Omega$ and $v\in T_x\Omega$ be lightlike. Let $\gamma$ be the maximal lightlike geodesic segment that is tangent to $v$ and contained in $\Omega$, and let $\sqrt{2}L$ denote its length with respect to the Riemannian metric $g=g_{\S^p}+g_{\S^q}$.
    If $L<\pi$, then
        $$F_\Omega(v)=\left(\frac{1+\tan(d_g(x,x_-)-L/2)^2}{\tan(L/2)+\tan(d_g(x,x_-)-L/2)}
        +\frac{1+\tan(d_g(x,x_+)-L/2)^2}{\tan(L/2)+\tan(d_g(x,x_+)-L/2)}\right)\frac{\Vert v\Vert_g}{\sqrt{2}},$$
    where $x_-$ and $x_+$ are the endpoints of $\gamma$. Otherwise, if $L\geq \pi$, then $F_\Omega(v)=0$.
\end{prop}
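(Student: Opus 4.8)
The plan is to reduce the computation of $F_\Omega(v)$ to an explicit one-dimensional problem on a geodesic, then carry out that computation using the known projective parametrization of photons together with the behaviour of the Schwarzian derivative under homographies. First I would recall from the second item of Examples~\ref{exemple_para_proj} that for the metric $g = g_{\S^p}+g_{\S^q}$ on $\S^p\times\S^q$, a lightlike affine geodesic is a product $\xi=(\xi^p,\xi^q)$ of unit-speed geodesics, and that $\xi\circ\tan^{-1}$ is projectively parametrized with domain $\R$; here the relevant (degenerate) "length" is read off from the Riemannian lengths of $\xi^p$ and $\xi^q$, which explains the normalization $\sqrt 2 L$ for the length of $(x_-,x_+)$. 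Next I would observe that, by the definition of $F_\Omega$ and since through $x$ in the lightlike direction $v$ there is (up to projective reparametrization) a unique maximal projectively parametrized lightlike geodesic $\gamma$, computing $F_\Omega(v)$ amounts to finding the infimum of $\sqrt{\rho_I(u,u)}$ over all ways of writing $v=\gamma_*u$, i.e. over all homographies sending a subinterval of $\RPunTilde$ onto $I=(-1,1)$ and carrying the appropriate point to $x$. The infimum is then a one-variable optimization over the choice of homography, which by the Schwarz–Pick-type extremality of homographies between hyperbolic intervals is attained precisely when the interval $J$ parametrizing the maximal photon segment in $\Omega$ is sent by an isometry (for the respective Poincaré metrics) onto $(-1,1)$ — so the "infimum" collapses and $F_\Omega(v)$ equals $\rho_J$ evaluated on the preimage of $v$.

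With this reduction in place, the computation proceeds as follows. Parametrize the maximal lightlike geodesic through $x$ by its affine parameter $t$, so that $\xi(t)=(\xi^p(t),\xi^q(t))$ and $\gamma(s)=\xi(\tan s)$ is projectively parametrized on $s\in(-\pi/2,\pi/2)$ (rescaling the tangent vector so $\xi$ has $g$-speed $\sqrt2$, matching $Su=2$). The endpoints $x_\pm$ correspond to affine parameter values $t_\pm$, and since the $g$-distance along $\xi$ from $x_-$ to $x_+$ is $\sqrt2 L$ while $x$ sits at $g$-distance $d_g(x,x_-)$ from $x_-$, one gets explicit values for the parameters: the projective coordinate of $x_-$ is $s_- = \arctan(t_-)$ with $t_- - t_x$ controlled by $d_g(x,x_-)$, and similarly for $x_+$. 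The maximal photon segment in $\Omega$ is thus $\gamma((s_-,s_+))$, an interval of $\RPunTilde/\text{(projective structure)}$; composing $\gamma$ with the unique homography $h$ sending $(s_-,s_+)$ onto $(-1,1)$ and $s_x\mapsto 0$ would be one admissible reparametrization, but to compute $F_\Omega(v)$ I must instead keep $x$ fixed, i.e. use the homography $h$ with $h(x\text{-parameter})=$ (preimage of $v$), and then $F_\Omega(v)=\sqrt{\rho_I}$ of that preimage vector, which by conformal invariance of the Poincaré metric equals $\rho_{(s_-,s_+)}$ evaluated at the parameter of $x$ applied to $\tfrac{d}{ds}$, times the reparametrization factor $\tfrac{ds}{d(\text{proj param})}$. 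Writing out $\rho$ for the interval $(s_-,s_+)$ in terms of $\tan$ of the half-length $L/2$ and of $d_g(x,x_\pm)-L/2$, and multiplying by $\Vert v\Vert_g/\sqrt2$ to convert from the affine parameter normalization back to $v$ itself, produces exactly the two-term formula in the statement. When $L\geq\pi$, the photon segment in $\Omega$ has projective length at least that of a full affine line $\R$ (a half-turn in the $\tan$ coordinate), hence $\gamma$ extends to a non-constant $\PPL(\R,\Omega)$ or beyond, the interval $J$ is all of $\RPunTilde$ or an interval strictly larger than a fundamental domain, on which the Poincaré metric degenerates, forcing $F_\Omega(v)=0$.

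The main obstacle I anticipate is bookkeeping the two distinct normalizations and being careful with the case analysis at $L=\pi$. Concretely: (i) matching the affine parameter of the geodesic, the $\tan$-reparametrization giving $Su=2$, and the final answer expressed in terms of $\Vert v\Vert_g$ requires tracking a factor of $\sqrt2$ and the speed rescaling; (ii) the interval $(s_-,s_+)\subset\RPunTilde$ must be correctly identified — in particular whether, as $L\to\pi$, its projective length reaches the threshold at which $\rho$ degenerates (this is where the hypothesis $L<\pi$ versus $L\geq\pi$ enters, and one should check that at exactly $L=\pi$ the segment already contains a projective copy of $\R$ as a proper subinterval so that $F$ vanishes, consistent with the stated "$L\geq\pi$"). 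Everything else — the extremality of homographies, upper semicontinuity of $F_\Omega$ from \cite[Lem.~4.1]{markowitz_1981}, and the explicit trigonometric identities for the Poincaré metric of a bounded interval — is routine and can be cited or done in a line.
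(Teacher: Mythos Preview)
Your proposal is correct and follows essentially the same approach as the paper: identify the maximal projectively parametrized lightlike geodesic through $x$ as $\gamma=\xi\circ\tan^{-1}$ with $\xi$ the $g$-geodesic of speed $\sqrt2$ parametrizing $(x_-,x_+)$, then read off $F_\Omega(v)$ as the Poincaré density of the interval $(-\tan(L/2),\tan(L/2))$ at the parameter corresponding to $x$, rescaled by $\Vert v\Vert_g/\sqrt2$. The paper's version is slightly more direct---it writes the maximal $\gamma$ immediately and computes $F_\Omega(\gamma'(u))=2\tan(L/2)/(\tan(L/2)^2-u^2)$ without the detour through ``optimizing over homographies'' and Schwarz--Pick extremality, since maximality of $\gamma$ already forces the infimum to be attained there---but your argument reaches the same conclusion by the same mechanism.
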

\begin{proof}
    Assume that $L<\pi$, and let $\xi:(-L/2,L/2)\to \Omega$ be a $g$-geodesic at speed $\sqrt{2}$ that parametrizes the segment  $(x_-,x_+)$. Then $\xi=(\xi^p,\xi^q)$, where $\xi^i$ is a unit-speed geodesic of $\S^i$, for $i=p,q$. Hence $\gamma=\xi\circ \tan^{-1}$ is a maximal projectively parametrized lightlike geodesic tangent to $v$ at $u=\tan(L/2-d(x,x_+))$, see Example~\ref{exemple_para_proj}.
    We deduce that 
    $$F_\Omega(\gamma^\prime(t))=\frac{2\tan(L/2)}{\tan(L/2)^2-u^2}=(1+u^2)^{-1}F_\Omega\left(\sqrt{2}\frac{v}{\Vert v\Vert_g}\right).$$
    The result follows in that case.
    When $L\geq\pi$, then using the same argument as before, there exist non-constant projectively parametrized lightlike geodesics $\gamma:\R\to \Omega$ tangent to $v$, so $F_\Omega(v)=0$, see also Proposition~\ref{prop_TCLI}.
\end{proof}

\begin{ex}
    \label{exemple_formula_F_M}
    Let $\Omega\subset \R^{p,q}$ and $v\in T_x\Omega$ be lightlike. Let $a,b\in \R^{p,q}$ denote the endpoints of the maximal lightlike segment tangent to $v$ and contained in $\Omega$. Then
    $$F_\Omega(v)=\frac{\Vert v \Vert}{\Vert x-a\Vert}+\frac{\Vert v \Vert}{\Vert x-b\Vert},$$
    for any norm $\Vert \cdot\Vert$ on $\R^{p,q}$.
\end{ex}

\subsection{Characterizations of Markowitz hyperbolicity}\label{Section_characterisation_of_Mark_hyp} We now turn to the main notion of this paper, namely, manifolds for which the Markowitz pseudodistance is definite. 

\begin{definition}
    A conformal manifold $(M,[g])$ is called \emph{Markowitz hyperbolic} if the pseudodistance $\delta_M$ separates points, that is $\delta_M(x,y)> 0$ whenever $x\neq y$.
\end{definition}

This notion of hyperbolicity was first introduced by Markowitz in \cite{markowitz_1981} as a conformal analog of Kobayashi's hyperbolicity for projective manifolds \cite{kob_projectif}. In general, it is impossible to compute $\delta_M$, therefore it can be a hard task to determine whether a conformal manifold is Markowitz hyperbolic or not. 

\begin{thm}
    \label{thm_hyp}
    Let $(M,[g])$ be a conformal manifold. Then the following properties are equivalent:
    \begin{enumerate}
        \item $M$ is Markowitz hyperbolic, that is $\delta_M$ is a distance.
        \item $\delta_M$ induces the standard topology on $M$.
        \item $\delta_M$ is locally equivalent to a (hence any) Riemannian distance.
        \item $F_M$ is locally bounded below by a Riemannian norm restricted to $C(TM)$.
        \item The family $\PPL(I,M)$ is equicontinuous, with respect to some distance on $M$ that is locally equivalent to a Riemannian distance. 
    \end{enumerate}
    In that case, the Markowitz distance $\delta_M$ is a length metric on $M$.    
\end{thm}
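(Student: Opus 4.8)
The plan is to run the cycle $(4)\Rightarrow(3)\Rightarrow(2)\Rightarrow(1)\Rightarrow(4)$, then to attach (5) through $(3)\Rightarrow(5)\Rightarrow(1)$, and finally to read off the length‑metric statement from Theorem~\ref{thm_lien_dmark_Fmark}. I fix once and for all an auxiliary Riemannian distance $d$ on $M$; since Riemannian metrics are locally comparable, properties (3) and (4) are independent of that choice.

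For $(4)\Rightarrow(3)$ I would first record the a priori upper bound $\delta_M\le C_U\,d$ on every relatively compact open $U$: by upper semicontinuity of $F_M$ (\cite{markowitz_1981}) one has $F_M(v)\le C_U\|v\|$ for lightlike $v$ over $\overline U$, and by Fact~\ref{fact_existence_chaine} two nearby points of $U$ are joined by at most $p+q$ lightlike geodesic segments of total $d$‑length comparable to their distance; projectively parametrizing these and applying Theorem~\ref{thm_lien_dmark_Fmark} gives the bound, and in particular $\delta_M$ is locally Lipschitz, hence continuous, for $d$. For the matching lower bound I would assume $F_M\ge c\|\cdot\|$ on lightlike vectors over a ball with $\overline B(x_0,2r)\subset U$: a piecewise lightlike geodesic between $x,y\in B(x_0,r)$ either stays in $B(x_0,2r)$, where $\int F_M(\gamma')\,dt\ge c\,d(x,y)$, or leaves it, where its initial arc has $d$‑length $\ge r$ and lies in $U$, so $\int F_M(\gamma')\,dt\ge cr$; Theorem~\ref{thm_lien_dmark_Fmark} then yields $\delta_M(x,y)\ge c\min\{d(x,y),r\}$, hence (3). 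The implications $(3)\Rightarrow(2)$ (local Riemannian equivalence forces the standard topology) and $(2)\Rightarrow(1)$ (a pseudodistance inducing a Hausdorff topology is a distance) are formal.

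The heart of the matter is $(1)\Rightarrow(4)$, which I would prove by contraposition. If $F_M$ is not bounded below near $x_0$ by a Riemannian norm, choose lightlike $v_k$ with $\|v_k\|=1$, based at $x_k\to x_0$, and $F_M(v_k)\to0$; unwinding the definition of $F_M$ and reparametrizing by isometries of $(I,d_I)$ yields $\gamma_k\in\PPL(I,M)$ with $\gamma_k(0)=x_k$ and $\|\gamma_k'(0)\|\to\infty$. Then, exactly as in the proof of Proposition~\ref{prop_topologie_PPL}, I would rescale the projective parameter and absorb $\gamma_k''(0)$ into a homography $\phi_k$ fixing $0$ with $\phi_k'(0)\to 0$, so that $\gamma_k\circ\phi_k$ has convergent $2$‑jet at $0$ and unit speed there; Corollary~\ref{cor_conv_geod_avec_2jet} produces a subsequential $C^{2}$‑limit $\alpha\in\PPL(I_\varepsilon,M)$ with $\alpha'(0)\ne0$, in particular non‑constant. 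Since $\phi_k$ fixes $0$ with $\phi_k'(0)\to0$ and $\phi_k(I_\varepsilon)\subset I$ (so the pole of $\phi_k$ cannot enter $I_\varepsilon$), one checks, shrinking $I_\varepsilon$ if necessary, that $\phi_k(s)\to0$ for every $s\in I_\varepsilon$; as each $\gamma_k$ is distance‑decreasing from $(I,d_I)$ to $(M,\delta_M)$, $\delta_M\bigl(x_k,(\gamma_k\circ\phi_k)(s)\bigr)\le d_I\bigl(0,\phi_k(s)\bigr)\to0$, and passing to the limit (using continuity of $\delta_M$) gives $\delta_M(x_0,\alpha(s))=0$ for all $s\in I_\varepsilon$. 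Since $\alpha$ is non‑constant, $\delta_M$ does not separate points. I expect this step to be the main obstacle: one must manufacture a genuine non‑constant projectively parametrized lightlike geodesic all of whose points lie at zero Markowitz distance from $x_0$, and — in contrast with the compact Brody setting — one cannot extract an entire geodesic $\R\to M$, so the vanishing of the distances has to come from the collapse of the reparametrizations $\phi_k$, which needs care precisely because $M$ is not assumed compact.

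It remains to weave in (5) and to conclude. For $(3)\Rightarrow(5)$, by (4) the function $z\mapsto\inf\{F_M(v): v\ \text{lightlike},\ \|v\|=1,\ \pi(v)=z\}$ is bounded below by a positive constant on every relatively compact set, so a partition of unity gives a Riemannian metric $h$ with $\|v\|_h\le F_M(v)$ for all lightlike $v$; by Theorem~\ref{thm_lien_dmark_Fmark} this forces $d_h\le\delta_M$ on $M$, hence every $\gamma\in\PPL(I,M)$ is $1$‑Lipschitz from $(I,d_I)$ to $(M,d_h)$ and the family is equicontinuous. For $(5)\Rightarrow(1)$, I would argue again by contraposition: $\neg(1)$ gives $\neg(4)$, hence $\gamma_k\in\PPL(I,M)$ with $\gamma_k(0)\to x_0$ and $\|\gamma_k'(0)\|_d\to\infty$; normalizing the underlying affine lightlike geodesics and using continuity of the geodesic flow (Proposition~\ref{prop_equa_diff}), $\gamma_k$ covers a $d$‑distance bounded below by a fixed constant over a parameter interval $[0,t_k]$ with $t_k\to0$, contradicting equicontinuity of $\PPL(I,M)$ at $0\in I$ for any Riemannian distance. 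Finally, in the hyperbolic case Theorem~\ref{thm_lien_dmark_Fmark} writes $\delta_M(x,y)$ as an infimum of integrals $\int F_M(\sigma')\,dt$ over piecewise lightlike geodesics $\sigma$; since each such $\sigma$ has $\delta_M$‑length at most $\int F_M(\sigma')\,dt$ while every path from $x$ to $y$ has $\delta_M$‑length at least $\delta_M(x,y)$, the infimum of $\delta_M$‑lengths of paths equals $\delta_M(x,y)$, so $\delta_M$ is a length metric.
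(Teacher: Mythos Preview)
Your arguments for $(4)\Rightarrow(3)\Rightarrow(2)\Rightarrow(1)$, for the link with $(5)$, and for the length-metric conclusion are correct and parallel the paper's; the paper organises the cycle as $(3)\Rightarrow(5)\Rightarrow(4)\Rightarrow(1)\Rightarrow(3)$, but the substantive ingredients (upper bound via lightlike frames and Fact~\ref{fact_existence_chaine}, lower bound from $F_M$ via Theorem~\ref{thm_lien_dmark_Fmark}, and the passage between $(4)$ and $(5)$ through Proposition~\ref{prop_topologie_PPL}) are the same.

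The genuine gap is in your $(1)\Rightarrow(4)$, which you rightly flag as the crux. You assert $\phi_k(I_\varepsilon)\subset I$, hence that the pole of $\phi_k$ lies outside $I_\varepsilon$ and $\phi_k(s)\to 0$. But Corollary~\ref{cor_conv_geod_avec_2jet} only furnishes \emph{extensions} of $\gamma_k\circ\phi_k$ to $I_\varepsilon$, and these may parametrise a longer arc of the underlying lightlike geodesic than $\gamma_k(I)$; so nothing forces $\phi_k(I_\varepsilon)\subset I$. Concretely, $\phi_k(t)=a_kt/(1+b_kt)$ with $a_k\to 0$, but $b_k$ is determined by the uncontrolled quantity $\gamma_k''(0)$, and nothing rules out $|b_k|\to\infty$, which sends the pole $-1/b_k$ into any prescribed $I_\varepsilon$ and destroys your key estimate $\delta_M\bigl(x_k,(\gamma_k\circ\phi_k)(s)\bigr)\le d_I(0,\phi_k(s))$. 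The paper's own proof is equally incomplete at this juncture: its $(1)\Rightarrow(3)$ merely asserts the lower bound on $\delta_M$ ``from the preceding arguments'', but those arguments derive it from $(4)$, not from $(1)$. A repair is to replace $\gamma_k$ by the \emph{maximal} $\hat\gamma_k\in\PPL(I,M)$ (after disposing of the case where the maximal projective interval contains a copy of $\R$, which gives $\neg(1)$ directly): maximality forces the reparametrisation $\sigma_k$ (with $\hat\gamma_k=\xi_k\circ\sigma_k$, $\xi_k$ unit-speed affine) to surject onto a fixed affine interval $[-r,r]$, and writing $u_k=\sigma_k^{-1}$ as a ratio of solutions of $y''+\tfrac{1}{2}R_k y=0$ then bounds the remaining free parameter uniformly; one gets $u_k(r)\to 0$, hence $\hat\gamma_k(u_k(r))\to\xi(r)\ne x_0$ while $\delta_M(x_k,\hat\gamma_k(u_k(r)))\le d_I(0,u_k(r))\to 0$, yielding $\neg(1)$.
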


\begin{rmk}
    Note that in Theorem \ref{thm_hyp}, condition (4) is not equivalent to the positivity of $F_M$, since the infinitesimal functional need not be continuous, see Section \ref{Section_total_conf_lightlike_incompletness}.
\end{rmk} 

\begin{proof}
    Let us first show that $\delta_M$ is always locally bounded above by a Riemannian distance. Since $F_M$ is bounded on compact subsets of $M$, we can find a Riemannian metric $h$ on $M$ such that $F_M\leq \Vert\cdot\Vert_h$ on $C(TM)$. Let $d_h$ be the Riemannian distance associated to $h$ and, for $x,y\in M$, let 
$$\tilde{\delta}(x,y)=\inf_{\gamma:x\rightsquigarrow y}\int \Vert\gamma^\prime(t)\Vert_h\text{d}t,$$
where the infimum runs over all piecewise lightlike geodesic curves from $x$ to $y$.     
From Theorem~\ref{thm_lien_dmark_Fmark}, one has $\delta_M\leq \tilde{\delta}$, so it remains to show that for all $x\in M$, there exists a neighborhood of $x$ and a constant $\alpha>0$ such that $\tilde{\delta}\leq \alpha d_h$ on that neighborhood. Let $x\in M$, and let $(X_1,\dots,X_n)$ be a lightlike frame in a neighborhood of $x$, and fix a metric $g$ in the conformal class of $M$. For $y$ close to $x$, we denote by $\theta_y$ the map defined on an neighborhood of $0\in \R^n$ by $$\smash{\theta_y(t_1,\dots,t_n)=\phi_{X_1}^{t_1}\circ\dots\circ\phi_{X_n}^{t_n}(y)},$$ 
where $\phi_X^t(z)=\exp_z(tX(z))$ and $\exp$ is the exponential map of $g$.
One can find $\varepsilon>0$ and a compact neighborhood $V$ of $x$ such that for all $y\in V$, the map $\theta_y$ is defined on $\overline{B}=[-\varepsilon,\varepsilon]^{p+q}$ and it is a diffeomorphism onto its image, with $V\subset \theta_y(\overline{B})$. By compactness of $\overline{B}\times V$, there exist constants $\alpha_1,\alpha_2>0$ such that
\begin{equation}
\label{inegalite_g_g_0}
    \alpha_1^{-1} h\leq (\theta_x)_*h_{eucl}\leq \alpha_2 h,\text{ for all $x\in V$.}
\end{equation}
Let $0<\varepsilon^\prime<\varepsilon$ and let $B^\prime=(-\varepsilon^\prime,\varepsilon^\prime)^n$. We may choose $\varepsilon^\prime$ sufficiently small and take $V$ smaller if necessary, so that there exists a neighborhood $W$ of $x$, convex for the metric $h$, such that $\theta_y(B^\prime)\subset W\subset \theta_y(B)$ for all $y\in V$. We can still reduce $V$ and assume that $V\subset\theta_y(B^\prime)$ for all $y\in V$. Let $y,z\in V$ and let $u\in B^\prime$ such that $z=\theta_y(u)$. We let $c_1$ be a curve parameterizing the segment $[0,u]\subset B^\prime$, and let $c_2$ be the piecewise linear curve from $0$ to $u$ composed of at most $n$ segments, whose directions are parallel to the axes of $B^\prime$, and whose Riemannian length is
\begin{equation*}
\label{c_1_et_c_2}
    L(c_2,h_{eucl})=\vert u_1\vert+\dots+\vert u_n\vert \leq \sqrt{n}\,L(c_1,h_{eucl}).
\end{equation*} 
By construction, the curve $\gamma_2=\theta_y\circ c_2$ is a lightlike chain from $y$ to $z$, hence Equation~(\ref{inegalite_g_g_0}) implies:
\begin{equation*}
    \tilde{\delta}(y,z)\leq L(\gamma_2,h)\leq \alpha_1  L(\gamma_2,(\theta_y)_*h_{eucl})=\alpha_1L(c_2,h_{eucl})\leq \sqrt{n}\,\alpha_1 L(c_1,h_{eucl}).
\end{equation*}
Since $W$ is convex for $h$, Equation~(\ref{inegalite_g_g_0}) also implies: 
\begin{align*}
    d_h(y,z)&= \inf_{\gamma\subset W}L(\gamma,h)\\
    &\geq \alpha_2^{-1}\inf_{\gamma\subset \theta_y(\overline{B})}L(\gamma,(\theta_y)_*h_{eucl})\\
    &=\alpha_2^{-1}L(\theta_y(c_1),(\theta_y)_*h_{eucl})\\
    &=\alpha_2^{-1}L(c_1,h_{eucl}),
\end{align*}
where the infimum are taken over smooth curves $\gamma$ from $y$ to $z$. Hence we obtain
\begin{equation*}
    \tilde{\delta}(y,z)\leq\sqrt{n}\,\alpha_1\alpha_2d_h(y,z).
\end{equation*}
Hence $\tilde{\delta}\leq \alpha d_h$ on a neighborhood of $x$, where $\alpha=\sqrt{n}\,\alpha_1\alpha_2$. Hence $\delta_M$ is always locally bounded above by a Riemannian distance. In particular, it follows that $\delta_M$ is continuous.

    We can now turn to the proof of the several equivalences between the above propositions. Implications $(3)\Rightarrow(2)\Rightarrow (1)$ are immediate.     
    To prove implication $(1)\Rightarrow(4)$, assume that $F_M$ is not locally bounded below by a Riemannian norm. Then there exists a sequence $(v_k)\in C(TM)$ converging to a nonzero $v\in C(TM)$, such that $F_M(v_k)\to 0$. Up to extraction, we can find a sequence $(\gamma_k)$ such that $\gamma_k\in\PPL((-2^k,2^k),M)$ for all $k\geq 0$, such that $\gamma_k^\prime(0)=v_k$. Let $(h_k)$ be a sequence of homographies such that $\alpha_k=\gamma_k\circ h_k$ has the same 1-jet as $\gamma_k$ at $t=0$, and such that the 2-jet of $\alpha_k$ at $t=0$ converges as $k\to \infty$. Explicitly, we can write $h_k(s)=\frac{s}{1+st_k}$, for some sequence $(t_k)\in \R$. Up to extraction, we can assume that $t_k\geq 0$ for simplicity. From Corollary \ref{cor_conv_geod_avec_2jet}, there exists $\varepsilon>0$ such that $\alpha_k$ converges smoothly on the interval $[-\varepsilon,\varepsilon]$ to a curve $\alpha$.
    Let $s\in[0,\varepsilon]$ and let $k\geq 0$ be large enough so that $\varepsilon\leq 2^{k-1}$. Then, since $\vert h_k^\prime(s)\vert\leq\varepsilon$, one has 
    $$F_M(\alpha_k^\prime(s))=\vert h_k^\prime(s)\vert F_M(\gamma_k^\prime(h_k(s)))\leq \frac{\varepsilon2^{1-k}}{1-(h_k(s)/2^k)^2}\leq \frac{\varepsilon2^{1-k}}{1-(\varepsilon/2^k)^2}\leq  \varepsilon2^{2-k}.$$
    From Theorem~\ref{thm_lien_dmark_Fmark}, one has $\delta_M(\alpha_k(s),\alpha_k(t))\leq \varepsilon^2 2^{2-k}$ for all $s,t\in [0,\varepsilon]$ and for all $k\geq 0$ large enough. Since $\delta_M$ is continuous, one has $\delta_M(\alpha(0),\alpha(s))=0$ for all $s\in [0,\varepsilon]$. Now $\alpha^\prime(0)=v\neq 0$, so the restriction of $\alpha$ to $[0,\varepsilon]$ is non-constant. Hence $\delta_M$ is not a distance on $M$. 
    
    We now show that $(4)\Rightarrow(3)$. Assume that $F_M$ is locally bounded below by a Riemannian norm restricted to $C(TM)$. Let $h$ be any Riemannian metric on $M$ and let $\Vert \cdot \Vert_h$ denote its norm. For $x\in M$, let 
    $$f(x)=\inf_{v\in C(T_xM)\setminus\{0\}}F_M(v/\Vert v\Vert_h).$$ 
    Then, on every compact subset, the function $f$ is bounded below by a positive constant. If $\hat{f}:M\to \R_{>0}$ is any smooth function such that $f\geq \hat{f}$, then the Riemannian metric $\hat h= \hat{f}\cdot h$ satisfies $F_M\geq \Vert\cdot\Vert_{\hat h}$. From Theorem~\ref{thm_lien_dmark_Fmark}, we obtain $\delta_M\geq d_{\hat h}$. Hence $\delta_M$ is locally equivalent to $d_{\hat h}$ by the above argument. 

    Implication $(3)\Rightarrow(5)$ is clear since every $\gamma\in \PPL(I,M)$ is a 1-Lipschitz map $\gamma:(I,d_I)\to (M,\delta_M)$.
    We now show that $(5)\Rightarrow(4)$. Let $(v_k)\in C(TM)$ be a sequence of lightlike vectors converging to some $v\neq 0$. One can find a sequence $(\gamma_k)\in\PPL(I,M)$ with $\gamma_k^\prime(0)=\frac{1}{2\alpha_k} v_k$, where $\alpha_k\leq F_M(v_k)+1/k$. Since $\gamma_k$ is equicontinuous, there exists $\varepsilon>0$, such that a subsequence of $\gamma_k\vert_{(-\varepsilon,\varepsilon)}$ converges uniformly on compact subsets to some $\gamma:(-\varepsilon,\varepsilon)\to M$. From Proposition~\ref{prop_topologie_PPL}, the convergence is smooth and therefore $\gamma_k^\prime(0)\to \gamma^\prime(0)\in TM$. This implies that $F_M(v_k)\not \to 0$ as $k\to\infty$, so $F_M$ is locally bounded below by a Riemannian norm. 

    \indent Finally, we assume that $(M,\delta_M)$ is a metric space, and we show that $(M,\delta_M)$ is a length metric space. Denote by $L(\gamma,\delta_M)$ the metric space length of a curve $\gamma$ with respect to the distance $\delta_M$. Let $x,y\in M$ and let $\varepsilon>0$. There exists a chain $\Ccal=\{N,(\gamma_k),(s_k),(t_k)\}$ from $x$ to $y$ such that $\delta(x,y)\leq L(\Ccal)\leq \delta(x,y)+\varepsilon$. Let $\gamma$ be the curve from $x$ to $y$ obtained as the concatenation of the curves $\gamma_k\vert_{[s_k,t_k]}$.  For all $k\in \{1,\dots,N\}$, since $\gamma_k:(I,d_I)\to(M,\delta_M)$ is 1-Lipschitz by definition of $\delta_M$, we get 
$$L(\gamma_k\vert_{[s_k,t_k]},\delta_M)\leq d_I(s_k,t_k).$$
Summing over all $k\in \{1,\dots,N\}$, one obtains 
$$L(\gamma,\delta_M)\leq \sum_{k=1}^N d_I(s_k,t_k)= L(\Ccal)\leq \delta(x,y)+\varepsilon,$$
so $\delta_M$ is a length metric.
\end{proof}

We state the following corollary of the preceding proof, which can be used to give bi-Lipschitz estimates on the Markowitz distance. 

\begin{cor}\label{cor_equivalence_delta_et_d_g}
    Let $(M,[g])$ be a Markowitz hyperbolic conformal manifold. Assume that there exists a Riemannian metric $h$ and a subgroup $\Gamma\subset\Conf(M,[g])$, such that $h$ is $\Gamma$-invariant and $M/\Gamma$ is compact. Then $\delta_M$ is equivalent to the Riemannian distance $d_h$.
\end{cor}

\begin{proof}
      Let $\Vert\cdot\Vert_h$ denote the Riemannian norm of $h$, and let $\Kcal\subset M$ be a compact subset that intersects every $\Gamma$-orbit. Since $M$ is Markowitz hyperbolic, there exists $\alpha>1$, such that 
      \begin{equation}
        \label{double_ineg}
          \alpha^{-1} \Vert\cdot\Vert_h\leq F_M\leq \alpha \Vert\cdot\Vert_h
      \end{equation} 
      holds on $\Kcal$, see Theorem~\ref{thm_hyp}. Since both $h$ and $F_M$ are $\Gamma$-invariant, Equation~(\ref{double_ineg}) actually holds on $M$. For $x,y\in M$, let $\tilde\delta(x,y)=\inf \int \Vert\gamma^\prime(t)\Vert_h dt$, where the infimum runs over all piecewise lightlike geodesics from $x$ to $y$. Note that $\tilde\delta$ was already defined in the proof of Theorem~\ref{thm_hyp}. From Theorem~\ref{thm_lien_dmark_Fmark}, one has 
      $\alpha^{-1}\tilde \delta \leq \delta_M\leq \alpha\tilde \delta$. Therefore, it remains to show that $\tilde\delta$ is equivalent to $d_h$. We already have $d_h\leq \tilde\delta$ by construction. From the proof of Theorem~\ref{thm_hyp} and the compactness of $\Kcal$, we can find $\beta>1$ such that $\tilde\delta\leq\beta d_h$ holds on a neighborhood $U$ of $\Kcal$. Let $\alpha:[0,1]\to M$ be a continuous curve. One can divide $\alpha$ in curves $\alpha_1,\dots,\alpha_m$ all of which are contained in at least one element of the open covering $\{\gamma\cdot U\}_{\gamma\in \Gamma}$. Let $1\leq i\leq m$ and let $\gamma_i\in \Gamma$ such that $\alpha_i\subset \gamma_i^{-1}\cdot U$. Since $d_h$ and $\tilde\delta$ are $\Gamma$-invariant, one has $$L(\alpha_i,\tilde\delta)=L(\gamma_i(\alpha_i),\tilde\delta)\leq \beta L(\gamma_i(\alpha_i),d_h)=\beta L(\alpha_i,d_h),$$
      where $L$ refers to the metric length. By additivity of the length, we deduce that
      $L(\alpha,\tilde\delta)\leq \beta L(\alpha, d_h).$
      Since $\tilde\delta$ is a length metric, it follows that the inequality $\tilde\delta\leq\beta d_h$ holds globally on $M$. Hence $d_h$ and $\tilde\delta$ are equivalent.
\end{proof}

\subsection{Total conformal lightlike incompleteness}\label{Section_total_conf_lightlike_incompletness}
We now discuss a notion that is related to Markowitz hyperbolicity.

\begin{definition}\label{def_tot_conf_lightlike_incompletness}
A conformal manifold $(M,[g])$ is said to be \emph{totally conformally lightlike incomplete} if every projectively parametrized lightlike geodesic $\R\to M$ is constant.
\end{definition}

Total conformal lightlike incompleteness is the pseudo-Riemannian analog of Brody hyperbolicity in complex geometry, see \cite{Brody}.

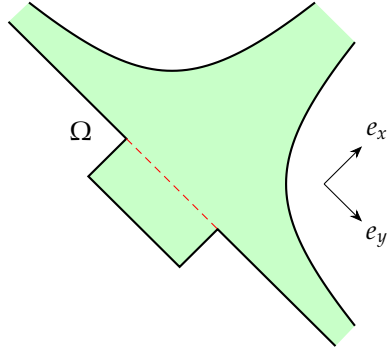
\begin{figure}
    \centering

\begin{tikzpicture}[scale = 1] 
        \def\a{0.6}
        \def\c{1.05}
        \def\rayon{1.5}
        \def\eps{0.5}

        \fill[bovert] ({\rayon*sinh(\c)},{\rayon*cosh(\c)}) -- ({\rayon*sinh(-\c)},{\rayon*cosh(-\c)}) -- (-2.15,2.15)--(-\a,\a)--(-\a-\eps,\a-\eps)--(\a-\eps,-\a-\eps)--(\a,-\a)--(2.15,-2.15)-- ({\rayon*cosh(\c)},{\rayon*sinh(-\c)}) -- ({\rayon*cosh(\c)},{\rayon*sinh(\c)});

        \fill[white,domain=-\c-0.1:\c+0.1,smooth,variable=\u,thin] 
      plot ({\rayon*sinh(\u)},{\rayon*cosh(\u)});

      \fill[white,domain=-\c-0.1:\c+0.1,smooth,variable=\u,thin] 
      plot ({\rayon*cosh(\u)},{\rayon*sinh(\u)});

        \draw[thick] (-2.15,2.15)--(-\a,\a)--(-\a-\eps,\a-\eps)--(\a-\eps,-\a-\eps)--(\a,-\a)--(2.15,-2.15);
        \draw[densely dashed,red] (-\a,\a) -- (\a,-\a);
         \draw[domain=-\c:\c,smooth,variable=\u,thick] 
      plot ({\rayon*sinh(\u)},{\rayon*cosh(\u)});

      \draw[domain=-\c:\c,smooth,variable=\u,thick] 
      plot ({\rayon*cosh(\u)},{\rayon*sinh(\u)});
        
          \draw[->,>=Stealth] (2,0) -- (2.5,0.5);
          \draw[->,>=Stealth] (2,0) -- (2.5,-0.5);
          \draw node at (2.7,0.7) {$e_x$};
          \draw node at (2.7,-0.7) {$e_y$};
          \draw node at (-1.2,0.7) {$\Omega$};

        \end{tikzpicture}
        
        \caption{\sloppy The domain $\Omega=(\max\{\vert x\vert,\vert y\vert\}<1)\cup (\vert xy\vert<1\text{ and }x>0)$ contained in Minkowski space $(\R^{1,1},[dxdy])$ that is totally conformally lightlike incomplete but not Markowitz hyperbolic. The Markowitz pseudodistance of $\Omega$ is identically zero on the red dashed segment $(x=0)\cap \Omega$.}
        \label{Omega_slip_non_Mhyp}
\end{figure}

\begin{prop}\label{prop_TCLI}
    Let $(M,[g])$ be a conformal manifold. Then, one has:
    \begin{enumerate}
        \item $M$ is Markowitz hyperbolic $\Rightarrow$ $M$ is totally conformally lightlike incomplete.
        \item $M$ is totally conformally lightlike incomplete $\iff$ $F_M$ is positive (that is $v\neq 0\Rightarrow F_M(v)>0$).
    \end{enumerate}
\end{prop}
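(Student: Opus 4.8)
\textit{The plan.} Both assertions reduce to one elementary mechanism plus a single genuinely non-formal construction; I would organize the argument around these.

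\textit{The mechanism, and proofs of $(1)$ and of ``$F_M$ positive $\Rightarrow$ totally conformally lightlike incomplete''.} Suppose $\gamma\colon\R\to M$ is a non-constant projectively parametrized lightlike geodesic. I would precompose it with the affine homographies $h_k\colon I\to\R$ of slope $k$, whose images exhaust $\R$, so that $\gamma\circ h_k\in\PPL(I,M)$. For fixed $s,t\in\R$ this is, for $k$ large, a single-link lightlike chain from $\gamma(s)$ to $\gamma(t)$ of length $\rho_I(s/k,t/k)\to 0$, so $\delta_M(\gamma(s),\gamma(t))=0$; and for fixed $t_0\in\R$, taking $h_k(x)=t_0+kx$ and the tangent vector $u=\tfrac1k\,\partial_x|_0\in T_0I$, one has $(\gamma\circ h_k)_*u=\gamma'(t_0)$ while $\sqrt{\rho_I(u,u)}=2/k\to 0$, so $F_M(\gamma'(t_0))=0$. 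Since a non-constant PPL geodesic on $\R$ is an immersion (its underlying affine geodesic is non-constant and the inverse of a projective parameter is a local diffeomorphism), it is locally injective, hence exhibits two distinct image points at zero $\delta_M$-distance, and at any $t_0$ it has $\gamma'(t_0)\neq 0$ with $F_M(\gamma'(t_0))=0$. This proves $(1)$ in contrapositive form, and the implication ``$F_M$ positive $\Rightarrow$ totally conformally lightlike incomplete'' of $(2)$.

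\textit{The remaining implication of $(2)$.} It remains to show that total conformal lightlike incompleteness implies $F_M>0$; I would argue by contraposition. Assume $F_M(v_0)=0$ for some lightlike $v_0\neq 0$ at $x_0\in M$; the goal is to produce a non-constant PPL geodesic $\R\to M$. Unwinding the definition of $F_M$ and reparametrizing by isometries of $(I,\rho_I)$ (these are exactly the homographies $I\to I$), one first gets $\gamma_k\in\PPL(I,M)$ with $\gamma_k(0)=x_0$ and $\gamma_k'(0)=\mu_k v_0$, $|\mu_k|\to\infty$. Now fix $h$ in the conformal class, let $\eta$ be the maximal $h$-geodesic with $\eta(0)=x_0$, $\eta'(0)=v_0$ (domain $L_\eta\subset\R$), and let $f\colon L_\eta\to\RPunTilde$ be the projective parameter along $\eta$ normalized by $f(0)=0$, $f'(0)=1$, $f''(0)=0$; then $\widehat\eta:=\eta\circ f^{-1}$ is the inextendible non-constant PPL geodesic tangent to $v_0$, defined on $J:=f(L_\eta)\subseteq\RPunTilde$ (Proposition~\ref{prop_str_proj_sur_les_geod_de_lum}). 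Writing $\gamma_k=\xi_k\circ u_k^{-1}$ with the projective parameter $u_k$ normalized by $u_k(0)=0$, $u_k'(0)=1$, $u_k''(0)=0$ forces $\xi_k(s)=\eta(\mu_k s)$ by homogeneity of geodesics, and then the rescaled function $\tilde u_k(r):=u_k(r/\mu_k)$ solves the \emph{same} Schwarzian equation $S\tilde u_k=\tfrac{2}{n-2}\Ric_h(\eta',\eta')$ along $\eta$ as $f$ does, with $\tilde u_k(0)=0$, $\tilde u_k'(0)=1/\mu_k$, $\tilde u_k''(0)=0$. Two solutions of this equation on the connected interval $L_\eta$ differ by a single homography, and matching $2$-jets at $0$ forces that homography to be $x\mapsto x/\mu_k$; hence $\tilde u_k=\tfrac1{\mu_k}f$ and $\gamma_k=\eta\circ\tilde u_k^{-1}$. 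As $\gamma_k$ is defined on all of $I=(-1,1)$ we must have $I\subseteq\tilde u_k(L_\eta)=\tfrac1{\mu_k}J$, i.e. $J\supseteq(-|\mu_k|,|\mu_k|)$ for every $k$; letting $|\mu_k|\to\infty$, the interval $J$ must contain a copy of the affine line $\R$ inside $\RPunTilde$, and restricting $\widehat\eta$ there produces the desired non-constant PPL geodesic $\R\to M$.

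\textit{Main obstacle.} The only non-formal step is this last one. From $F_M(v_0)=0$ we obtain PPL geodesics through $x_0$ whose speed at the base point blows up, and---crucially---$M$ is not assumed compact, so one cannot extract a limiting ``entire'' geodesic in the spirit of Brody's reparametrization lemma. The substitute is the rigidity of projective parametrizations: the blow-up must be absorbed into an affine dilation of the projective parameter along the \emph{fixed} underlying geodesic $\eta$, which then forces the projective structure of $\eta$ to contain a full copy of $\R$. The delicate part is making this precise---identifying the normalized projective parameters $\tilde u_k$ and $f$ via the action of $\PGL_2(\R)$ on solutions of the relevant Schwarzian equation, and controlling their images in $\RPunTilde$ (equivalently: a projective immersion $(-1,1)\to\RPunTilde$ fixing $0$ with arbitrarily large derivative there has image of projective length tending to $\pi$).
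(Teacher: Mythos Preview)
Your argument is essentially correct, with one small gap to flag, and it differs from the paper's in an instructive way.

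For (1) and for the implication ``$F_M$ positive $\Rightarrow$ TCLI'' in (2), your rescaling mechanism is the same as the paper's. Your direct proof of (1)---exhibiting two distinct image points of a non-constant $\gamma\in\PPL(\R,M)$ at zero $\delta_M$-distance---is actually more self-contained than the paper's route, which deduces (1) from (2) together with Theorem~\ref{thm_hyp} (Markowitz hyperbolic $\Rightarrow$ $F_M$ locally bounded below by a Riemannian norm $\Rightarrow$ $F_M$ positive).

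For the remaining implication of (2), you argue by contraposition through an explicit Schwarzian computation, whereas the paper argues directly and in two lines. The paper's proof: given TCLI and $v\neq 0$, the inextendible PPL geodesic tangent to $v$ is defined on an interval $J\subset\RPunTilde$ (Proposition~\ref{prop_str_proj_sur_les_geod_de_lum}); since $J$ cannot contain a copy of $\R$, it is projectively equivalent to $I$, and after that identification $F_M(v)=\sqrt{\rho_I(u,u)}>0$ where $\gamma_*u=v$. (The equality---that the infimum is realized by the maximal geodesic---is implicit and follows from Schwarz--Pick for projective self-maps of $I$.) Your contrapositive recovers the same dichotomy, but through the heavier machinery of matching projective parameters along the fixed affine geodesic $\eta$.

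The gap: the normalization $u_k''(0)=0$ is not freely available. Once $\gamma_k$ is fixed (via an isometry of $I$, using one degree of freedom) and you impose $u_k(0)=0$, $u_k'(0)=1$ (using the two affine degrees of freedom in $\xi_k$), the value $u_k''(0)$ is determined by the $2$-jet of $\gamma_k$ at $0$, which you do not control. Without $u_k''(0)=0$ the homography $H_k$ relating $\tilde u_k$ to $f$ need not be linear, and the inclusion $J\supseteq(-|\mu_k|,|\mu_k|)$ does not follow as written. The fix is painless: skip the Schwarzian bookkeeping and note directly that $\gamma_k=\widehat\eta\circ\psi_k$ for a projective map $\psi_k\colon I\to J$ with $\psi_k(0)=0$, $\psi_k'(0)=\mu_k$; if $J$ were projectively equivalent to $I$ then $\psi_k$ would contract the Poincar\'e metric, forcing $|\mu_k|$ to stay bounded. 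That is exactly the paper's argument, read in contrapositive.
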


\begin{proof}
    If $M$ is Markowitz hyperbolic, then $F_M$ is locally bounded below by a Riemannian norm (Theorem \ref{thm_hyp}), so $F_M$ is positive. Therefore assertion $(1)$ follows from assertion $(2)$. It remains to prove assertion $(2)$.
    Assume that $(M,[g])$ contains a non-constant $\gamma\in\PPL(\R,M)$. Then, for $k\geq 1$, the curve $\gamma_k:I\to M$ defined by $\gamma_k(s)=\gamma(ks)$ for $s\in I$ is a projectively parametrized lightlike geodesic. Hence, if we let $v=\gamma^\prime(0)\neq 0$, one has
    $$F_M(v)=\frac{1}{k}F_M(\gamma_k^\prime(0))\leq \frac{1}{k}\sqrt{\rho_I(\partial_t,\partial_t)}=\frac{2}{k},$$
    hence $F_M(v)=0$. Therefore $F_M$ is not positive. Conversely, assume that $M$ is totally conformally lightlike incomplete, and let $v$ be a nonzero lightlike vector. Let $\gamma:J\to M$ be a maximal projectively parametrized lightlike geodesic tangent to $v$, where $J\subset\RPunTilde$ is a connected interval (such a geodesic exists by Proposition~\ref{prop_str_proj_sur_les_geod_de_lum}). Since $M$ is totally conformally lightlike incomplete, the domain $J$ is projectively equivalent to $I$, so we can always assume that $J=I$, up to a reparametrization. If $u\in TI$ is such that $\gamma_*u=v$, then 
    $$F_M(v)=\sqrt{\rho_I(u,u)}>0.$$
    Hence $F_M$ is positive.
\end{proof}

\begin{rmk}
    A totally conformally lightlike incomplete manifold may fail to be Markowitz hyperbolic, see Figure~\ref{Omega_slip_non_Mhyp}.
\end{rmk}

\subsection{Complete hyperbolicity} We now briefly discuss the completeness of $\delta_M$, similarly to \cite{Royden}.

\begin{prop}
    \label{prop_cara_complete_hyperbolic}
    Let $(M,[g])$ be a conformal manifold. Then one has:
    \begin{enumerate}        
        \item $(M,\delta_M)$ is complete $\iff$ closed balls are compact.
        \item $(M,\delta_M)$ is complete $\Rightarrow$ $F_M$ is positive and continuous $\Rightarrow$ $M$ is Markowitz hyperbolic. 
    \end{enumerate}
\end{prop}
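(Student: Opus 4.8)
The plan is to treat the two assertions in turn. As is standard for Kobayashi-type metrics, I read ``$(M,\delta_M)$ is complete'' as meaning that $\delta_M$ is a distance (so $M$ is Markowitz hyperbolic) and that the metric space $(M,\delta_M)$ is complete; Theorem~\ref{thm_hyp} then supplies all the structure I will need, namely that $\delta_M$ induces the manifold topology and is a length metric. I would prove the first assertion before the second, since the argument for the second uses properness of $(M,\delta_M)$.

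\emph{First assertion.} The implication ``closed balls are compact $\Rightarrow$ $(M,\delta_M)$ complete'' is routine: a Cauchy sequence is bounded, hence lies in a compact closed ball, hence has a convergent subsequence, and a Cauchy sequence with a convergent subsequence converges. For the converse, I would use Theorem~\ref{thm_hyp}: $(M,\delta_M)$ is locally compact (it induces the standard topology on the manifold $M$) and is a length space; a complete, locally compact length metric space is proper by the Hopf--Rinow--Cohn-Vossen theorem, and ``proper'' is precisely the statement that closed balls are compact.

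\emph{Second assertion.} Positivity of $F_M$ is immediate: $M$ is Markowitz hyperbolic, hence totally conformally lightlike incomplete by Proposition~\ref{prop_TCLI}(1), and by Proposition~\ref{prop_TCLI}(2) this is equivalent to $F_M$ being positive. The substantial point is continuity of $F_M$. Since $F_M$ is already upper semicontinuous, I only need lower semicontinuity at an arbitrary nonzero lightlike $v_0$. Given lightlike $v_k\to v_0$, I pass to a subsequence realizing $L:=\liminf_j F_M(v_j)$ and, for each $k$, choose $\gamma_k\in\PPL(I,M)$ and $u_k\in TI$ with $(\gamma_k)_*u_k=v_k$ and $\sqrt{\rho_I(u_k,u_k)}\le F_M(v_k)+1/k$. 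Composing $\gamma_k$ with an isometry of $(I,d_I)$, which is a homography and therefore preserves both membership in $\PPL(I,M)$ and the $\rho_I$-length of $u_k$, I may assume $u_k=c_k\,\partial_t|_0$ for a scalar $c_k$; then $\gamma_k(0)=\pi(v_k)$, $c_k\gamma_k'(0)=v_k$ and $2|c_k|\le F_M(v_k)+1/k$. Each $\gamma_k$ is $1$-Lipschitz from $(I,d_I)$ to $(M,\delta_M)$, so for $r<1$ the image $\gamma_k([-r,r])$ lies in the closed $\delta_M$-ball of radius $d_I(0,r)$ around $\pi(v_k)$; since $\pi(v_k)\to\pi(v_0)$ and closed balls are compact by the first assertion, these images lie in a common compact set. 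By Arzelà--Ascoli a subsequence of $(\gamma_k)$ converges uniformly on compact subsets to a curve $\gamma\colon I\to M$, and by Proposition~\ref{prop_topologie_PPL} the limit belongs to $\PPL(I,M)$ and the convergence is smooth, so $\gamma_k'(0)\to\gamma'(0)$. As $|c_k|$ is bounded and $v_k\to v_0\neq0$, the identity $c_k\gamma_k'(0)=v_k$ forces (a further subsequence of) $c_k$ to converge to some $c\neq0$ with $2|c|\le L$ and $c\gamma'(0)=v_0$. Hence $F_M(v_0)\le\sqrt{\rho_I(c\,\partial_t|_0,\,c\,\partial_t|_0)}=2|c|\le L$, giving lower semicontinuity. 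Finally, if $F_M$ is positive and continuous, then over any compact $K\subset M$ the set of lightlike vectors that are unit for a fixed auxiliary Riemannian metric $g$ is compact, so $F_M$ has a positive lower bound there; by homogeneity $F_M$ is locally bounded below by a Riemannian norm on $C(TM)$, and Theorem~\ref{thm_hyp} (implication $(4)\Rightarrow(1)$) yields that $M$ is Markowitz hyperbolic.

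\emph{Main obstacle.} The delicate step is the lower semicontinuity of $F_M$: this is exactly where completeness is used, through the properness of $(M,\delta_M)$ obtained in the first assertion, and the bookkeeping with the rescaling scalars $c_k$ -- together with normalizing the point of tangency by a homography -- must be carried out carefully so that the limiting geodesic is genuinely tangent to $v_0$ with the correct $\rho_I$-cost. The nontrivial direction of the first assertion itself leans on the length-space structure from Theorem~\ref{thm_hyp} and the classical Hopf--Rinow--Cohn-Vossen theorem.
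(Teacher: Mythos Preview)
Your proposal is correct and follows essentially the same approach as the paper: Hopf--Rinow for length spaces handles assertion~(1), and for assertion~(2) the paper likewise deduces positivity from Proposition~\ref{prop_TCLI}, obtains lower semicontinuity of $F_M$ via Arzel\`a--Ascoli (using properness of $(M,\delta_M)$) combined with Proposition~\ref{prop_topologie_PPL}, and closes with Theorem~\ref{thm_hyp}. Your bookkeeping with the scalars $c_k$ and the homography normalization is in fact more explicit than the paper's version, which normalizes by rescaling to $F_M(v_k)=2$ and is somewhat terse on this point.
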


\begin{proof}
    Assertion $(1)$ follows from the fact that $\delta_M$ is a length metric and Hopf-Rinow theorem \cite[Prop. 3.7]{bridson2013metric}. Assume now that $(M,\delta_M)$ is complete. Then $F_M$ is positive by Proposition \ref{prop_TCLI}.  Let $(v_k)$ be a sequence of lightlike vectors converging to some $v\in C(TM)$, so that $F_M(v_k)=2$ for all $k\geq 0$. Then we can find a sequence $\gamma_k\in\PPL(I,M)$ such that $\gamma_k^\prime(0)=v_k$ for all $k\geq 0$. Now $\gamma_k:(I,d_I)\to (M,\delta_M)$ is 1-Lipschitz for all $k\geq 0$, hence there exists a subsequence of $(\gamma_k)$ that converges uniformly on compact subsets to some curve $\gamma:I\to M$, by the Arzelà-Ascoli theorem. Proposition \ref{prop_topologie_PPL} implies that $\gamma\in \PPL(I,M)$ and that the convergence is smooth. In particular $v=\gamma^\prime(0)$, so 
    $F_M(v)\leq 2$ by definition of $F_M$. This shows that $F_M$ is lower-semicontinuous, hence $F_M$ is continuous. Thus, the first implication of assertion $(2)$ is proved. Assume now that $F_M$ is positive and continuous. Then $F_M$ is locally bounded below by a Riemannian norm, so $M$ is Markowitz hyperbolic by Theorem \ref{thm_hyp}.
\end{proof}

\subsection{Example: the diamond}\label{Section_ex_diamant} The \emph{diamond} of signature $(p,q)$ is the conformal manifold defined as $\D^{p,q}=\H^p\times\H^q$ endowed with the conformal class of the product metric $-g_{\H^p}+g_{\H^q}$. This conformal spacetime is conformally flat, and it can be identified with a proper domain of $\Ein^{p,q}$, see \cite[Sect. 3]{Cha_Gal}. In Lorentzian signature $(p,q)=(1,n)$, the diamond $\D^{1,n}=\R\times \H^n$ is conformally equivalent to a chronal diamond $I(x,y)$ of the Minkowski space $\R^{1,n}$. The diamond is Markowitz hyperbolic, and its Markowitz pseudodistance can be fully computed. 

\begin{prop}[{\cite[Ex. 4.2]{Cha_Gal}}]\label{prop_distance_diamant}
The Markowitz distance of $\D^{p,q}=\H^p\times \H^q$ between two points $x=(x_p,x_q)$ and $y=(y_p,y_q)$ is given by 
$$\delta_{\D^{p,q}}(x,y)=\max\{d(x_p,y_p),d(x_q,y_q)\},$$
where $d$ denotes the hyperbolic distance.
\end{prop}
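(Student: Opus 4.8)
\textbf{Strategy.} The plan is to prove the two inequalities separately, using the product structure $\D^{p,q}=\H^p\times\H^q$ and the behaviour of the infinitesimal functional under the projection maps. Write $\pi_p:\D^{p,q}\to\H^p$ and $\pi_q:\D^{p,q}\to\H^q$ for the two coordinate projections. These are not conformal maps, but the key observation is that $\pi_p$ and $\pi_q$ map projectively parametrized lightlike geodesics of $\D^{p,q}$ to projectively parametrized geodesics of $\H^p$ and $\H^q$ respectively, after a reparametrization — indeed, by the computation in Example~\ref{exemple_para_proj}, a maximal $\gamma\in\PPL(I,\D^{p,q})$ is of the form $\gamma=(\xi^p,\xi^q)\circ\tan^{-1}$ with $\xi^p,\xi^q$ unit-speed hyperbolic geodesics, so $\pi_p\circ\gamma$ traces a hyperbolic geodesic. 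Since the hyperbolic metric on $\H^p$ has sectional curvature $-1$, a unit-speed geodesic reparametrized by $\tan^{-1}$ on the interval $I=(-1,1)$ is exactly the projectively parametrized one realizing $d(x_p,y_p)$ as the Poincaré distance $d_I$ between parameter values (this uses the standard fact that $(I,d_I)$ is isometric to a geodesic of $\H^p$ after such a reparametrization — more precisely the defining property that $\delta_{\H^p}$-type contractions along geodesics are tight). This gives the lower bound $\delta_{\D^{p,q}}(x,y)\geq d(x_p,y_p)$ and symmetrically $\geq d(x_q,y_q)$, hence $\delta_{\D^{p,q}}(x,y)\geq\max\{d(x_p,y_p),d(x_q,y_q)\}$.

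\textbf{Upper bound.} For the reverse inequality I would exhibit a short lightlike chain from $x=(x_p,x_q)$ to $y=(y_p,y_q)$. Without loss of generality assume $d(x_p,y_p)\geq d(x_q,y_q)=:L$. Choose the hyperbolic geodesic segment from $x_p$ to $y_p$ in $\H^p$, and a geodesic in $\H^q$ through $x_q$; by rescaling the geodesic in the $\H^q$-factor (composing with a hyperbolic isometry / adjusting speed, which corresponds to a projective reparametrization on the second factor), one builds a single $\gamma_1\in\PPL(I,\D^{p,q})$ whose first-factor trace goes from $x_p$ to $y_p$ and whose second-factor trace passes through $x_q$ and some auxiliary point $z_q$, arranged so that the $d_I$-length of the parameter interval equals exactly $d(x_p,y_p)$. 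Then a second geodesic $\gamma_2$ keeps the first factor fixed at $y_p$ (a constant, lightlike-degenerate direction in $\H^p$) while moving the second factor from $z_q$ to $y_q$; its parameter length can be made $\leq d(x_p,y_p)$. Summing gives a chain, but this naive estimate only yields $\leq 2\max$. To get the sharp constant I would instead invoke Theorem~\ref{thm_lien_dmark_Fmark} and compute $F_{\D^{p,q}}$ directly: using Example~\ref{exemple_para_proj} and the form of maximal geodesics, for a lightlike $v=(v_p,v_q)\in T_{(x_p,x_q)}\D^{p,q}$ one finds $F_{\D^{p,q}}(v)=\max\{\|v_p\|_{\hyp},\|v_q\|_{\hyp}\}$ (each factor geodesic being complete, each contributes its own hyperbolic norm, and the "max" arises because the reparametrization $\tan^{-1}$ can be chosen to favour whichever factor is larger, making the other factor strictly sub-maximal). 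Integrating this $\ell^\infty$-type Finsler norm along the path that is geodesic in the dominant factor and constant in the other yields precisely $\max\{d(x_p,y_p),d(x_q,y_q)\}$, matching the lower bound.

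\textbf{Main obstacle.} The delicate point is the computation $F_{\D^{p,q}}(v)=\max\{\|v_p\|_{\hyp},\|v_q\|_{\hyp}\}$ and, relatedly, making the upper-bound chain genuinely achieve the maximum rather than the sum. The subtlety is that a projectively parametrized lightlike geodesic of $\D^{p,q}$ couples the two factors through a \emph{single} projective parameter: one cannot independently optimize the reparametrization in each factor. One must check that, given $v$ with (say) $\|v_p\|_{\hyp}\geq\|v_q\|_{\hyp}$, there is a choice of the common parameter making $\sqrt{\rho_I(u,u)}$ equal to $\|v_p\|_{\hyp}$ — i.e. the dominant factor is parametrized "tightly" as an isometric copy of $(I,d_I)$ while the subdominant factor, forced to use the same parameter, still fits inside its (complete) geodesic. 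This is exactly where completeness of $\H^q$ is used. I would verify it by explicit computation with $\xi=(\xi^p,\xi^q)$, $\gamma=\xi\circ\tan^{-1}$, and the pushforward formula for $F$ from Example~\ref{exemple_para_proj}, and then assemble the two inequalities. Alternatively — and perhaps more cleanly — one can cite \cite[Ex.~4.2]{Cha_Gal} directly, as the proposition is attributed there, and present the above only as a sketch of why the formula holds.
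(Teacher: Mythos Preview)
The paper does not give its own proof; the result is quoted from \cite[Ex.~4.2]{Cha_Gal}. Your proposal, however, contains a genuine error that breaks the upper bound. In $\D^{p,q}=\H^p\times\H^q$ with the conformal class of $-g_{\H^p}+g_{\H^q}$, a vector $v=(v_p,v_q)$ is lightlike \emph{exactly} when $\|v_p\|_{\hyp}=\|v_q\|_{\hyp}$. So your formula $F_{\D^{p,q}}(v)=\max\{\|v_p\|_{\hyp},\|v_q\|_{\hyp}\}$ is vacuous (the two entries are always equal), there is no ``dominant factor'' at the infinitesimal level, and a curve that is ``geodesic in one factor and constant in the other'' is never lightlike. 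The appeal to Theorem~\ref{thm_lien_dmark_Fmark} then fails, since that theorem requires piecewise \emph{lightlike} curves. A smaller slip: Example~\ref{exemple_para_proj} is about $\S^p\times\S^q$; for $\H^p\times\H^q$ the Ricci on lightlike vectors is negative, the Schwarzian equation becomes $Su=-2a^2$, and the projective parameter is $\tanh$, not $\tan$. Thus maximal projective lightlike geodesics are $\gamma(s)=(\eta^p(\tanh^{-1}s),\eta^q(\tanh^{-1}s))$ with $\eta^p,\eta^q$ unit-speed hyperbolic geodesics.

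Your lower-bound idea (project the chain onto each factor) is correct once one uses $\tanh^{-1}$: each segment of a lightlike chain projects in $\H^p$ to a hyperbolic geodesic arc whose length is a fixed multiple of its $d_I$-contribution, so the triangle inequality in $\H^p$ gives the bound. What you are missing is the correct source of the ``$\max$'' in the upper bound: it comes from the chain, not from $F$. If, say, $L:=d(x_p,y_p)\geq d(x_q,y_q)$, take the unit-speed geodesic $\eta^p:[0,L]\to\H^p$ from $x_p$ to $y_p$, and choose any \emph{piecewise} unit-speed geodesic path $\eta^q:[0,L]\to\H^q$ from $x_q$ to $y_q$ (possible since $L\geq d(x_q,y_q)$; e.g.\ overshoot $y_q$ and come back). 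The product $(\eta^p,\eta^q)$ is piecewise lightlike, and its chain length is (up to the same normalizing constant) exactly $L=\max\{d(x_p,y_p),d(x_q,y_q)\}$. This zigzag in the shorter factor is the mechanism that produces the maximum.
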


\subsection{Examples: HB-domains}\label{section_misner} Let $n\geq 1$ and equip $\R^{1,n}$ with the standard flat metric $\b$ and a time orientation. 
Recall from the introduction that for $0\leq \ell\leq n$, a \emph{future HB-domain} is the future $\Omega_\ell=I^+(F_\ell)$ of a $\ell$-dimensional spacelike subspace $F_\ell\subset \R^{1,n}$, see Figure~\ref{Figure_Misner}.

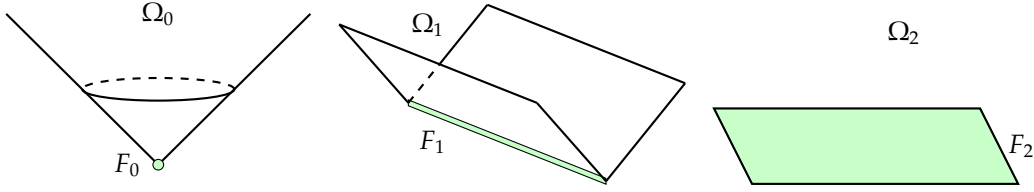
\begin{figure}
    \centering
    \begin{tabular}{ccc}

        \begin{tikzpicture}[scale=1]
            \draw[line width = 0.8pt] (-2,2) -- (0,0) -- (2,2); 
         \fill[red,line width = 0.8pt] (0,0) circle (1pt);
              \draw[dashed,line width = 0.8pt] (1,1) arc (0:180:1 and 0.15);
          \draw[line width = 0.8pt] (-1,1) arc (180:360:1 and 0.15);
          \draw node at (-0.4,0) {$F_0$};
          \filldraw[fill=bovert] (0,0) circle (2pt);
          \node at (0,2) {$\Omega_0$};
        \end{tikzpicture}
        
        & 
        
        \begin{tikzpicture}[scale=1.3]
        \draw node at (-0.75,0) {$F_1$};
        \def\a{1}
        \def\B{0.4}
        \def\c{0.8}
        \def\d{1}
        \filldraw[fill=bovert] (\a,-\B+0.03) -- (-\a,\B+0.03) -- (-\a,\B-0.03) -- (\a,-\B-0.03) -- (\a,-\B+0.03);
        \draw[line width = 0.8pt]  (\a+\c,-\B+\d) -- (-\a+\c,\B+\d);
        \draw[line width = 0.8pt]  (\a,-\B) -- (\a+\c,-\B+\d);
        \draw[dashed, line width = 0.8pt] (-\a,\B) -- (-\a+0.4*\c,\B+0.4*\d);
        \draw[line width = 0.8pt]  (-\a+0.4*\c,\B+0.4*\d) -- (-\a+\c,\B+\d);
        \def\e{-0.7}
        \def\f{0.8}
        \draw[line width = 0.8pt]  (\a+\e,-\B+\f) -- (-\a+\e,\B+\f);
        \draw[line width = 0.8pt]  (\a,-\B) -- (\a+\e,-\B+\f);
        \draw[line width = 0.8pt]  (-\a,\B) -- (-\a+\e,\B+\f);
        \node at (-0.8,1.2) {$\Omega_1$};
         \end{tikzpicture}

         &
        
    \begin{tikzpicture}[scale=0.5]
    \filldraw[fill=bovert, opacity=0.7] 
        (-5,2) -- (-4,0) -- (3,0) -- (2,2) -- cycle;
    \draw[thick] 
        (-5,2) -- (-4,0) -- (3,0) -- (2,2) -- cycle;
        \node at (3.1,1) {$F_2$};
        \node at (0,4) {$\Omega_2$};
    \end{tikzpicture}
            
    \end{tabular}
    \caption{The three future HB-domains of $\R^{1,2}$.}
    \label{Figure_Misner}
\end{figure}

\begin{prop}
    \label{prop_equivalence_d_Omega_ell}
    For $0\leq \ell\leq n$, the conformal spacetime $\Omega_\ell$ is Markowitz hyperbolic, and $(\Omega_\ell,\delta_{\Omega_\ell})$ is bi-Lipschitz equivalent to $\H^{n-\ell}\times \H^{\ell+1}$.
\end{prop}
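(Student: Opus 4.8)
The plan is to realise $\Omega_\ell$ as a homogeneous conformal manifold, to compute its infinitesimal functional $F_{\Omega_\ell}$ explicitly via Example~\ref{exemple_formula_F_M}, and then to extract both Markowitz hyperbolicity and the metric description from the general comparison statements of Section~\ref{section_hyp_Mark}.

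\emph{A homogeneous model.} Write $\R^{1,n}=\R_t\times\R^\ell_u\times\R^{n-\ell}_w$ and take $F_\ell=\{t=0,\ w=0\}$. A direct computation gives
$$
\Omega_\ell=I^+(F_\ell)=\{(t,u,w):t>0,\ t^2>|w|^2\}=\R^\ell\times C^+_{n-\ell},
$$
where $C^+_{n-\ell}\subset\R^{1,n-\ell}$ is the open future solid cone. Let $G\subset\Conf(\Omega_\ell)$ be the subgroup generated by the dilations of $\R^{1,n}$ centred at the origin, the translations and rotations of the $\R^\ell_u$-factor, and $\SO^+(1,n-\ell)$ acting on the $\R^{1,n-\ell}_{(t,w)}$-factor. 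One checks that $G$ preserves $\Omega_\ell$, acts transitively on it with compact stabiliser $\SO(\ell)\times\SO(n-\ell)$, and splits as a direct product $G=A_1\times A_2$, where $A_1=(\R_{>0}\ltimes\R^\ell)\rtimes\SO(\ell)$ acts through the $u$-coordinates and $A_2=\SO^+(1,n-\ell)$ acts through the $(t,w)$-coordinates. Since $A_1/\SO(\ell)\cong\R_{>0}\ltimes\R^\ell$ is the group $AN$ acting simply transitively on $\H^{\ell+1}$, and $A_2/\SO(n-\ell)=\H^{n-\ell}$, the identification $\Omega_\ell=G/(\SO(\ell)\times\SO(n-\ell))$ yields a $G$-equivariant diffeomorphism $\Phi:\Omega_\ell\to\H^{\ell+1}\times\H^{n-\ell}$ intertwining the $G$-action with the product of the standard isometric actions. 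In particular the Riemannian product metric pulls back to a $G$-invariant metric $g$ on $\Omega_\ell$ with $(\Omega_\ell,d_g)$ isometric to $\H^{\ell+1}\times\H^{n-\ell}$; the existence of some such $G$-invariant metric also follows from compactness of the stabiliser.

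\emph{The infinitesimal functional.} Since $\R^{1,n}$ is Ricci-flat, its projectively parametrized lightlike geodesics are exactly the affinely parametrized lightlike lines (Example~\ref{exemple_para_proj}), and the same holds for the domain $\Omega_\ell$. No lightlike line lies in $\{t>0\}$, because a lightlike vector has nonzero $t$-component; hence every maximal projectively parametrized lightlike geodesic of $\Omega_\ell$ is supported on a half-line of $\R\subset\RPunTilde$, which is projectively equivalent to $I$, so $\Omega_\ell$ is totally conformally lightlike incomplete and $F_{\Omega_\ell}$ is positive by Proposition~\ref{prop_TCLI}. Moreover, for $x\in\Omega_\ell$ and $v\in C(T_x\Omega_\ell)$, the maximal lightlike segment through $x$ tangent to $v$ has exactly one finite endpoint $a(x,v)\in\partial\Omega_\ell$, its other end running to infinity (it cannot remain in $\{t>0\}$ in both time directions). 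Following $s\mapsto x+sv$ until the linear-or-quadratic inequality defining $\Omega_\ell$ first fails shows that $a(x,v)$ depends continuously on $(x,v)$, and the limiting case $b\to\infty$ of the formula of Example~\ref{exemple_formula_F_M} gives
$$
F_{\Omega_\ell}(v)=\frac{\Vert v\Vert}{\Vert x-a(x,v)\Vert},
$$
so $F_{\Omega_\ell}$ is continuous on $C(T\Omega_\ell)$. Being positive, continuous, homogeneous and $G$-invariant, while $g$ is $G$-invariant and $G$ is transitive, the quotient $F_{\Omega_\ell}(v)/\Vert v\Vert_g$ is controlled by its restriction to the compact set of $g$-unit lightlike vectors over one basepoint; hence there are constants $0<c\le C$ with $c\Vert v\Vert_g\le F_{\Omega_\ell}(v)\le C\Vert v\Vert_g$ for every $v\in C(T\Omega_\ell)$.

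\emph{Conclusion, and the main obstacle.} The lower bound on $F_{\Omega_\ell}$ and Theorem~\ref{thm_hyp}, implication $(4)\Rightarrow(1)$, show that $\Omega_\ell$ is Markowitz hyperbolic. Then Corollary~\ref{lemme_equivalence_delta_et_d_g}, applied with the $G$-invariant metric $g$ and the transitive (hence cocompact) group $\Gamma=G$, gives that $\delta_{\Omega_\ell}$ is bi-Lipschitz equivalent to $d_g$; composing with $\Phi$ yields $(\Omega_\ell,\delta_{\Omega_\ell})\simeq\H^{n-\ell}\times\H^{\ell+1}$. The genuinely delicate point is the lower bound for $F_{\Omega_\ell}$: upper semicontinuity of $F$ together with positivity is \emph{not} enough to bound it below away from zero on a compact set, so one really needs the explicit description of $F_{\Omega_\ell}$ — in particular the correct interpretation of the ``endpoint at infinity'' in Example~\ref{exemple_formula_F_M} — to obtain continuity and hence the two-sided comparison with $g$. (As a consistency check, when $\ell=0$ one recovers $\Omega_0=C^+_n\cong\D^{1,n}$ and Proposition~\ref{prop_distance_diamant}.)
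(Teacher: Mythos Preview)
Your proof is correct and follows the same overall strategy as the paper --- exhibit a transitive conformal group action with a compatible invariant Riemannian metric, then invoke Corollary~\ref{lemme_equivalence_delta_et_d_g} --- but the execution differs in two useful ways. First, the paper builds the invariant Riemannian metric by Wick-rotating the explicit conformal metric $g_\ell=\b/\Vert x_\ell^\perp\Vert^2$ along the line field $\mathcal D_x=\R x_\ell^\perp$ and then writes down an explicit isometry to $\H^{n-\ell}\times\H^{\ell+1}$; you instead read off the product hyperbolic structure directly from the homogeneous-space description $\Omega_\ell\cong G/(\SO(\ell)\times\SO(n-\ell))$. Second, the paper applies Corollary~\ref{lemme_equivalence_delta_et_d_g} without first verifying Markowitz hyperbolicity (this is deferred to Remark~\ref{remarque_hyp_de_Omega_l}, via Theorem~\ref{thm_conformement_convex}), whereas you close this loop internally: computing $F_{\Omega_\ell}$ from the half-line version of Example~\ref{exemple_formula_F_M} and using transitivity to globalise the positive lower bound yields hyperbolicity via Theorem~\ref{thm_hyp}$\,(4)\Rightarrow(1)$. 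This makes your argument self-contained, at the cost of the extra computation of $F_{\Omega_\ell}$.

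One imprecision worth fixing: your claim that ``$A_1$ acts through the $u$-coordinates'' is not literally true, since the dilations $D_\lambda$ scale all of $(t,u,w)$. The direct-product splitting $G=A_1\times A_2$ is nonetheless correct, but it becomes visible only after passing to coordinates $(u,r,\xi)$ with $r=\sqrt{t^2-|w|^2}$ and $\xi=(t,w)/r\in\H^{n-\ell}$: then $A_1$ acts on the upper-half-space factor $(u,r)\in\H^{\ell+1}$ and $A_2=\SO^+(1,n-\ell)$ acts on $\xi$, and the two commute. With that clarification your equivariant diffeomorphism $\Phi$ and the pulled-back product metric are exactly what you need.
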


\begin{proof}
    For $x\in \Omega_\ell$, we let $x_\ell^\perp$ denote the orthogonal projection of $x$ onto $F_\ell^\perp$. Define a Lorentzian metric $g_\ell$ on $\Omega_\ell$ by 
    $$g_\ell=\frac{1}{\Vert x_\ell^\perp\Vert^2}\b,$$
    where $\Vert\cdot\Vert=\sqrt{\vert \b(\cdot,\cdot)\vert}$ is the Lorentzian norm on $\R^{1,n}$. Let $\mathcal{D}\subset T\Omega_\ell$ be the timelike foliation defined for all $x\in\Omega_\ell$ by 
    $\mathcal{D}_x=\R x_\ell^\perp.$
    Let $\Gamma\subset\Conf(\R^{1,n})$ be the group of similarities that preserve the orthogonal splitting $\R^{1,n}=F_\ell\oplus F_\ell^\perp$ and the time orientation of $\R^{1,n}$. Then, one can check directly that $\Gamma$ preserves $\Omega_\ell$ and acts transitively on it, so $\Omega_\ell/\Gamma$ is compact (when $\ell\geq1$, one can deduce from Liouville's theorem that $\Gamma=\Conf(\Omega_\ell)$, but we will not need this here). Also, the group $\Gamma$ preserves both $g_\ell$ and $\mathcal{D}$.
 
    Let $g_\ell^+$ be the Wick rotation of $g_\ell$ by $\mathcal{D}$, i.e. $g_\ell^+$ is the unique Riemannian metric on $\Omega_\ell$ that coincides with $g_\ell$ and $-g_\ell$ on $\mathcal{D}^\perp$ and $\mathcal{D}$, respectively, and such that $\mathcal{D}^\perp$ and $\mathcal{D}$ are orthogonal with respect to $g_\ell^+$. Since $g_\ell$ and $\mathcal{D}$ are $\Gamma$-invariant, the metric $g_\ell^+$ is also $\Gamma$-invariant.  From Corollary~\ref{cor_equivalence_delta_et_d_g}, the Markowitz distance $\delta_{\Omega_\ell}$ is equivalent to the Riemannian distance of $g_\ell^+$. Now, the spacetime $(\Omega_\ell,g_\ell)$ is isometric to 
    $$((\R_{>0}\times\H^{n-\ell})\times_{t^{-1}}\R^\ell, g_\ell=-t^{-2}dt^2+g_{\H^{n-\ell}}+t^{-2}g_{\R^\ell}),$$
    where an explicit isometry is given by the map $x\mapsto (\Vert x_\ell^\perp\Vert^2,x/\Vert x_\ell^\perp\Vert^2,x-x_\ell^\perp)$. Under this identification, the metric $g_\ell^+$ is given by 
    $$g_\ell^+=t^{-2}dt^2+g_{\H^{n-\ell}}+t^{-2}g_{\R^\ell}=g_{\H^{n-\ell}}+t^{-2}(dt^2+g_{\R^\ell}).$$
    Hence $(\Omega_\ell,g_\ell^+)$ is isometric to $\H^{n-\ell}\times \H^{\ell+1}$.
\end{proof}

\begin{rmk}
    \label{remarque_hyp_de_Omega_l}
    Markowitz hyperbolicity of $\Omega_\ell$ can also be deduced directly from Theorem~\ref{thm_B2}: the domain $\Omega_\ell$ is convex in $\R^{1,n}$, hence it identifies with a conformally convex domain of $\Ein^{1,n}$ (we anticipate some terminology here, see also Section~\ref{section_conf_convex}). Since $\Omega_\ell$ contains no lightlike line in $\R^{1,n}$, it is totally conformally lightlike incomplete. Then Theorem~\ref{thm_B2} implies that $\Omega_\ell$ is Markowitz hyperbolic.
\end{rmk}

\begin{rmk}
    When $\ell=n$ the HB-domain $\Omega_n\subset \R^{1,n}$ is an Einstein-de Sitter half-space. Its Markowitz distance was explicitly computed by Markowitz, see \cite[Thm. 5]{Markowitz_warped_product}. 
    Proposition~\ref{prop_equivalence_d_Omega_ell} implies in particular that $(\Omega_n,\delta_{\Omega_n})$ is bi-Lipschitz isometric to the real $n$-dimensional hyperbolic space $\H^n$. In particular, $(\Omega_n,\delta_{\Omega_n})$ is a Gromov hyperbolic metric space. Other domains of Minkowski space that share this property can be found in \cite{article_hyp_de_Gromov}.
\end{rmk}

\section{Compactness for conformal maps}
\label{section_compacité}

Given a conformal map $f:M\to N$ and given two metrics $g$ and $\tilde{g}$ in the conformal class of $M$ and $N$, respectively, the \emph{conformal distortion} of $f$ at $x$ is defined as the unique positive number $\dc(f,x)_{g,g^\prime}>0$, such that 
$$f^*\tilde{g}\vert_x=(\dc(f,x)_{g,\tilde{g}})^2g\vert_x.$$
In other words, the conformal distortion is the obstruction for a map $f$ to be isometric at $x$ with respect to the metrics $g$ and $\tilde{g}$.

\begin{prop}
\label{prop_compacité_app_conformes}
    Let $M$ and $N$ be two connected Markowitz hyperbolic conformal manifolds. Let $(x_k)\in M$ be a sequence converging to a point $x\in M$ and let $(f_k)\in \Conf(M,N)$ be such that $y_k=f_k(x_k)$ converges to some $y\in N$. Then, the following properties are equivalent:
    \begin{enumerate}
        \item the sequence $(f_k)$ is contained in a compact subset of $\Conf(M,N)$,
        \item for some (hence any) metrics $g$ and $\tilde{g}$ on $M$ and $N$ respectively, one has  
            $\inf_k\dc(f_k,x_k)_{g,\tilde{g}}>0$.
    \end{enumerate}
    If we assume in addition that the maps $f_k$ are injective for all $k\geq 0$, then these conditions are equivalent to the following one:
    \begin{enumerate}
        \setcounter{enumi}{2}
        \item there exists a neighborhood $U$ of $y$, such that $U\subset f_k(M)$ for large enough $k\geq 0$.
    \end{enumerate}
\end{prop}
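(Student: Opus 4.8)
The plan is to prove $(1)\Leftrightarrow(2)$ first, and then fold $(3)$ into the equivalence under the injectivity hypothesis. Fix a metric $g$ in the conformal class of $M$ and a metric $\tilde g$ in that of $N$. Since $M$ and $N$ are Markowitz hyperbolic, Theorem~\ref{thm_hyp} gives that $\delta_M,\delta_N$ induce the standard topologies, are locally bi-Lipschitz to $d_g,d_{\tilde g}$, and that $F_M,F_N$ are locally bounded below by Riemannian norms; recall also that $F_M,F_N$ are upper semicontinuous and locally bounded above. A preliminary observation, used throughout, is an a priori upper bound on the distortion: by naturality of the infinitesimal functional (Proposition~\ref{prop_naturalité_F_M}), for every conformal $f\colon M\to N$, every compact $K\subset M$ and every compact $L\subset N$ one has $\dc(f,p)_{g,\tilde g}\le c(K,L)$ whenever $p\in K$ and $f(p)\in L$, with $c(K,L)$ depending only on $K$ and $L$ (combine $F_N(f_*v)\le F_M(v)$ with an upper bound for $F_M$ over the unit bundle of $K$ and a lower bound for $F_N$ over that of $L$); applied with $p=x_k$ this shows $\sup_k\dc(f_k,x_k)_{g,\tilde g}<\infty$. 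For $(1)\Rightarrow(2)$: if $\inf_k\dc(f_k,x_k)_{g,\tilde g}=0$, take a subsequence realising the infimum in the limit; by $(1)$ a further subsequence converges in $\Conf(M,N)$ to some $f$, and continuity of the distortion under $C^1$-convergence forces $\dc(f,x)_{g,\tilde g}=0$, contradicting that the conformal map $f$ is a local diffeomorphism.

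For $(2)\Rightarrow(1)$ it suffices to prove the local statement ``under the hypotheses together with $(2)$, some neighborhood $V\ni x$ carries a subsequence of $(f_k)$ converging in $C^\infty$ on compact subsets of $V$ to a conformal map'', and then to propagate it. By Proposition~\ref{proposition_naturalité} the $f_k$ are $1$-Lipschitz from $(M,\delta_M)$ to $(N,\delta_N)$, hence equicontinuous. Non-escape near $x$ follows from a connectedness argument: fix a compact $\tilde g$-ball $L\ni y$ and then a small $g$-ball $V\ni x$; along any $g$-geodesic in $V$ the bound $\dc(f_k,\cdot)_{g,\tilde g}\le c(\overline V,L)$ holds as long as its $f_k$-image stays in $L$, so if $V$ is small relative to the radius of $L$, the facts $f_k(x_k)\to y$ and $x_k\to x$ force $f_k(V)\subset L$ for all large $k$. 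Arzelà--Ascoli (with equi-Lipschitz bound the above distortion estimate) then produces a subsequence converging in $C^0(V,L)$ to some $f$ with $\dc(f,x)_{g,\tilde g}\ge\inf_k\dc(f_k,x_k)_{g,\tilde g}>0$, so $f$ is a local diffeomorphism near $x$. Upgrading this to $C^\infty$-convergence with conformal limit is the crux: each $f_k$ carries projectively parametrized lightlike geodesics of $M$ to those of $N$ (as in the proof of Proposition~\ref{proposition_naturalité}), and since $N$ is Markowitz hyperbolic the family $\PPL(I,N)$ is equicontinuous (Theorem~\ref{thm_hyp}) and uniform limits of its members are again members, with smooth convergence (Proposition~\ref{prop_topologie_PPL}); reconstructing $f_k$ near $x$ from the images of the short lightlike geodesic segments of a lightlike frame around $x$ (Fact~\ref{fact_existence_chaine}) and passing to a further subsequence yields $C^\infty_{\mathrm{loc}}$-convergence on some $V'\ni x$, the limit being conformal as a local diffeomorphism carrying lightlike vectors to lightlike vectors. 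Finally one propagates: the set of $p\in M$ such that every subsequence of $(f_k)$ has a further subsequence converging in $C^\infty_{\mathrm{loc}}$ near $p$ to a conformal map is open, contains $x$, and is closed---at a boundary point, a conformal limit obtained at a nearby point supplies, at any point of its domain, the distortion lower bound needed to re-run the local statement---hence equals the connected manifold $M$. A diagonal extraction over a countable cover of $M$ then gives, for every subsequence of $(f_k)$, a further subsequence converging in $C^\infty_{\mathrm{loc}}(M)$ to a conformal map; as $C^\infty(M,N)$ is metrizable, the closure of $\{f_k\}$ is a compact subset of $\Conf(M,N)$.

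Now suppose the $f_k$ are injective, and add $(3)$ to the equivalence. For $(3)\Rightarrow(2)$: if $U\subset f_k(M)$ with $U$ an open neighborhood of $y$, then $g_k:=f_k^{-1}\colon U\to M$ is conformal; for large $k$ one has $y_k\in U$ and $g_k(y_k)=x_k\to x$, so Proposition~\ref{prop_naturalité_F_M}, the local upper bound on $F_U$ near $y$ (upper semicontinuity), and the local lower bound on $F_M$ near $x$ give $\dc(g_k,y_k)_{\tilde g,g}\le C$ for some constant $C$; since $\dc(g_k,y_k)_{\tilde g,g}=\dc(f_k,x_k)_{g,\tilde g}^{-1}$, this is a uniform positive lower bound on $\dc(f_k,x_k)_{g,\tilde g}$, i.e.\ $(2)$, hence $(1)$. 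For $(1)\Rightarrow(3)$: by $(1)$ a subsequence $f_k\to f$ in $C^\infty_{\mathrm{loc}}$ with $f\in\Conf(M,N)$ and $f(x)=y$; since $(1)\Rightarrow(2)$, $\dc(f,x)_{g,\tilde g}>0$, so $f$ maps a small closed $g$-ball $\overline B$ around $x$ diffeomorphically onto a neighborhood of $y$, with $f(\partial\overline B)$ at positive $\tilde g$-distance from $y$; uniform convergence on $\overline B$ keeps $f_k(\partial\overline B)$ away from a fixed ball $U\ni y$ while $f_k(x)\to y$, and a degree argument gives $U\subset f_k(\overline B)\subset f_k(M)$ for all large $k$. (Injectivity is used only in $(3)\Rightarrow(2)$, to make sense of $f_k^{-1}$.)

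I expect the main obstacle to be the upgrade from uniform to smooth convergence in the propagation step: a uniform limit of conformal maps need not be conformal (it may collapse), and in Lorentzian---more generally pseudo-Riemannian---signature one cannot invoke the elliptic regularity available, in Riemannian signature of dimension $\ge 3$, for the Yamabe-type equation of the conformal factor. The substitute, sketched above, is the rigidity of conformal maps: they are determined by a $2$-jet and preserve the web of projectively parametrized lightlike geodesics, so the regularity one needs is the analogue for conformal maps of the smoothing result of Section~\ref{Section_topologies_PPL}; making the reconstruction of $f_k$ from these geodesics precise, and checking that the distortion lower bound genuinely propagates, is the technical heart of the argument.
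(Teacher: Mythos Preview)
Your overall architecture matches the paper's: use the $1$-Lipschitz property of conformal maps for equicontinuity, trap the images in a precompact ball via the triangle inequality for $\delta_N$, extract a uniform limit by Arzelà--Ascoli, then upgrade to smooth convergence and propagate globally. The divergence is in how the two hard steps are handled.

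For the upgrade from $C^0$ to $C^\infty$, the paper does not attempt anything like your reconstruction from lightlike geodesic frames. It simply quotes a theorem of Frances \cite[Thm.~1.2]{Charles_transformations_conformes}: a $C^0$ limit of conformal maps between pseudo-Riemannian manifolds is smooth, the convergence is $C^\infty$, and the limit is either conformal or a locally trivial fibration onto a totally degenerate submanifold. Conditions $(2)$ and $(3)$ are then used only to rule out the degenerate alternative. Your sketch via $\PPL$ and Fact~\ref{fact_existence_chaine} is suggestive, but as you acknowledge, smooth convergence along each individual lightlike geodesic (which Proposition~\ref{prop_topologie_PPL} does give) does not obviously assemble into joint $C^\infty$ convergence of the map $f_k$; you would need uniform control of derivatives \emph{transverse} to each geodesic, and nothing in Section~\ref{Section_topologies_PPL} supplies that. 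This is a genuine gap, and the natural fix is precisely the Frances dichotomy.

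For propagation, the paper uses the normal Cartan connection (Lemma~\ref{Lemme_convergence_locale_app_conf}): once a lift $\hat f_k$ converges at a single point of the Cartan bundle, parallelism-preservation and the formula $\hat f_k(\exp(a,v))=\exp(\hat f_k(a),v)$ push convergence along $\omega$-constant vector fields, and an open--closed argument in $\hat M$ finishes. Your open--closed argument in $M$ itself is in the same spirit but relies on ``the distortion lower bound propagates'', which again needs the Frances result (or the Cartan picture) to be made rigorous.

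Your treatment of $(3)$ is genuinely different and arguably cleaner: you prove $(3)\Rightarrow(2)$ directly by bounding the distortion of $f_k^{-1}\vert_U$ via $F_M((f_k^{-1})_*v)\le F_U(v)$, whereas the paper argues that under $(3)$ the uniform limit $f$ satisfies $\delta_M(a,b)\le\delta_U(f(a),f(b))$ and is therefore injective, hence not of degenerate type. Both routes work.
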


\begin{proof} $(1)\Rightarrow (2)$ and $(1)\Rightarrow(3)$ are clear. Let us show the converse implications. Since $N$ is locally compact, we can find a radius $\varepsilon>0$ so that $B_{\delta_N}(y,3\varepsilon)$ has compact closure in $N$. Let $V=B_{\delta_M}(x,\varepsilon)$. Let $k\geq 0$ be large enough so that $\delta_M(x_k,x)<\varepsilon$ and $\delta_N(y_k,y)<\varepsilon$. Then, for $a\in V$, one has
    \begin{align*}
        \delta_N(f_k(a),y)&\leq \delta_N(f_k(a),f_k(x_k))+ \delta_N(f_k(x_k),y)\\
        &\leq \delta_M(a,x_k)+ \delta_N(y_k,y)\\
        &\leq \delta_M(a,x)+\delta_M(x,x_k)+ \delta_N(y_k,y)\leq 3\varepsilon.        
    \end{align*}
    Therefore $f_k(V)\subset B_{\delta_N}(y,3\varepsilon)$ for large enough $k\geq 0$. Since $(f_k)$ is equicontinuous by Proposition~\ref{proposition_naturalité}, we can find a subsequence $(f_{m(k)})$ converging uniformly on compact subsets of $B_{\delta_M}(x,r)$ to a map $f:B_{\delta_M}(x,r)\to N$. From \cite[Thm. 1.2]{Charles_transformations_conformes}, the limit map $f$ is smooth, and the convergence is smooth on compact subsets of $B_{\delta_M}(x,r)$. Also, the smooth map $f$ is either conformal or a locally trivial bundle onto a totally degenerate submanifold. We will now show that the second alternative cannot occur under hypothesis $(2)$, or under hypothesis $(3)$, if the maps $f_k$ are assumed injective for all $k\geq 0$. 

    Assume that $(2)$ holds. Let $v\in T_{x}M$ be a timelike vector. One has
    \begin{equation*}
        \dc(f_{m(k)}, g, \tilde{g}, x)^2=\tilde{g}(T_xf_{m(k)}\cdot v,T_xf_{m(k)}\cdot v)/g(v,v)\longrightarrow \tilde{g}(T_xf\cdot v,Tf\cdot v)/g(v,v)>0.
    \end{equation*}
    Hence $\tilde{g}(T_xf\cdot v,Tf\cdot v)\neq 0$, so $f$ cannot take values in a totally degenerate submanifold. Hence $f$ is conformal. 

     Assume that $(3)$ holds and that the maps $f_k$ are injective. We can always decrease $\varepsilon>0$ if necessary and assume that $\overline{B}_{\delta_N}(y,3\varepsilon)\subset U$. Let $a,b\in V$. Then, since the maps $f_k$ are injective, the restriction $f_k:M\to f_k(M)$ is a diffeomorphism, hence 
     $$\delta_M(a,b)= \delta_{f_k(M)}(f_k(a),f_k(b)).$$
     Also, since $U\subset f_k(M)$ for large enough $k\geq 0$, one has $\delta_{f_k(M)}\leq \delta_U$, hence 
    $$\delta_M(a,b)\leq \delta_U(f_k(a),f_k(b)).$$
    In particular, one obtains
    $$\delta_M(a,b)\leq \delta_U(f(a),f(b)),$$
    so $f$ is injective on $V$. Therefore, the map $f$ cannot be a locally trivial bundle over a degenerate submanifold. Hence $f$ is a conformal map. 

    From Lemma~\ref{Lemme_convergence_locale_app_conf} below, we deduce that $f$ extends to a smooth conformal map and that $(f_{m(k)})$ converges smoothly to $f$ on compact subsets of $M$.
\end{proof}
It remains to prove the following standard lemma that we have used to conclude the proof of Proposition~\ref{prop_compacité_app_conformes}.

\begin{lem}
\label{Lemme_convergence_locale_app_conf}
    Let $f_k:M\to N$ be a sequence of conformal maps, where $M$ and $N$ are connected conformal manifolds. Assume that there exists a nonempty open subset $U$ of $M$, such that $f_k\vert_U$ converges smoothly on compact subsets of $U$ to a conformal map $f:U\to N$. Then $f$ extends to a conformal map $f:M\to N$, and the sequence $(f_k)$ converges smoothly to $f$ on compact subsets of $M$. 
\end{lem}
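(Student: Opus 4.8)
The plan is to prove Lemma~\ref{Lemme_convergence_locale_app_conf} by a standard analytic continuation argument, exploiting the rigidity of conformal maps: a conformal map is determined by its $\infty$-jet (in fact by its $2$-jet in dimension $\geq 3$, by Liouville-type rigidity, but here I will argue more robustly via connectedness). First I would introduce the set
\begin{equation*}
  A=\{x\in M\ :\ f_k\text{ converges smoothly on a neighborhood of }x\text{ to a conformal germ}\},
\end{equation*}
together with the limit germ $f$ defined on $A$ (well-defined since $A$ is contained in $U$-connected overlaps and limits agree on intersections). By hypothesis $U\subset A$, so $A$ is nonempty, and $A$ is open by construction. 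The heart of the matter is to show $A$ is closed in $M$; since $M$ is connected this gives $A=M$, and the globally defined limit $f:M\to N$ is conformal because conformality is a local condition.

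To prove closedness, let $x_0\in\overline{A}$ and pick $x\in A$ close to $x_0$. The key point is to get uniform control on the conformal maps $f_k$ near $x_0$ using the contraction property of the Markowitz distance (Proposition~\ref{proposition_naturalité}): since $f_k(x)\to f(x)=:y$, and $\delta_N(f_k(x),f_k(z))\leq\delta_M(x,z)$ for all $z$, the images $f_k(z)$ for $z$ in a small $\delta_M$-ball around $x$ stay inside a fixed $\delta_N$-ball around $y$ whose closure is compact in $N$ (local compactness of $N$). In other words the sequence $(f_k)$ is equicontinuous and locally uniformly bounded near $x_0$ with respect to auxiliary Riemannian metrics; but here the Markowitz distance $\delta_M$ need not be definite near $x_0$, which is the main obstacle --- one cannot directly invoke equicontinuity of $(f_k)$ on all of $M$.

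The way around this obstacle is to not rely on $\delta_M$ being a distance globally, but to transport the already-established smooth convergence along lightlike geodesics. Here I would use the structure results of Section~\ref{section_para_proj}: given $x\in A$ and a lightlike vector $v\in T_xM$, the projectively parametrized lightlike geodesic $\gamma$ tangent to $v$ is carried by $f_k$ to the projectively parametrized lightlike geodesic $f_k\circ\gamma$ of $N$ (conformal maps preserve projective parametrizations of lightlike geodesics, by Proposition~\ref{proposition_naturalité}'s infinitesimal analogue, or directly since $f_k$ is conformal). The $2$-jet $J^2(f_k\circ\gamma)(0)$ is determined by $J^2 f_k$ at $x$ composed with $J^2\gamma(0)$, which converges since $f_k\to f$ smoothly near $x$. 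By Corollary~\ref{cor_conv_geod_avec_2jet}, the curves $f_k\circ\gamma$ then converge in the $C^{m+1}$ topology on a fixed interval $I_\varepsilon$ to a projectively parametrized lightlike geodesic $\beta$ of $N$; in particular $f_k$ converges along the image of $\gamma$, smoothly, and the limit is $f\circ\gamma$ where it is already defined and extends it. Using Fact~\ref{fact_existence_chaine}, every point near $x$ is reached from $x$ by a bounded-length chain of at most $n=p+q$ lightlike geodesic segments; iterating the previous convergence statement $n$ times, and noting that the endpoint $2$-jet data at each stage converges because the previous stage converged in $C^{m+1}$, one propagates smooth convergence of $(f_k)$ from $x$ to a full neighborhood of $x$, hence (choosing $x$ close enough to $x_0$) to a neighborhood of $x_0$. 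The limit is automatically conformal: it is a smooth limit of conformal maps which is a local diffeomorphism near $x$ (being conformal there), and conformality propagates since $f^*\tilde g=\lim f_k^*\tilde g$ stays in $[g]$. Hence $x_0\in A$, so $A$ is closed, and $A=M$. Finally, the same argument shows the convergence $f_k\to f$ is smooth on all compact subsets of $M$: any compact set is covered by finitely many of the neighborhoods produced above, on each of which $C^{m+1}$ (indeed $C^\infty$) convergence has been established. The main obstacle, as flagged, is precisely that $\delta_M$ is not assumed definite here, so the clean compactness argument of Proposition~\ref{prop_compacité_app_conformes} is unavailable; replacing it with the geodesic-by-geodesic propagation via Corollary~\ref{cor_conv_geod_avec_2jet} and Fact~\ref{fact_existence_chaine} is the crux.
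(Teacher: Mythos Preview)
There is a genuine gap in the iteration step. When you propagate along a single lightlike geodesic $\gamma$ from $x\in A$, Corollary~\ref{cor_conv_geod_avec_2jet} gives $C^{m+1}$ convergence of the \emph{curves} $f_k\circ\gamma$ on some $I_\varepsilon$. This tells you that $f_k(\gamma(t))$ and the tangential derivatives $(f_k\circ\gamma)^{(j)}(t)$ converge, but it does \emph{not} give convergence of the full $2$-jet of $f_k$ at $\gamma(t)$: you have no control over $T_{\gamma(t)}f_k(w)$ for directions $w$ transverse to $\gamma'(t)$. Hence when you try to launch the second segment of your chain from $\gamma(t_1)$ in a new lightlike direction, the $2$-jet hypothesis of Corollary~\ref{cor_conv_geod_avec_2jet} is unavailable, and the iteration collapses at the first step. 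The sentence ``the endpoint $2$-jet data at each stage converges because the previous stage converged in $C^{m+1}$'' is exactly where the argument breaks.

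The paper bypasses this by lifting to the normal Cartan bundle $\pi:\hat M\to M$, where conformal maps become parallelism-preserving and hence \emph{isometric} for the Riemannian metric $\omega_M^*\langle\cdot,\cdot\rangle$. Convergence of a single lift $\hat f_k(\hat x)$ in $\hat N$ already encodes convergence of the full $2$-jet of $f_k$ at $x$, and isometries are rigid enough that pointwise convergence at $\hat x$ propagates to an entire geodesic ball via the relation $\hat f_k(\exp(\hat x,v))=\exp(\hat f_k(\hat x),v)$. This is why the open--closed argument succeeds on the bundle but not, as you wrote it, on the base. Your idea could in principle be salvaged by flowing the whole open set $V\subset A$ (rather than a single point) along a lightlike vector field, arguing that the $\varepsilon$ of Corollary~\ref{cor_conv_geod_avec_2jet} is uniform over compacta by continuous dependence, obtaining $C^0$ convergence of $f_k$ on a larger open set, and then invoking \cite[Thm.~1.2]{Charles_transformations_conformes} to upgrade to smooth convergence and rule out the degenerate alternative; but that is substantially more than what you wrote, and in effect reproves the Cartan-geometric rigidity that the paper uses directly.
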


This lemma is better proved using the viewpoint of Cartan connections.  Recall from \cite{Sharpe} that a Cartan geometry modeled on the homogeneous space $\Ein^{p,q}=\PO(p+1,q+1)/P$, where $P$ is the stabilizer of an isotropic line inside $G=\PO(p+1,q+1)$, is the data of 
\begin{itemize}
    \item a principal $P$-bundle $\pi:\Mhat\to M$.
    \item a parallelism $\omega:T\Mhat\to \g$ (i.e. $\omega_{\hat{x}}:T_{\hat{x}}\Mhat\to \g$ is a linear isomorphism for all $\hat{x}\in \Mhat$), called the \emph{Cartan connection}, such that the following two properties hold: 
    \begin{enumerate}
        \item $\omega(\frac{d}{dt}\vline_{t=0} \hat{x}\cdot e^{tv})=v$ for every vector $v\in \mathfrak{p}$ and point $\hat{x}\in \Mhat$;
        \item $(R_h)^*\omega=\operatorname{Ad}(h^{-1})\cdot \omega$ for all $h\in P$.
    \end{enumerate} 
\end{itemize}

A Cartan geometry $\pi:\Mhat\to M$ naturally defines a conformal class of pseudo-Riemannian metrics of $M$ in the following way. Let $x\in M$ and let $\hat{x}\in\pi^{-1}(\{x\})$. Since $T_{\hat{x}}(\hat{x}\cdot P)=\ker T_{\hat{x}}\pi$, the tangent map $T_{\hat{x}}\pi$ gives an identification between $T_{\hat{x}}\Mhat/T_{\hat{x}}(\hat{x}\cdot P)$ and $T_xM$. Also the Cartan connection factors through an isomorphism 
$$\omega_{\hat{x}}:T_{\hat{x}}\Mhat/T_{\hat{x}}(x\cdot P)\to \g/\p.$$
By combining these identifications, one obtains an isomorphism 
$j_{\hat{x}}:T_xM\to \g/\p$ which only depends on the choice of  ${\hat{x}}\in\pi^{-1}(\{x\})$. Given another lift ${\hat{x}}\cdot h$ of $x$, where $h\in P$, the two identifications are related by the relation
$$j_{\hat{x}\cdot h}=\operatorname{Ad}(h^{-1})\circ j_{\hat{x}}.$$
Hence $T_xM$ is identified to $\g/\p$ modulo the adjoint action of $P$. Now, using a stereographic projection, the quotient $T_P\Ein^{p,q}=\g/\p$ identifies with $\R^{p,q}$ and the adjoint action of $P$ is conjugated to the conformal action of the similarity group of  $(\R^{p,q},[\b])$. Hence there exists a $\operatorname{Ad}(P)$-invariant conformal class $[g_{\g/\p}]$ of pseudo-Riemannian metric on $\g/\p$, so the class $j_{\hat{x}}^*[g_{\g/\p}]$ is a well defined conformal class of pseudo-Riemannian metrics on $M$.

Given a conformal manifold $(M,[g])$, a Cartan geometry $\pi:\Mhat\to M$ modeled on $\Ein^{p,q}$ is said to be \emph{compatible to the conformal structure} if $[g]$ is obtained with the same procedure of the preceding paragraph. In general, there  exist infinitely many non-equivalent compatible Cartan geometries on a conformal pseudo-Riemannian manifold. However, for every conformal manifold, there exists a unique \emph{normal Cartan connection} that is compatible to the conformal class, see \cite{Sharpe}.
We will write $(\Mhat,\omega_M)$ for the normal Cartan connection associated to $M$. Since the Cartan connection is uniquely determined by the conformal structure of $(M,[g])$, a conformal map $f:M\to N$ will lift to a unique parallelism preserving map 
$\hat{f}:(\Mhat,\omega_M)\to (\hat{N},\omega_N)$, i.e. $\hat{f}^*\omega_N=\omega_M$. 

\begin{proof}[Proof of Lemma~\ref{Lemme_convergence_locale_app_conf}]
     Let us fix some notations first. For a vector $v\in \g$, let $\hat{v}$ denote the vector field on $\hat{M}$ and $\hat{N}$, such that $\omega(\hat{v})=v$. Let $\langle\cdot\,,\cdot\rangle$ be an inner product on $\g$ and let $g_M=\omega_M^*\langle\cdot\,,\cdot\rangle$ and $g_N=\omega_M^*\langle\cdot\,,\cdot\rangle$ be the induced Riemannian metrics on $\hat{M}$ and $\hat{N}$, respectively. For $v\in \g$ and $\hat{x}\in \hat{M}$, we denote $\exp(\hat{x},v):=\exp(\hat{x},\hat{v}_{\hat{x}})$, where $\exp(\hat{x},\hat{v}_{\hat{x}})$ is the exponential map of $g_M$.  
    Let $x\in U$ and let $\hat{x}\in \hat{M}$ be a lift of $x$, and let $(\hat{f_k})$ be a sequence of lifts of $(f_k)$. Let $y\in N$ denote the limit of $y_k=f_k(x)$. Since $(f_k)$ converges to a conformal map on $U$, the sequence $(\hat y_k)=(\hat f_k(\hat x))$ converges in $\hat N$. This follows from the fact that $(f_k)$ and $(f_k\vert_U^{-1})$ are \emph{stable} at $x$ and $y$, respectively, see \cite[Lem. 4.3]{Charles_transformations_conformes}. Let
    $$\Omega=\{a\in \hat{M}\,\vert\,(\hat{f_k}(a))\text{ converges in $N$}\}.$$
    The set $\Omega$ is nonempty since it contains $\hat x$.  Let $a\in \Omega$ and let $b\in N$ be the limit of the sequence $(b_k:=\hat{f_k}(a))$. Let $\varepsilon>0$ be such that $\exp(a,\cdot)$ is defined on $B(0,\varepsilon)\subset \mathfrak{g}$ and maps diffeomorphically onto its image. Since $\hat{f_k}$ is isometric for $\hat{M}$, one has
    \begin{equation}
    \label{formule_exponentielle}
        \hat{f_k}(\exp(a,v))=\exp(b_k,v),
    \end{equation}
    for all $v\in B(0,\varepsilon)$. In particular,  the exponential map $\exp(b_k,\cdot)$ is defined on $B(0,\varepsilon)$ for all $k\geq 0$. 
    \emph{Claim: $\exp(b,\cdot)$ is also defined on $B(0,\varepsilon)$}. Let $0<\varepsilon^\prime<\varepsilon$ and $v\in \g$ be a unit vector so that $t\mapsto\exp(y,tv)$ is defined on the interval $[0,\varepsilon^\prime)$.  Let $k\geq 0$ be such that $d(b_k,b)\leq (\varepsilon-\varepsilon^\prime)/2$, and let $\gamma$ be a $C^1$ curve of length $L(\gamma,g_N)<(\varepsilon-\varepsilon^\prime)/2$ from $b_k$ to $b$. The concatenation of $\gamma$ and the geodesic $t\mapsto\exp(b,tv)$ yields a curve $\alpha$ of length
    $L(\alpha,g_N)<\varepsilon.$
    Since $B(a,\frac{\varepsilon+\varepsilon^\prime}{2})$ has compact closure in $\hat M$, the curve $\alpha$ lifts to a unique curve $\tilde{\alpha}$ along $\hat{f_k}$. Since $\tilde{\alpha}$ is extensible at time $t=\varepsilon^\prime$, so is $\alpha$. This proves the previous claim. From Equation~(\ref{formule_exponentielle}), we deduce that $(\hat{f_k})$ converges smoothly to 
    $\exp(b,\cdot)\circ \exp(a,\cdot)^{-1}$
    on the geodesic ball $B(a,\varepsilon)$. In particular, one has $B(a,\varepsilon)\subset\Omega$, so $\Omega$ is open. 
    Let us show that $\Omega$ is also closed in $\hat{M}$. Given a sequence $(a_k)\in \Omega^\N$ converging to $a\in \hat{M}$, we can find $\varepsilon>0$ such that $B(a_l,\varepsilon)$ is a geodesic ball for large enough $l\geq 0$ and $a\in B(a_l,\varepsilon)$. Since $\hat{f_k}\vert_{B(a_l,\varepsilon)}$ converges, the sequence $(\hat{f_k}(a))$ converges in $N$. Hence $\Omega$ is closed and $\Omega=\hat M$ by connectedness. Hence $(\hat{f_k})$ converges smoothly on compact subsets of $\hat M$ to a map $\hat{f}:\hat{M}\to \hat{N}$. Now 
    $\hat{f}^*\omega_N=\lim_k\hat{f_k}^*\omega_N=\omega_M,$
    hence $\hat f$ is parallelism preserving. Therefore $\hat{f}$ covers a conformal map $f:M\to N$ and $(f_k)$ converges smoothly to $f$ on compact subsets of $M$. 
\end{proof}

\begin{rmk}
    We briefly mention a consequence of Proposition~\ref{prop_compacité_app_conformes}.
    Given a conformally flat Markowitz hyperbolic manifold $M$ and a point $x\in M$, the set of pointed maps $\mathcal{F}_x=\Conf((\D^{p,q},a),(M,x))$ is nonempty. For a fixed $g$ and for $a\in \D^{p,q}$, the map 
    $$\dc({-},a)_{g_{\D^{p,q}},g}:\mathcal{F}_x\to \R$$ is bounded above and attains its maximum for at least one conformal map $f_x:\D^{p,q}\to M$ by Proposition~\ref{prop_compacité_app_conformes}. 
    The metric $\hat g_x=(f_x)_*g_{\D^{p,q}}$ is a well defined, conformally invariant metric on $M$. See \cite{these_Chalumeau} for a detailed study of that metric. 
\end{rmk}

\section{Closed conformal manifolds}
\label{section_Brody}

Let $M$ be a closed manifold and denote by $\Ccal(M)$ the set conformal class of pseudo-Riemannian metrics on $M$. We say that a sequence $[g_k]\in \Ccal(M)$ \emph{converges to $[g]$ in the $C^2$ topology} if for some (hence any) metric $g^*\in [g]$, there exists a sequence $(g_k^*)$ such that $g_k^*\in [g_k]$ for all $k\geq 0$ and $g_k^*\to g^*$ as $k\to \infty$ in the (standard) $C^2$ topology. This endows $\Ccal(M)$ with a topology.

\begin{thm}
    \label{thm_compacité_fort}
    Let $(M,[g])$ be a closed conformal manifold. If $M$ is totally conformally lightlike incomplete, then there exists a neighborhood $\W$ of $[g]$ for the $C^2$ topology, such that $(M,[g^\prime])$ is Markowitz hyperbolic for every $[g^\prime]\in \W$. In particular, $(M,[g])$ is Markowitz hyperbolic.
\end{thm}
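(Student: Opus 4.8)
The plan is to argue by contradiction, combining the characterization of Markowitz hyperbolicity through equicontinuity of $\PPL(I,M)$ (Theorem~\ref{thm_hyp}, items $(1)$ and $(5)$) with Brody's reparametrization trick and the smooth-convergence result for projectively parametrized lightlike geodesics (Proposition~\ref{prop_topologie_PPL}). First I would fix an auxiliary Riemannian metric $g_R$ on the closed manifold $M$, with norm $\Vert\cdot\Vert$. Suppose no such neighborhood $\W$ exists: then there is a sequence $[g_k]\to[g]$ in the $C^2$ topology such that $(M,[g_k])$ is \emph{not} Markowitz hyperbolic for each $k$. By Theorem~\ref{thm_hyp}, $\PPL(I,(M,[g_k]))$ fails to be equicontinuous, so after passing to a subsequence one can find $\gamma_k\in\PPL(I,(M,[g_k]))$ and points $t_k\in I$ with $\Vert\gamma_k'(t_k)\Vert_{} \to \infty$; precomposing with a homography of $I$ and using compactness of $M$, we may assume $t_k=0$, $\gamma_k(0)\to x_\infty\in M$, and $\Vert\gamma_k'(0)\Vert\to\infty$.

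The heart of the argument is Brody's reparametrization lemma, which I would invoke in the following form: for each $k$, among all homographies $h:I\to I'$ with $I'\subset I$, one can choose $h_k$ so that the reparametrized curve $\beta_k=\gamma_k\circ h_k:I\to M$ satisfies $\Vert\beta_k'(0)\Vert = \sup_{t\in I}(1-|t|^2)\Vert\beta_k'(t)\Vert$ and this quantity is bounded below by a fixed constant (in fact it tends to infinity here); rescaling so that $\Vert\beta_k'(0)\Vert$ is normalized to a fixed value, the resulting family $(\beta_k)$ is equicontinuous for $g_R$ because of the Brody bound $(1-|t|^2)\Vert\beta_k'(t)\Vert\le \Vert\beta_k'(0)\Vert$ on compact subsets of $I$. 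By Arzelà--Ascoli, after a further subsequence $\beta_k\to\beta$ uniformly on compact subsets of $I$, with $\beta$ non-constant since $\Vert\beta_k'(0)\Vert$ is bounded away from $0$. Here is where the hypothesis on the metrics enters: since $g_k\to g$ in the $C^2$ topology (passing to the representatives provided by the definition of $C^2$-convergence of conformal classes, which by Proposition~\ref{prop_topologie_PPL} is exactly what we need — $m=2$), Proposition~\ref{prop_topologie_PPL} applies to the sequence $(\beta_k)$ of projectively parametrized lightlike geodesics for the metrics $g_k$ and gives that $\beta$ is a projectively parametrized lightlike geodesic for the limit metric $g$, and that the convergence is $C^3$.

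Finally I would upgrade $\beta$ to an entire geodesic. Because of the Brody normalization, the reparametrized curves $\beta_k$ are defined on $I$ but their "true" domains (the images of the $h_k$) exhaust larger and larger subintervals; more precisely, the standard Brody argument shows that for any $R>0$, eventually $\beta_k$ extends as a projectively parametrized lightlike geodesic to the interval $(-R,R)$ with the Brody bound still controlling the $g_R$-speed on compact subsets, so after a diagonal extraction $\beta$ extends to a non-constant element of $\PPL(\R,(M,[g]))$. This contradicts the assumption that $(M,[g])$ is totally conformally lightlike incomplete. Hence the neighborhood $\W$ exists, and applying the conclusion to $[g']=[g]$ gives Markowitz hyperbolicity of $(M,[g])$ itself.

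\medskip

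The main obstacle I expect is the bookkeeping around Brody's reparametrization in the \emph{projective} setting: unlike the holomorphic case, where one reparametrizes discs by holomorphic automorphisms, here one must reparametrize by homographies of $I\subset\RPunTilde$ and check that the Brody extremal reparametrization interacts correctly with the projective-parameter structure (so that $\beta_k$ is again in $\PPL$) and with the passage to larger domains. The smoothness and the fact that a uniform limit of projectively parametrized lightlike geodesics is again projectively parametrized are exactly handled by Proposition~\ref{prop_topologie_PPL} and Corollary~\ref{cor_conv_geod_avec_2jet}, so the only genuinely new ingredient is verifying that the Brody sup is attained by a homographic reparametrization and that the resulting normalized family has the required equicontinuity and extension properties on $\R$ in the limit. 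This is the step I would write out in full detail in Section~\ref{section_Brody}.
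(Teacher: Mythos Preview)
Your proposal is correct and follows essentially the same route as the paper: contradiction, failure of equicontinuity via Theorem~\ref{thm_hyp}, Brody's reparametrization, Arzel\`a--Ascoli, and then Proposition~\ref{prop_topologie_PPL} to pass to a non-constant limit in $\PPL(\R,(M,[g]))$. The paper makes the ``upgrade to $\R$'' step explicit by first applying Brody's lemma to get $\alpha_k\in\PPL(I,(M,[g_k]))$ with $\Vert\alpha_k'(0)\Vert=2^k$ and $\Vert\alpha_k'(t)\Vert\le 2^k/(1-t^2)$, and then linearly rescaling $\beta_k(t)=\alpha_k(t/2^k)$ on $(-2^k,2^k)$, which gives $\Vert\beta_k'(t)\Vert\le 1/(1-(t/2^k)^2)$ and hence uniform bounds on any compact of $\R$; your worry about the homographic bookkeeping is unnecessary, since reparametrization by a homography (in particular a dilation) automatically preserves membership in $\PPL$.
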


In particular, the property ``$(M,[g])$ is Markowitz hyperbolic'' is open with respect to the $C^2$ topology on $[g]$.

\begin{lem}[Brody's lemma]
    Let $\Vert\cdot\Vert$ be a Riemannian norm on $M$ and let $\gamma\in\PPL(I,M)$ such that $\Vert \gamma^\prime(0)\Vert>c$ for some constant $c>0$. Then, there exists a homography $h:I\to I$, such that $\alpha=\gamma\circ h$ satisfies $\Vert\alpha^\prime(0)\Vert=c$ and $\Vert\alpha^\prime(t)\Vert\leq c/(1-t^2)$ for all $t\in I$.\qed
\end{lem}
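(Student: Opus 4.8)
The plan is to adapt Brody's classical reparametrization argument to the projective structure of $I=(-1,1)$. Two features of this structure enter: $\PPL(I,M)$ is invariant under precomposition by homographies carrying $I$ into $I$ (recalled in Section~\ref{section_para_proj}), and the homographies mapping $I$ \emph{onto} itself and fixing its two endpoints are exactly the orientation-preserving isometries of $(I,\rho_I)$ — concretely the maps $h_0(t)=\frac{t+a}{1+at}$, $a\in(-1,1)$, which satisfy the pointwise identity $(1-t^2)\,\vert h_0'(t)\vert=1-h_0(t)^2$. For $\gamma\in\PPL(I,M)$ put $\Phi_\gamma(t)=(1-t^2)\,\Vert\gamma'(t)\Vert$. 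The identity above yields at once the transformation rule $\Phi_{\gamma\circ h_0}=\Phi_\gamma\circ h_0$ for $h_0\in\mathrm{Isom}^+(I,\rho_I)$; hence $\sup_I\Phi_{\gamma\circ h_0}=\sup_I\Phi_\gamma$, and if this supremum is finite and attained it can be moved to the origin by choosing $a$ appropriately.

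Since the hypothesis only gives $\Phi_\gamma(0)=\Vert\gamma'(0)\Vert>c$ and not $\sup_I\Phi_\gamma=c$, the key extra ingredient is a shrinking step. For $\rho\in(0,1)$ set $\beta_\rho(t)=\gamma(\rho t)$; as $t\mapsto\rho t$ is a homography carrying $I$ into $I$ and $\PPL$ is invariant under projective reparametrization, $\beta_\rho\in\PPL(I,M)$, and after the substitution $s=\rho t$ one has
$$G(\rho):=\sup_{t\in I}\Phi_{\beta_\rho}(t)=\sup_{\vert s\vert<\rho}\frac{\rho^2-s^2}{\rho}\,\Vert\gamma'(s)\Vert.$$
I would then record three elementary facts. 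First, for each $\rho$ the supremum is a maximum attained at an interior point, since $s\mapsto\frac{\rho^2-s^2}{\rho}\Vert\gamma'(s)\Vert$ is continuous on the compact interval $[-\rho,\rho]$, vanishes at $s=\pm\rho$, and is positive somewhere (indeed $G(\rho)\ge\rho\Vert\gamma'(0)\Vert>0$). Second, $G$ is continuous on $(0,1)$; this is the routine ``maximum over a varying compact set'' argument, using the joint continuity of $(\rho,s)\mapsto\frac{\rho^2-s^2}{\rho}\Vert\gamma'(s)\Vert$. Third, $G(\rho)\le\rho\sup_{\vert s\vert\le\rho}\Vert\gamma'(s)\Vert\to 0$ as $\rho\to0^+$, whereas $G(\rho)\ge\rho\Vert\gamma'(0)\Vert>c$ for all $\rho\in\bigl(c/\Vert\gamma'(0)\Vert,\,1\bigr)$. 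By the intermediate value theorem there is $\rho_*\in(0,1)$ with $G(\rho_*)=c$, and by the first fact $\Phi_{\beta_{\rho_*}}$ attains the value $c$ at some $t_0\in I$.

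To conclude, take $h_0(t)=\frac{t+t_0}{1+t_0 t}$, the isometry of $(I,\rho_I)$ with $h_0(0)=t_0$, and set $\alpha=\beta_{\rho_*}\circ h_0=\gamma\circ h$, where $h(t)=\rho_*\,\frac{t+t_0}{1+t_0 t}$ is a homography with $h(I)=(-\rho_*,\rho_*)\subset I$, so that $\alpha\in\PPL(I,M)$. The transformation rule gives $\Phi_\alpha=\Phi_{\beta_{\rho_*}}\circ h_0\le c$ on $I$, that is $\Vert\alpha'(t)\Vert\le c/(1-t^2)$ for every $t\in I$; evaluating at $t=0$ and using $h_0(0)=t_0$ gives $\Vert\alpha'(0)\Vert=\Phi_\alpha(0)=\Phi_{\beta_{\rho_*}}(t_0)=c$.

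All the manipulations here are elementary computations with Möbius transformations; the one point that must be handled with some care is the continuity of $G$ on $(0,1)$ together with its limiting behaviour at the endpoints, precisely because $\gamma$ is only assumed defined on the open interval $I$ and its derivative may be unbounded near $\pm1$, so that $\sup_I\Phi_\gamma$ could be infinite. Once $G$ is known to be continuous, positive, and to cross the level $c$, the intermediate value theorem supplies the shrinking parameter $\rho_*$ for which $\sup_I\Phi$ equals exactly $c$ and is attained, which is the whole content of the lemma beyond the formal reparametrization.
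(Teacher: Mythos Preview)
Your argument is correct and is precisely Brody's reparametrization lemma adapted to the projective interval $(I,\rho_I)$, which is exactly what the paper intends: the paper's own proof consists of the single sentence ``The proof is identical to \cite{Brody}.'' Your write-up supplies the details that the paper omits, and the shrinking/recentering steps you carry out (choosing $\rho_*$ by the intermediate value theorem and then precomposing with a hyperbolic isometry to move the maximum of $\Phi_{\beta_{\rho_*}}$ to the origin) are the standard ones.
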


The proof is identical to that in \cite{Brody}.

\begin{proof}[Proof of Theorem~\ref{thm_compacité_fort}]
    We assume that there exists a sequence $(g_k)$ of metrics converging to $g$ for the $C^2$ topology such that $(M,[g_k])$ is not Markowitz hyperbolic and show that $(M,[g])$ is not totally conformally lightlike incomplete. Write $F_k$ for the infinitesimal functional of $(M,[g_k])$. From Theorem~\ref{thm_hyp}, for every $k\geq 0$, there exists a sequence $(\gamma_k^\ell)\in \PPL(I,(M,[g_k]))$ such that $(\gamma_k^\ell(0))$ converges in $M$ and $\Vert(\gamma_k^\ell)^\prime(0)\Vert\to\infty$ as $\ell\to\infty$.
    By diagonal extraction, we can find a sequence $(\gamma_k)\in\PPL(I,(M,[g_k]))$ such that $(\gamma_k(0))$ converges in $M$ and $\Vert\gamma_k^\prime(0)\Vert\to\infty$ as $k\to\infty$.
    Up to extracting a subsequence, we can assume that $\Vert\gamma_k^\prime(0)\Vert\geq 2^k$ for all $k\geq 0$. From Brody's lemma, there exists a sequence $(h_k)$ of homographies such that for all $k\geq 0$, the curve $\alpha_k=\gamma_k\circ h_k$ satisfies 
    $$\Vert\alpha_k^\prime(0)\Vert=2^k\text{ and }\Vert\alpha_k^\prime(t)\Vert\leq 2^k/(1-t^2),$$
    for all $t\in I$. Since $M$ is compact, we can always extract a subsequence and assume that $(\alpha_k(0))$ converges in $M$. For $k\geq 0$, define $I_k=(-2^k,2^k)$ and $\beta_k:I_k\to M$ by $\beta_k(t)=\alpha_k(t/2^k)$ for all $t\in I_k$. For all $t\in I_k$, one has 
    $$\Vert\beta_k^\prime(t)\Vert\leq 1/(1-(t/2^k)^2),$$
    so for every compact $\Kcal\subset \R$, the family $(\beta_k\vert_\Kcal)$ is equicontinuous, hence converges uniformly to a curve $\Kcal\to M$ up to extracting a subsequence. Therefore, we may extract a subsequence of $(\beta_k)$ that converges uniformly on compact subsets of $\R$ to a curve $\beta:\R\to M$. From Proposition~\ref{prop_topologie_PPL}, the curve $\beta$ is a projectively parametrized lightlike geodesic for $[g]$ and the convergence is smooth. Since $\Vert\beta_k^\prime(0)\Vert=1$, the derivative $\beta^\prime(0)$ is nonzero, so $\beta$ is non-constant and $(M,[g])$ is not totally conformally lightlike incomplete.
\end{proof}

\begin{cor}
    Let $(M,[g])$ be a closed conformal manifold. If $\Conf(M,[g])$ is non-compact, then there exists a non-constant projectively parametrized lightlike geodesic $\R\to M$.
\end{cor}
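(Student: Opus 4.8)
The plan is to prove the contrapositive: if $(M,[g])$ is totally conformally lightlike incomplete, then $\Conf(M,[g])$ is compact. The entry point is Theorem~\ref{thm_compacité_fort}, which says that a closed, totally conformally lightlike incomplete conformal manifold is Markowitz hyperbolic. Thus it suffices to show that a \emph{closed Markowitz hyperbolic} conformal manifold has compact conformal group. We may assume $M$ is connected, since $\Conf(M)$ permutes the finitely many connected components of a closed manifold and the subgroup preserving each of them has finite index.

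First I would reduce compactness of $\Conf(M)$ to a sequential statement. Let $(f_k)$ be a sequence in $\Conf(M)$ and fix a base point $x\in M$. Since $M$ is compact, after passing to a subsequence we may assume that $f_k(x)$ converges to some $y\in M$. I would then apply Proposition~\ref{prop_compacité_app_conformes} with $N=M$, with the constant sequence $x_k=x$, and with $y_k=f_k(x)\to y$: the $f_k$ are conformal diffeomorphisms, hence injective, and $f_k(M)=M$ contains every neighbourhood of $y$, so the third condition of that proposition holds trivially. Therefore a subsequence of $(f_k)$ converges, smoothly on compact subsets, to a conformal map $f\colon M\to M$.

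It remains to upgrade $f$ to a conformal diffeomorphism, so that $f\in\Conf(M)$ and $\Conf(M)$ is sequentially compact. A conformal map between conformal manifolds of the same dimension is a local diffeomorphism, and since it is defined on the compact manifold $M$ it is proper, hence a finite-sheeted covering map; as the degree is locally constant one gets $\deg f=\lim_k\deg f_k=1$, so $f$ is a diffeomorphism. This shows $\Conf(M)$ is compact, contradicting the hypothesis, and hence a closed conformal manifold with non-compact conformal group cannot be totally conformally lightlike incomplete; that is, it admits a non-constant projectively parametrized lightlike geodesic $\R\to M$.

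I do not expect a real obstacle: everything substantial is already packaged in Theorem~\ref{thm_compacité_fort} and Proposition~\ref{prop_compacité_app_conformes}, the only new point being the harmless observation that the third condition of Proposition~\ref{prop_compacité_app_conformes} is automatic when the target coincides with the compact source. One could equally argue through isometries: by Theorem~\ref{thm_hyp} the metric space $(M,\delta_M)$ is compact, by Proposition~\ref{proposition_naturalité} applied to $f$ and $f^{-1}$ the group $\Conf(M)$ sits inside the compact group $\Iso(M,\delta_M)$, and it is closed there because a uniform limit of conformal diffeomorphisms is, by \cite{Charles_transformations_conformes}, either conformal or a fibration onto a totally degenerate hypersurface, the latter being impossible for a surjective isometry of $(M,\delta_M)$.
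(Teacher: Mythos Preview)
Your proof is correct, and in fact the paper takes precisely the alternative route you sketch in your last paragraph: from Markowitz hyperbolicity one gets that $(M,\delta_M)$ is a compact metric space, so $\Iso(M,\delta_M)$ is compact for the compact-open topology; $\Conf(M)$ sits inside $\Iso(M,\delta_M)$ by Proposition~\ref{proposition_naturalité} and is closed there (this is where \cite{Charles_transformations_conformes} enters), hence compact, and the compact-open and smooth topologies on $\Conf(M)$ agree.

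Your primary argument, via condition~(3) of Proposition~\ref{prop_compacité_app_conformes}, is a clean alternative that reaches smooth subconvergence directly and bypasses the isometry-group step. One small remark: the degree argument showing the limit $f$ is a diffeomorphism tacitly uses an orientation (otherwise only $\deg_{\Z/2}f=1$ survives, which only says the number of sheets is odd). You can repair this either by running the same extraction on $f_k^{-1}$ and checking that the two limits compose to the identity, or by a volume argument: if $f$ were a $d$-sheeted self-cover then $\int_M f^*\mu=d\int_M\mu$ for any smooth density $\mu$, whereas $\int_M f_k^*\mu=\int_M\mu$ for each $k$, forcing $d=1$.
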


\begin{proof}
    We assume that $M$ is totally conformally lightlike incomplete. Then Theorem~\ref{thm_compacité_fort} implies that $M$ is Markowitz hyperbolic, so the isometry group $\Iso(M,\delta_M)$ is compact in the compact-open topology. Now $\Conf(M)$ is closed in $\Iso(M,\delta_M)$, so $\Conf(M)$ is compact in the compact-open topology. It follows that $\Conf(M)$ is compact for the smooth topology, see for instance \cite{Charles_transformations_conformes}.
\end{proof}

\begin{ex}
    Let $\Sigma^p$ and $\Sigma^q$ be compact Riemannian manifolds of constant negative curvature and of dimension $p$ and $q$, respectively. The conformal manifold $M=(\Sigma^p\times \Sigma^q,[-g_{\H^p}+g_{\H^q}])$ is conformally flat and Markowitz hyperbolic, as its universal cover $\H^p\times \H^q$ is Markowitz hyperbolic, see Section~\ref{Section_ex_diamant}. Therefore a pseudo-Riemannian metric $g^\prime$ sufficiently close to $g=-g_{\H^p}+g_{\H^q}$ induces a Markowitz hyperbolic conformal structure $(M,[g^\prime])$ by Theorem \ref{thm_compacité_fort}. 
\end{ex} 

\section{Conformally flat manifolds}
\label{section_conf_plat}

The rest of the paper is devoted to conformally flat manifolds. 

\subsection{Markowitz hyperbolicity for conformally flat manifolds.}

Our aim is to prove the following characterization of Markowitz hyperbolicity for conformally flat manifolds. 

\begin{thm}
    \label{thm_hyp_conf_plat}
    Let $(M,[g])$ be a conformally flat manifold. The following properties are equivalent:
    \begin{enumerate}
        \item $M$ is Markowitz hyperbolic.
        \item The family $\Conf(\D^{p,q},M)$ is equicontinuous, with respect to some distance on $M$ that is locally equivalent to a Riemannian distance.
    \end{enumerate}     
\end{thm}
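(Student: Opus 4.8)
The plan is to deduce this from Theorem~\ref{thm_hyp} (the characterization of Markowitz hyperbolicity via equicontinuity of $\PPL(I,M)$), using that $M$ is conformally flat. The key bridge is that, for a conformally flat manifold, projectively parametrized lightlike geodesics into $M$ can be produced from (and recovered from) conformal maps $\D^{p,q}\to M$. Indeed, in the diamond $\D^{p,q}=\H^p\times\H^q$, fixing a unit-speed geodesic $\sigma_p$ of $\H^p$ and a unit-speed geodesic $\sigma_q$ of $\H^q$, the curve $t\mapsto(\sigma_p(t),\sigma_q(t))$ is (after a projective reparametrization, as computed in Example~\ref{exemple_para_proj} for the sphere model, and similarly here) a maximal projectively parametrized lightlike geodesic of $\D^{p,q}$; call its reparametrization $\gamma_0\in\PPL(I,\D^{p,q})$. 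Then for any $f\in\Conf(\D^{p,q},M)$ the curve $f\circ\gamma_0$ lies in $\PPL(I,M)$ by naturality of the projective structure on lightlike geodesics under conformal maps (Proposition~\ref{prop_str_proj_sur_les_geod_de_lum} combined with the fact that conformal maps preserve lightlike pregeodesics and the defining ODE~\eqref{equation_parametrage_projectif}).

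First I would prove $(2)\Rightarrow(1)$. Assume $\Conf(\D^{p,q},M)$ is equicontinuous; I want to show $\PPL(I,M)$ is equicontinuous, and then invoke Theorem~\ref{thm_hyp}. Given $\gamma\in\PPL(I,M)$ tangent to a lightlike vector $v=\gamma'(0)$ with $\|v\|_g$ large, I use conformal flatness: near $\gamma(0)$ there is a conformal chart to $\R^{p,q}$, and in $\R^{p,q}$ one can realize a neighborhood as the image of a conformal embedding of $\D^{p,q}$ (a diamond sits in $\R^{p,q}$ as a chronal diamond, cf. Section~\ref{Section_ex_diamant}), arranged so that the embedded diamond's distinguished lightlike geodesic $\gamma_0$ is tangent to a multiple of $v$ at its center. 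Composing with the chart gives $f\in\Conf(\D^{p,q},M)$ with $(f\circ\gamma_0)'$ proportional to $v$; equicontinuity of the $f$'s then forces a uniform bound on $\|v\|_g$ in terms of how far $\gamma$ can be extended, which is exactly equicontinuity of $\PPL(I,M)$. The technical care here is to make the family of conformal embeddings $\D^{p,q}\hookrightarrow\R^{p,q}$ realizing a fixed neighborhood with prescribed lightlike direction a \emph{compact} family, so that the reparametrizing homographies stay controlled; this uses that $\Conf(\R^{p,q})$ acts transitively enough on (pointed, directed) lightlike geodesics.

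For $(1)\Rightarrow(2)$: assume $M$ is Markowitz hyperbolic. Then by Proposition~\ref{proposition_naturalité} every $f\in\Conf(\D^{p,q},M)$ is distance-decreasing from $(\D^{p,q},\delta_{\D^{p,q}})$ to $(M,\delta_M)$, and by Proposition~\ref{prop_distance_diamant} we know $\delta_{\D^{p,q}}$ explicitly (it is $\max$ of the two hyperbolic distances, in particular it is a genuine, locally-Euclidean-equivalent distance on $\D^{p,q}$). Since $M$ is Markowitz hyperbolic, $\delta_M$ is locally equivalent to a Riemannian distance (Theorem~\ref{thm_hyp}(3)). Hence the family $\Conf(\D^{p,q},M)$, viewed as maps between metric spaces $(\D^{p,q},\delta_{\D^{p,q}})\to(M,d_g)$, is uniformly equicontinuous on compact sets: distances in the target are controlled by $\delta_M$, which is controlled by $\delta_{\D^{p,q}}$, which is locally Lipschitz-comparable to the Euclidean distance on $\D^{p,q}$. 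This direction is essentially immediate once Theorem~\ref{thm_hyp} and Proposition~\ref{prop_distance_diamant} are in hand.

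The main obstacle I expect is the $(2)\Rightarrow(1)$ direction, specifically making precise that \emph{every} projectively parametrized lightlike geodesic of $M$ with a given $2$-jet arises, locally, as $f\circ h$ for some $f\in\Conf(\D^{p,q},M)$ and a homography $h$ drawn from a compact family — i.e.\ transferring equicontinuity of $\Conf(\D^{p,q},M)$ to equicontinuity of $\PPL(I,M)$ without losing uniformity. This is where conformal flatness is genuinely used (via the developing map and Liouville's theorem, to move any local lightlike geodesic to the standard one in $\R^{p,q}$), and where one must be careful that the reparametrizations relating $\gamma_0$ to an arbitrary $\gamma$ do not degenerate; the analysis of the uniform and smooth topologies on $\PPL(I,M)$ from Section~\ref{Section_topologies_PPL}, particularly Proposition~\ref{prop_topologie_PPL} and the homography-normalization argument in its proof, is exactly the tool that handles this.
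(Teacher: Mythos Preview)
Your $(1)\Rightarrow(2)$ direction is correct and matches the paper exactly: contraction from $(\D^{p,q},\delta_{\D^{p,q}})$ to $(M,\delta_M)$ via Proposition~\ref{proposition_naturalité}, plus local equivalence of both pseudodistances with Riemannian ones.

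For $(2)\Rightarrow(1)$ the overall shape (contrapositive: from a sequence $\gamma_k\in\PPL(I,M)$ with $\gamma_k(0)$ convergent and $\|\gamma_k'(0)\|\to\infty$, produce a non-equicontinuous sequence in $\Conf(\D^{p,q},M)$) is the paper's as well, but your implementation has a genuine gap. Embedding a diamond \emph{near the basepoint} $\gamma_k(0)$ is not enough. If $f_k:\D^{p,q}\to M$ only covers a small neighborhood, the pulled-back geodesic $\alpha_k=f_k^{-1}\circ\gamma_k$ is defined only on a short sub-interval of $I$, and there is no a priori bound on $\alpha_k'(0)$; so you cannot conclude that $T f_k$ blows up. Your alternative framing (a ``compact family'' of embeddings) goes the wrong way: a compact family has \emph{bounded} distortion, and composing with it cannot convert a bounded $\alpha_k'(0)$ into an unbounded $\gamma_k'(0)$. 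Either way, the argument stalls exactly at the uniformity issue you flag.

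The missing step, which the paper supplies, is to embed a diamond around the \emph{entire compact image} $\overline{\gamma_k(I)}$ (after replacing $\gamma_k(t)$ by $\gamma_k(t/2)$ to make this closure compact). This is Corollary~\ref{cor_voisinage_diamant}, resting on two lemmas: the developing map is injective along any $\gamma\in\PPL(I,M)$ (Lemma~\ref{lemme_inj_geod}), and compact lightlike segments in $\Ein^{p,q}$ admit a basis of diamond neighborhoods (Lemma~\ref{lemme_base_voisi_photon}, proved by an explicit similarity-squeezing argument in a stereographic chart). Once $\gamma_k=\varphi_k\circ\alpha_k$ with $\alpha_k\in\PPL(I,\D^{p,q})$ defined on \emph{all of} $I$, the bound $F_{\D^{p,q}}(\alpha_k'(0))\leq 2$ is automatic, and Markowitz hyperbolicity of $\D^{p,q}$ forces $\alpha_k'(0)$ bounded; hence the operator norm of $T\varphi_k$ diverges, and one concludes via \cite[Thm.~1.2]{Charles_transformations_conformes} that $(\varphi_k)$ cannot be equicontinuous. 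The tools from Section~\ref{Section_topologies_PPL} you invoke are not what closes this gap; Lemma~\ref{lemme_base_voisi_photon} is.
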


We will need to identify $\D^{p,q}$ conformally with a bounded domain $\R^{p,q}$, see also \cite{Cha_Gal}. Let $\R^{p,q}=H_p\oplus H_q$ be an orthogonal splitting into negative and positive definite subspaces $H_p$ and $H_q$, respectively, and write vectors $v=v_p+v_q$ according to this decomposition. We define a norm $\Vert\cdot\Vert_{p,q}$ on $\R^{p+q}$ as 
$$\Vert v\Vert_{p,q}=\sqrt{-\b(v_p,v_p)}+\sqrt{\b(v_q,v_q)},$$
where $\b$ is the flat metric on $\R^{p,q}$. Then any open ball $B=B_{p,q}(v,r)$ for the norm $\Vert\cdot\Vert_{p,q}$ can be conformally identified with $\D^{p,q}=(\H^p\times \H^q,[-g_{\H^p}+g_{\H^q}])$, see \cite[Sect. 3.3.2]{Cha_Gal}. Thus we obtain the following:

\begin{lem}
    \label{lemme_base_voisi_photon}
    Let $\alpha\in \PPL(I,\Ein^{p,q})$. Then the segment $\Ccal=\overline{\alpha(I)}$ admits a basis of neighborhoods $\Bcal=\{D_i\}$ consisting of open subsets each of which is conformally equivalent to $\D^{p,q}$.
\end{lem}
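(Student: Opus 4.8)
The plan is to reduce the statement to the model situation in the Einstein universe and then transport a basis of neighborhoods of a photon segment using conformal charts of the form "open ball for $\Vert\cdot\Vert_{p,q}$ in $\R^{p,q}$". First I would fix a stereographic projection: let $\alpha\in\PPL(I,\Ein^{p,q})$ be given, and let $\Ccal=\overline{\alpha(I)}$ be the closure of its image, which is a compact photon segment (a closed arc of a photon, possibly with one or two endpoints). Choose a point $z\in\Ein^{p,q}$ not lying on the photon containing $\Ccal$ (such a point exists since a photon is a projective line, hence a proper subset of $\Ein^{p,q}$), so that $\Ccal$ is contained in the stereographic chart $\Ein^{p,q}\setminus C(z)\cong\R^{p,q}$ by Proposition~\ref{prop_projection_stereo}. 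In this chart $\Ccal$ becomes a compact lightlike segment of $\R^{p,q}$, that is, a compact piece of an affine line with lightlike direction.

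Next I would produce, for the compact lightlike segment $\Ccal\subset\R^{p,q}$, a decreasing basis of neighborhoods consisting of open balls $B_{p,q}(v_i,r_i)$ for the norm $\Vert\cdot\Vert_{p,q}$. The key elementary observation is that a lightlike segment is contained in arbitrarily small such balls: after an isometry of $\R^{p,q}$ we may assume the lightlike direction is $e_1+e_{p+1}$ (one negative, one positive basis vector), so the segment lies in the plane $\Span(e_1,e_{p+1})$ where $\Vert\cdot\Vert_{p,q}$ restricts to the $\ell^1$-type norm $|x_1|+|x_{p+1}|$, whose unit ball is a (Euclidean-rotated) square; a lightlike segment runs parallel to a side of this square, so it is contained in balls of the form $B_{p,q}(v,r)$ with $r$ as close as we like to half the length of $\Ccal$ in this norm, and by translating the center along the lightlike direction we can center these balls on $\Ccal$. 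Shrinking $r$ toward that infimum and translating appropriately gives a nested family $\{B_{p,q}(v_i,r_i)\}$ that forms a neighborhood basis of $\Ccal$ in $\R^{p,q}$, hence (since the stereographic chart is open and conformal) a neighborhood basis of $\Ccal$ in $\Ein^{p,q}$.

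Finally I would invoke the identification recalled just before the lemma: each open ball $B_{p,q}(v,r)$ is conformally equivalent to $\D^{p,q}=(\H^p\times\H^q,[-g_{\H^p}+g_{\H^q}])$, by \cite[Sect.~3.3.2]{Cha_Gal}. Setting $\Bcal=\{D_i\}$ with $D_i=B_{p,q}(v_i,r_i)$ then gives the desired basis of neighborhoods of $\Ccal$ by open sets conformally equivalent to $\D^{p,q}$, completing the proof.

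The main obstacle is the geometric step of checking that a compact lightlike segment genuinely admits a neighborhood basis of $\Vert\cdot\Vert_{p,q}$-balls: one must verify that these balls can be taken arbitrarily thin in the directions transverse to the segment while still covering it, which comes down to the fact that the unit sphere of $\Vert\cdot\Vert_{p,q}$ contains the lightlike directions in its "flat faces" (the $\ell^1$-behavior on the $2$-plane spanned by a mixed pair of basis vectors). Once this normal form is set up the verification is routine, but it is the only place where the specific shape of the norm $\Vert\cdot\Vert_{p,q}$ — rather than an arbitrary norm on $\R^{p,q}$ — is essential.
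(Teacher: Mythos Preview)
Your approach has a genuine gap at the central geometric step. You claim that shrinking $r$ toward its infimum yields balls $B_{p,q}(v_i,r_i)$ forming a neighborhood basis of the segment $\Ccal$, but this is false: all $\Vert\cdot\Vert_{p,q}$-balls are translates and dilates of one another, hence have the \emph{same shape}. If the endpoints of $\Ccal$ are at $\Vert\cdot\Vert_{p,q}$-distance $L$, then any ball containing $\Ccal$ has radius $r>L/2$; as $r\searrow L/2$ (centered at the midpoint) the balls converge to the closed ball of radius $L/2$, which is still a full-dimensional body, not to the segment. The flat faces of the $\Vert\cdot\Vert_{p,q}$-sphere in the lightlike direction only tell you that this infimum radius is sharp, not that the balls become transversally thin. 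So a compact lightlike segment does \emph{not} admit a neighborhood basis of $\Vert\cdot\Vert_{p,q}$-balls, and the ``routine verification'' you allude to cannot succeed.

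What is actually needed is not balls but \emph{conformal images} of a single ball. The paper fixes one ball $\D_\varepsilon=B_{p,q}(0,\sqrt2+\varepsilon)\supset\Ccal$ and applies the one-parameter family of linear similarities $\varphi_\lambda\in\Sim(\R^{p,q})$ that fix the lightlike direction $u$ of $\Ccal$, send a complementary lightlike vector $v=-u_p+u_q$ to $v/\lambda^2$, and act as $\lambda^{-1}\operatorname{Id}$ on $u^\perp\cap v^\perp$. One checks $\varphi_\lambda^*\b=\lambda^{-2}\b$, so each $\varphi_\lambda(\D_\varepsilon)$ is again conformally equivalent to $\D^{p,q}$; and since $\varphi_\lambda$ converges to the linear projection onto $\R u$ along $v^\perp$, the images $\varphi_\lambda(\D_\varepsilon)$ Hausdorff-converge to a segment only slightly longer than $\Ccal$, hence form the desired basis. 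This anisotropic squeezing by similarities is precisely the missing idea in your plan.

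There is also a smaller issue with your stereographic projection: choosing $z$ merely off the photon $\Delta$ does not guarantee $\Ccal\cap C(z)=\emptyset$, since $C(z)$ may still meet $\Delta$ at a point of $\Ccal$, or even contain $\Delta$ entirely when the line $z$ lies in $\Pi^\perp$. The paper instead picks $x\in\Delta\setminus\Ccal$ (which exists because $\alpha$ is parametrized on $I$) and then $y\in C(x)\setminus\Delta$, so that $C(y)\cap\Delta=\{x\}$ misses $\Ccal$.
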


\begin{proof}
    Let $\Delta$ be the photon of $\Ein^{p,q}$ containing $\Ccal$, and choose a point $x\in \Delta\setminus \Ccal$ (such a point exists since $\alpha$ is projectively parametrized on $I$). Let $y\in C(x)\setminus\Delta$ and fix a stereographic projection $\st:\R^{p,q}\to \Ein^{p,q}\setminus C(y)$ at $y$. Since $C(y)\cap \Delta=\{x\}$, the segment $ \Ccal$ is disjoint from $C(y)$, hence it appears as a bounded lightlike segment in the stereographic projection.     
    Let $\R^{p,q}=H_p\oplus H_q$ be an orthogonal decomposition as above, and let $u=u_p+u_q\in\R^{p,q}$ be a lightlike vector such that $\mathcal{C}=\{tu\,\vert\,t\in[-1,1]\}$ with $\b(u_p,u_p)=-\b(u_q,u_q)=-1/2$ (we can always assume that $\Ccal$ takes this form, up to composition by a similarity). For $\varepsilon>0$, we let $\D_\varepsilon$ denote the ball  
    $$\D_\varepsilon=B_{p,q}(0,\sqrt{2}+\varepsilon)=\left\{v_p+v_q\in \R^{p,q}\,\vert\,\sqrt{-\b(v_p,v_p)}+\sqrt{\b(v_q,v_q)}< \sqrt{2}+\varepsilon\right\}.$$
    By construction, one has $\Ccal\subset\D_\varepsilon$ for all $\varepsilon>0$. Let $v=-u_p+u_q$ and for $\lambda\geq1$, define $\varphi_\lambda$ to be the similarity of $\R^{p,q}$ given by 
    $$\varphi_\lambda(u)=u,\,\varphi_\lambda(v)=v/\lambda^2\text{ and }\varphi_\lambda=\lambda^{-1}Id \text{ on }u^\perp\cap v^\perp. $$
    Let us show that the family $\{\varphi_\lambda(\D_\varepsilon)\}_{\varepsilon,\lambda}$ is a base of neighborhoods of $\mathcal{C}$. Since $\varphi_\lambda$ acts trivially on the line generated by  $u$, every $\varphi_\lambda(\D_\varepsilon)$ contains $\mathcal{C}$. Let $V$ be a neighborhood of $\Ccal$ and let $\varepsilon>0$ be such that $\{ tu\,\vert\,\vert t\vert\leq 1+\varepsilon/\sqrt2\}\subset V$. Let $\pi:\R^{p,q}\to \R u$ be the linear projection on $u$ in the direction of $v^\perp$. 
    Now $\varphi_\lambda\to \pi$ uniformly on compact subsets, so $$\varphi_\lambda(\overline{\D_\varepsilon})\to \pi(\overline{\D_\varepsilon})=\{ tu\,\vert\,\vert t\vert\leq 1+\varepsilon/\sqrt2\}\text{ as }\lambda\to +\infty,$$  in the Hausdorff topology. Hence, for $\lambda>1$ sufficiently large, one has $\varphi_\lambda(\overline{\D_\varepsilon})\subset V$.
\end{proof}

\begin{lem}
    \label{lemme_inj_geod}
    Let $M$ be a simply connected conformally flat manifold. Then the developing map is injective when restricted to the image of any $\gamma\in \PPL(I,M)$.
\end{lem}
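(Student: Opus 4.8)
The plan is to reduce the lemma to a statement about the model space, namely: \emph{every $\gamma\in\PPL(I,\Ein^{p,q})$ is injective.} Since $M$ is simply connected, $\widetilde M=M$ and the developing map is a conformal immersion $\dev\colon M\to\Ein^{p,q}$. First I would record that $\dev$ sends projectively parametrized lightlike geodesics to projectively parametrized lightlike geodesics: fixing a metric $g$ in the conformal class of $\Ein^{p,q}$, the pullback $\dev^*g$ lies in the conformal class of $M$ and $\dev$ is a local isometry $(M,\dev^*g)\to(\Ein^{p,q},g)$, so it carries affine $\dev^*g$-geodesics to affine $g$-geodesics and, since $\Ric_{\dev^*g}=\dev^*\Ric_g$, preserves the defining equation \eqref{equation_parametrage_projectif} for projective parameters; hence $\dev\circ\gamma\in\PPL(I,\Ein^{p,q})$ whenever $\gamma\in\PPL(I,M)$. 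This step is routine. Granting the model-space claim, if $a=\gamma(s)$ and $b=\gamma(t)$ in $\gamma(I)$ satisfy $\dev(a)=\dev(b)$, then $(\dev\circ\gamma)(s)=(\dev\circ\gamma)(t)$, which forces $s=t$ and hence $a=b$; so $\dev$ is injective on $\gamma(I)$ (and, incidentally, $\gamma$ itself is injective).

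To prove the claim I would pass to the double cover. We may assume $\gamma$ is non-constant, the constant case being trivial. Since $I$ is simply connected, $\gamma$ lifts through the covering $\S^p\times\S^q\to\Ein^{p,q}$ of Proposition~\ref{prop_rev_double} to some $\tilde\gamma\in\PPL(I,\S^p\times\S^q)$, and it suffices to show $\tilde\gamma$ is injective with image contained in a sub-arc of length $<\pi$ of its underlying great circle, for then $\gamma$ is injective too. On $\S^p\times\S^q$ equipped with the round product metric $-g_{\S^p}+g_{\S^q}$, the lightlike affine geodesics are the products $\xi=(\xi^p,\xi^q)$ of equal-speed geodesics of the two spheres, and for $\xi$ run at unit speed one has $\Ric(\xi',\xi')=n-2$, so by Example~\ref{exemple_para_proj} the equation for projective parameters becomes $Su=2$. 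Writing $\tilde\gamma=\xi\circ u^{-1}$ with $u\colon J\to I$ a diffeomorphism solving $Su=2$, I would use that the solutions of $Su=2$ are exactly the functions $A\circ\tan$ with $A$ a Möbius transformation of the projective line, and that $\tan$ maps each open interval of length $\pi$ onto $\RP^1$ minus one point; since $I=(-1,1)$ is a \emph{proper} subinterval of $\RP^1$, such a solution can be a diffeomorphism onto $I$ only on an interval of length strictly less than $\pi$. Hence $\lvert J\rvert<\pi$.

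To conclude I would observe that on $\S^p\times\S^q$ one has $\xi(t_1)=\pm\xi(t_2)$ precisely when $t_1-t_2\in\pi\Z$ (each unit-speed factor being a great circle of length $2\pi$), so on an interval $J$ of length $<\pi$ the map $\xi\vert_J$ is injective and remains injective after composing with the quotient $\S^p\times\S^q\to\Ein^{p,q}$. Post-composing with the diffeomorphism $u^{-1}\colon I\to J$ then shows $\tilde\gamma$, and hence $\gamma$, is injective — in fact an embedding onto a proper sub-arc of a photon — which completes the proof of the claim and of the lemma.

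The argument is elementary and I do not foresee a genuine obstacle. The one point requiring care is the inequality $\lvert J\rvert<\pi$: it must be extracted cleanly from the description of the solution set of $Su=2$ together with the fact that $(-1,1)$ sits inside the projective line as a \emph{proper} subinterval rather than as the whole line. If one forgot that $\PPL$ geodesics are parametrized by $I=(-1,1)$ and not by all of $\RP^1$, injectivity could genuinely fail, since a full photon is a circle. Everything else is standard Schwarzian/ODE bookkeeping together with the geometry of geodesics on a product of round spheres.
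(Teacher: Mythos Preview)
Your argument is correct and follows essentially the same route as the paper: reduce to showing that any $\gamma\in\PPL(I,\Ein^{p,q})$ is injective, lift to $\S^p\times\S^q$, and use the Schwarzian equation $Su=2$ (equivalently Example~\ref{exemple_para_proj}) to bound the parameter interval by $|J|<\pi$, which forces the image to be a proper sub-arc of a photon with no antipodal pairs. The paper packages the length bound by citing Proposition~\ref{Prop_exemple_calcul_F_M}, whereas you extract it directly from the description of solutions of $Su=2$ as M\"obius reparametrizations of $\tan$; these are the same computation, and your version is arguably cleaner since it avoids invoking the $F_\Omega$ formula only to read off a fact contained in its proof.
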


\begin{proof}
    The statement is immediate when $\gamma\in\PPL(I,M)$ is constant. If $\gamma$ is non-constant,  the curve $\alpha=\dev\circ\gamma\in\PPL(I,\Ein^{p,q})$ is a non-constant projectively parametrized lightlike geodesic. The curve $\alpha$ lifts to a $\widetilde\alpha\in\PPL(I,\S^p\times\S^q)$. The lift $\widetilde\alpha$ does not contain antipodal points, see Example \ref{exemple_para_proj}, so $\alpha$ is injective. Therefore $\dev$ is injective when restricted to $\gamma(I)$.
\end{proof}

From the preceding two lemmas, we deduce the following:

\begin{cor}
    \label{cor_voisinage_diamant}
    Let $M$ be a conformally flat manifold, and let $\gamma\in\PPL(I,M)$ such that $\smash{\overline{\gamma(I)}}$ is compact. Then there exist $\alpha\in \PPL(I,\D^{p,q})$ and $\varphi\in \Conf(\D^{p,q},M)$ such that $\gamma=\varphi(\alpha)$.
\end{cor}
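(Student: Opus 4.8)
The plan is to develop $\gamma$ into the Einstein universe, build a local conformal inverse of the developing map along the developed curve, extend it across the two endpoints of the developed segment using the compactness of $\overline{\gamma(I)}$, and then restrict the result to a diamond supplied by Lemma~\ref{lemme_base_voisi_photon}.

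First I would fix a developing map $\dev\colon\Mtilde\to\Ein^{p,q}$, lift $\gamma$ to $\tilde\gamma\in\PPL(I,\Mtilde)$, and set $\alpha_0:=\dev\circ\tilde\gamma\in\PPL(I,\Ein^{p,q})$. The case where $\gamma$ is constant is immediate (a point of $\Ein^{p,q}$ has a neighbourhood conformally equivalent to $\D^{p,q}$, so one takes for $\varphi$ a branch of $\pi\circ\dev^{-1}$ on such a neighbourhood and for $\alpha$ a constant curve), so assume $\gamma$ non-constant. Then $\alpha_0$ is an immersion, and by Lemma~\ref{lemme_inj_geod} (and its proof) $\dev$ is injective on $\tilde\gamma(I)$ and $\alpha_0$ is injective on $I$, whence $\tilde\gamma$ is injective too. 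Next I would construct a conformal map $\varphi_0\colon N\to M$ on an open neighbourhood $N$ of $\alpha_0(I)$ in $\Ein^{p,q}$ with $\varphi_0\circ\alpha_0=\gamma$, by analytically continuing along the embedded arc $\alpha_0(I)$ the local conformal maps of the form $\pi\circ(\dev|_{\smash{\widetilde W}})^{-1}$, where $W\subset\Ein^{p,q}$ is open, $\widetilde W\subset\Mtilde$ is mapped diffeomorphically onto $W$ by $\dev$, and $\pi\colon\Mtilde\to M$ is the covering projection; coherence of the continuation is guaranteed because $\dev$ is a genuine (global) local homeomorphism — so two such branches, being right inverses of $\dev$ on a connected set where they agree at a point, agree everywhere there — and because $\alpha_0$ is injective.

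The crux is to extend $\varphi_0$ conformally across the two endpoints $p_\pm$ of the closed arc $\overline{\alpha_0(I)}$ (which lie on the photon containing $\alpha_0(I)$). For the endpoint $p_+$: using that $\overline{\gamma(I)}$ is compact, pick $t_k\to 1$ with $\gamma(t_k)\to q_+\in M$ and a conformal chart $\chi\colon V\to\Ein^{p,q}$ around $q_+$; after passing to a subsequence the points $\alpha_0(t_k)$ lie in a single connected component $O$ of $\varphi_0^{-1}(V)$, with $p_+\in\overline{O}$. By Liouville's theorem $\chi\circ\varphi_0|_O$ extends to some $h\in\PO(p+1,q+1)$; comparing $h(\alpha_0(t_k))=\chi(\gamma(t_k))$ in the limit gives $h(p_+)=\chi(q_+)\in\chi(V)$, so $p_+\in h^{-1}(\chi(V))$, and on that open set $\chi^{-1}\circ h$ agrees with $\varphi_0$ on $O$ — hence, by unique continuation of conformal maps, on the connected sheet of $\varphi_0$ through $\alpha_0(I)$ — providing a conformal extension of $\varphi_0$ past $p_+$ (this also shows a posteriori that $\gamma$ extends continuously at $t=1$). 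Running the same argument at $p_-$ then yields $\varphi_0$ conformal on an open neighbourhood of the compact set $\overline{\alpha_0(I)}$, still with $\varphi_0\circ\alpha_0=\gamma$.

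To conclude, I would apply Lemma~\ref{lemme_base_voisi_photon} to produce a diamond $D$ with $\overline{\alpha_0(I)}\subset D\subset\operatorname{dom}(\varphi_0)$ and a conformal diffeomorphism $\Phi\colon\D^{p,q}\xrightarrow{\ \sim\ }D$, and then set $\alpha:=\Phi^{-1}\circ\alpha_0\in\PPL(I,\D^{p,q})$ and $\varphi:=\varphi_0\circ\Phi\in\Conf(\D^{p,q},M)$, so that $\varphi\circ\alpha=\varphi_0\circ\alpha_0=\gamma$. I expect the extension across $p_\pm$ to be the main obstacle: it is the only place where compactness of $\overline{\gamma(I)}$ is needed, it relies on Liouville's rigidity to prevent the local conformal inverse of $\dev$ from degenerating at the endpoints, and the bookkeeping with connected components near $p_\pm$ requires care.
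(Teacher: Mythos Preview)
Your argument is correct, but it takes a longer route than the paper's. The paper's proof short-circuits the entire analytic-continuation and Liouville-at-the-endpoints machinery as follows: after passing to the universal cover, the compactness of $\overline{\gamma(I)}$ lets one extend $\gamma$ to a projectively parametrized lightlike geodesic on a slightly larger interval, so Lemma~\ref{lemme_inj_geod} already gives injectivity of $\dev$ on the compact set $\overline{\tilde\gamma(I)}$. A local diffeomorphism that is injective on a compact set is injective on an open neighbourhood $U$ of that set; thus $(\dev|_U)^{-1}$ is a genuine conformal map defined on a neighbourhood of $\overline{\alpha_0(I)}$ in one stroke, and one finishes exactly as you do with Lemma~\ref{lemme_base_voisi_photon}. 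Your continuation-plus-Liouville argument is essentially a reconstruction of this inverse by hand; it works, and has the mild advantage of making the role of Liouville rigidity explicit, but it costs you the endpoint bookkeeping, which the paper's approach avoids entirely.

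One point in your version deserves tightening. The claim that, after passing to a subsequence, the points $\alpha_0(t_k)$ lie in a \emph{single} component $O$ of $\varphi_0^{-1}(V)$ really needs $\gamma(t)\to q_+$ for all $t\to 1$, not merely along the subsequence; otherwise the connecting arcs $\alpha_0([t_k,t_{k+1}])$ need not stay in $\varphi_0^{-1}(V)$. This convergence does hold (a lightlike geodesic with precompact image extends continuously past its endpoints, by the ODE it satisfies), but that fact should be invoked \emph{before} the component argument, not recovered ``a posteriori''. Once you grant it, the rest of your endpoint extension goes through.
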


\begin{proof}
    We can always replace $M$ with its universal cover and assume that $M$ is simply connected. Let $\dev:M\to \Ein^{p,q}$ be a developing map. Since $\smash{\overline{\gamma(I)}}$ is compact, one can always find $\hat\gamma\in\PPL(I,M)$ such that $\smash{\overline{\gamma(I)}}\subset \hat{\gamma}(I)$. In particular, we deduce from  Lemma~\ref{lemme_inj_geod} that the developing map is injective in restriction to $\smash{\overline{\gamma(I)}}$. Therefore we can find a neighborhood $U$ of $\smash{\overline{\gamma(I)}}$ in $M$ such that the developing map is still injective when restricted to $U$. Let $\beta=\dev(\gamma)\in\PPL(I,\Ein^{p,q})$ and let $V=\dev(U)$. The open set $V\subset \Ein^{p,q}$ is a neighborhood of $\Ccal=\smash{\overline{\alpha(I)}}$ in $\Ein^{p,q}$. From Lemma~\ref{lemme_base_voisi_photon}, we can find a conformal embedding $\psi:\D^{p,q}\to V$ containing $\Ccal$ in its image. Since $\psi$ is injective, there exists a unique $\alpha\in \PPL(I,\D^{p,q})$ such that $\psi(\alpha)=\beta$.  Since $\dev\vert_{U}$ is injective, there is a unique conformal map $\varphi:\D^{p,q}\to M$ such that $\dev\circ\varphi=\psi$. Now $\dev(\varphi(\alpha))=\beta=\dev(\gamma)$, hence $\varphi(\alpha)=\gamma$ by injectivity on $\dev\vert_U$.
\end{proof}

\begin{proof}[Proof of Theorem~\ref{thm_hyp_conf_plat}]
    Implication $(1)\Rightarrow (2)$ follows from Proposition~\ref{proposition_naturalité}. Indeed, every conformal map $\varphi\in \Conf(\D^{p,q},M)$ induces a 1-Lipschitz mapping
    $\varphi:(\D^{p,q},\delta_{\D^{p,q}})\to (M,\delta_M)$, and $\delta_M$ is locally equivalent to a Riemannian distance by Theorem \ref{thm_hyp}.

    For the converse implication, assume that $M$ is not Markowitz hyperbolic. From Theorem~\ref{thm_hyp}, there exists a sequence $(\gamma_k)\in\PPL(I,M)$ such that $\gamma_k(0)$ converges in $M$ and $\gamma_k^\prime(0)\to \infty$. 
    We can always replace $\gamma_k$ by $t\mapsto\gamma_k(t/2)$ for $t\in I$ and assume that $\gamma_k(I)$ has compact closure in $M$, for all $k\geq 0$. From Corollary~\ref{cor_voisinage_diamant},  we can find a sequence of conformal maps $(\varphi_k)\in\Conf(\D^{p,q},M)$ and a sequence $\alpha_k\in\PPL(I,\D^{p,q})$ such that $\gamma_k=\varphi_k(\alpha_k)$. Also, since $\D^{p,q}$ is homogeneous, we can always assume that after a suitable reparameterization, one has $\alpha_k(0)=x$ for a fixed basepoint $x\in \D^{p,q}$. We claim that $(\varphi_k)$ is not equicontinuous at $x$. Indeed, since $\alpha_k\in \PPL(I,\D^{p,q})$, one has 
    $$F_{\D^{p,q}}(\alpha_k^\prime(0))\leq \sqrt{\rho_I(\partial_t,\partial_t)}=2.$$ 
    Since $\D^{p,q}$ is Markowitz hyperbolic by Proposition~\ref{prop_distance_diamant}, the sequence $(\alpha_k^\prime(0))$ is bounded by Theorem~\ref{thm_hyp}.
    Now 
    $T_x\varphi_k(\alpha_k^\prime(0))=\gamma_k^\prime(0)\to \infty$ as $k\to \infty$, so the operator norm of $T_x\varphi_k$ is unbounded. If $(\varphi_k)$ were equicontinuous at $x$, then a subsequence of $(\varphi_k)$ would converge uniformly in a neighborhood of $x$. But then the convergence would be smooth by \cite[Thm. 1.2]{Charles_transformations_conformes}, contradicting the fact that $T_x\varphi_k$ diverges. Hence $(\varphi_k)$ is non-equicontinuous.    
\end{proof}

\subsection{Conformally convex domains}\label{section_conf_convex}

A \emph{Möbius hyperplane} is a subset $\Hcal\subset \Ein^{p,q}$ that is the intersection of $\Ein^{p,q}$ with a projective hyperplane, that is $\Hcal=\P(H)\cap\Ein^{p,q}$ for some hyperplane $H\subset \R^{p+1,q+1}$. Up to conformal equivalence, there are three types of Möbius hyperplanes depending on the signature of $H$. A Möbius hyperplane is either a lightcone or it is conformally equivalent to $\Ein^{p-1,q}$ or $\Ein^{p,q-1}$. 

\begin{figure}
    \centering
    \begin{tikzpicture}[scale=2]

  \def\a{1}
  \def\h{3}


    \fill[bovert] ({cosh(\a)}, {sinh(\a)}) -- ({-cosh(\a)}, {sinh(\a)}) -- (-1,0) -- (1,0) -- ({cosh(\a)}, {sinh(\a)});

    \fill[bovert] ({cosh(\a)}, {sinh(\a)}) arc (0:180:{cosh(\a)} and 0.2);

    \fill[bovert] (-1,0) arc (180:360:1 and 0.15);

    \fill[white, opacity=0.3] ({sinh(\a)}, {sinh(\a)}) -- (0,0) -- ({-sinh(\a)}, {sinh(\a)}) ;

     \fill[white] ({sinh(\a)}, {sinh(\a)}) arc (0:360:{sinh(\a)} and 0.15);

    \fill[bovert] ({sinh(\a)}, {sinh(\a)}) arc (0:360:{sinh(\a)} and 0.15);

    \fill[white, opacity=0.5] ({sinh(\a)}, {sinh(\a)}) arc (0:360:{sinh(\a)} and 0.15);

    \fill[domain=0:\a,smooth,variable=\u,white] 
      plot ({cosh(\u)}, {sinh(\u)});

    \fill[domain=0:1,smooth,variable=\u,white] 
      plot ({-cosh(\u)}, {sinh(\u)});

  \draw[domain=0:\a,smooth,variable=\u,thin] 
      plot ({cosh(\u)}, {sinh(\u)});

    \draw[domain=0:1,smooth,variable=\u,thin] 
      plot ({-cosh(\u)}, {sinh(\u)});

    \draw (-1,0) arc (180:360:1 and 0.15);
    \draw[densely dashed] (1,0) arc (0:180:1 and 0.15);


    \draw ({-cosh(\a)}, {sinh(\a)}) arc (180:360:{cosh(\a)} and 0.2);
    \draw ({cosh(\a)}, {sinh(\a)}) arc (0:180:{cosh(\a)} and 0.2);

    \draw[densely dashed] ({sinh(\a)}, {sinh(\a)}) -- (0,0) -- ({-sinh(\a)}, {sinh(\a)});

    \draw ({sinh(\a)}, {sinh(\a)}) arc (0:360:{sinh(\a)} and 0.15);

\end{tikzpicture}
    \caption{A conformally convex domain of $\R^{1,2}$.}
    \label{figure_conf_conv_domain}
\end{figure}
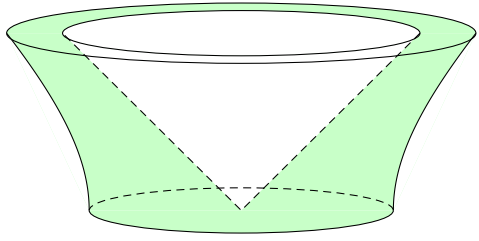

\begin{definition}
    \label{definition_conf_conv}
    A subset $\Omega\subset\Ein^{p,q}$ is said to be \emph{conformally convex} if for every point $x\in \partial\Omega$, there exists a Möbius hyperplane $\Hcal$, such that $x\in \Hcal$ and $\Omega\cap\Hcal=\emptyset$. In that case, we say that $\Hcal$ is a \emph{supporting Möbius hyperplane} of $\Omega$ at $x$.
\end{definition}

Conformal convexity is the conformal analog of convexity in real projective space, where projective hyperplanes are replaced by Möbius hyperplanes.

\begin{examples}
    \label{exemple_de_conf_conv}
    Let us mention the following examples of conformally convex domains.
    \begin{itemize}
        \item Any convex domain of $\R^{p,q}$ can be identified with a conformally convex domain under a stereographic projection. Indeed, given a stereographic projection $\st:\R^{p,q}\to \Ein^{p,q}$ and a supporting affine hyperplane $H$ of a convex domain $\Omega\subset\R^{p,q}$, the closure $\smash{\overline{\st(H)}}$ is a supporting Möbius hyperplane of $\st(\Omega)$, see Lemma~\ref{lemme_geometry_mobius_support_hyperplane}.
        \item Any convex domain of $\H^{p,q}$ can be identified with a conformally convex domain of $\Ein^{p,q}$. Recall that $\H^{p,q}$ refers to the hyperboloid $\H^{p,q}=\{v\in\R^{p+1,q}\,\vert\,\b(v,v)=-1\}$ endowed with the metric $\b$, which restricts to a metric of signature $(p,q)$ with constant negative curvature. A domain $\Omega\subset \H^{p,q}$ is \emph{convex} if any $x\in \partial\Omega$ admits a totally geodesic supporting hypersurface. Then the image of a convex $\Omega\subset\H^{p,q}$ under the natural conformal injection $j:\H^{p,q}\hookrightarrow\Ein^{p,q}$ given by $j(v)= [v+e_{p+q+2}]$, is conformally convex in $\Ein^{p,q}$.
        \item Similarly, any convex domain of $\dS^{p,q}=\{v\in\R^{p,q+1}\,\vert\,\b(v,v)=+1\}$ can be identified with a conformally convex domain of $\Ein^{p,q}$. 
        \item Any dually convex domain of $\Ein^{p,q}$ is conformally convex. Recall that a domain $\Omega$ is dually convex if every point in its boundary admits a supporting lightcone, see also \cite{Zimpropqh} for a general definition for flag manifolds. 
        \item Any open intersection of conformally convex domains is again conformally convex.
        \item The domain $\Omega=\{(t,x,y)\in \R^{1,2}\,\vert\,0<t<1 \text{ and }0<-t^2+x^2+y^2<1\}$ depicted in Figure~\ref{figure_conf_conv_domain} is conformally convex, but neither convex nor dually convex.
    \end{itemize}
\end{examples}

Our main result for conformally convex domains is the following Barth-type theorem.

\begin{thm}
    \label{thm_conformement_convex}
    Let $\Omega\subset\Ein^{p,q}$ be a conformally convex domain. Then $\Omega$ is Markowitz hyperbolic if and only if $\Omega$ is totally conformally lightlike incomplete. In that case, the metric space $(\Omega,\delta_\Omega)$ is complete and every maximal $\gamma\in \PPL(I,\Omega)$ is an isometry $\gamma:(I,d_I)\to (\Omega,\delta_\Omega)$.
\end{thm}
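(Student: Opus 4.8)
The implication ``Markowitz hyperbolic $\Rightarrow$ totally conformally lightlike incomplete'' is exactly Proposition~\ref{prop_TCLI}(1), so the content is the converse together with the completeness and isometry assertions, and I would assume throughout that $\Omega$ is conformally convex and totally conformally lightlike incomplete. The starting point is a rigidity property of maximal lightlike geodesics, which is where conformal convexity first enters. Since a photon, or even a photon minus a point, contained in $\Omega$ would produce a non-constant element of $\PPL(\R,\Omega)$, there is none, so $\Omega\ne\Ein^{p,q}$; and arguing as in the proof of Proposition~\ref{prop_TCLI}, every maximal $\gamma\in\PPL(J,\Omega)$ has $J$ projectively equivalent to $I$, with image a proper open sub-arc of a unique photon $\Delta$ whose two \emph{distinct} endpoints $a,b$ lie on $\partial\Omega$. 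The crucial remark is that a supporting Möbius hyperplane $\Hcal_a$ at $a$ cannot contain $\Delta$, because $\Hcal_a\cap\Omega=\emptyset$ while $\Delta\cap\Omega\ne\emptyset$; hence $\Hcal_a\cap\Delta=\{a\}$, i.e.\ $\Hcal_a$ meets $\Delta$ \emph{transversally} and only at $a$, and likewise at $b$.

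To prove Markowitz hyperbolicity I would verify condition~(4) of Theorem~\ref{thm_hyp}, that $F_\Omega$ is locally bounded below by a Riemannian norm, by contradiction following Brody's strategy (as in Section~\ref{section_Brody}). If $F_\Omega$ is not bounded below on a compact $K\subset\Omega$, one gets $x_k\to x\in K$ and lightlike unit vectors $v_k$ with $F_\Omega(v_k)\to0$; taking the maximal geodesic $\gamma_k:I\to\Omega$ normalized by $\gamma_k(0)=x_k$, one computes $F_\Omega(v_k)=2/\|\gamma_k'(0)\|$ (using maximality and Proposition~\ref{prop_TCLI}), so $\|\gamma_k'(0)\|\to\infty$. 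Brody's lemma plus a linear rescaling yields an equicontinuous family of lightlike geodesics defined on intervals exhausting $\R$, which by Arzelà--Ascoli and Proposition~\ref{prop_topologie_PPL} subconverges \emph{smoothly} to a non-constant $\beta\in\PPL(\R,\overline\Omega)$ on a photon $\Delta_\beta$, with image $\Delta_\beta$ minus a point. Since each reparametrized geodesic still passes through the point $x_k\in K$, the Hausdorff limit of their images contains $x\in K\subset\Omega$ and lies on $\Delta_\beta$, so $\Delta_\beta\cap\Omega\ne\emptyset$; by the first paragraph, every supporting hyperplane at a point of $\Delta_\beta\cap\partial\Omega$ is transversal to $\Delta_\beta$. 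If $\beta(t_0)\in\partial\Omega$ for some $t_0$, then near $t_0$ the curve $\beta$ crosses the supporting hyperplane $\Hcal_{\beta(t_0)}$ transversally, hence, $\Omega$ being locally on one side of it, leaves $\Omega$; by smoothness of the convergence $\beta_k\to\beta$ the same holds for $\beta_k$ with $k$ large, contradicting $\beta_k(\R)\subset\Omega$. Therefore $\beta\in\PPL(\R,\Omega)$ is non-constant, contradicting total conformal lightlike incompleteness. Thus $F_\Omega$ is locally bounded below, $\Omega$ is Markowitz hyperbolic, and by Theorem~\ref{thm_hyp} the metric $\delta_\Omega$ is a length metric.

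For completeness I would use Proposition~\ref{prop_cara_complete_hyperbolic}(1), reducing to the statement that closed $\delta_\Omega$-balls are compact; since $\overline\Omega$ is compact in $\Ein^{p,q}$, this amounts to showing $\delta_\Omega(x_0,y_k)\to\infty$ whenever $y_k\in\Omega$ converges to a point $y\in\partial\Omega$. Using a supporting hyperplane at $y$ one obtains, near $y$, a lower bound on $F_\Omega$ (in the spirit of the explicit formula of Example~\ref{exemple_formula_F_M}, applied in a stereographic chart adapted to the supporting hyperplane) forcing the $\delta_\Omega$-length of any lightlike chain reaching $y_k$ to diverge; concretely one compares $\delta_\Omega$ near the boundary with the Markowitz distance of an Einstein--de Sitter half-space, which blows up at its boundary. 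For the isometry statement, the inequality $\delta_\Omega(\gamma(s),\gamma(t))\le d_I(s,t)$ is immediate since every element of $\PPL(I,\Omega)$ is distance-decreasing from $(I,d_I)$ by the very definition of $\delta_\Omega$, while $F_\Omega(\gamma'(\tau))=\sqrt{\rho_I(\partial_\tau,\partial_\tau)}$ for maximal $\gamma$ shows the $\delta_\Omega$-length of $\gamma|_{[s,t]}$ equals $d_I(s,t)$; the reverse inequality amounts to showing $\gamma|_{[s,t]}$ is a minimizing path, for which I would exhibit, using the transversal supporting hyperplanes at $a$ and $b$, a larger conformally convex domain $W\supseteq\Omega$ in which $\gamma$ is still a maximal lightlike geodesic and whose Markowitz distance restricted to $\gamma(I)$ can be computed explicitly, exactly as for the diamond in Proposition~\ref{prop_distance_diamant}, to equal $d_I$; then $\delta_\Omega\ge\delta_W$ finishes.

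The main obstacle is the Brody step: forcing the limit geodesic $\beta$ to stay inside $\Omega$. This is precisely where conformal convexity is indispensable, through the transversality of supporting Möbius hyperplanes to photons established in the first paragraph; and it relies essentially on the convergence $\beta_k\to\beta$ being \emph{smooth} (Proposition~\ref{prop_topologie_PPL}) rather than merely uniform, so that the transversal crossing can be transported from the limit curve back to the approximants. Secondary technical points are the boundary blow-up estimate for completeness and, for the isometry statement, the right choice of the comparison domain $W$ — the naive intersection of the two Möbius half-spaces at $a$ and $b$ need not itself be totally conformally lightlike incomplete, so $W$ must be chosen with some care so that $\delta_W$ does not degenerate along $\gamma$.
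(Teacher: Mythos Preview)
Your argument for ``totally conformally lightlike incomplete $\Rightarrow$ Markowitz hyperbolic'' is correct and close in spirit to the paper's: both hinge on the transversality of supporting M\"obius hyperplanes to photons (Lemma~\ref{lemme_intersection_photon_hyperplan}) to prevent a limiting photon from escaping $\Omega$. The paper packages this via an explicit computation of $F_\Omega$ (Lemma~\ref{lemme_de_caracterisation}) rather than Brody's lemma, but the geometric core is the same.

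The completeness argument, however, has a genuine gap. Your plan is to compare $\delta_\Omega$ with the Markowitz distance of the half-space cut out by a single supporting hyperplane $\Hcal_y$, invoking the Einstein--de Sitter model. But a connected component of the complement of a M\"obius hyperplane in $\Ein^{p,q}$ is in general \emph{not} Markowitz hyperbolic: when $\Hcal_y$ is a lightcone its complement is $\R^{p,q}$ with $\delta\equiv 0$, and the Einstein--de Sitter comparison only applies to spacelike hyperplanes in Lorentzian signature. The paper's device is different and is the key idea missing from your sketch: a \emph{projective map} $f:\Omega\to\RP^1$ built from \emph{two} hyperplanes (Definition~\ref{def_app_proj}, Proposition~\ref{Proposition_applications_projectives}), under which $\delta_\Omega$ dominates the hyperbolic metric $d_I$ on the image interval. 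This Carath\'eodory-type bound handles the generic boundary point; the remaining case, where \emph{every} supporting hyperplane of $\Omega$ passes through the limit point $x$, forces $\Omega$ into a Minkowski chart at $x$ as an affinely convex set, and one finishes with the Hilbert metric on a line-free convex factor (Proposition~\ref{Proposition_complletude_delta_kob}) together with translation invariance. This second case is where most of the subtlety lies and is entirely absent from your outline.

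The same projective-map tool dispatches the isometry statement in one stroke: taking $\Hcal_1,\Hcal_2$ supporting at the endpoints $a_1,a_2$ of a maximal $\gamma$, the map $f:\Omega\to I$ restricts to a projective isomorphism on $\gamma(I)$, whence $\delta_\Omega(\gamma(s),\gamma(t))\ge d_I(f(\gamma(s)),f(\gamma(t)))=d_I(s,t)$. This is exactly the lower bound you were seeking via a comparison domain $W$, and it sidesteps the difficulty you yourself flagged about $W$ possibly failing to be totally conformally lightlike incomplete.
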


The proof is given below in Section~\ref{section_preuve_thm_conf_conv}.

\begin{ex}[Markowitz distance of a ball]
    Let $\R^{1,1}$ be equipped with the flat metric $g=dxdy$ and let $D=\{(x,y)\in\R^{1,1}\,\vert\,x^2+y^2<1\}$ denote the unit ball of the Minkowski space. Since $D$ is convex, it identifies with a conformally convex domain of the Einstein universe, see Example~\ref{exemple_de_conf_conv}, so the conclusion of Theorem~\ref{thm_conformement_convex} holds for $D$. Note that, in this particular case, the completeness of $\delta_D$ also follows from the inequality $H_D\leq \delta_D$, where $H_D$ is the Hilbert metric on $D$. The fact that every maximal $\gamma\in \PPL(I,D)$ is an isometry $\gamma:(I,d_I)\to (\Omega,\delta_D)$ reduces to the equality $H_D = \delta_D$ along lightlike geodesics.
    
    Let us explain why, surprisingly, the distance $\delta_D$ and $H_D$ are not equivalent, and in fact, why there are no $\alpha,\beta >0$, such that $\delta_D\leq \alpha H_D+\beta$. Assume by contradiction that such constants do exist, so that the identity map $\text{id}:D\to D$ induces a quasi-isometry between 
    $(D,\delta_D)$ and $(D,H_D)\simeq\H^2$, which is Gromov hyperbolic. Hence, there exists a constant $r>0$ such that any $\delta_D$ geodesic between points $x,y\in D$ is contained in the $r$-neighborhood of a hyperbolic geodesic $[x,y]$ of $D$, see \cite{bridson2013metric}. Therefore, for any $x,y\in D$, there is a piecewise lightlike geodesic curve $\gamma$ that is contained in the $(r+1)$-neighborhood of $[x,y]$ and whose length is $\leq \delta_D(x,y)+1$. Choosing the segment $[x,y]$ sufficiently close to the boundary yields a contradiction, see Figure~\ref{figure_mark_dist_ball}. See also \cite{article_hyp_de_Gromov}.
\end{ex} 

    \begin{figure}
        \centering
        \begin{tabular}{ccccc}
        \begin{tikzpicture}[scale=2,rotate = 45]
            \draw[fill=none](0,0) circle (1);
            \foreach\t in {0.2,0.4,0.6,0.8}{
            
            \draw [red,samples=200,thick,variable=\x,domain={-sqrt(1-\t*\t)}:{sqrt(1-\t*\t)}] plot ({\x}, {\t});

            \draw [red,samples=200,thick,variable=\x,domain={-sqrt(1-\t*\t)}:{sqrt(1-\t*\t)}] plot ({\x}, {-\t});

            \draw [red,samples=200,thick,variable=\x,domain={-sqrt(1-\t*\t)}:{sqrt(1-\t*\t)}] plot ({\t}, {\x});

            \draw [red,samples=200,thick,variable=\x,domain={-sqrt(1-\t*\t)}:{sqrt(1-\t*\t)}] plot ({-\t}, {\x});
        
            }

            \draw[thick, red] (0,1) -- (0,-1);
            \draw[thick, red] (1,0) -- (-1,0);
            
        \end{tikzpicture}
        &&&&
        \begin{tikzpicture}[scale=2,rotate = 45]
        \def\eps{0.15}
            \draw[fill=none](0,0) circle (1);
            \foreach\t in {0.01,0.04,0.1,0.2,0.5,1,3}{
            
            \draw [red,samples=200,thick,variable=\x,domain=180-asin(1/(1+\t)):180+asin(1/(1+\t))] plot ({sqrt(2*\t+\t*\t)*cos(\x)+1+\t}, {sqrt(2*\t+\t*\t)*sin(\x)});
            
            \draw [red,samples=200,thick,variable=\y,domain=180-asin(1/(1+\t)):180+asin(1/(1+\t))] plot ({-sqrt(2*\t+\t*\t)*cos(\y)-1-\t}, {sqrt(2*\t+\t*\t)*sin(\y)});
            
            \draw [red,samples=200,thick,variable=\x,domain=180-asin(1/(1+\t)):180+asin(1/(1+\t))] plot ({sqrt(2*\t+\t*\t)*sin(\x)}, {sqrt(2*\t+\t*\t)*cos(\x)+1+\t});
            
            \draw [red,samples=200,thick,variable=\y,domain=180-asin(1/(1+\t)):180+asin(1/(1+\t))] plot ({sqrt(2*\t+\t*\t)*sin(\y)}, {-sqrt(2*\t+\t*\t)*cos(\y)-1-\t});
            }

            \draw[very thick, red] (0,1) -- (0,-1);
            \draw[very thick, red] (1,0) -- (-1,0);

            \draw[fill=none] (0.55-\eps,0.55) -- (0.55,0.55+\eps) -- (0.55+\eps,0.55) -- (0.55,0.55-\eps) -- (0.55-\eps,0.55);
            
            \draw[->, >=Stealth] (0.65,0.65) to [out=90, in=-30] (0.1,1.3);

            \begin{scope}[yshift=43,xshift=-12,rotate=45,scale=0.35]
                \draw (1,1) -- (1,-1) -- (-1,-1) -- (-1,1) -- (1,1);
                \foreach\t in {-0.8,-0.4,0,0.4,0.8}{
                \draw[red,thick] (-1,-0.1+\t) -- (1,0.1+\t);
                \draw[red,thick] (-1,0.1+0.1*\t*\t+\t) -- (1,-0.1+0.1*\t*\t+\t);
                }
            \end{scope}
            
        \end{tikzpicture}\\
        Affine model in $\R^{1,1}$ 
        &&&&
        Conformal model in the Poincaré disk
        \end{tabular}
        \caption{Markowitz distance of a ball $B\subset\R^{1,1}$.}
        \label{figure_mark_dist_ball}
    \end{figure}
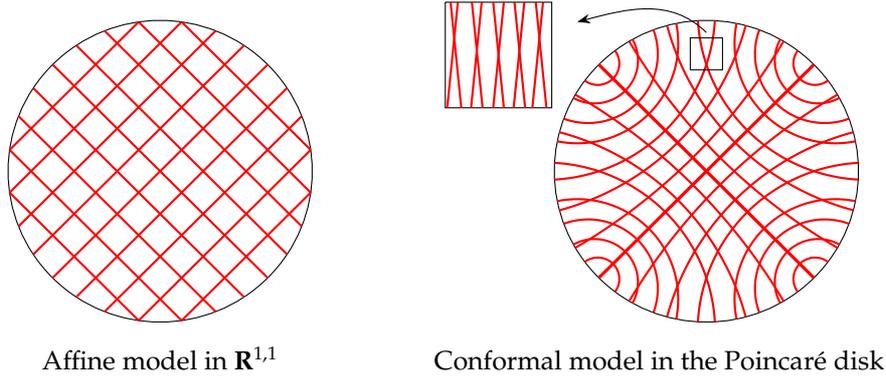

\subsection{Projective maps}

In what follows, a \emph{projective manifold} is a manifold $P$ endowed with a projective class $[\nabla]$ of affine connections. These manifolds are equipped with a Kobayashi--Royden-like pseudodistance, denoted $\delta^\kob$, defined as in Equation~(\ref{equation_def_d_mark}), but where lightlike geodesics are replaced by \emph{projective geodesics}, see \cite{kob_projectif}. Recall that in a projective manifold $P$, a curve $\alpha:I\to P$ is a projective geodesic if for some affine connection $\nabla$ in the projective class of $P$, there exists a $\nabla$-geodesic $\xi(t)$ and a parameter $u(t)$, called \emph{projective}, such that $\alpha=\xi\circ u^{-1}$, see for instance \cite[Sect. 6]{kob_projectif}. 

\begin{prop}[{\cite[Ex. 3.17]{kob_projectif}}]
    \label{Proposition_complletude_delta_kob}
    Let $\Omega\subset\RP^n$ be a connected domain. Then $\delta_\Omega^\kob$ is complete if and only if $\Omega$ is properly convex. In that case, $\delta_\Omega^\kob$ is the Hilbert metric on $\Omega$.
\end{prop}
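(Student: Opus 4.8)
The plan is to prove the ``if'' direction by identifying $\delta_\Omega^\kob$ with the Hilbert metric $H_\Omega$ of $\Omega$ (normalized so that it restricts to the Poincar\'e distance $d_I$ on chords) and then invoking the classical completeness of Hilbert metrics on properly convex domains. For the inequality $\delta_\Omega^\kob\geq H_\Omega$ I would first record the one-dimensional base case: for a proper open sub-interval $J$ of a projective line, the Hilbert distance of $J$ agrees, under the essentially unique projective identification $J\cong I$, with $d_I$ — this is just projective invariance of the cross-ratio together with the explicit cross-ratio formula on $(-1,1)$. Since $\Omega$, being properly convex, contains no full projective line, any $\gamma\in\Proj(I,\Omega)$ is a projective embedding onto such a $J\subset\Omega$; hence by monotonicity of the Hilbert metric ($J\subset\Omega\Rightarrow H_J\geq H_\Omega$ on $J$) the map $\gamma\colon(I,d_I)\to(\Omega,H_\Omega)$ is distance-decreasing, and summing along a chain $\Ccal=(m,(\gamma_k),(s_k),(t_k))$ and using the triangle inequality for $H_\Omega$ yields $H_\Omega\leq\delta_\Omega^\kob$. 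For the reverse inequality I would use proper convexity to see that the maximal chord of $\Omega$ through two given points $x,y$ is a single open segment $\sigma^{\circ}\subset\Omega$ with two distinct endpoints $a,b\in\partial\Omega$; its projective parametrization is then a length-one chain whose length is $d_I$ between the parameters of $x$ and $y$, which by definition of the Hilbert metric is exactly $H_\Omega(x,y)$. So $\delta_\Omega^\kob=H_\Omega$, and completeness follows from the classical fact (see e.g.\ \cite{Goldman2022book}) that the Hilbert metric of a properly convex domain is a complete distance; this direction is essentially bookkeeping.

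For the converse I would argue by contraposition, in two cases. If $\Omega$ is convex but not properly convex, then $\Omega$ contains a full affine line, hence a non-constant projectively parametrized geodesic defined on an interval strictly larger than $I$; rescaling it as in the proof of Proposition~\ref{prop_TCLI} shows that $\delta_\Omega^\kob$ vanishes between any two of its points, so $\delta_\Omega^\kob$ is not even a distance, a fortiori not complete. If $\Omega$ is not convex, the failure of convexity produces a boundary point $p\in\partial\Omega$ lying in the relative interior of a chord of $\Omega$; I would pick $x_n\in\Omega$ with $x_n\to p$ and join $x_n$ to $x_m$ by a chord $\sigma_{n,m}\subset\Omega$ slipping past $p$ on one side, whose endpoints on $\partial\Omega$ stay bounded away from $p$. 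Then the cross-ratio of $x_n,x_m$ with these endpoints tends to $1$, so $\delta_\Omega^\kob(x_n,x_m)\leq H_{\sigma_{n,m}}(x_n,x_m)\to 0$; thus $(x_n)$ is $\delta_\Omega^\kob$-Cauchy with no limit in $\Omega$, and $\delta_\Omega^\kob$ is not complete.

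I expect the main obstacle to be the non-convex case of the converse: one must choose the boundary point $p$ so that $\Omega$ genuinely fails to be locally convex there — which is precisely what proper convexity forbids — and then verify the cross-ratio estimate uniformly as $x_n,x_m\to p$, controlling the family of chords that slip past $p$. Everything else is standard Hilbert-geometry manipulation. I would also note the analogy with the present paper: Theorem~\ref{thm_conformement_convex} is the conformal counterpart of the ``if'' direction for conformally convex domains, but the clean equivalence above has no conformal analogue, since there exist non-conformally-convex domains of $\Ein^{p,q}$ carrying a complete Markowitz distance (Proposition~\ref{prop_domain_non_conf_conv_but_complete}).
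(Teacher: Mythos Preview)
The paper does not prove this proposition; it is quoted from \cite[Ex.~3.17]{kob_projectif} (see also \cite[Prop.~12.2.3]{Goldman2022book}), so there is no proof of the paper's own to compare against.

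On its merits: your ``if'' direction and Case~1 of the converse are correct and standard. The obstacle you anticipate in Case~2 is genuine and your sketch does not yet overcome it. If you take the $x_n$ on the line $\overline{ab}$ itself---say on $(p,b]$ with $p$ the first exit point from $b$---then the unique line through $x_n$ and $x_m$ is $\overline{ab}$, one endpoint of whose maximal sub-segment in $\Omega$ is $p$, so the cross-ratio does \emph{not} tend to $1$. One is therefore forced to choose the $x_n$ off $\overline{ab}$, but then the existence of a chord through $x_n,x_m$ lying in $\Omega$ with both endpoints far from $p$ is no longer automatic: it amounts to finding a line near $\overline{ab}$ whose intersection with $\Omega$ contains a long segment near $p$, and for a generic boundary point this can fail (take $\Omega$ to be a ball minus a small codimension-one disk centred at $p$). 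Making the argument rigorous requires either selecting $p$ more carefully---for instance on the frontier of the obstacle, where one genuinely can slip past---or arguing along a different route (e.g.\ showing that non-convexity forces discontinuity of the endpoint maps and hence of the infinitesimal metric, which by the projective analogue of Proposition~\ref{prop_cara_complete_hyperbolic} is incompatible with completeness). Your closing analogy with Theorem~\ref{thm_conformement_convex} and Proposition~\ref{prop_domain_non_conf_conv_but_complete} is apt.
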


We now define an essential tool for the proof of Theorem~\ref{thm_conformement_convex}. 

\begin{definition}
    \label{def_app_proj}
    Let $(M,[g])$ be a conformal manifold and $P$ be a projective manifold. A map $f:M\to P$ is \emph{projective} if for every $\gamma\in \PPL(I,M)$, the curve $f\circ\gamma:I\to P$ is a projective geodesic.
\end{definition}

\begin{ex}
    \label{exemple_app_projective}
    Let $\Omega\subset\Ein^{p,q}$ be contained in some open subset $U\subset \P(\R^{p+1,q+1})$. Then for every $V\subset \RP^m$, a map $f:U\to V$ that is projective in the classical sense restricts to a projective map $f\vert_\Omega:\Omega\to V$ in the sense of Definition \ref{def_app_proj}.
\end{ex}

\begin{prop}
    \label{Proposition_applications_projectives}
    Let $f:M\to P$ be a projective map. Then, for all $x,y\in M$, one has
$$\delta_P^\kob(f(x),f(y))\leq \delta_M(x,y).$$
\end{prop}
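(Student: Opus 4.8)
The plan is to imitate, almost verbatim, the proof of Proposition~\ref{proposition_naturalité}: a projective map carries a lightlike chain of $M$ to a projective geodesic chain of $P$ of the \emph{same} length. Concretely, I would start from an arbitrary lightlike chain $\Ccal=(m,(\gamma_k),(s_k),(t_k))$ from $x$ to $y$ in $M$, as in the definition of $\delta_M$ (see Equation~\ref{equation_def_d_mark}), and consider the data $(m,(f\circ\gamma_k),(s_k),(t_k))$ in $P$. The matching conditions at the breakpoints are preserved for free, since $f$ is a function: $f\circ\gamma_1(s_1)=f(x)$, $f\circ\gamma_m(t_m)=f(y)$, and $f\circ\gamma_k(t_k)=f\bigl(\gamma_{k+1}(s_{k+1})\bigr)=f\circ\gamma_{k+1}(s_{k+1})$ for $1\leq k\leq m-1$. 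That each link $f\circ\gamma_k\colon I\to P$ is a projective curve is exactly the defining property of a projective map (Definition~\ref{def_app_proj}); since $I=(-1,1)$ is equipped with its standard projective structure, $f\circ\gamma_k$ is an admissible link of a $\delta^\kob$-chain in $P$.

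The second step is purely bookkeeping: the length of a projective chain in $P$ is computed by the very same formula $\sum_{k=1}^m \rho_I(s_k,t_k)$ that defines $L(\Ccal)$, because $\delta^\kob$ is built from Equation~\ref{equation_def_d_mark} with lightlike geodesic chains replaced by projective ones. Hence the image chain $(m,(f\circ\gamma_k),(s_k),(t_k))$ has length $L(\Ccal)$, which gives $\delta_P^\kob(f(x),f(y))\leq L(\Ccal)$. Taking the infimum over all lightlike chains $\Ccal$ from $x$ to $y$ then yields $\delta_P^\kob(f(x),f(y))\leq\delta_M(x,y)$, as desired.

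I do not expect any real obstacle: the statement is the conformal–projective analogue of the standard fact that Kobayashi-type pseudodistances are non-increasing under the relevant class of maps. The only point that deserves a remark is the convention for admissible links: if one requires the links of a $\delta^\kob$-chain to be non-constant, then whenever $f\circ\gamma_k$ happens to be constant one simply deletes that link from the image chain — the matching conditions ensure that $f\circ\gamma_{k-1}$ and $f\circ\gamma_{k+1}$ already share the point $f(\gamma_k(t_k))$, so one is left with a strictly shorter projective chain with the same endpoints, and the inequality is only reinforced. With the more permissive convention allowing constant links, no such adjustment is needed.
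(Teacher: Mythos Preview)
Your proof is correct and follows exactly the same approach as the paper's own proof, which is a one-line observation that $f$ sends a lightlike chain of $M$ to a projective geodesic chain of $P$ of the same length. Your additional remark on constant links is a reasonable caution but not needed here, since the definition of $\delta^\kob$ in \cite{kob_projectif} allows them.
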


\begin{proof}
    This follows from the definition, since the map $f$ sends a lightlike chain of $M$ to a chain of projective geodesics of $P$ of the same length.
\end{proof}

\subsection{Proof of Theorem~\ref{thm_conformement_convex}} \label{section_preuve_thm_conf_conv}
We will first give geometric lemmas on Möbius hyperplanes. 
\begin{lem}
\label{lemme_geometry_mobius_support_hyperplane}
    Let $\Hcal$ be a Möbius hyperplane and let $x\in \Ein^{p,q}$ such that $\Hcal\neq C(x)$. Then, the trace of $\Hcal$ in a stereographic projection of $x$ is an affine hyperplane of $\R^{p,q}$ when $x\in \Hcal$, and a hyperboloid or a lightcone of $\R^{p,q}$ when $x\not\in \Hcal$.
\end{lem}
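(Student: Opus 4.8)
The statement is a computation in the projective model. The plan is to fix $x\in\Ein^{p,q}$, choose a vector $v\in\R^{p+1,q+1}$ with $x=[v]$ and an isotropic $w$ with $\b(v,w)=1$, and use the explicit stereographic chart $\varphi:H=\Span(v,w)^\perp\to\Ein^{p,q}\setminus C(x)$ from Proposition~\ref{prop_projection_stereo}, namely $\varphi(h)=[w+h-\b(h,h)v]$. Then, given a Möbius hyperplane $\Hcal=\P(H_0)\cap\Ein^{p,q}$ with $H_0=\zeta^\perp$ for some nonzero $\zeta\in\R^{p+1,q+1}$, I would simply compute the preimage $\varphi^{-1}(\Hcal)$: a point $[u]$ with $u=w+h-\b(h,h)v$ lies in $\Hcal$ iff $\b(\zeta,u)=0$, i.e.
\begin{equation*}
    \b(\zeta,w)-\b(h,h)\b(\zeta,v)+\b(\zeta,h)=0.
\end{equation*}
Here I use that $\zeta$ decomposes along the splitting $\R^{p+1,q+1}=\R v\oplus\R w\oplus H$, writing $\zeta=\alpha v+\beta w+\zeta_0$ with $\zeta_0\in H$, so that $\b(\zeta,v)=\beta$, $\b(\zeta,w)=\alpha$ and $\b(\zeta,h)=\b(\zeta_0,h)$. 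The equation becomes
\begin{equation*}
    -\beta\,\b(h,h)+\b(\zeta_0,h)+\alpha=0.
\end{equation*}

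Now I would split into cases according to whether $x\in\Hcal$, i.e. whether $\b(\zeta,v)=\beta=0$. If $\beta=0$ (so $x\in\Hcal$), the equation reads $\b(\zeta_0,h)=-\alpha$, which is an affine hyperplane of $H\simeq\R^{p,q}$ (note $\zeta_0\neq 0$, since otherwise $\zeta\in\R v$ would force $\Hcal=C(x)$, excluded). If $\beta\neq 0$ (so $x\notin\Hcal$), then after dividing by $-\beta$ and completing the square the equation becomes
\begin{equation*}
    \b\!\left(h-\tfrac{1}{2\beta}\zeta_0,\;h-\tfrac{1}{2\beta}\zeta_0\right)=\tfrac{1}{4\beta^2}\b(\zeta_0,\zeta_0)+\tfrac{\alpha}{\beta}=\tfrac{\b(\zeta,\zeta)}{4\beta^2},
\end{equation*}
where the last equality is the identity $\b(\zeta,\zeta)=2\alpha\beta+\b(\zeta_0,\zeta_0)$ coming from $\b(v,w)=1$. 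Thus $\varphi^{-1}(\Hcal)$ is a quadric of the form $\{\b(h-h_0,h-h_0)=c\}$; it is a (nondegenerate) hyperboloid when $c=\b(\zeta,\zeta)/4\beta^2\neq0$ and a cone (lightcone of the flat metric, centered at $h_0$) when $\b(\zeta,\zeta)=0$. This is exactly the claimed trichotomy, so the proof is essentially this computation together with the case analysis.

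The only mild subtleties are bookkeeping ones: one should note that $\varphi$ is a conformal diffeomorphism onto $\Ein^{p,q}\setminus C(x)$ (Proposition~\ref{prop_projection_stereo}) so that "trace of $\Hcal$ in the stereographic projection" literally means $\varphi^{-1}(\Hcal\setminus C(x))=\varphi^{-1}(\Hcal)$, using $x\notin\varphi(H)$; and that the hypothesis $\Hcal\neq C(x)$ is exactly what guarantees $\zeta_0\neq0$, which is needed both to see the affine hyperplane is genuine and to see the quadric is nonempty and not all of $\R^{p,q}$. I do not expect any real obstacle here — the statement is a direct consequence of the explicit stereographic formula, and the "hard part," such as it is, is only organizing the case split cleanly and identifying the sign of $\b(\zeta,\zeta)$ with the hyperboloid-versus-cone dichotomy.
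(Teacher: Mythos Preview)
Your argument is correct and follows essentially the same route as the paper: both compute the preimage of $\Hcal$ under the explicit stereographic chart $\varphi(h)=[w+h-\b(h,h)v]$. The only organizational difference is that the paper, in the case $x\notin\Hcal$, chooses the isotropic vector $w$ adapted to $\Hcal$ (so that $H=F+\R(w-sv)$ with $F=v^\perp\cap H$ and $s$ the sign of $H^\perp$), which makes the defining equation come out directly as $\b(h,h)=s$ without needing to complete the square; it then remarks that any other stereographic chart differs by a similarity. One small slip in your write-up: the hypothesis $\Hcal\neq C(x)$ does not force $\zeta_0\neq 0$ in general---it only says $\zeta\notin\R v$, so $\zeta_0\neq 0$ follows precisely in the case $\beta=0$ where you actually use it; when $\beta\neq 0$ you may well have $\zeta_0=0$, and the quadric is still a genuine hyperboloid or lightcone.
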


\begin{proof}
    Let $H\subset\R^{p+1,q+1}$ be a linear hyperplane such that $\Hcal=\P(H)\cap \Ein^{p,q}$, and let $s\in\{0,1,-1\}$ be the sign of $H^\perp$. Let $v\in \R^{p+1,q+1}$ such that $x=[v]$. If $x\not\in \Hcal$, we can let $w\in \R^{p+1,q+1}$ be an isotropic vector so that $H=F+\R(w-sv)$ and $\b(v,w)> 0$, where $F=v^\perp\cap H$. 
    Let $\varphi:F\simeq\R^{p,q}\to \Ein^{p,q}\setminus C(x)$ be the stereographic projection given by $\varphi(h)=[w+h-\b(h,h)v]$, see Proposition~\ref{prop_projection_stereo}.
    For $h\in F$, one has 
    \begin{align*}
        \varphi(h)\in \Hcal\iff \b(h,h)= s.
    \end{align*}
    Hence $\Hcal$ is a hyperboloid or a lightcone in the stereographic projection $\varphi$. Since any other stereographic projection $\R^{p,q}\simeq \Ein^{p,q}\setminus C(x)$ is obtained from $\varphi$ by composition with a similarity, the result follows in this case. The case $x\in \Hcal$ is similar.
\end{proof}

\begin{lem}
\label{lemme_intersection_photon_hyperplane}
    Let $\Hcal$ be a Möbius hyperplane and let $\Delta$ be a photon. Then $\Delta$ is either contained in $\Hcal$ or it intersects $\Hcal$ transversely at a single point. 
\end{lem}

\begin{proof}
    Write $\Delta=\P(\Pi)$ and $\Hcal=\P(H)\cap\Ein^{p,q}$, where $\Pi\subset\R^{p+1,q+1}$ is a totally isotropic plane and $H\subset\R^{p+1,q+1}$ is a linear hyperplane. If $\Delta\not\subset\Hcal$, then $\Pi+H=\R^{p+1,q+1}$, and $\Delta\cap\Hcal=\{x\}$ for $x=\P(\Pi\cap H)$. Let $y\in \Hcal\setminus \Delta$. Then in the stereographic projection of $y$, the Möbius hyperplane $\Hcal$ is an affine hyperplane of $\R^{p,q}$ and $\Delta$ is an affine line of $\R^{p,q}$, see Lemma~\ref{lemme_geometry_mobius_support_hyperplane}. In particular, the intersection $\Delta\cap\Hcal$ is transverse.
\end{proof}

\begin{lem}
    \label{lemme_de_caracterisation}
    Let $\Omega\subset\Ein^{p,q}$ be a non Markowitz hyperbolic domain. Then there exist a compact $\Kcal\subset \Omega$ and a sequence of lightlike segments $(c_k)$ intersecting $\Kcal$ and contained in $\Omega$, such that $(c_k)$ converges to a photon in the Hausdorff topology.
\end{lem}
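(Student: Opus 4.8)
The plan is to extract the segments $c_k$ from the failure of equicontinuity of projectively parametrized lightlike geodesics, and to control them by transporting everything to the double cover $\S^p\times\S^q$ of $\Ein^{p,q}$, where the infinitesimal functional is computed explicitly by Proposition~\ref{Prop_exemple_calcul_F_M}.

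First I would use non-hyperbolicity. Fix the Riemannian metric $g_0=g_{\S^p}+g_{\S^q}$ on $\S^p\times\S^q$ and, via Proposition~\ref{prop_rev_double}, also on $\Ein^{p,q}$; all Riemannian lengths below are taken with respect to $g_0$. By Theorem~\ref{thm_hyp}, $F_\Omega$ fails to be locally bounded below by $\|\cdot\|_{g_0}$ on $C(T\Omega)$; unwinding the definition, there are a point $x_\infty\in\Omega$, points $x_k\to x_\infty$ in $\Omega$, and lightlike vectors $v_k\in C(T_{x_k}\Omega)$ with $\|v_k\|_{g_0}=1$ and $F_\Omega(v_k)\to 0$. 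Lifting along the double cover $p:\S^p\times\S^q\to\Ein^{p,q}$ and passing to a subsequence, I obtain $\tilde x_k\to\tilde x_\infty\in p^{-1}(\Omega)=:\tilde\Omega$ with $p(\tilde x_\infty)=x_\infty$, and lifts $\tilde v_k$ of $v_k$; since $p$ is a conformal covering, Proposition~\ref{prop_naturalité_F_M} gives $F_{\tilde\Omega}(\tilde v_k)=F_\Omega(v_k)\to 0$, with $\|\tilde v_k\|_{g_0}=1$.

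The heart of the argument is to show that the \emph{maximal} lightlike geodesic segment $\tilde c_k$ of $\tilde\Omega$ tangent to $\tilde v_k$ at $\tilde x_k$ becomes, in the limit, a full photon's worth. Write $\sqrt 2\,L_k$ for its $g_0$-length, in the notation of Proposition~\ref{Prop_exemple_calcul_F_M}, and split (up to a subsequence) into the subcases $L_k\ge\pi$ for all $k$, or $L_k<\pi$ for all $k$. In the second subcase, Proposition~\ref{Prop_exemple_calcul_F_M} expresses $\sqrt 2\,F_{\tilde\Omega}(\tilde v_k)$ as an explicit sum of two nonnegative terms depending only on $L_k$ and on the position of $\tilde x_k$ inside $\tilde c_k$; this expression is bounded below by a positive constant as long as $L_k$ stays away from $\pi$ and $\tilde x_k$ stays away from the endpoints of $\tilde c_k$, it blows up when $\tilde x_k$ approaches an endpoint, and (since $L_k\in(0,\pi)$ keeps $\tan(L_k/2)$ finite) it also blows up as $L_k\to 0$. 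Hence $F_{\tilde\Omega}(\tilde v_k)\to 0$ forces $L_k\to\pi$: this is the key step and the one demanding the most care, since it is really a case analysis of the degenerations of that closed formula. In the first subcase, $L_k\ge\pi$ already gives, via Example~\ref{exemple_para_proj}, a non-constant $\tilde\gamma_k\in\PPL(\R,\tilde\Omega)$ through $\tilde x_k$, so that $\tilde\Omega$ — hence $\Omega$ — contains a photon minus a point through $x_k$, and the conclusion below is immediate.

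Finally I would descend back to $\Ein^{p,q}$. Since $L_k<\pi$, the segment $\tilde c_k$ of $\S^p\times\S^q$ contains no antipodal pair, so — as in the proof of Lemma~\ref{lemme_inj_geod} — $p$ embeds it onto a lightlike arc $c_k\subset\Omega$, lying on a photon $\Delta_k$, of $g_0$-length $\sqrt 2\,L_k\to\sqrt 2\,\pi$; and $\sqrt 2\,\pi$ is exactly the $g_0$-length of a photon (a period of a lightlike geodesic of $\S^p\times\S^q$ has $g_0$-length $2\sqrt 2\,\pi$ and $p$ identifies it two-to-one with a photon). The photons of $\Ein^{p,q}$ form a compact family, so after a further subsequence $\Delta_k\to\Delta_\infty$; parametrizing each $c_k$ by $g_0$-arclength with the point $x_k\in c_k$ pinned (recall $x_k\to x_\infty\in\Omega$), these parametrizations converge to a full arclength parametrization of $\Delta_\infty$, whence $c_k\to\Delta_\infty$ in the Hausdorff topology. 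Taking $\Kcal=\{x_\infty\}\cup\{x_k:k\ge 1\}\subset\Omega$ (compact, and met by each $c_k$ at $x_k$), and replacing each $c_k$ by a closed sub-arc still containing $x_k$ and shorter by $1/k$ so as to be safely inside $\Omega$, yields the required data (the $L_k\ge\pi$ case being handled the same way, starting from the photon minus a point found above). I expect the main obstacle to be, as indicated, extracting ``$F_{\tilde\Omega}(\tilde v_k)\to 0\Rightarrow L_k\to\pi$'' from the closed formula of Proposition~\ref{Prop_exemple_calcul_F_M}, together with the bookkeeping needed to make the Hausdorff convergence of the truncated arcs uniform.
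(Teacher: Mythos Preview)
Your proposal is correct and follows essentially the same route as the paper's proof: lift to the double cover $\S^p\times\S^q$, use Proposition~\ref{prop_naturalité_F_M} to transport $F_\Omega(v_k)\to 0$ upstairs, invoke the closed formula of Proposition~\ref{Prop_exemple_calcul_F_M} to force the maximal lightlike arc through $\tilde x_k$ to have $L_k\to\pi$, and then push sub-arcs of length $\to\sqrt 2\,\pi$ back down to get Hausdorff convergence to a photon. The paper's argument for the key step ``$F\to 0\Rightarrow L_k\to\pi$'' is slightly more direct than yours: since $\tilde x_k\to\tilde x_\infty\in\tilde\Omega$ while the endpoints $\tilde x_k^\pm$ lie in $\partial\tilde\Omega$, one has a uniform lower bound $d_g(\tilde x_k,\tilde x_k^\pm)>\varepsilon$, and plugging this into the formula forces $\tan(L_k/2)\to+\infty$; your case analysis of the degenerations of the formula reaches the same conclusion. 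Your separate treatment of the case $L_k\ge\pi$ is more careful than the paper (which silently folds it into the choice of sub-arcs $\hat c_k$ of length $\to\sqrt 2\,\pi$), but not essential.
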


\begin{proof}
    Let $r:\S^p\times\S^q\to \Ein^{p,q}$ be the standard double cover given by Proposition ~\ref{prop_rev_double}, and write $\Omega^\prime=r^{-1}(\Omega)$ and $g=g_{\S^p}+g_{\S^q}$. Since the Markowitz functional is invariant under covering by Proposition \ref{prop_naturalité_F_M}, we can find a sequence of vectors $(v_k)$ converging to some nonzero $v\in C(T\Omega^\prime)$, such that $F_{\Omega^\prime}(v_k)\to 0$. For $k\geq 0$, let $x_k^+$ and $x_k^-$ be the future and past endpoints of the lightlike geodesic generated by $v_k$ in $\Omega^\prime$, respectively, and let $\sqrt{2}L_k$ be the $g$-length of that geodesic. From Proposition~\ref{Prop_exemple_calcul_F_M}, one has 
    $$F_{\Omega^\prime}(v_k)=\left(\frac{1+\tan(d_g(x_k,x_k^-)-L_k/2)^2}{\tan(L_k/2)+\tan(d_g(x_k,x_k^-)-L_k/2)}
    +\frac{1+\tan(d_g(x_k,x_k^+)-L_k/2)^2}{\tan(L_k/2)+\tan(d_g(x_k,x_k^+)-L_k/2)}\right)\frac{\Vert v_k\Vert_g}{\sqrt{2}}\to 0,$$
    where $x_k$ is the basepoint of $v_k$, for all $k\geq 0$.
    Since the sequence $(x_k)$ is contained in a compact subset of $\Omega^\prime$, there exists some $\varepsilon>0$ such that $d_g(x_k,x_k^\pm)>\varepsilon$ for all $k\geq 0$. 
    We deduce that $\tan(L_k/2)\to +\infty$, so $L_k\to \pi$. Let $(\hat c_k)$ be a sequence of lightlike segments so that $c_k\subset(x_k^-,x_k^+)$ for all $k\geq 0$, and $L_g(\hat c_k)\to \sqrt 2\pi$ as $k\to \infty$. Then $(\hat c_k)$ converges to a geodesic segment $\hat\Delta$ of $\S^p\times \S^q$  of length $\sqrt 2\pi$, so $(c_k)=(r(\hat c_k))$ converges to the photon $\Delta=c(\hat\Delta)$ in the Hausdorff topology.
\end{proof}

\begin{proof}[Proof of Theorem~\ref{thm_conformement_convex}]
If $\Omega$ is Markowitz hyperbolic, then it is totally conformally lightlike incomplete, see Proposition~\ref{prop_TCLI}. Assume that $\Omega$ is not Markowitz hyperbolic and let us show that there exists a non-constant projectively parametrized lightlike geodesic $\gamma:\R\to \Omega$. By Lemma~\ref{lemme_de_caracterisation}, there exist a compact $\Kcal\subset \Omega$ and a sequence $([a_k,b_k])$ of lightlike segments contained in $\Omega$ that intersect $\Kcal$, such that $[a_k,b_k]$ converges to a photon in the Hausdorff topology. For $k\geq 0$, let $\Delta_k$ be the unique photon containing $[a_k,b_k]$. Then $\Delta_k\to \Delta$ in the Hausdorff topology. Let us fix a sequence $(x_k)$ so that $x_k\in \Delta_k\setminus [a_k,b_k]$, and $x_k\to x\in \Delta$. We claim that $\Delta\setminus\{x\}$ is contained in $\Omega$. Assume by contradiction that $\Delta\setminus\{x\}\not\subset\Omega$. Then there exists a point $y\in \partial\Omega\cap \Delta$ distinct from $x$. Since $\Omega$ is conformally convex, we can find a Möbius hyperplane $\Hcal$ disjoint from $\Omega$ that contains $y$. Since $\Kcal\cap \Delta\neq \emptyset$, the photon $\Delta$ cannot be contained in $\Hcal$, so $\Delta$ intersects $\Hcal$ transversely at $y$, see Lemma~\ref{lemme_intersection_photon_hyperplane}. Let $c=[y-\varepsilon,y+\varepsilon]\subset\Delta$ be a small enough lightlike segment around $y$. Since $[a_k,b_k]\to \Delta$, we can find a sequence of lightlike segments $c_k\subset [a_k,b_k]$ that converge to $c$ in the Hausdorff topology. In particular $c_k$ intersects $\Hcal$ for large enough $k$. This contradicts the fact that $\Hcal\cap[a_k,b_k]=\emptyset$ for all $k\geq 0$. Therefore $\Delta\setminus\{x\}\subset \Omega$. Since $\Delta\setminus\{x\}$ can be projectively parametrized on $\R$, we deduce that $\Omega$ is not totally conformally lightlike incomplete.

We now assume that $\Omega$ is Markowitz hyperbolic. Let us show that $(\Omega,\delta_\Omega)$ is a complete metric space. Let $(x_k)$ be a Cauchy sequence, and assume by contradiction that $(x_k)$ does not converge in $\Omega$. Up to extraction, we can assume that $(x_k)$ converges to a point $x\in \partial\Omega$. We distinguish two cases.

\smallskip

\emph{$\bullet$ Case 1: There exists a Möbius hyperplane $\Hcal_1$ disjoint from $\Omega$ that does not contain $x$.} Let $\Hcal_2$ be a supporting Möbius hyperplane at $x$ and let $f:\Omega\to \P(V)$ be the quotient projective map, where $V=(H_1\oplus H_2)/H_1\cap H_2$ and $H_i\subset\R^{p+1,q+1}$ is the linear hyperplane defining $\Hcal_i$. By connectedness, the image of $f$ is one of the two connected components of $\P(V)\setminus \{\P(H_1/H_1\cap H_2),\P(H_1/H_1\cap H_2)\}$. We identify this component projectively with the interval $I$. Since $f$ is a projective mapping, see Example \ref{exemple_app_projective}, Proposition~\ref{Proposition_applications_projectives} implies that 
$$d_I(f(x_k),f(x_{k+l}))\leq \delta_\Omega(x_k,x_{k+l}),$$
so $(f(x_k))$ is a Cauchy sequence of $(I,d_I)$. Since $(I,d_I)$ is complete, the sequence $(f(x_k))$ converges in $I$. But $x_k\to x\in \Hcal_2$, so $f(x_k)\to\P(H_2/H_1\cap H_2)\in\partial I$, a contradiction.

\smallskip

\emph{$\bullet$ Case 2: Every supporting Möbius hyperplane of $\Omega$ contains $x$.} We claim that $C(x)$ is disjoint from $\Omega$. Assume by contradiction that $\Omega$ intersects $C(x)$. Let $\Delta$ be a photon containing $x$ and intersecting $\Omega$. Since $\Omega$ is totally conformally lightlike incomplete, the lightlike line $\Delta\setminus\{x\}$ cannot be fully contained in $\Omega$, so we can find a point $y\in \Delta\cap\partial\Omega$. Now $\Omega$ is conformally convex, so it admits a supporting Möbius hyperplane $\Hcal$ at $y$. By assumption we have $x\in \Hcal$, so $\Delta\subset\Hcal$ by Lemma~\ref{lemme_intersection_photon_hyperplane}. This contradicts $\Omega\cap\Delta\neq\emptyset$. Therefore $\Omega\cap C(x)=\emptyset$. 

Let us fix a stereographic projection $\R^{p,q}\simeq \Ein^{p,q}\setminus C(x)$, so that $\Omega$ is identified with a subset of $\R^{p,q}$. Let $\Hcal$ be a supporting Möbius hyperplane of $\Omega$. Since $\Hcal$ contains $x$, its trace in $\R^{p,q}$ is an affine hyperplane of $\R^{p,q}$ by Lemma~\ref{lemme_geometry_mobius_support_hyperplane}. In particular, the domain $\Omega$ is a connected component of the complement of affine hyperplanes in $\R^{p,q}$, so $\Omega$ is a convex subset of $\R^{p,q}$. We can find a splitting $\R^{p,q}=E\oplus F$ such that $\Omega= E\oplus C$ for some convex $C\subset F$ containing no affine line. Since $\Omega$ does not contain a lightlike line, the subspace $E$ is definite. In particular, we can always assume that the splitting $\R^{p,q}=E\oplus F$ is orthogonal. Let $\pi:\R^{p,q}\to F$ be the orthogonal projection onto $F$. Then $\pi$ sends an affine lightlike line in $\R^{p,q}$ to an affine line in $F$, hence it is a projective mapping in the sense of Definition \ref{def_app_proj}. Therefore 
the restriction $\pi_\Omega=\pi\vert_\Omega:\Omega\to C$ is projective. By Proposition~\ref{Proposition_applications_projectives}, one has 
$$\delta_C^\kob(\pi_\Omega(x_k),\pi_\Omega(x_{k+l}))\leq \delta_\Omega(x_k,x_{k+l}),$$
for all $k,l\geq 0$. In particular the sequence $(\pi_\Omega(x_k))$ is a Cauchy sequence with respect to $\delta_C^\kob$. Now $C$ is convex and contains no line, so $(C,\delta_C)$ is a complete metric space by Proposition~\ref{Proposition_complletude_delta_kob}. Hence $(\pi_\Omega(x_k))$ converges to some limit $\ell\in C$. Let $\hat{\ell}\in\pi_\Omega^{-1}(\{\ell\})$ and $\varepsilon>0$ such that $B(\hat{\ell},\varepsilon)$ has compact closure in $\Omega$. For $k\geq 0$, let $y_k\in\R^{p,q}$ be the intersection of $x_k+E$ with $F$. Then $y_k\to \hat{\ell}$ as $k\to \infty$, so up to extraction, we may assume that $y_k\in B(\hat{\ell},\varepsilon/2)$ for all $k\geq 0$. Up to extracting a subsequence, we can also assume that $\delta(x_k,x_l)\leq \varepsilon/2$ for all $k,l\geq 0$. 
Now, the distance $\delta_\Omega$ is invariant by translations in the direction $E$, since these translations are conformal maps that preserve $\Omega$. Hence, if we let $u=x_0-y_0\in E$, we obtain
\begin{align*}
    \delta_\Omega(x_k-u,\hat{\ell})&\leq\delta_\Omega(x_k-u,x_0-u)+\delta_\Omega(x_0-u,\ell)\\
    &=\delta_\Omega(x_k,x_0)+\delta_\Omega(y_0,\hat{\ell})\\
    &\leq \varepsilon/2+\varepsilon/2=\epsilon.
\end{align*}
Since $B(\hat{\ell},\varepsilon)$ has compact closure, a subsequence of $(x_k-u)$ converges in $\Omega$, so $(x_k)$ has a convergent subsequence. Hence $(x_k)$ converges and $(\Omega,\delta_\Omega)$ is complete.

Finally, we show that a maximal $\gamma\in\PPL(I,M)$ is isometric. By definition we have 
$$\delta_\Omega(\gamma(s),\gamma(t))\leq d_I(s,t).$$
We claim that equality holds in the previous inequality. Let $a_1,a_2\in\partial\Omega$ be the endpoints of $\gamma$, and let $\Hcal_1$ and $\Hcal_2$ be supporting Möbius hyperplanes of $\Omega$ at $a_1$ and $a_2$, respectively. Let $f:\Omega\to \P((H_1\oplus H_2)/H_1\cap H_2)$ be the same quotient map as in \emph{Case 1} above. As before, identify $f(\Omega)$ projectively with $I$. Then $f(\gamma(I))=f(\Omega)=I$, so 
$$\delta_\Omega(\gamma(s),\gamma(t))\geq d_I(f(\gamma(t)),f(\gamma(s)))=d_I(s,t).$$
Hence $\delta_\Omega(\gamma(s),\gamma(t))= d_I(s,t)$ for all $s,t\in I$. 
\end{proof}

\begin{rmk}
    To prove Theorem~\ref{thm_conformement_convex}, one may be tempted to argue as follows: a conformally convex domain $\Omega\subset\Ein^{p,q}\subset\P(\R^{p+1,q+1})$ is contained as a closed subset of its convex envelope $\Conv(\Omega)$ in real projective space. By construction one has $\delta^\kob_{\Conv(\Omega)}\leq \delta_\Omega$ on $\Omega$. If $\Conv(\Omega)$ contains no line, then $\delta^\kob_{\Conv(\Omega)}$ is complete by Proposition \ref{Proposition_complletude_delta_kob}, since $\Conv(\Omega)$ is convex by construction, so $\delta_\Omega$ is complete. However, $\Conv(\Omega)$ may contain a projective line even if $\Omega$ doesn't. This is the case, for instance, for the Einstein-de Sitter half-space.
\end{rmk}

\subsection{Complete non-conformally convex domains}
We end this section with examples of spacetimes that show that Proposition \ref{Proposition_complletude_delta_kob} has no simple analog with conformally convex domains, as there exist domains $\Omega\subset \Ein^{1,n}$ having a complete Markowitz distance that are not conformally convex.

\begin{prop}
    \label{prop_domain_non_conf_conv_but_complete}
    Let $\Omega\subset \R^{1,n}$ denote the union of the future of two transverse spacelike hyperplanes of $\R^{1,n}$. Then $(\Omega,\delta_\Omega)$ is complete Markowitz hyperbolic, but $\Omega$ is not conformally convex.
\end{prop}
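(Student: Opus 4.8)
The plan is to reduce everything to a closed formula for the infinitesimal functional $F_\Omega$, taken from Example~\ref{exemple_formula_F_M}. Write $\ell_1,\ell_2$ for affine functions on $\R^{1,n}$ with $P_i=\{\ell_i=0\}$ and $\Omega=H_1\cup H_2$, $H_i:=\{\ell_i>0\}=I^+(P_i)$. Since $P_i$ is spacelike, the $\b$-gradient $n_i$ of $\ell_i$ is timelike; since both $H_i$ are futures, $n_1,n_2$ lie in a common timecone; and transversality of $P_1,P_2$ amounts to $n_1,n_2$ being linearly independent, i.e.\ to $(\ell_1,\ell_2)\colon\R^{1,n}\to\R^2$ being onto. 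For a lightlike $v$ at $x\in\Omega$ one has $d\ell_i(v)=\b(n_i,v)\neq0$ (timelike paired with null), and $d\ell_1(v),d\ell_2(v)$ share a sign; hence along the affine lightlike line $t\mapsto x+tv$ the set $\{t:x+tv\in\Omega\}$ is a union of two half-lines pointing the same way, so it is a single half-line, bounded on exactly one side, with finite endpoint $a=x+t_*v$ where $t_*=\min_i(-\ell_i(x)/d\ell_i(v))$; one checks $|t_*|=\max_{i:\ell_i(x)>0}(\ell_i(x)/|d\ell_i(v)|)$. Thus the maximal lightlike segment of $\Omega$ through $x$ tangent to $v$ has a single finite endpoint, and Example~\ref{exemple_formula_F_M} (with the convention that an endpoint at infinity contributes $0$) gives
$$F_\Omega(v)=\frac{\Vert v\Vert}{\Vert x-a\Vert}=\frac1{|t_*|}=\min_{i\,:\,\ell_i(x)>0}\frac{|d\ell_i(v)|}{\ell_i(x)},$$
for $\Vert\cdot\Vert$ any norm on $\R^{1,n}$; fix it Euclidean from now on.

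Markowitz hyperbolicity and completeness both follow from this. Set $\psi=\max(\ell_1^+,\ell_2^+)$ with $\ell_i^+=\max(\ell_i,0)$; then $\psi$ is Lipschitz on $\R^{1,n}$, equals $\max_{i:\ell_i(x)>0}\ell_i(x)$ on $\Omega$, and $\{\psi\ge\varepsilon\}$ is a closed subset of $\R^{1,n}$ contained in $\Omega$ for each $\varepsilon>0$. Since a spacelike hyperplane contains no lightlike vector, $c_0:=\min_i\inf\{|d\ell_i(v)|:v\text{ lightlike},\Vert v\Vert=1\}>0$, so $F_\Omega(v)\ge c_0\Vert v\Vert/\psi(x)$. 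On compacts of $\Omega$, $\psi$ is bounded, so $F_\Omega$ is locally bounded below by a Riemannian norm, and $\Omega$ is Markowitz hyperbolic by Theorem~\ref{thm_hyp}. For completeness I would integrate the lower bound: by Theorem~\ref{thm_lien_dmark_Fmark}, $\delta_\Omega(x_0,x)=\inf_\gamma\int F_\Omega(\gamma')$ over piecewise lightlike geodesics; using $|\tfrac{d}{dt}\log\psi(\gamma(t))|\le K\Vert\gamma'\Vert/\psi(\gamma(t))$ a.e.\ ($K$ a Lipschitz constant for $\psi$) one gets $\delta_\Omega(x_0,x)\ge\tfrac{c_0}{K}\,|\log\psi(x)-\log\psi(x_0)|$, and likewise, since $\psi(x)\le C(1+\Vert x\Vert)$, one gets $\delta_\Omega(x_0,x)\ge\tfrac{c_0}{C}\log\tfrac{1+\Vert x\Vert}{1+\Vert x_0\Vert}$. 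Hence $\{x:\delta_\Omega(x_0,x)\le R\}$ lies in $\{\Vert x\Vert\le R_1\}\cap\{\psi\ge\varepsilon\}$ for suitable $R_1,\varepsilon$, a compact subset of $\Omega$; so closed $\delta_\Omega$-balls are compact and $(\Omega,\delta_\Omega)$ is complete by Proposition~\ref{prop_cara_complete_hyperbolic}.

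It remains to show $\Omega$ is not conformally convex. Because $(\ell_1,\ell_2)$ is onto, pick $x_*\in P_1$ with $\ell_2(x_*)<0$; then $x_*\in\partial\Omega$ and $\Omega$ agrees with the half-space $H_1$ on a neighbourhood of $x_*$. A supporting Möbius hyperplane $\Hcal$ at $x_*$ would satisfy $\ell_1\le0$ on $\Hcal$ near $x_*$ with equality at $x_*$, hence would be tangent to $P_1$ there, i.e.\ $T_{x_*}\Hcal=T_{x_*}P_1$. I would then go through the three shapes a Möbius hyperplane can take in the stereographic chart containing $\Omega$ (Lemma~\ref{lemme_geometry_mobius_support_hyperplane}): an affine hyperplane, a quadric $\{\b(z-z_0,z-z_0)=s\}$, or a lightcone. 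Tangency to $P_1$ at $x_*$, together with the nondegeneracy of $T_{x_*}P_1$, rules out every lightcone $C(y)$ with $y\ne x_*$ and forces $\Hcal$ to be $\overline{P_1}$, the lightcone $C(x_*)$, or a quadric with timelike axis $n_1$ passing through $x_*$. But $\overline{P_1}\cap\Omega\supseteq P_1\cap H_2\ne\emptyset$ (again by surjectivity of $(\ell_1,\ell_2)$); $C(x_*)\cap\Omega\ne\emptyset$ since $\ell_1$ is not of constant sign on $C(x_*)$; and for such a quadric a short computation gives $s<0$ and shows that one of its two sheets lies in $\overline{H_1}$ and meets $H_1\subset\Omega$. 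In every case $\Hcal\cap\Omega\ne\emptyset$, contradicting the supporting property; so no such $x_*$ admits a supporting Möbius hyperplane, and $\Omega$ is not conformally convex.

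The step I expect to be the main obstacle is the completeness, and precisely the need to make $\delta_\Omega$ blow up \emph{at infinity} and not merely along the finite boundary $\partial\Omega$: the bound $F_\Omega\ge c_0\Vert\cdot\Vert/\psi$ degenerates as $\Vert x\Vert\to\infty$, and one must use that the affine (hence at most linear) growth of $\psi$ is still slow enough for $\int\mathrm dr/(1+r)$ to diverge. A lesser technical nuisance is the sign bookkeeping in the quadric computation for the last part.
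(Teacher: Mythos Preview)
Your argument is correct. Both you and the paper compute $F_\Omega$ via the single finite endpoint of each lightlike ray and bound it below, but the endgames differ.

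For completeness, the paper bounds $F_\Omega$ below by a constant times the quasi-hyperbolic norm $\sqrt{g(v,v)}/d_g(x,\partial\Omega)$ and then imports completeness of the quasi-hyperbolic metric from an external reference. Your route through $\psi=\max(\ell_1^+,\ell_2^+)$ and the direct integration of $F_\Omega(\gamma')\ge c_0\Vert\gamma'\Vert/\psi(\gamma)$ is self-contained and in fact slightly sharper: since any nearest boundary point $y\in\partial\Omega$ has $\ell_i(y)\le 0$, one gets $\psi(x)\le(\max_i\Vert\nabla_E\ell_i\Vert)\,d_g(x,\partial\Omega)$, so your bound implies the paper's. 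Your treatment of escape to infinity via the linear growth of $\psi$ is exactly the missing piece you flagged, and it is handled correctly.

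For non-conformal-convexity, the paper picks instead a point $x$ of the intersection $P_1\cap P_2$. There the argument is a two-line dimension count: a supporting $\Hcal$ smooth at $x$ would have $T_x\Hcal\subset\ker d\ell_1\cap\ker d\ell_2$, a codimension-two subspace, which is impossible; and the lightcone $C(x)$ meets $H_1\subset\Omega$ along any future null ray from $x$, exactly as in your computation. Your choice of a point on $P_1\setminus\overline{H_2}$ also works---the quadric case does reduce, via the reverse Cauchy--Schwarz inequality for timelike vectors, to one sheet lying entirely in $H_1$---but the paper's choice of boundary point avoids that case analysis.
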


\begin{proof}
    Let $v_1,v_2\in \R^{1,n}$ be two distinct timelike vectors, and let $H_1$ and $H_2$ be their orthogonal, respectively. For $i\in \{1,2\}$, we let $U_i=I^+(H_i)$ and we let $\Omega=U_1\cup U_2$. The domain $\Omega$ is not conformally convex, as points in $H_1\cap H_2$ do not admit any supporting Möbius hyperplanes. We claim that $\delta_\Omega$ is a complete metric. For $i\in\{1,2\}$, we let $g_i$ denote the Wick rotation of $\b$ in the direction $\R v_i$ (defined as in the proof a Proposition~\ref{prop_equivalence_d_Omega_ell}).
    For $x\in \R^{1,n}$, let $t_i(x)$ denote the Lorentz distance from $x$ to $H_i$. From the formula given in Example~\ref{exemple_formula_F_M}, we have 
    $$F_{U_i}(v)=\frac{\sqrt{g_i(v,v)}}{\sqrt{2}\,t_i(x)},$$
    for every point $x\in U_i$ and lightlike vector $v\in T_xU_i$. Now, a maximal lightlike geodesic of $\Omega$ is either contained in $U_1$ or in $U_2$, hence the infinitesimal functional of $\Omega$ is given by
    $$F_{\Omega}(v)=\frac{1}{\sqrt{2}}\min\left(\frac{\sqrt{g_1(v,v)}}{t_1(x)},\frac{\sqrt{g_2(v,v)}}{t_2(x)}\right),$$
    for every point $x\in \Omega$ and lightlike vector $v\in T_x\Omega$. Let $\alpha\geq 1$ and let $g$ be an inner product on $\R^{1,n}$ such that $g\leq g_i\leq \alpha g$ for $i=1,2$. Then, for $x\in \Omega$ and $v$ lightlike, one has 
    $$F_{\Omega}(v)\geq\frac{\sqrt{g(v,v)}}{\sqrt{2}\max\{t_1(x),t_2(x)\}}\geq \frac{\sqrt{g(v,v)}}{\sqrt{2} \alpha d_g(x,\partial\Omega)},$$
    where $d_g$ is the distance induced by $g$ on $\Omega$. Now, the metric $g_{\text{q.h.}}=\frac{g}{ d_g(\cdot,\partial\Omega)^2}$ is the \emph{quasihyperbolic metric} of $(\Omega,g)$ and it has a complete Riemannian distance $d_{\text{q.h.}}$, see for instance \cite[Prop. 2.8]{Uniformizing_gromov_hyp_space}. By Theorem~\ref{thm_lien_dmark_Fmark}, one obtains $\delta_\Omega\geq d_{\text{q.h.}}/\sqrt{2}\alpha$, so $\delta_\Omega$ is complete.
\end{proof}

\section{Globally hyperbolic C-maximal spacetimes}
\label{section_GHM}

\subsection{Cauchy maximality} A conformal spacetime $M$ is said to be \emph{globally hyperbolic} if it is \emph{causal}, meaning that it does not contain non-constant closed causal curves, and if its causal diamonds $J(x,y)$ are compact for every $x\leq y$.
Globally hyperbolic spacetimes are characterized by the fact that they admit \emph{Cauchy hypersurfaces}, that is, topological hypersurfaces that intersect every inextendible causal curve exactly once, see \cite{MinguzziSanchez2008}. Given two globally hyperbolic spacetimes $M$ and $N$, a \emph{Cauchy embedding} from $M$ to $N$ is a conformal map $f:M\to N$  that sends a Cauchy hypersurface of $M$ to a Cauchy hypersurface of $N$. 
\begin{definition}
    \label{definition_C_maximality}
    A globally hyperbolic spacetime $M$ is $C$-\emph{maximal} if every Cauchy embedding $f:M\to N$ is surjective.
\end{definition}
This notion was introduced by Rossi \cite{Clara} as a conformal analog of maximal spacetimes in Lorentzian geometry \cite{Choquet-Bruhat_Geroch}, and was later studied by Smaï, see \cite{SmaiThese,smai2023enveloping,smai2025futures}. Our goal is to prove the following theorem.

\begin{thm}
    \label{thm_GHM}
    Let $(M,[g])$ be a conformally flat, globally hyperbolic, $C$-maximal spacetime. 
    Then $M$ is Markowitz hyperbolic if and only if $M$ is totally conformally lightlike incomplete.
\end{thm}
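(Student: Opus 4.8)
The plan is to prove the nontrivial implication --- ``$M$ totally conformally lightlike incomplete $\Rightarrow$ $M$ Markowitz hyperbolic'' --- by contraposition; the converse is Proposition~\ref{prop_TCLI}(1) and uses none of the hypotheses on $M$. So I assume $M$ is conformally flat, globally hyperbolic and $C$-maximal but \emph{not} Markowitz hyperbolic, and I aim to produce a non-constant $\gamma\in\PPL(\R,M)$.

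First I would normalize the ambient picture via the structure theory of conformally flat, globally hyperbolic, $C$-maximal spacetimes of Rossi \cite{Clara} and Sma\"i \cite{smaï2023enveloping}: such an $M$ is conformally a quotient $\mathcal D(\Sigma)/\Gamma$, where $\mathcal D(\Sigma)\subset\Eintilde$ is the Cauchy development of an acausal hypersurface of $\Eintilde$ and $\Gamma\le\Conf(\mathcal D(\Sigma))$ acts freely and properly discontinuously. Since $\mathcal D(\Sigma)\to M$ is then a covering, Proposition~\ref{prop_naturalité_F_M} identifies $F_{\mathcal D(\Sigma)}$ with $F_M$; hence, by Theorem~\ref{thm_hyp} and Proposition~\ref{prop_TCLI}, $M$ is Markowitz hyperbolic (resp.\ totally conformally lightlike incomplete) if and only if $\mathcal D(\Sigma)$ is. I may therefore assume $M=\Omega:=\mathcal D(\Sigma)$ is an honest domain of $\Eintilde$. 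The point of $C$-maximality is that the boundary $\partial\Omega$ inside $\Eintilde$ is then the union of the Cauchy horizons $H^{\pm}(\Sigma)$, each of which --- when nonempty --- is an achronal null hypersurface ruled by null geodesics of $\Eintilde$.

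Next, since $\Omega$ is not Markowitz hyperbolic, the analysis of Lemma~\ref{lemme_de_caracterisation}, transcribed to $\Eintilde$ (its proof uses the formula of Proposition~\ref{Prop_exemple_calcul_F_M}, which lifts to domains of the universal cover $\R\times\S^n$ of $\S^1\times\S^n$), yields a compact $\Kcal\subset\Omega$ and lightlike segments $c_k\subset\Omega$ meeting $\Kcal$, each lying on a photon $\Delta_k$ of $\Eintilde$ with bounded projective domain, such that after passing to a subsequence $\Delta_k$ converges to a photon $\Delta_\infty$ and $c_k$ converges, in the Hausdorff topology, to a lightlike geodesic segment $\Delta$ of $\Delta_\infty$ whose projective domain contains a copy of $\R$. (If instead some maximal lightlike geodesic of $\Omega$ already has a projective domain containing a copy of $\R$, then $\Omega$ supports a non-constant element of $\PPL(\R,\Omega)$ and we are done at once.) Since the $c_k$ meet the compact set $\Kcal$, the limit $\Delta$ contains a point $x\in\Kcal\subset\Omega$; and $\Delta\subset\overline\Omega$, being a Hausdorff limit of subsets of $\Omega$.

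The heart of the argument --- and the only place where $C$-maximality is genuinely needed --- is to improve $\Delta\subset\overline\Omega$ to $\Delta\subset\Omega$. Suppose, for contradiction, that some $y\in\Delta$ lies on $\partial\Omega$, say $y\in H^{+}(\Sigma)$, and let $w$ be the null direction of $\Delta$ at $y$. In Lorentzian signature two null vectors are orthogonal if and only if they are proportional, so either $w$ is the direction of the null generator of $H^{+}(\Sigma)$ through $y$ --- in which case $\Delta$ coincides near $y$ with that generator and so lies in $\partial\Omega$ near $y$ --- or $w\notin T_yH^{+}(\Sigma)$, in which case $\Delta$ crosses $H^{+}(\Sigma)$ transversally at $y$ and hence contains points strictly to the future of $H^{+}(\Sigma)$, i.e.\ outside $\overline\Omega$, contradicting $\Delta\subset\overline\Omega$. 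Therefore $\Delta\cap\partial\Omega$ is open in $\Delta$; it is also closed in $\Delta$ (the complement of the open set $\Delta\cap\Omega$), so by connectedness of $\Delta$ it is empty, as $\Delta\cap\Omega\ni x$ is nonempty. Thus $\Delta\subset\Omega$, so $\Delta$ is contained in a single maximal lightlike geodesic of $\Omega$, whose projective domain then contains a copy of $\R$; restricting that geodesic to a copy of $\R$ gives a non-constant element of $\PPL(\R,\Omega)=\PPL(\R,M)$, so $M$ is not totally conformally lightlike incomplete --- which is the contrapositive statement. I expect the two delicate points to be the reduction carried out in the second paragraph, which rests wholly on the cited structure theory and on the regularity of Cauchy horizons, and the generator-versus-transversal-crossing dichotomy just used; the rest is essentially a transcription of the conformally convex case (Theorem~\ref{thm_conformement_convex}).
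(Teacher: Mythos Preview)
Your approach differs from the paper's, and the decisive gap is the reduction in your second paragraph. The paper explicitly observes (Example~\ref{exemple_causalement_convexes_maximaux}) that when a simply connected $C$-maximal globally hyperbolic spacetime \emph{is} a causally convex domain of $\Eintilde$, Theorem~\ref{thm_GHM} is an immediate consequence of the conformally convex case, Theorem~\ref{thm_conformement_convex}; the entire point of the separate proof is to treat GHMC spacetimes without assuming that the developing map of the universal cover is a global embedding. Your assertion that ``such an $M$ is conformally a quotient $\mathcal D(\Sigma)/\Gamma$ with $\mathcal D(\Sigma)\subset\Eintilde$'' is not what the cited references deliver: the only result of Sma\"i used in the paper, Theorem~\ref{thm_de_Rym}, controls $\dev$ merely on each $I^{\pm}(x)$, not on all of $\widetilde M$. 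The paper therefore proceeds differently: it shows that $F_M$ is \emph{continuous} (Proposition~\ref{prop_F_CO_GHM}) by working locally on every diamond $I(a,b)$, proving $F_M=F_U$ there with $U=I^+(a)\cup I^-(b)$, applying Theorem~\ref{thm_de_Rym} to get injectivity of $\dev$ on the two pieces with \emph{convex} image, gluing via Lemma~\ref{lemme_assiettes}, and finally checking that the endpoints $x_\pm(v)$ vary continuously thanks to that convexity. Then $F_M$ positive (Proposition~\ref{prop_TCLI}(2)) and continuous gives Markowitz hyperbolicity by Proposition~\ref{prop_cara_complete_hyperbolic}(2).

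Even granting your reduction, the clopen argument for $\Delta\cap\partial\Omega$ is incomplete. Cauchy horizons are only Lipschitz, so ``$w\notin T_yH^+(\Sigma)$'' is ill-posed at crease points; and in your Case~1 (``$w$ is the generator direction'') you only obtain $\Delta\subset H^+(\Sigma)$ to the \emph{past} of $y$, since generators of $H^+(\Sigma)$ are past-inextendible but may have future endpoints on the crease set --- so a two-sided neighbourhood of $y$ in $\Delta$ need not lie in $\partial\Omega$. A correct version would replace smooth transversality by a broken-null-geodesic argument (two distinct null directions at a horizon point force chronological separation, hence exit from $\overline\Omega$) and analyse what happens at future endpoints of generators. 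But once your reduction is granted all of this is superfluous: a causally convex $C$-maximal $\Omega\subset\Eintilde$ is already conformally convex in $\Ein^{1,n}$ by Example~\ref{exemple_causalement_convexes_maximaux}, and Theorem~\ref{thm_conformement_convex} finishes the job directly.
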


We deduce the following corollary.

\begin{cor}
    \label{corollary_properness_GHMC}
    Let $M$ be a conformally flat, globally hyperbolic, $C$-maximal spacetime. If $M$ is totally conformally lightlike incomplete, then $\Conf(M)$ acts properly on $M$.\qed
\end{cor}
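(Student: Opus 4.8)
The plan is to deduce the statement directly from Theorem~\ref{thm_GHM} together with the classical fact that the isometry group of a connected, locally compact metric space acts properly. First I would apply Theorem~\ref{thm_GHM}: since $M$ is conformally flat, globally hyperbolic, $C$-maximal and totally conformally lightlike incomplete, it is Markowitz hyperbolic, so $\delta_M$ is a genuine distance on $M$. By Proposition~\ref{proposition_naturalité}, every $f\in\Conf(M)$ satisfies $\delta_M(f(x),f(y))\le\delta_M(x,y)$ for all $x,y\in M$, and applying the same inequality to the conformal map $f^{-1}$ shows that $f$ is in fact a $\delta_M$-isometry. Hence there is a natural inclusion $\Conf(M)\hookrightarrow\Iso(M,\delta_M)$.

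Next I would invoke Theorem~\ref{thm_hyp}, which guarantees that, under Markowitz hyperbolicity, $\delta_M$ induces the standard manifold topology on $M$. In particular $(M,\delta_M)$ is a connected, locally compact (indeed locally connected and locally contractible) metric space. By the van Dantzig--van der Waerden theorem, the isometry group of such a space, equipped with the compact-open topology, acts properly on it. Restricting this proper action to the subgroup $\Conf(M)$ then yields a proper action of $\Conf(M)$ on $M$, provided the topology on $\Conf(M)$ is compatible with the one induced from $\Iso(M,\delta_M)$.

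The one point requiring care is precisely this topological compatibility, and I would handle it exactly as in the proof of the corollary following Theorem~\ref{thm_compacité_fort}. The claim is that $\Conf(M)$ is a closed subgroup of $\Iso(M,\delta_M)$: by Lemma~\ref{Lemme_convergence_locale_app_conf} (equivalently, by the regularity result of \cite{Charles_transformations_conformes}), a limit, uniform on compact subsets, of conformal diffeomorphisms that remains a $\delta_M$-isometry is again a smooth conformal map, and the convergence is automatically smooth. This simultaneously shows that the compact-open topology and the smooth topology agree on $\Conf(M)$ and that $\Conf(M)$ is closed in $\Iso(M,\delta_M)$. Since a closed subgroup of a group acting properly acts properly for the restricted action, the conclusion follows. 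The main subtlety is therefore not the properness itself---which is immediate once $M$ is known to be Markowitz hyperbolic---but the transfer of properness from the metric-isometry picture to the smooth conformal action, i.e. the identification of the two topologies on $\Conf(M)$ and the closedness of $\Conf(M)$ inside $\Iso(M,\delta_M)$.
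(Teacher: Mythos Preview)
Your proposal is correct and follows exactly the route the paper has in mind: the paper gives no explicit proof (the \qed is attached directly to the statement), since the corollary is presented as an immediate consequence of Theorem~\ref{thm_GHM} together with the general fact, already noted in the introduction, that $\Conf(M)$ acts properly on any Markowitz hyperbolic manifold. Your argument simply spells out this latter fact via Theorem~\ref{thm_hyp}, the van Dantzig--van der Waerden theorem, and the regularity result from \cite{Charles_transformations_conformes}, which is precisely the justification the paper leaves implicit.
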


\begin{ex}
    \label{exemple_causalement_convexes_maximaux}
    Let $\Omega\subset \Eintilde$ be a causally convex domain. Then $\Omega$ is globally hyperbolic. If $\Omega$ is $C$-maximal, then there exists a closed subset $\Lambda\subset\Eintilde$ such that $\Omega=\cap_{x\in\Lambda}E(x)$, see \cite{smai2023enveloping}. Therefore the domain $\Omega$ can be identified with a conformally convex domain of $\Ein^{1,n}$ (in fact, one can identify $\Omega$ with a dually convex domain of $\Ein^{1,n}$, see \cite[Prop. 6.3]{Cha_Gal}). Hence, for causally convex $C$-maximal domains of $\Eintilde$, one can obtain the conclusion of Theorem~\ref{thm_GHM} with Theorem~\ref{thm_conformement_convex}. Also, we know that for this class of globally hyperbolic spacetimes, the distance $\delta_\Omega$ is complete and lightlike geodesics are geodesic by Theorem~\ref{thm_conformement_convex}. 
\end{ex}

Theorem~\ref{thm_GHM} is a direct consequence of Proposition \ref{prop_TCLI} and Proposition \ref{prop_cara_complete_hyperbolic} as well as the following proposition, whose proof is given below in Section \ref{section_preuve_prop_GHMC}.

\begin{prop}
    \label{prop_F_CO_GHM}
    Let $(M,[g])$ be a conformally flat, globally hyperbolic, $C$-maximal spacetime. Then $F_M$ is continuous.
\end{prop}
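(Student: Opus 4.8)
The plan is to show that $F_M$ is both upper and lower semicontinuous. Upper semicontinuity is already known in full generality by \cite[Lem.~4.1]{markowitz_1981} (and recalled in Section~\ref{section_FM}), so the entire content is lower semicontinuity. Equivalently, one must show that if $(v_k)$ is a sequence of lightlike vectors converging to a nonzero lightlike vector $v$, then $\liminf_k F_M(v_k)\geq F_M(v)$. By homogeneity of $F_M$ and after passing to a subsequence, it suffices to treat the case where $F_M(v_k)\to \ell$ for some $\ell\in[0,\infty)$ and produce a $\gamma\in\PPL(I,M)$ tangent to a multiple of $v$ realizing length at most $\ell$; in the degenerate case $\ell=0$ one would need a non-constant $\gamma\in\PPL(\R,M)$ tangent to $v$. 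Following the pattern of the proof of Proposition~\ref{prop_cara_complete_hyperbolic}, choose $\gamma_k\in\PPL(I,M)$ with $\gamma_k^\prime(0)=\tfrac{1}{\ell_k}v_k$ where $\ell_k\leq F_M(v_k)+1/k$ (rescaling so that, say, $\sqrt{\rho_I(\partial_t,\partial_t)}$ at the relevant parameter matches $F_M(v_k)$ up to $1/k$), so that $\gamma_k^\prime(0)\to \tfrac{1}{\ell}v$ when $\ell>0$, or $\|\gamma_k^\prime(0)\|\to\infty$ when $\ell=0$. The goal is to extract a uniformly convergent subsequence and invoke Proposition~\ref{prop_topologie_PPL} to conclude that the limit is projectively parametrized with smooth convergence, forcing $F_M(v)\leq \liminf F_M(v_k)$ (and in the $\ell=0$ case, rescaling domains à la Brody as in the proof of Theorem~\ref{thm_compacité_fort} to get a non-constant geodesic $\R\to M$, which would contradict total conformal lightlike incompleteness — but here we are not assuming that, so instead this shows directly that $\ell=0$ cannot happen unless there is genuinely such a geodesic, i.e. $F_M(v)=0$ too, which is fine for lower semicontinuity).

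The real obstacle is the \emph{equicontinuity / non-escape} of the sequence $(\gamma_k)$: a priori the curves could run out of $M$, i.e. $\overline{\gamma_k(I)}$ need not stay in a fixed compact set, so Arzelà--Ascoli does not apply directly. This is exactly where the hypotheses \emph{conformally flat} and \emph{globally hyperbolic $C$-maximal} must be used. The plan is: lift everything to the universal cover $\widetilde M$ and use the developing map $\dev:\widetilde M\to \Ein^{1,n}$. By Rossi's theorem \cite{Clara,smaï2023enveloping}, since $M$ is globally hyperbolic conformally flat and $C$-maximal, $\widetilde M$ is itself globally hyperbolic $C$-maximal, hence $\dev$ is a \emph{Cauchy embedding} of $\widetilde M$ onto a causally convex domain $\Omega\subset\Eintilde$ — in fact $\dev$ is injective and $\widetilde M \cong \Omega$ with $\Omega = \bigcap_{x\in\Lambda} E(x)$ for a closed $\Lambda\subset\Eintilde$ (as recalled in Example~\ref{exemple_causalement_convexes_maximaux}). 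Since $F_M$ is invariant under the covering $\widetilde M\to M$ (Proposition~\ref{prop_naturalité_F_M}), it suffices to prove continuity of $F_\Omega$ for such a domain $\Omega\subset\Eintilde$. But then Example~\ref{exemple_causalement_convexes_maximaux} identifies $\Omega$ with a conformally convex (indeed dually convex) domain of $\Ein^{1,n}$, and Theorem~\ref{thm_conformement_convex} applies: either $\Omega$ is totally conformally lightlike incomplete, in which case $(\Omega,\delta_\Omega)$ is complete and hence $F_\Omega$ is continuous by Proposition~\ref{prop_cara_complete_hyperbolic}(2); or $\Omega$ is \emph{not} totally conformally lightlike incomplete, and one must still argue continuity of $F_\Omega$ directly.

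For that remaining case — $\Omega\subset\Ein^{1,n}$ conformally convex but carrying a non-constant $\gamma\in\PPL(\R,\Omega)$ — the plan is to reduce to a stereographic picture as in the proof of Theorem~\ref{thm_conformement_convex}: split off the directions along which $\Omega$ contains complete lightlike geodesics. Concretely, pass to the double cover $\S^p\times\S^q$ picture and use the explicit formula of Proposition~\ref{Prop_exemple_calcul_F_M} for $F$ on domains of $\S^p\times\S^q$: this formula is manifestly continuous in $(x,v)$ wherever $L<\pi$, and identically zero where $L\geq\pi$; the only issue is continuity across the locus $L=\pi$, and there one checks from the formula that $F\to 0$ as $L\to\pi^-$ (since $\tan(L/2)\to\infty$ and the denominators blow up while numerators stay bounded, using that the basepoint stays a definite distance from the endpoints — exactly the estimate already carried out in the proof of Lemma~\ref{lemme_de_caracterisation}). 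Thus $F$ extends continuously by zero across $L=\pi$, and upstairs on $\widetilde M\cong\Omega$ the function $F_\Omega$ is continuous; pushing down via the covering gives continuity of $F_M$. The one technical point to be careful about is that "$L$" as a function of $(x,v)$ is continuous as long as the future/past endpoints of the lightlike geodesic through $(x,v)$ vary continuously on $\partial\Omega$, which follows from global hyperbolicity (causal diamonds compact) plus $C$-maximality of $\Omega$; I expect this continuity-of-endpoints claim, together with handling the $L=\pi$ transition uniformly, to be the main place where real work is needed.
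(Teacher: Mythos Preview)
Your reduction has a genuine gap at the step where you claim the developing map $\dev:\widetilde M\to\Eintilde$ is injective with image a causally convex domain $\Omega=\bigcap_{x\in\Lambda}E(x)$. Neither Rossi's result nor Example~\ref{exemple_causalement_convexes_maximaux} says this. Example~\ref{exemple_causalement_convexes_maximaux} goes only in the other direction: it starts from a causally convex domain of $\Eintilde$ that is already given as a subset and describes its structure when $C$-maximal. It does \emph{not} assert that every simply connected GHMC spacetime develops injectively onto such a domain, and the paper does not assume this anywhere. Calling $\dev$ a ``Cauchy embedding into $\Eintilde$'' is also off: a Cauchy hypersurface of $\Eintilde$ is a round sphere, so unless $\widetilde M=\Eintilde$ this cannot hold.

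What the paper actually uses is the much more local input of Sma\"i's Theorem~\ref{thm_de_Rym}: for each point $a$, the restriction of $\dev$ to $I^+(a)$ is injective, and its image in the stereographic chart $M^+(\dev(a))\simeq\R^{1,n}$ is convex, past complete, and disjoint from a spacelike hyperplane. The proof then fixes $a\ll b$, observes that $F_M=F_{I^+(a)\cup I^-(b)}$ on the diamond $I(a,b)$ (since every maximal lightlike geodesic through a point of $I(a,b)$ stays in $I^+(a)\cup I^-(b)$ by causality), uses Lemma~\ref{lemme_assiettes} to glue the two injective pieces, and only then passes to the explicit formula of Proposition~\ref{Prop_exemple_calcul_F_M}. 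The continuity of the future endpoint $x_+(v)$ is obtained precisely from the convexity and the spacelike-hyperplane bound supplied by Theorem~\ref{thm_de_Rym}; your appeal to ``global hyperbolicity plus $C$-maximality'' for this step is not enough without that convexity. So the missing idea is: do not try to embed all of $\widetilde M$; work on the causal sandwich $I^+(a)\cup I^-(b)$ where Sma\"i's theorem gives you both injectivity of $\dev$ and the convexity needed to make the endpoint map continuous.
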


\begin{rmk}
    We do not know whether Proposition~\ref{prop_F_CO_GHM} can be strengthened to show that $\delta_M$ is a complete metric when $F_M$ is positive. Such a result would establish that the converse implication in Proposition~\ref{prop_cara_complete_hyperbolic}(2) holds in the context of GHMC spacetimes. Example~\ref{exemple_causalement_convexes_maximaux} shows that this improvement can indeed be obtained in many cases.
\end{rmk}

\subsection{Causal geometry of $\Eintilde$} When $n\geq 2$, the universal cover of $\Ein^{1,n}$ is conformally diffeomorphic to $\R\times\S^n$. We introduce some notations and give basic facts on the causal structure of $\Eintilde$, which we identify with $\R\times\S^n$, see also Figure~\ref{Figure_Cone_dans_Eintilde}:
\begin{itemize}
    \item the causal relations are given as follows: for all $(s,x),(t,y)\in\R\times\S^n$, one has
    $$(s,x)\leq (t,y)\Longleftrightarrow d(x,y)\leq  t-s,$$
    where $d$ is the round metric on $\S^n$;
    \item the automorphism group of the universal cover $\Eintilde\to\Ein^{1,n}$ is generated by the conformal diffeomorphism $\sigma$, given by $\sigma(t,x)= (t+\pi,-x)$;     
    \item the set of points of $\Eintilde$ that are not causally related to $p=(s,x)$ is denoted 
    $$E(p)=\{(t,y)\in\Eintilde\,\vert\,\vert t-s\vert<d(x,y)\}.$$
    This domain if conformally equivalent to $\R^{1,n}$ and any identification $E(p)\simeq\R^{1,n}$ is called a \emph{stereographic projection} at $p$; 
    \item the lightcone of a point $p=(s,x)\in \Eintilde$ is given by 
    $$C(p)=\{(t,y)\in\Eintilde\,\vert\,\vert t-s\vert=d(x,y)\mod 2\pi\Z\}.$$
    The complement of $C(p)$ in $\Eintilde$ has countably many components, given by $\{ E(\sigma^k(p))\}_{k\in\Z}$;
    \item we write $M^+(p)=E(\sigma(p))$ and $M^-(p)=E(\sigma^{-1}(p))$.
\end{itemize}

\begin{figure}
    \centering    

     \begin{tikzpicture}[scale=1.7,xscale=1.3]
            \begin{scope}[thick, lightblue,yshift=-14]
            \begin{scope}[rotate=-26.56]
            \draw[dashed] (1.12,0) arc (0:180:1.12 and 0.15);
          \draw (-1.12,0) arc (180:360:1.12 and 0.15);
        \end{scope}
        \end{scope}

        \begin{scope}[thick, lightblue,yshift=14]
            \begin{scope}[rotate=+26.56]
            \draw[thick,dashed] (1.12,0) arc (0:180:1.12 and 0.15);
          \draw (-1.12,0) arc (180:360:1.12 and 0.15);
        \end{scope}
        \end{scope}

        \begin{scope}[thick, lightblue]
            \draw plot[domain=0.7:1,smooth,variable=\ang]
   ({\ang}, {0.5*cos(90*\ang)+1});
            \draw plot[domain=0.7:1,smooth,variable=\ang]
   ({\ang}, {-0.5*cos(90*\ang)-1});
        \end{scope}

        \begin{scope}[thick, lightblue]
            \draw[densely dashed] plot[domain=0.85:1,smooth, variable=\ang]
   ({\ang}, {0.8*cos(90*\ang)+1});
            \draw[densely dashed] plot[domain=0.85:1,smooth, variable=\ang]
   ({\ang}, {-0.8*cos(90*\ang)-1});
        \end{scope}

        \draw[dashed] (1,1) arc (0:180:1 and 0.15);
          \draw (-1,1) arc (180:360:1 and 0.15);
          \draw[dashed] (1,-1) arc (0:180:1 and 0.15);
          \draw (-1,-1) arc (180:360:1 and 0.15);
          \draw (1,-1.2) -- (1,1.2);
          \draw[<-] (-1,1.2) -- (-1,-1.2);

        \fill[fill=black] (1,1) circle (1pt);
        \fill[fill=black] (-1,0) circle (1.2pt);
        \fill[fill=black] (1,-1) circle (1pt);
        \node at (1.2,1) {$\sigma(p)$};
        \node at (1.25,-1) {$\sigma^{-1}(p)$};
        \node[lightblue] at (0.7,0.4) {$C(p)$};
        \node at (-1.1,-0.05) {$p$};
        \node at (-1.15,1.1) {$\R$};
        \node at (0,-1.3) {$\S^{n}$};
        \node at (0,0) {$E(p)$};
        \node at (-0.6,-0.6) {$M^-(p)$};
        \node at (-0.6,0.6) {$M^+(p)$};
        \end{tikzpicture}

    \caption{The lightcone of a point $p$ in $\Eintilde$.}
    \label{Figure_Cone_dans_Eintilde}
\end{figure}

\subsection{Proof of Proposition~\ref{prop_F_CO_GHM}}\label{section_preuve_prop_GHMC}
We begin with a standard lemma.
\begin{lem}\label{lemme_assiettes}
    Let $X$ and $Y$ be two metric spaces and let $f:X\to Y$ be a local homeomorphism. Assume that there exists two open subsets $U,V\subset X$ such that $X=U\cup V$, and such that $f\vert _U$ and $f\vert_V$ are injective. If $f(U)\cap f(V)$ is a nonempty connected open subset, then $f$ is injective.
\end{lem}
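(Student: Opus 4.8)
The idea is to show that any two points with the same image must lie in a region where a single injective branch covers them. Suppose $f(x_1)=f(x_2)=:y$ with $x_1\neq x_2$; the aim is to derive a contradiction. The natural strategy is: first use the covering/local-homeomorphism structure to transport a small connected neighborhood of $y$ inside $f(U)\cap f(V)$ back to neighborhoods of both $x_1$ and $x_2$; then use a connectedness argument to enlarge these ``sheets'' until one of them contains both $x_1$ and $x_2$, contradicting the injectivity of $f\vert_U$ or $f\vert_V$.

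\textbf{Key steps.} First I would observe that, by symmetry between $U$ and $V$, we may assume $x_1\in U$. If $x_2\in U$ as well, then $f\vert_U$ injective gives $x_1=x_2$ immediately, so assume $x_2\in V\setminus U$. Let $W=f(U)\cap f(V)$, which is open, nonempty, and connected. Now consider the set
$$A=\{z\in W\,\vert\, (f\vert_U)^{-1}(z)=(f\vert_V)^{-1}(z)\}.$$
Here $(f\vert_U)^{-1}:f(U)\to U$ and $(f\vert_V)^{-1}:f(V)\to V$ are the (continuous, by the local homeomorphism property) inverse branches. I claim $A$ is both open and closed in $W$. It is open because if $z\in A$, then the two branches agree at $z$ and hence, being continuous sections of the local homeomorphism $f$ agreeing at a point, they agree on a neighborhood of $z$ in $W$. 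It is closed because $A$ is precisely the coincidence set of two continuous maps $W\to X$ into a metric (hence Hausdorff) space, i.e. $A=\{z: (f\vert_U)^{-1}(z)=(f\vert_V)^{-1}(z)\}$ is closed. Since $W$ is connected and nonempty, and $A$ is nonempty (it is nonempty because... this is the point requiring a little care), we get $A=W$.

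\textbf{Establishing $A\neq\emptyset$ and concluding.} To see $A\neq\emptyset$: pick any $z_0\in W$ and let $a=(f\vert_U)^{-1}(z_0)\in U$, $b=(f\vert_V)^{-1}(z_0)\in V$, both preimages of $z_0$. If $a=b$ we are done. If $a\neq b$ we seem stuck—so instead I would argue more directly. Note $X=U\cup V$, so the branch $(f\vert_V)^{-1}$ maps $W$ into $V\subset X$; restrict attention to points of $W$ that also lie in $f(U\cap V)$: on such points $f\vert_{U\cap V}$ is injective (as a restriction of $f\vert_U$), and for $z\in f(U\cap V)$ the point $(f\vert_U)^{-1}(z)$ lies in $U\cap V\subset V$ and is a preimage of $z$ under $f\vert_V$, hence equals $(f\vert_V)^{-1}(z)$. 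Thus $f(U\cap V)\cap W\subset A$. Since $U\cap V\neq\emptyset$ (as $X=U\cup V$ is connected, being covered by... actually $X$ is connected as it is covered by two overlapping opens with connected overlap image — or one simply notes $U\cap V=\emptyset$ would force $f(U)\cap f(V)=\emptyset$ contrary to hypothesis, using that $f\vert_U,f\vert_V$ injective means a common image value pulls back to a point in $U\cap V$), the set $f(U\cap V)\cap W$ is a nonempty open subset of $W$, so $A\neq\emptyset$. Therefore $A=W$. Finally, since $f(x_1)=f(x_2)=y$, the value $y$ lies in $f(U)\cap f(V)=W=A$, so $(f\vert_U)^{-1}(y)=(f\vert_V)^{-1}(y)$; but $x_1=(f\vert_U)^{-1}(y)$ and $x_2=(f\vert_V)^{-1}(y)$, whence $x_1=x_2$.

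\textbf{Main obstacle.} The delicate point is the nonemptiness of $A$, equivalently showing $U\cap V\neq\emptyset$ and that the two inverse branches actually coincide somewhere; the clean observation $f(U\cap V)\cap W\subset A$ (together with the fact that a common image value of the two injective restrictions automatically comes from a single point of $U\cap V$) resolves this, and the rest is the standard connectedness/clopen argument.
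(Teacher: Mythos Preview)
Your approach is essentially the paper's: the set $A$ you define coincides with $f(U\cap V)$ (indeed $z\in A$ iff the common value $(f\vert_U)^{-1}(z)=(f\vert_V)^{-1}(z)$ lies in $U\cap V$, iff $z\in f(U\cap V)$), and your clopen-plus-connectedness argument is exactly what the paper does.

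There is, however, a genuine gap at the nonemptiness step. Your claim that ``$U\cap V=\emptyset$ would force $f(U)\cap f(V)=\emptyset$'' is circular: a common image value $z=f(a)=f(b)$ with $a\in U$, $b\in V$ yields $a=b\in U\cap V$ only if $f$ is already known to be injective, which is the conclusion. In fact the lemma is \emph{false} as stated: take $X=\{1,2\}$ discrete, $Y=\{0\}$, $f\equiv 0$, $U=\{1\}$, $V=\{2\}$; then $f$ is a local homeomorphism, both restrictions are injective, and $f(U)\cap f(V)=\{0\}$ is nonempty, connected and open, yet $f$ is not injective. The paper's own proof has the same omission (it jumps to $f(U\cap V)=f(U)\cap f(V)$ ``by connectedness'' without checking $f(U\cap V)\neq\emptyset$). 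In the paper's unique application the sets $I^+(a)$ and $I^-(b)$ satisfy $I^+(a)\cap I^-(b)\supset I(a,b)\neq\emptyset$ by construction, so everything works there; but as a freestanding lemma one needs the extra hypothesis $U\cap V\neq\emptyset$ (or $X$ connected), and neither of your two attempts to establish it goes through.
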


\begin{proof}
    Let us show that $f(U\cap V)$ is closed in $f(U)\cap f(V)$. Let $(x_k)$ be a sequence of $U\cap V$ such that $f(x_k)$ converges to some $y\in f(U)\cap f(V)$. Let $x\in U$ such that $y=f(x)$. Since $f$ is a local homeomorphism, one has $x_k\to x$ as $k\to \infty$. Similarly, if $x^\prime\in V$ is such that $f(x^\prime)=y$, one has $x_k\to x^\prime$, so $x=x^\prime\in U\cap V$. Therefore $y\in f(U\cap V)$ and $f(U\cap V)$ is closed in $f(U)\cap f(V)$. By connectedness, we obtain $f(U\cap V)= f(U)\cap f(V)$. It follows that $f$ is injective on $X=U\cup V$.
\end{proof}

We will need the following theorem of Smaï, which is stated here with a weaker conclusion than its original formulation in \cite{smai2025futures}.

\begin{thm}[{\cite[Thm. 1.3]{smai2025futures}}]
    \label{thm_de_Rym}
    Let $M$ be a simply connected, conformally flat, globally hyperbolic, $C$-maximal spacetime different from $\Eintilde$. For every $x\in M$, the restriction of $\dev:M\to \Eintilde$ to $I^+(x)$ is injective and has image contained in $M^+(\dev(x))$. In a stereographic projection $M^+(\dev(x))\simeq\R^{1,n}$, this image is convex, past complete and disjoint from a spacelike hyperplane of $\R^{1,n}$. 
\end{thm}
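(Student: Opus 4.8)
The plan is to prove the two assertions separately: the \emph{confinement} $\dev(I^+(x))\subset M^+(x)$, and then the three geometric properties of its image in a stereographic chart. I would begin by recording the soft facts. Since $M$ is simply connected and conformally flat, $\dev\colon M\to\Eintilde$ is a conformal immersion, and conformal immersions preserve causal type; fixing compatible time orientations, future-directed causal curves of $M$ develop to future-directed causal curves of $\Eintilde$, so $\dev(I^+(x))\subset I^+(\dev(x))$ for every $x$. Next I would invoke the structure theory of conformally flat, globally hyperbolic, $C$-maximal spacetimes \cite{Clara,smaï2023enveloping}: a simply connected such spacetime with $M\neq\Eintilde$ is conformally a causally convex domain $\Omega_M=\dev(M)\subset\Eintilde$ with $\dev$ an embedding, and by Example~\ref{exemple_causalement_convexes_maximaux} one may write $\Omega_M=\bigcap_{z\in\Lambda}E(z)$ for a closed set $\Lambda\subset\Eintilde$. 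Causal convexity then gives $\dev(I^+(x))=I^+(\dev(x))\cap\Omega_M$, reducing everything to a question about the regular domain $\Omega_M$.

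For the confinement I must show that no point of $I^+(\dev(x))\cap\Omega_M$ lies in the causal future of $\sigma(\dev(x))$, where $\sigma(t,a)=(t+\pi,-a)$ generates the deck group of $\Eintilde\to\Ein^{1,n}$. The key geometric observation is that in $\Eintilde=\R\times\S^n$ the photon through $\dev(x)=(s,a)$ passes successively through $\sigma(\dev(x)),\sigma^2(\dev(x)),\dots$, and that $\dev(x)\le\sigma(\dev(x))$ are lightlike related. Hence, if some $q\in\Omega_M$ satisfied $q\ge\sigma(\dev(x))$, causal convexity would force the whole photon segment from $\dev(x)$ to $\sigma(\dev(x))$ into $\Omega_M$; I would then show, using global hyperbolicity (a future-inextendible causal curve must leave every compact causal diamond since it meets each Cauchy hypersurface) together with $C$-maximality, that this configuration propagates upward through all the $\sigma^k(\dev(x))$ and forces $\Omega_M=\Eintilde$, contradicting $M\neq\Eintilde$. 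This step, disentangling the interplay between the wrapping of photons, the compactness of diamonds, and $C$-maximality, is the genuine obstacle of the proof; the remainder is comparatively formal.

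Granting the confinement, I would fix a stereographic projection $\pi\colon M^+(x)=E(\sigma(\dev(x)))\to\R^{1,n}$ and set $\Omega=\pi(\dev(I^+(x)))=\pi\big(\Omega_M\cap E(\sigma(\dev(x)))\big)$. Since $\dev(x)$ and $\sigma(\dev(x))$ are lightlike related, the point $\dev(x)$ lies on the cone $C(\sigma(\dev(x)))$, hence at null infinity of the chart; applying the dictionary of Lemma~\ref{lemme_geometry_mobius_support_hyperplane}, each lightcone $C(z)$ with $z\in\Lambda$ bounding $\Omega_M$ traces in $\R^{1,n}$ to a hyperboloid, a cone, or an affine hyperplane, and $\Omega$ appears as an intersection of the domains $E(z)$ of points acausal to $z$, with $z$ ranging over a future-directed achronal set. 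This is precisely the building block of a Bonsante regular domain \cite{Bonsante}: such an intersection is past complete, and because $\Lambda$ lies to one (future) side, $\Omega$ is disjoint from a spacelike hyperplane of $\R^{1,n}$.

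Finally, convexity of $\Omega$ can be read off directly from conformal flatness once confinement is known: photons of $\Eintilde$ develop to affine lightlike segments of the chart, and confinement guarantees that the segment realizing any causal relation between two points of $\Omega$ stays inside $M^+(x)$, so $\Omega$ is causally convex in $\R^{1,n}$; combined with past completeness and the supporting hyperplanes above, the Bonsante-type argument upgrades this to genuine convexity. I expect the chart computations and this last step to be routine modulo the regular-domain formalism, with all the real difficulty concentrated in the confinement of the second paragraph.
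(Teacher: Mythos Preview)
The paper does not contain its own proof of this statement: Theorem~\ref{thm_de_Rym} is quoted verbatim from Smaï \cite[Thm.~1.3]{smaï2025futures} and used as a black box in the subsequent proof of Proposition~\ref{prop_F_CO_GHM}. There is therefore no argument in the present paper against which to compare your proposal.

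That said, one step in your sketch warrants caution. You assert at the outset that for a simply connected, conformally flat, globally hyperbolic, $C$-maximal $M\neq\Eintilde$ the developing map is a global embedding onto a causally convex domain $\Omega_M\subset\Eintilde$, and you then reduce everything to statements about that domain. But global injectivity of $\dev$ is not a soft fact here; note that in the proof of Proposition~\ref{prop_F_CO_GHM} the paper explicitly invokes Theorem~\ref{thm_de_Rym} to deduce injectivity of $\dev$ merely on $I^+(a)$ and $I^-(b)$ separately, and then uses Lemma~\ref{lemme_assiettes} to glue. If global injectivity were already available from \cite{Clara,smaï2023enveloping} prior to Smaï's theorem, that detour would be unnecessary. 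So either you are assuming something at least as strong as what you are trying to prove, or you need to pin down precisely where in the cited literature global injectivity is established independently of the theorem at hand. The rest of your outline (confinement via the photon-wrapping obstruction, then the Bonsante regular-domain description in the chart $M^+(x)$) matches the shape of the argument one expects in \cite{smaï2025futures}, but the injectivity issue is where the real work hides.
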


The same statement holds if we replace ``$+$'' by ``$-$'' and ``past complete'' by ``future complete''.

\begin{proof}[Proof of Proposition~\ref{prop_F_CO_GHM}]
    Since $F_M$ is invariant under covering, see Proposition~\ref{prop_naturalité_F_M}, we can assume that $M$ is simply connected. Let $\dev:M\to \Eintilde$ be a developing map for $M$. Let $a,b\in M$ be such that $a\ll b$, and let $D=I(a,b)$ and $U=I^+(a)\cup I^-(b)$. We will show that $F_M$ is continuous on $D$, which will conclude the proof. Let us first show that $F_M$ and $F_U$ coincide on $D$. Since $U\subset M$, the inequality $F_M\leq F_U$ holds on $U$ by Proposition~\ref{proposition_naturalité}. Let $x\in D$ and $v\in T_xM$ be a lightlike vector. Let $\gamma\in \PPL(I,M)$ be a maximal lightlike geodesic such that $\gamma(0)=x$ and $\gamma^\prime(0)=\lambda v$ for some constant $\lambda>0$. Then $\gamma\vert_{[0,1)}$ is a future directed causal curve, so $\gamma([0,1))\subset J^+(x)\subset U$. Similarly, one has $\gamma((-1,0])\subset J^-(x)\subset U$, so $\gamma$ takes values in $U$. In particular 
    $$F_U(v)\leq \sqrt{\rho_I\left(\lambda^{-1}\partial_t,\lambda^{-1}\partial_t\right)}=\frac{2}{\lambda}=F_M(v),$$
    so $F_U$ and $F_M$ coincide on $D$. Therefore, we only need to show that $F_U$ is continuous on $D$.     
    
    Let $p=\dev(a)$ and $q=\dev(b)$. From Theorem~\ref{thm_de_Rym}, the restriction of the developing map to $I^+(a)$ (resp. $I^-(b)$) is injective. Since $\dev(I^+(a))$ and $\dev(I^-(b))$ are causally convex, one has
    $$I\left(p,q\right)\subset \dev(I^+(a))\cap \dev(I^-(b)) \subset I^+(p)\cap I^-\left(y\right) \subset I\left(p,q\right).$$
    Therefore $\dev(I^+(a))\cap \dev(I^-(b))= I\left(p,q\right)$ is a diamond of $\Eintilde$, so $\dev(I^+(a))\cap \dev(I^-(b))$ is connected. Since the restrictions of $\dev$ to $I^+(a)$ and $I^-(b)$ are injective, the restriction of $\dev$ to $U$ is injective by Lemma~\ref{lemme_assiettes}. Let $V=\dev(U)$. Since $F_U=\dev^* F_V$, we only need to show that $F_V$ is continuous on $D^\prime=\dev(D)=I\left(p,q\right)$. 
    Identify $\Eintilde$ with the product $\R\times \S^n$ and let $g=dt^2+d_{\S^n}$. For a future lightlike vector $v\in T_xD^\prime$ , let $x_+(v)$ and $x_-(v)$ denote the endpoints in $\Eintilde$ of the maximal lightlike geodesic generated by $v$ and contained in $V$, and let $\sqrt{2}L_v$ denote the $g$-length of that lightlike segment. Then from Proposition~\ref{Prop_exemple_calcul_F_M}, one has 
    $$F_V(v)=\left(\frac{1+\tan(d_g(x,x_-(v))-L_v/2)^2}{\tan(L_v/2)+\tan(d_g(x,x_-(v))-L_v/2)}
        +\frac{1+\tan(d_g(x,x_+(v))-L_v/2)^2}{\tan(L_v/2)+\tan(d_g(x,x_+(v))-L_v/2)}\right)\frac{\Vert v\Vert_g}{\sqrt{2}},$$
    if $L_v<\pi$ and $F_\Omega(v)=0$ otherwise. In order to show that $F_V$ is continuous, it is sufficient to show that $x_-(v)$ and $x_+(v)$ depend continuously on $v$.
    
     Identify $M^+(p)$ conformally with $\R^{1,n}$ and let $\Omega=\dev(I^+(a))\subset \R^{1,n}$. By Theorem~\ref{thm_de_Rym}, the domain $\Omega$ is convex, past complete and disjoint from at least one spacelike hyperplane $H\subset\R^{1,n}$. 
    Let $(v_k)$ be a sequence of lightlike vectors based at $x_k\in D^\prime$ and converging to $v\in T_x D^\prime$. Since $(x_k)$ converges, we can always find $z\in \Omega$ such that $z\leq x_k$ for all $k\geq 0$. Then one has $z\leq x_+(v_k)$ for all $k\geq 0$, so $x_+(v_k)$ belongs to the compact set $J^+(z)\cap J^-(H)$ since $\Omega$ is contained in the past of $H$. 
    Let $\bar x$ be an accumulation point for $x_+(v_k)$. By convexity of $\Omega$, one has $[x,\bar x)\subset\Omega$. Since $x_+(v_k)\in \partial\Omega$ for all $k\geq 0$, the point $\bar x$ belongs to $\partial \Omega$, so $\bar x=x_+(v)$ by definition. Hence $x_+(v_k)\to x_+(v)$ as $k\to +\infty$, so $v\mapsto x_+(v)$ is continuous. Similarly, the map $v\mapsto x_-(v)$ is continuous.
\end{proof}

\section{Application to conformal Lorentzian geometry}
\label{section_application}
The aim of this section is to prove Theorem~\ref{thm_application}. The proof is based on Proposition \ref{prop_compacité_app_conformes} and two lemmas on the Markowitz pseudodistance that are given below in Section \ref{Section_deux_lemmes_sur_delta_M}. We will also need the following lemma on $\Eintilde$.

\begin{lem}
    \label{lemme_géométrie_eintilde}
    Let $p\in \Eintilde$ and $\Omega\subset E(p)$ such that in the stereographic projection of $p$, the domain $\Omega$ is an Einstein-de Sitter half-space. Let  $\gamma\in\PPL(\R, \Eintilde)$ be non-constant such that $\gamma(\R)\subset\overline{\Omega}$. Then the endpoints of $\gamma$ are $p$ and $\sigma(p)$.
\end{lem}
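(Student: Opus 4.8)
The plan is to argue entirely inside the model $\Eintilde=\R\times\S^n$ of Proposition~\ref{prop_rev_double} (taking $(p,q)=(1,n)$), with conformal metric $g=-dt^2+g_{\S^n}$, causal relation $(s,x)\le(t,y)\iff d(x,y)\le t-s$, deck transformation $\sigma(t,x)=(t+\pi,-x)$, and, for $a=(s,x)$, $E(a)=\{(t,y):|t-s|<d(x,y)\}$, so that $\overline{E(a)}=\{|t-s|\le d(x,y)\}$ and $\partial E(a)=\partial I^+(a)\cup\partial I^-(a)$ is the ``first sheet'' of the lightcone $C(a)$, singular only at $a,\sigma(a),\sigma^{-1}(a)$. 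First I would record the shape of $\gamma$: since $\Ric_g(v,v)=n-1$ for every lightlike $v$ (because $g(v,v)=-|v_\R|^2+|v_{\S^n}|^2=0$ forces $|v_{\S^n}|=|v_\R|$), Equation~\ref{equation_parametrage_projectif} reads $Su=2$, solved by $u=\tan$; so, as in Example~\ref{exemple_para_proj}, a nonconstant $\gamma\in\PPL(\R,\Eintilde)$ (which is then automatically maximal, its projective parameter being a homography of $\tan$) is, up to a homographic reparametrization, $\xi\circ\tan^{-1}$ for an affine $g$-geodesic $\xi(u)=(\varepsilon u+a_0,c(u))$ with $c$ a unit-speed great circle and $\varepsilon=\pm1$. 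Thus, reparametrizing, $\gamma(\R)$ is the image, under a complete lightlike geodesic $\xi$ of $\Eintilde$, of an open interval of length exactly $\pi$; its $\R$-coordinate sweeps an interval of length $\pi$, and its two endpoints are exchanged by $\sigma^{\pm1}$.

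The core is a dichotomy. If $\gamma(\R)$ met $E(p)$, then, lightlike geodesics of $\R^{1,n}$ being complete affine lines, $\gamma(\R)$ would contain a complete null line $\ell_0$ of $E(p)\cong\R^{1,n}$. But $\Omega$ is the future of a spacelike hyperplane $H\subset\R^{1,n}$, a Cauchy hypersurface, so $\overline{\Omega}\cap E(p)$ equals the closure of $\Omega$ in $E(p)$, which is a closed half-space $\{f\ge c\}$ of $\R^{1,n}$ with $f$ affine and $\nabla f$ future-timelike; and $f$ is non-constant affine along $\ell_0$ (a timelike gradient pairs nontrivially with a nonzero null direction), so $\ell_0\not\subset\{f\ge c\}$, contradicting $\ell_0\subset\gamma(\R)\cap E(p)\subset\overline{\Omega}\cap E(p)$. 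Hence $\gamma(\R)\subset\overline{E(p)}\setminus E(p)=\partial E(p)$. Now a complete lightlike geodesic lying in $\partial E(p)$ must, near any smooth point of $\partial E(p)$ on it, coincide with the generator of $\partial E(p)$ through that point; and the generators of $\partial I^+(p)$ (resp.\ of $\partial I^-(p)$) are exactly the lightlike geodesics running from $p$ to $\sigma(p)$ (resp.\ from $\sigma^{-1}(p)$ to $p$). So $\xi$ passes through $p$; writing its parameter so that $\xi(0)=p$ we get $\xi(\pi)=\sigma(p)$, $\xi(-\pi)=\sigma^{-1}(p)$ and $\xi(u)\in\partial E(p)\iff u\in[-\pi,\pi]$. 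Hence the image interval is some $(b,b+\pi)\subseteq[-\pi,\pi]$, i.e.\ $b\in[-\pi,0]$, and the endpoints of $\gamma$ are $\xi(b)\in\partial I^-(p)$ and $\xi(b+\pi)=\sigma(\xi(b))\in\partial I^+(p)$.

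It then remains to exclude $b<0$. I would do this by proving that the closure of $\Omega$ in $\Eintilde$ meets $\partial I^-(p)$ only at $p$. Let $q\in\partial I^-(p)\setminus\{p\}$ and suppose $z_k\in\Omega=I^+(H)$ with $z_k\to q$; pick $w_k\in H$ with $w_k\ll z_k$. The closure of a spacelike hyperplane of $E(p)\cong\R^{1,n}$ in $\Eintilde$ is $H\cup\{p\}$ (such hyperplanes accumulate only at the spacelike-infinity point $p$; here one uses the explicit stereographic projection), so after passing to a subsequence $w_k\to w\in H\cup\{p\}$. Since the causal relation of $\Eintilde$ is closed, $w\le q$, and $q\le p$ as $q\in\partial I^-(p)$, so $w\le p$; as no point of $H\subset E(p)$ is causally related to $p$, necessarily $w=p$, whence $p\le q$, and with $q\le p$ and causality of $\Eintilde$ this forces $q=p$ — a contradiction. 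Applying this to $q=\xi(b)$ (which lies in $\partial I^-(p)$ and differs from $p=\xi(0)$ when $b<0$) contradicts $\xi(b)\in\overline{\gamma(\R)}\subset\overline{\Omega}$. Therefore $b=0$, and the endpoints of $\gamma$ — which are unchanged by the reparametrizations performed — are $\xi(0)=p$ and $\xi(\pi)=\sigma(p)$. (If instead $\Omega$ were the past of a spacelike hyperplane the whole argument is symmetric, with $\sigma(p)$ replaced by $\sigma^{-1}(p)$.)

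I expect the main obstacle to be the part following ``$\gamma$ does not enter $E(p)$'': correctly using the conjugate-point structure of $\Eintilde$ to see that a lightlike geodesic inside the lightcone $\partial E(p)$ is a generator joining $p$ to $\sigma(p)$ — a priori it could join $\sigma^{-1}(p)$ to $p$, which is exactly what the last step rules out — together with the auxiliary fact that a spacelike hyperplane of $E(p)$ has closure $H\cup\{p\}$ in $\Eintilde$, the single place where the explicit form of the stereographic projection $E(p)\cong\R^{1,n}$ enters.
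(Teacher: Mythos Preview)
Your argument is correct, but it is considerably longer and more indirect than the paper's. The paper uses essentially only the first observation you make---that the two endpoints of $\gamma$ are exchanged by $\sigma$---and then performs a direct three-line coordinate computation: normalizing to $p=(0,a)$ so that $\overline{\Omega}=\{(s,y):0\le s\le d(y,a)\}$, if $q=(t,x)$ is the future endpoint of $\gamma$ then the past endpoint $\sigma^{-1}(q)=(t-\pi,-x)$ also lies in $\overline{\Omega}$, giving $t-\pi\ge 0$, while $q\in\overline{\Omega}$ gives $t\le d(x,a)\le\pi$; hence $t=\pi$, $x=-a$, and $q=\sigma(p)$. Your route instead splits on whether $\gamma$ enters $E(p)$, excludes that case by a half-space obstruction in $\R^{1,n}$, identifies $\gamma$ with a lightcone generator in the remaining case, and then rules out the wrong generator via a causality argument involving $\overline{H}=H\cup\{p\}$. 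This buys some geometric insight---it cleanly separates the obstruction ``no complete null line fits in the half-space'' from the generator structure of $\partial E(p)$---at the price of length and several auxiliary facts. One spot where your justification is terse but correct: that $\gamma(\R)$, once it meets $E(p)$, must be a \emph{complete} null line there follows from uniqueness of PPL geodesics in $\Eintilde$, since the affine line in $E(p)\cong\R^{1,n}$ through the entry point is itself a PPL geodesic $\R\to\Eintilde$ agreeing with $\gamma$ on an open interval, hence everywhere.
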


\begin{proof}
    Since $\gamma$ is parametrized on $\R$, the endpoints of $\gamma$ are at distance $\sqrt{2}\pi$ from each other, see Example~\ref{exemple_para_proj}. Up to a conformal transformation, we can always assume that $p=(0,a)\in\R\times\S^n$ and that $\Omega$ takes the expression 
    $\Omega=\{(t,x)\,\vert\, 0<t<d(x,a)\}$.
    See also Figure~\ref{Figure_Einstein_de_Sitter}. If $q=(t,x)$ is the future endpoint of $\gamma$, then the past endpoint of $\gamma$ is 
    $\sigma^{-1}(q)=(t-\pi,-x)\in \overline{\Omega},$
    so $0\leq t-\pi\leq 0$. Hence $t=\pi$ and $q=\sigma(p)$. 
\end{proof}

\begin{figure}
    \centering    
    \begin{tabular}{cccccc}
     \begin{tikzpicture}[scale=1.7,xscale=1.3]

        \fill[bovert] (-1,0) -- (1,0) -- (1,1);

                \begin{scope}[thick, lightblue,yshift=-14]
            \begin{scope}[rotate=-26.56]
            \draw[dashed] (1.12,0) arc (0:180:1.12 and 0.15);
          \draw (-1.12,0) arc (180:360:1.12 and 0.15);
        \end{scope}
        \end{scope}

        \draw[fill=bovert] (-1,0) arc (180:360:1 and 0.15);
        
        \begin{scope}[thick, lightblue,yshift=14]
            \begin{scope}[rotate=+26.56]
            \draw[thick,dashed,fill=bovert] (1.12,0) arc (0:180:1.12 and 0.15);
          \draw (-1.12,0) arc (180:360:1.12 and 0.15);
        \end{scope}
        \end{scope}
        \draw[dashed] (1,1) arc (0:180:1 and 0.15);
          \draw (-1,1) arc (180:360:1 and 0.15);
          \draw[dashed] (1,-1) arc (0:180:1 and 0.15);
          \draw (-1,-1) arc (180:360:1 and 0.15);
          \draw (1,-1.2) -- (1,1.2);
          \draw[<-] (-1,1.2) -- (-1,-1.2);

        \fill[fill=black] (1,1) circle (1pt);
        \fill[fill=black] (-1,0) circle (1.2pt);
        \fill[fill=black] (1,-1) circle (1pt);
        \node at (1.2,1) {$\sigma(p)$};
        \node at (1.25,-1) {$\sigma^{-1}(p)$};
        \node[lightblue] at (-0.7,0.5) {$C(p)$};
        \node at (-1.1,-0.05) {$p$};
        \node at (-1.1,1.1) {$\R$};
        \node at (0,-1.3) {$\S^{n}$};
                \begin{scope}[thick, lightblue]
            \draw plot[domain=0.7:1,smooth,variable=\ang]
   ({\ang}, {0.5*cos(90*\ang)+1});
            \draw plot[domain=0.7:1,smooth,variable=\ang]
   ({\ang}, {-0.5*cos(90*\ang)-1});
        \end{scope}

        \begin{scope}[thick, lightblue]
            \draw[densely dashed] plot[domain=0.85:1,smooth, variable=\ang]
   ({\ang}, {0.8*cos(90*\ang)+1});
            \draw[densely dashed] plot[domain=0.85:1,smooth, variable=\ang]
   ({\ang}, {-0.8*cos(90*\ang)-1});
        \end{scope}
        \end{tikzpicture}

            &&&&&
        \begin{tikzpicture}[scale=2]       
    \fill[bovert] (-1,0) -- (1,0) -- (0,1);
        \draw[lightblue,line width = 0.8pt] (-1,0) -- (0,-1) -- (1,0) -- (0,1) -- (-1,0);
        \draw (-1,0) -- (1,0);
      \fill (1,0) circle (1pt);
      \fill (-1,0) circle (1pt);
       \fill (0,1) circle (1pt);
      \fill (0,-1) circle (1pt);
      \draw node at (0.2,1.1) {$\sigma(p)$};
      \draw node at (0.2,-1.1) {$\sigma^{-1}(p)$};
      \draw node at (-1.1,0.2) {$p$};            
      \draw node at (1.1,0.2) {$p$};

        \end{tikzpicture}
        \end{tabular}

    \caption{An Einstein-de Sitter half-space in $\Eintilde$.}
    \label{Figure_Einstein_de_Sitter}
\end{figure}

\subsection{Two lemmas on the Markowitz pseudodistance}\label{Section_deux_lemmes_sur_delta_M}
Choose an orthogonal splitting of $\R^{1,n}=\R\oplus\R^n$ such that the metric $\b$ can be written $\b=-dt^2+d\sigma^2$. For $\varepsilon>0$, we let $\b_\varepsilon$ denote the metric
$$\b_\varepsilon= -(1+\varepsilon)^2dt^2+dx^2.$$
The metric $\b_\varepsilon$ is a Lorentzian metric on $\R^{1,n}$ whose lightcone strictly contains that of the metric $\b$. We write $I_\varepsilon^\pm(x)$ and $J_\varepsilon^\pm(x)$ for the chronal/causal past or future of a point $x\in \R^{1,n}$ with respect to $\b_\varepsilon$.
\begin{lem}
    \label{lemme_ouvert_mark_hyp_eps}
    Let $\varepsilon>0$ and $x\in \R^{1,n}$. Then  $\Omega=\R^{1,n}\setminus J_\varepsilon^-(x)$ is Markowitz hyperbolic. 
\end{lem}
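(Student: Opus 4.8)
The strategy is to exhibit $\Omega = \R^{1,n}\setminus J_\varepsilon^-(x)$ as a (conformally) convex domain of the Einstein universe that is totally conformally lightlike incomplete, and then quote Theorem~\ref{thm_conformement_convex}. The point of the modified metric $\b_\varepsilon$ is that $J_\varepsilon^-(x)$ is the past solid cone of a \emph{strictly} narrower light cone than the $\b$-cone; consequently its complement $\Omega$, while open and convex in the affine structure of $\R^{1,n}$, contains no $\b$-lightlike line. Indeed, a $\b$-null line through a point $y\in\Omega$ has direction on the $\b$-light cone, which lies strictly inside the $\b_\varepsilon$-light cone; following it into the past it must eventually enter $J_\varepsilon^-(x)$, so it is not contained in $\Omega$. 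Hence $\Omega$ is totally conformally lightlike incomplete: any maximal $\gamma\in\PPL(\R,\Omega)$ would have to trace out a full $\b$-null line inside $\Omega$ (a non-constant projectively parametrized lightlike geodesic defined on all of $\R$ must, by Proposition~\ref{prop_TCLI} and the discussion of projective structures in Section~\ref{section_para_proj}, be an affinely reparametrized complete null geodesic), which is impossible.

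Next I would arrange the conformal convexity. The domain $\Omega$ is a convex domain of $\R^{1,n}$ (the complement of a closed solid cone), and by Example~\ref{exemple_de_conf_conv}, under a stereographic projection $\st\colon\R^{1,n}\to\Ein^{1,n}\setminus C(y_0)$ any convex domain of $\R^{1,n}$ is identified with a conformally convex domain of $\Ein^{1,n}$: each affine supporting hyperplane $H$ of $\Omega$ has closure $\overline{\st(H)}$ a supporting Möbius hyperplane of $\st(\Omega)$ by Lemma~\ref{lemme_geometry_mobius_support_hyperplane}. The boundary of $\Omega$ is $\partial J_\varepsilon^-(x)$, the $\b_\varepsilon$-null cone with apex $x$; at each of its points (including the apex $x$ itself, where the whole spacelike hyperplane $\{t=t_x\}$, or rather any spacelike hyperplane through $x$ not meeting the open past cone, supports $\Omega$) there is an affine supporting hyperplane, so $\Omega$ is convex and thus $\st(\Omega)$ is conformally convex. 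One should double-check that the stereographic projection can be chosen with its omitted light cone $C(y_0)$ disjoint from $\Omega$ so that the identification $\Omega\cong\st(\Omega)$ is conformal onto an actual domain of $\Ein^{1,n}$; choosing $y_0$ appropriately (e.g. so that $C(y_0)\cap\R^{1,n}$ is a light cone at a point far in the past inside $J_\varepsilon^-(x)$, or simply taking $y_0$ to be a point of $\partial\R^{1,n}$) handles this.

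With both hypotheses of Theorem~\ref{thm_conformement_convex} verified — $\st(\Omega)\subset\Ein^{1,n}$ is conformally convex, and it is totally conformally lightlike incomplete — the theorem gives that $\st(\Omega)$, hence $\Omega$, is Markowitz hyperbolic (and in fact $\delta_\Omega$ is complete, though that is not asserted in the statement). The main obstacle I anticipate is the bookkeeping around the stereographic projection: one must confirm that $\Omega$ really does sit as a genuine conformally convex \emph{domain} of $\Ein^{1,n}$ and not merely that its affine supporting hyperplanes have Möbius closures — that is, one needs $\Omega$ to avoid the omitted light cone $C(y_0)$ and one needs the apex $x\in\partial\Omega$ to admit a supporting Möbius hyperplane, which is exactly where the argument of Example~\ref{exemple_de_conf_conv} via Lemma~\ref{lemme_geometry_mobius_support_hyperplane} is invoked. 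The geometric content — that the $\b$-light cone sits strictly inside the $\b_\varepsilon$-light cone, so $\Omega$ carries no $\b$-null line — is the easy part.
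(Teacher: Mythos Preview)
Your approach has a genuine gap: the domain $\Omega = \R^{1,n}\setminus J_\varepsilon^-(x)$ is \emph{not} convex. The set $J_\varepsilon^-(x)$ is a proper solid cone with apex $x$, and the complement of such a cone is never convex. Concretely, take $x=0$ and $y_\pm = (-1,\,\pm 2(1+\varepsilon)e)$ for any unit spatial vector $e$; both lie in $\Omega$ since $2(1+\varepsilon) > (1+\varepsilon)\cdot 1$, yet their midpoint $(-1,0)$ lies in $J_\varepsilon^-(0)$. Your proposed supporting hyperplane $\{t=t_x\}$ at the apex does not work either: every point of that hyperplane other than $x$ itself already lies in $\Omega$.

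In fact $\Omega$ is not conformally convex, precisely because the apex $x\in\partial\Omega$ admits no supporting M\"obius hyperplane. By Lemma~\ref{lemme_geometry_mobius_support_hyperplane}, any M\"obius hyperplane appears in the given chart as an affine hyperplane, a $\b$-lightcone, or a $\b$-hyperboloid; each of these, if it passes through $x$, contains points with time coordinate strictly greater than $t_x$ and hence meets $\Omega\supset\{t>t_x\}$. So Theorem~\ref{thm_conformement_convex} genuinely does not apply.

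The paper argues directly instead, and your geometric observation is exactly what it uses: every $\b$-lightlike line meets $\Sigma=\partial J_\varepsilon^-(x)$ in a unique point (phrased as a contraction fixed-point, since $\Sigma$ is the graph of a $\tfrac{1}{1+\varepsilon}$-Lipschitz function of the spatial variables). Thus for lightlike $v\in T_y\Omega$ the maximal null segment in $\Omega$ tangent to $v$ has one endpoint at infinity and one endpoint $p_v\in\Sigma$, and Example~\ref{exemple_formula_F_M} gives $F_\Omega(v)=\Vert v\Vert/\Vert y-p_v\Vert$. Since $v\mapsto p_v$ is continuous, $F_\Omega$ is positive and continuous, and Proposition~\ref{prop_cara_complete_hyperbolic} yields Markowitz hyperbolicity.
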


\begin{proof}
    Assume that $x=0$ for simplicity. We denote $\Sigma=\partial J_\varepsilon^-(x)$. This is a topological hypersurface that can be parametrized as the graph of the map $f:\R^n\to \R$ given by
    $f(y)=\frac{-1}{1+\varepsilon}\Vert y\Vert$ for all $y\in \R^n$.
    Let $\gamma$ be an inextensible causal curve of $\R^{1,n}$, which can be written as $\gamma(t)=(t,\alpha(t))$ where $t\in \R$ and $\Vert\alpha^\prime(t)\Vert\leq 1$, up to reparametrization. The curve $\gamma$ intersects $\Sigma$ at a time $t\in \R$ if and only if $t$ is a fixed point of the function $f\circ \alpha$.
    Now $f\circ \alpha$ is a contracting mapping, hence has a unique fixed point $t\in\R$. Thus $\gamma$ intersects $\Sigma$ at a unique point, that is, $\Sigma$ is a Cauchy hypersurface of $\R^{1,n}$. 
    For $y\in \Omega$ and $v\in T_y\Omega$ lightlike, let $p_v$ denote the intersection point of the lightlike line generated by $v$ and $\Sigma$. 
    Let $\Vert\cdot\Vert$ be a norm on $\R^{1,n}$. Then from Example~\ref{exemple_formula_F_M}, one has
    $$F_\Omega(v)=\frac{\Vert v\Vert}{\Vert y-p_v\Vert}.$$
    Since $v\mapsto p_v$ is continuous, the infinitesimal functional $F_\Omega$ is continuous and positive, so $\Omega$ is Markowitz hyperbolic by Proposition \ref{prop_cara_complete_hyperbolic}. 
\end{proof}

\begin{lem}
    \label{lemme_classique}
    Let $\Omega$ be a quasi-homogeneous domain of a connected Markowitz hyperbolic conformal manifold $M$. If every $g\in\Conf(\Omega)$ extends to a conformal transformation of $M$, then $\Omega=M.$
\end{lem}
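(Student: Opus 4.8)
The plan is to identify $\Conf(\Omega)$ with a closed subgroup of $\Conf(M)$ and then run a connectedness argument on the orbit space $M/\Gamma$. To set up, first note that $\Omega$ is itself Markowitz hyperbolic: the inclusion $\Omega\hookrightarrow M$ is conformal, hence $1$-Lipschitz for the Markowitz pseudodistances (Proposition~\ref{proposition_naturalité}), so $\delta_\Omega\ge \delta_M|_\Omega$ and $\delta_\Omega$ separates points. Consequently, by the discussion in the introduction (and Proposition~\ref{prop_compacité_app_conformes}), both $\Conf(M)$ acts properly on $M$ and $\Conf(\Omega)$ acts properly on $\Omega$. Next, since $M$ is connected a conformal transformation of $M$ is determined by its restriction to any nonempty open set (two such transformations agreeing on an open set have the same $2$-jet there, hence agree everywhere); combined with the hypothesis that every element of $\Conf(\Omega)$ extends to $M$, this produces a well-defined injective homomorphism $\Conf(\Omega)\to\Conf(M)$, $g\mapsto\hat g$, whose image I denote $\Gamma$. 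By construction $\Gamma$ preserves $\Omega$ and acts on it as $\Conf(\Omega)$ does, hence cocompactly (this is quasi-homogeneity).

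The key step is to prove that $\Gamma$ is \emph{closed} in $\Conf(M)$. Suppose $\hat h_k\to\hat g$ in $\Conf(M)$ with $h_k\in\Conf(\Omega)$. Evaluating the convergence on points of $\Omega$ shows $\hat g(\Omega)\subseteq\overline\Omega$, and applying the same to the sequence $\hat h_k^{-1}$ gives $\hat g^{-1}(\Omega)\subseteq\overline\Omega$ as well. Since $\hat g^{-1}(\Omega)$ is a nonempty open subset of $M$ contained in $\overline\Omega$, and $\Omega$ is dense in $\overline\Omega$, there is a point $z_0\in\Omega$ with $w_0:=\hat g^{-1}(z_0)\in\Omega$. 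Then $h_k^{-1}(z_0)=\hat h_k^{-1}(z_0)\to w_0$, so eventually $h_k^{-1}(z_0)$ lies in a fixed compact subset of $\Omega$; properness of the $\Conf(\Omega)$-action then forces a subsequence $h_k\to h_\infty$ in $\Conf(\Omega)$. Restricting $\hat h_k\to\hat g$ to $\Omega$ gives $\hat g|_\Omega=h_\infty\in\Conf(\Omega)$, hence $\hat g=\widehat{h_\infty}\in\Gamma$ by uniqueness of conformal extension. Thus $\Gamma$ is a closed subgroup of the group $\Conf(M)$ acting properly on $M$, so $\Gamma$ itself acts properly on $M$, and therefore $M/\Gamma$ is Hausdorff; it is also connected, being the image of the connected space $M$.

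Finally I would conclude by a clopen argument. Let $q\colon M\to M/\Gamma$ be the quotient map, which is open. Since $\Omega$ is $\Gamma$-invariant and open, $q(\Omega)=\Omega/\Gamma$ is open in $M/\Gamma$; since $\Gamma$ acts cocompactly on $\Omega$, $q(\Omega)$ is compact, hence closed in the Hausdorff space $M/\Gamma$. A nonempty clopen subset of the connected space $M/\Gamma$ is everything, so $q(\Omega)=M/\Gamma$ and therefore $\Omega=q^{-1}(q(\Omega))=M$.

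I expect the main obstacle to be the closedness of $\Gamma$: that is precisely the place where one must exclude the possibility that a sequence of automorphisms of $\Omega$ degenerates, \emph{inside} $\Conf(M)$, to a conformal transformation of $M$ carrying an interior point of $\Omega$ to a boundary point. The device that makes it work is to pull compactness back into $\Omega$ through $\hat g^{-1}$, exploiting that the open set $\hat g^{-1}(\Omega)$ is trapped inside $\overline\Omega$ and hence must meet $\Omega$. Beyond this, one should check the routine point-set topology invoked — a closed subgroup of a properly acting group acts properly, proper actions have Hausdorff quotients, orbit maps are open — and that the relevant topology on $\Conf(M)$ and $\Conf(\Omega)$ is the compact–open (equivalently $C^\infty$) topology used elsewhere in the paper, so that ``properness'' and these convergence arguments are consistent.
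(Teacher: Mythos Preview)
Your argument is correct, but it takes a substantially more elaborate route than the paper's. The paper's proof is a four-line contradiction: assuming $\partial\Omega\neq\emptyset$, pick $x_k\to x\in\partial\Omega$ and, by quasi-homogeneity, $g_k\in\Conf(\Omega)$ with $g_k(y_k)=x_k$ and $y_k\to y\in\Omega$. Since each $g_k$ extends to an isometry of $(M,\delta_M)$ preserving $\partial\Omega$, one has $\delta_M(x_k,\partial\Omega)=\delta_M(y_k,\partial\Omega)$; the left side tends to $0$ while the right tends to $\delta_M(y,\partial\Omega)>0$ (using that $\delta_M$ induces the manifold topology, Theorem~\ref{thm_hyp}). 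That is the whole proof.

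Your approach instead builds a structural picture: embed $\Conf(\Omega)$ as a subgroup $\Gamma\le\Conf(M)$, prove $\Gamma$ is closed, deduce the $\Gamma$-action on $M$ is proper so $M/\Gamma$ is Hausdorff, and run a clopen argument. This is valid, and the closedness step is the genuine content --- you handle it correctly by pulling the limit back through $\hat g^{-1}$ and invoking properness of $\Conf(\Omega)$ on $\Omega$. What your approach buys is a group-theoretic statement ($\Gamma$ closed in $\Conf(M)$) that could be reused elsewhere; what it costs is a page of topological bookkeeping that the paper bypasses entirely by working directly with the invariant quantity $\delta_M(\cdot,\partial\Omega)$. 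The paper's argument also makes the role of Markowitz hyperbolicity more transparent: it is used only to know that $\delta_M$ separates $y$ from $\partial\Omega$, whereas in your proof it is invoked twice (properness of both $\Conf(M)$ and $\Conf(\Omega)$) through Proposition~\ref{prop_compacité_app_conformes}.
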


\begin{proof}
    By connectedness, we only need to show that $\Omega$ is closed in $M$, or equivalently that $\partial\Omega=\emptyset$. Assume by contradiction that there exists a point $x\in \partial \Omega$ and let $(x_k)$ be a sequence of $\Omega$ converging to $x$. By quasi-homogeneity, there exists a sequence $(g_k)\in\Conf(\Omega)$ and a sequence of points $(y_k)$ converging to some $y\in \Omega$ such that $g_k(y_k)=x_k$. Since $x_k\to x$, one has $\delta_M(x_k,\partial\Omega)\to 0$, but since $\partial \Omega$ is $\Conf(\Omega)$ invariant, one has 
    $$\delta_M(x_k,\partial\Omega)=\delta_M(g_k(y_k),g_k(\partial\Omega))=\delta_M(y_k,\partial\Omega)\to\delta_M(y,\partial\Omega)>0,$$
    a contradiction.
 \end{proof}

\subsection{Proof of Theorem~\ref{thm_application}}
Let $H\subset\R^{1,n}$ be a spacelike hyperplane and let   $\Omega\subset I^+(H)$. Assume that $\Conf(\Omega)$ acts cocompactly on $\Omega$. We let $G=\Conf^+(\Omega)$ denote the group of time orientation preserving conformal diffeomorphisms of $\Omega$. Since $G$ has index at most $2$ in $\Conf(\Omega)$, it acts on $\Omega$ with compact quotient. We fix $p\in \Eintilde$ and $\st:\R^{1,n}\to E(p)$ a stereographic projection at $p$. Let $\widetilde\Omega=\st(\Omega)$. All causal sets are to be understood in $\R^{1,n}$. We will use the notation $\widetilde{I^+}(x)$ to denote the future of a point $x\in\Eintilde$, and similarly for the past.
    
    \smallskip
    
    \emph{\textbf{Step 1:} For every $x\in \Omega$, there exists $a\in \partial\Omega\cap I^-(x)$ such that $J_\varepsilon^-(a)\cap\overline{\Omega}=\{a\}$ for some $\varepsilon>0$.} Let $x\in \Omega$. Then the set $\Kcal=J^-(x)\cap \overline{\Omega}$ is compact, since it is contained in the bounded set $J^-(x)\cap J^+(H)$. The compactness of $\Kcal$ implies that we can find a point $a\in \partial\Omega\cap I^-(x)$ such that 
    $$r=\vert\b(a-x,a-x)\vert=\max_{y\in\Kcal}\vert\b(y-x,y-x)\vert.$$
    Let $S=\{y\in I^-(x)\,\vert\,\b(y-x,y-x)=-r\}$. For $\varepsilon>0$ small enough, one has $J_\varepsilon^-(a)\cap J^+(H) \subset J^-(S)$, see Figure~\ref{figure_point_a}. Hence, by decreasing $\varepsilon>0$ again if necessary, one has $J_\varepsilon^-(a)\cap\overline{\Omega}=\{a\}$.
        \begin{figure}
    \centering
         \begin{tikzpicture}

    \def\a{2}
    \def\xplus{4.7}
    \def\xmoins{4}
    \fill[bovert, opacity=1]  (-\xmoins,\a)
    -- (-\xmoins,2)
    -- (-2.5,0.9) 
    -- (0,0)  
    -- (1,-0.2) 
    -- (4,1.3)
    -- (\xplus,2)
    -- (\xplus,\a) ;
    \draw  (-\xmoins,2)
    -- (-2.5,0.9) 
    -- (0,0)  
    -- (1,-0.2) 
    -- (4,1.3)
    -- (\xplus,2) ;
    \node at (4,1) {$\Omega$};

    \def\xun{-0.3}
    \def\xdeux{1.5}
    \def\para{3.5}
    \draw[thick] (\xun-\para,\xdeux-\para) -- (\xun,\xdeux) -- (\xun+\para,\xdeux-\para);
    \draw[fill=blue,opacity=0.1] (\xun-\para,\xdeux-\para) -- (\xun,\xdeux) -- (\xun+\para,\xdeux-\para) -- cycle;
    \node at (\xun+\para+0.5,\xdeux-\para+0.2) {$J^-(x)$};
    \fill (\xun,\xdeux) circle (2pt) node [above left] {$x$};

    \draw[->, >=Stealth] (1.6,-1.3) to [out=30, in=-130] (3.1,-0.65) node [above right] {$J^-_\varepsilon(a)$};

    \draw (-4,-1) -- (4,-1);
    \node at (-3.8,-0.8) {$H$};
    \node at (-2.8,-1.65) {$S$};

    \def\r{2.2}
    \draw [thick,samples=200,variable=\x,domain=-3.17:3.17] plot ({\xun+\x}, {\xdeux-sqrt(\r+\x*\x)});


    \fill (0,0) circle (2pt) node [above right] {$a$};
    
    \draw[thick] (-\a,-\a) -- (0,0) -- (\a,-\a);
    \fill[pattern=north east lines] (-\a,-\a) -- (0,0) -- (\a,-\a);

    \def\B{0.8}
    \draw[thick,densely dashed] (-\a/\B,-\a) -- (0,0) -- (\a/\B,-\a);

\end{tikzpicture}
       \caption{Construction of the point $a\in \partial\Omega$ in step 1 of the proof of Theorem~\ref{thm_application}.}
    \label{figure_point_a}
\end{figure}
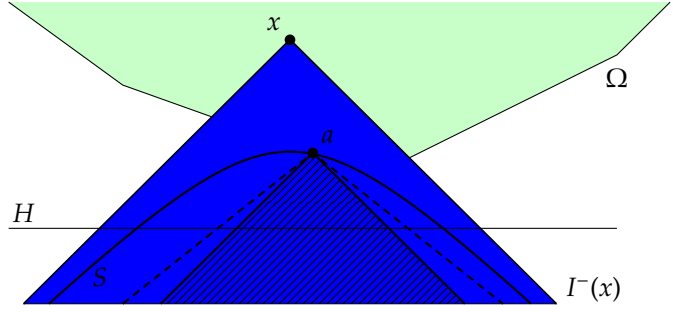
    
    \smallskip
    
    \emph{\textbf{Step 2:} the domain $\Omega$ is a causally convex subset of $\R^{1,n}$.} Since the stereographic projection is a conformal diffeomorphism onto its image, it is sufficient to prove that $\widetilde\Omega$ is a causally convex domain of $\Eintilde$. Let $U$ denote the interior of the intersection of all causally convex domains of $\Eintilde$ containing $\widetilde\Omega$. Since causal convexity is stable under infinite intersection, the domain $U$ is causally convex and contains $\widetilde\Omega$. Also, since $E=\st(I^+(H))$ is causally convex, it contains $U$ by definition. Now $E$ is conformally equivalent to an Einstein-de Sitter half-space, so it is Markowitz hyperbolic, see Section~\ref{section_misner}. We deduce that $U$ is Markowitz hyperbolic. Now, by construction, the domain $U$ is $\Conf(\widetilde\Omega)$-invariant, hence by Liouville's theorem, one has 
    $$\Conf(\widetilde\Omega)\subset\Conf(U).$$
    From Lemma~\ref{lemme_classique}, we deduce that $\widetilde\Omega=U$, that is $\widetilde\Omega$ is causally convex. 
    
    \smallskip
    
    \emph{\textbf{Step 3:} the domain $\Omega$ is a past, that is, there exists $b\in \partial\widetilde\Omega\subset\Eintilde$ such that $\widetilde\Omega\subset \widetilde{I^-}(b)$.} Let $x\in \Omega$ and let $a\in \partial\Omega\cap I^-(x)$ be such that $J_\varepsilon^-(a)\cap\overline{\Omega}=\{a\}$ for some $\varepsilon>0$. If we denote $U=\R^{1,n}\setminus J_\varepsilon^-(a)$, then $\Omega\subset U$ by construction. For simplicity, we fix $a=0$. Let $f_k:\Omega\to U$ be the sequence of conformal maps defined for all $y\in \Omega$ and $k\geq 0$ by 
    $$f_k(y)=2^ky.$$
    Let $x_k=2^{-k}x$ for all $k\geq 0$. Since $\Omega$ is causally convex, one has $x_k\in I(a,x)\subset\Omega$ for all $k\geq 0$. Since $\Omega$ is quasi-homogeneous, there exists a sequence $(g_k)\in\Conf(\Omega)$ such that $(y_k)=(g_k(x_k))$ stays in a compact subset of $\Omega$. Let $h_k=f_k\circ g_k^{-1}:\Omega\to U$. Then, for all $k\geq 1$, one has 
    $h_k(y_k)=f_k(x_k)=x$ and
    $$I(a,2x)\subset I(a,f_k(x))\subset f_k(I(a,x_k))\subset f_k(\Omega)= h_k(\Omega).$$
    By Lemma~\ref{lemme_ouvert_mark_hyp_eps}, the domain $U$ is Markowitz hyperbolic, and by Liouville's theorem, the maps $h_k$ are injective for all $k\geq 0$. Therefore we can apply Proposition~\ref{prop_compacité_app_conformes} to deduce that a subsequence of $(h_k)$ converges to a conformal map $h:\Omega\to U$. 
    Now, for all $k\geq 0$, one has 
    $$I(a,2^kx) = f_k(I(a,x))\subset f_k(\Omega)=h_k(\Omega).$$
    Since $I(a,2^kx)$ is an increasing sequence of open subsets, we deduce that 
    $$I^+(a)=\cup_kI(a,2^kx)\subset h(\Omega).$$
    In particular, the point $\sigma(p)$ is the endpoint in $\Eintilde$ of any future timelike half-line. If we still denote $h$ for the extension of $h$ to $\Eintilde$, one has $\sigma(p)\in \partial h(\widetilde\Omega)$ and $h(\widetilde\Omega)\subset \widetilde{I^-}(\sigma(p))$. Hence, one has if we let $b=h^{-1}(\sigma(p))$, then $b\in \partial\widetilde\Omega$ and $\widetilde\Omega\subset \widetilde{I^-}(b)$.
    
    \smallskip
    
    \emph{\textbf{Step 4:} in the stereographic projection of $q=\sigma^{-1}(b)$, the domain $\widetilde\Omega$ is future complete.} First, we explain why the stereographic projection of $q$ contains $\widetilde\Omega$ and why, in that stereographic projection, the domain $\widetilde\Omega$ is still disjoint from at least one spacelike hyperplane. Since $\widetilde\Omega\subset E(p)\subset \widetilde{I^-}(\sigma(p))$, one has $b\leq \sigma(p)$. Hence $q\leq p$ and $\sigma^{-1}(q)\leq \sigma^{-1}(p)$, so 
    $$\widetilde\Omega\subset \widetilde{I^-}(b)\cap \widetilde{I^+}(\sigma^{-1}(q))=E(q).$$
    Let $\Sigma=\overline{\st(H)}$. Then $q\leq p\in \Sigma$, hence $q\in \widetilde{I^-}(\Sigma)$. In particular, we can always translate $\Sigma$ in the past to another Möbius sphere $\Sigma^\prime$ such that $q\in\Sigma^\prime$ and $\Sigma^\prime\subset I^-(\Sigma)$. Thus, in the stereographic projection of $q$, the domain $\widetilde\Omega$ is contained in the future of the spacelike hyperplane $H^\prime$ defined by $\Sigma^\prime$ in that chart. In order to keep notations simple, we will still write $\Omega$ for the image of $\widetilde\Omega$ in the chart of $q$ and we will write $H$ for some spacelike hyperplane in that chart disjoint from $\Omega$. 
    
    From Step 3, there is some $x\in \Omega$ such that $I^+(x)\subset \Omega$. Let $y\in \Omega$ and fix a future timelike vector $v\in \R^{1,n}$. Since $x+kv\in I^+(x)$, causal convexity implies that $I(y,x+kv)\subset \Omega$ for all $k\geq 0$. Now 
    $$I^+(y)=\cup_{k\geq 0} I(y,x+kv),$$ 
    so $I^+(y)\subset \Omega$. Hence $\Omega$ is a future complete domain of $\R^{1,n}$.

    \smallskip
    
    \emph{\textbf{Step 5:} in the stereographic projection of $q$, the domain $\widetilde\Omega$ is convex.}
    Note that $b$ is the unique point of $\Eintilde$ that both belongs to $\partial\widetilde\Omega$ and contains $\widetilde\Omega$ in its past. Indeed, if $c$ is another such point, then $b\in \partial\widetilde\Omega\subset J^-(c)$ and similarly $c\in J^-(b)$. Hence $b\leq c\leq b$, so $c=b$. Hence $b$ is fixed by the group $\Conf^+(\widetilde\Omega)$, so $q=\sigma^{-1}(b)$ is also $\Conf^+(\widetilde\Omega)$-invariant. Therefore 
    $$ G=\Conf(\Omega)\subset\Conf(\R^{1,n})=\Sim(\R^{1,n}).$$
    In particular, the group $G$ acts by affine transformations on $\R^{1,n}$. Let $U=\Conv(\Omega)$ be the convex envelope of $\Omega$ in $\R^{1,n}$. Since $\Omega\subset I^+(H)$ and since $I^+(H)$ is convex in $\R^{1,n}$, one has $U\subset I^+(H)$, so $U$ is Markowitz hyperbolic. Since $G$ is a group of affinity, the convex envelope $U$ is $G$-invariant. Since $U$ is connected, it follows from Lemma~\ref{lemme_classique} that $\Omega=U$, so $\Omega$ is convex. 

    At this point, the domain $\Omega$ is convex, future complete and disjoint from a spacelike hyperplane. These domains carry a natural projection map $r:\Omega\to \partial\Omega$ defined as follows. For every $x\in \Omega$, the point $r(x)$ is the unique point of $J^-(x)\cap\overline{\Omega}$ maximizing the time separation from $x$. Such a point exists and is unique, and the projection map $r:\Omega\to \partial\Omega$ is $C^1$, see \cite[Prop. 4.3]{Bonsante}. The image $S=r(\Omega)$ is called the \emph{singularity in the past} of $\Omega$. For $x\in \Omega$, we write $H_x$ for the spacelike hyperplane through $r(x)$ and orthogonal to $x-r(x)$, which is a supporting hyperplane of $\Omega$ at $x$.
    
    \smallskip

    \emph{\textbf{Step 6:} in the stereographic projection of $q$, the domain $\widetilde\Omega$ is a cone at (at least) one point of the singularity in the past.} Let $x\in \Omega$. We define $f_k:\Omega\to I^+(H_x)$ by 
    $$f_k(y)=2^k(y-r(x))+r(x).$$
    Then $f_k(y_k)=x$ where $y_k=2^{-k}(x-r(x))+r(x)$. As before, we can find a sequence $(z_k)$ converging to some $z\in \Omega$ and we can find $(g_k)\in \Conf^+(\Omega)$ such that $g_k(z_k)=y_k$. The sequence $h_k=f_k\circ g_k$ satisfies $h_k(z_k)=x$ and $x\in I^+(r(x))\subset h_k(\Omega)$ for all $k\geq 0$. By Proposition~\ref{prop_compacité_app_conformes}, there exists a subsequence of $(h_k)$ converging to a conformal map $h:\Omega\to I^+(H_x)$. Since $f_k$ and $g_k$ are both affine maps for all $k\geq 0$, the limit $h$ is also affine. Now, by construction, the images $h_k(\Omega)=f_k(\Omega)$ are a sequence of increasing convex sets, and the union $\cup_{k\geq 0}h_k(\Omega)=h(\Omega)$ is a convex cone at $r(x)$. Hence $\Omega$ is a cone at $s=r(h^{-1}(x))$.
    
    \smallskip

    \emph{\textbf{Step 7:} in the stereographic projection of $q$, the domain $\widetilde\Omega$ is a HB-domain.} We first treat the case where $\Omega$ contains no line. We show that the singularity in the past consists of a single point. Assume that there exists a point $s^\prime=r(z)\in S$ distinct from $s$. As before we define $f_k:\Omega\to I^+(H_z)$ by $f_k(y)=2^k(y-s^\prime)+s^\prime.$ The sequence $(f_k)$ may be reparametrized into a sequence $(h_k)$ that converges to a similarity $h:\Omega\to I^+(H_z)$ whose image equals $h(\Omega)=\cup_kf_k(\Omega)$. By construction, one has 
    $$\cup_kf_k(s+\R_{\geq 0}(s^\prime-s))=\Span(s,s^\prime),$$
    hence $\partial h(\Omega)$ contains a line, and so does $\partial\Omega$, a contradiction. Therefore $S=\{s\}$ and $\Omega=I^+(S)=I^+(s)$ is a HB-domain of index $0$.
    
    We now turn to the general case. Since $\Omega$ is convex and contains no lightlike line, one can find an orthogonal decomposition $\R^{1,n}=\R^{1,n-\ell}+\R^\ell$ such that $\Omega=\Omega^\prime + \R^\ell$, where $\Omega^\prime\subset\R^{1,n-\ell}$ is a convex domain containing no line. Since $\Omega$ is future complete and disjoint from a spacelike hyperplane, the domain $\Omega^\prime$ is convex, future complete and disjoint from a spacelike hyperplane of $\R^{1,n-\ell}$. Also since $G$ preserves the splitting $\R^{1,n}=\R^{1,n-\ell}+\R^\ell$, the induced action of $G$ on $\Omega^\prime$ is by means of similarities of $\R^{1,n-\ell}$. This action is quasi-homogeneous because the image of a compact $\Kcal\subset \Omega$ intersecting every $G$-orbit under the orthogonal projection $\R^{1,n}\to \R^{1,n-\ell}$ is a compact of $\Omega^\prime$ intersecting every $G$-orbit. By the preceding case, the domain $\Omega^\prime$ consists of the future of a single point $s$. Now $\Omega=I^+(F_\ell)$ where $F_\ell =\R^\ell+s$, so $\Omega$ is a HB-domain. 
    
    \smallskip

    \emph{\textbf{Last step:} from conformal equivalence to equality}. Note that at this point, we have only shown that in some stereographic projection, the domain $\Omega$ is a HB-domain, because we replaced $\Omega$ with its image in the stereographic projection of $q$ in step 4. We now show that, when $\ell\geq 1$, then $\Omega$ is actually equal to $\Omega_\ell$, up to composition by a similarity.
    We identify $\R^{1,n}$ with $E(p)$ for some $p\in \Eintilde$. 
    Let $f\in \Conf(\Eintilde)$ such that $f(\Omega_\ell)=\Omega$. If $\ell\geq 1$, then there is a Lorentzian 2-plane $P\subset\R^{1,n}$ intersecting $F_\ell$ along a spacelike line. The boundary of $P\cap\Omega_\ell$ in $\Eintilde$ contains two lightlike geodesics joining $p$ to $\sigma(p)$ that are projectively parametrized on $\R$. Let $\Delta$ be the image of one of them. Then $f(\Delta)$ is contained in $\overline\Omega$, hence in $\partial I^+(H)\subset\Eintilde$. By Lemma~\ref{lemme_géométrie_eintilde}, the segment $f(\Delta)$ joins the two conjugate points $p$ and $\sigma(p)$. In particular, 
    $$f(\partial \Delta)=\partial(f(\Delta))=\{p,\sigma(p)\}=\partial\Delta,$$ so $\{p,\sigma(p)\}$ is invariant under $f$. Since both $\Omega_\ell$ and $\Omega$ are contained in $E(p)$ but disjoint from $E(\sigma(p))$, we must have $f(p)=p$, so $f$ is a similarity of $\R^{1,n}$. Hence $\Omega=f(\Omega_\ell)$ is a HB-domain. 

    When $\ell=0$, the domain $\Omega_\ell$ is conformally equivalent to a diamond in $\R^{1,n}$. If $D=I(u,v)\subset\R^{1,n}$ is such a diamond and $f:D\to \Omega$ is a conformal diffeomorphism, then $f(v)\in I^+(f(u))\cap I^-(\sigma(p))$. Up to a similarity, there are three different configurations for $f(v)$: either $f(v)\in \R^{1,n}$, or $f(v)=\sigma(p)$, or $f(v)\in J(p,\sigma(p))$ with $f(v)\not \in\{p, \sigma(p)\}$. This leads to three models of domains that are conformally equivalent to $\Omega_0$ that are disjoint from a spacelike hyperplane, see Figure~\ref{Figure_Modèles_affines_des_diamants}.  \qed

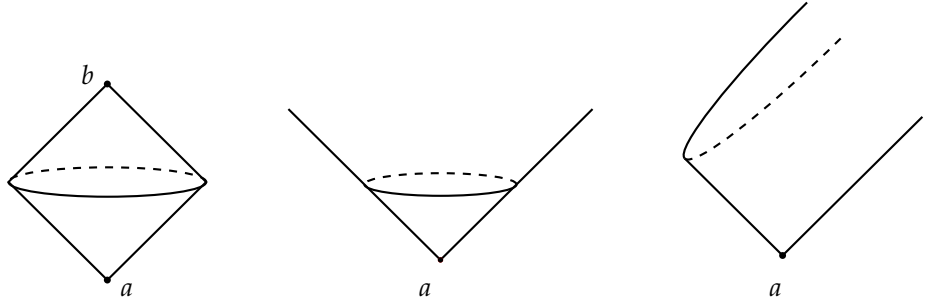
\begin{figure}
    \centering
    $$\begin{array}{ccccccc}
        \begin{tikzpicture}[scale=1.3,rotate=-90]       
        \begin{scope}[rotate = 90,line width = 0.8pt]
          \draw (-1,0) arc (180:360:1 and 0.15);
        \end{scope}
        \begin{scope}[rotate = 90,line width = 0.8pt]
          \draw[dashed] (1,0) arc (0:180:1 and 0.15);
        \end{scope}
        \draw[line width = 0.8pt] (-1,0) -- (0,-1) -- (1,0) -- (0,1) -- (-1,0);
      \fill (1,0) circle (1pt);
      \fill (-1,0) circle (1pt);
      \draw node at (1.1,0.2) {$a$};
      \draw node at (-1.1,-0.2) {$b$};

        \end{tikzpicture}
        & & &

            \begin{tikzpicture}[scale=1,rotate=90]
            \draw[line width = 0.8pt] (2,2) -- (0,0) -- (2,-2); 
         \fill[red,line width = 0.8pt] (0,0) circle (1pt);
          \begin{scope}[rotate = -90]
              \draw[dashed,line width = 0.8pt] (1,1) arc (0:180:1 and 0.15);
          \draw[line width = 0.8pt] (-1,1) arc (180:360:1 and 0.15);
          \end{scope}
          \draw node at (-0.4,0.2) {$a$};
          \fill (0,0) circle (1pt);
          
        \end{tikzpicture}
        & & & 
            \begin{tikzpicture}[scale=1.3,rotate=135]       
        \begin{scope}[rotate = 90,line width = 0.8pt]
          \draw (1,0) arc (0:-90:2 and 0.25);
        \end{scope}
        \begin{scope}[rotate = 90,line width = 0.8pt]
          \draw[dashed] (1,0) arc (0:90:2 and 0.25);
        \end{scope}
        \draw[line width = 0.8pt] (0,1) -- (-1.4,1) -- (-1.4,-1);
      \fill (-1.4,1) circle (1pt);
      \draw node at (-1.6,1.3) {$a$};

        \end{tikzpicture}

    \end{array}$$
    \caption{The three models for a diamond in a stereographic projection that are disjoint from a spacelike hyperplane. From left to right, these domains are: a chronal diamond of $\R^{1,n}$, the future of a point $a\in \R^{1,n}$ and the intersection of the future of a point $a\in \R^{1,n}$ with the past of a degenerate hyperplane.}
    \label{Figure_Modèles_affines_des_diamants}
\end{figure}

\begin{proof}[Proof of Corollary~\ref{corollaire_de_lapplication}]
    Let $\dev:\widetilde M\to \Ein^{1,n}$ and $\hol:\pi_1(M)\to \PO(2,n)$ be a developing map and a holonomy homomorphism for $M$, respectively. Let $\Kcal\subset \widetilde M$ be a compact subset intersecting every $\pi_1(M)$-orbit. Since the developing map is $\pi_1(M)$-equivariant, the compact $\Ccal=\dev(\Kcal)\subset \Omega=\dev(M)$ intersects every $\Gamma=\hol(\pi_1(M))$-orbit of points of $\Omega$. Hence $\Omega$ is quasi-homogeneous. 

    Assume that the image of $\dev$ is contained in an Einstein-de Sitter half-space $H$. By Theorem~\ref{thm_application}, the domain $\Omega$ is conformally equivalent to a HB-domain of some index $\ell\geq 0$. In particular, the domain $\Omega$ carries a conformally invariant Riemannian metric $g_\ell^+$, see Section \ref{section_misner}. The pullback metric $g=\dev^*g_\ell^+$ is a $\pi_1(M)$-invariant Riemannian metric on $\smash{\widetilde M}$. In particular, the metric $g$ quotients to a  Riemannian metric on $M$, which is complete by the Hopf-Rinow theorem since $M$ is compact. Therefore $g$ is a complete Riemannian metric on $\smash{\widetilde M}$ and $\dev:\widetilde M\to \Omega$ is a local isometry. It follows that the developing map is a covering map onto its image, hence it is a diffeomorphism since $\Omega\simeq\Omega_\ell$ is contractible.

    Choose an orthogonal splitting $\R^{1,n}=\R^\ell+\R^{1,n-\ell}$ so that $\Omega_\ell=\R^\ell+I^+(0)$. Let $X$ be the timelike vector field on $\Omega_\ell$ defined by 
    $X(u,v)=v$
    in these coordinates. Then $X$ generates a flow $(\phi_t)$ given by 
    $\phi_t(u,v)=(u,e^tv).$
    In particular, if $\dvol_{\R^{1,n}}$ denotes the volume form of the flat metric on $\R^{1,n}$, then 
    $$(\phi_t)^*\dvol_{\R^{1,n}}=(e^t)^{n-\ell+1}\dvol_{\R^{1,n}}.$$
    Now, the volume form of the metric $g_\ell^+=\frac{1}{\Vert v\Vert^2}\b$ is given by $\dvol_{g_\ell^+}=\frac{1}{\Vert v\Vert^{n+1}}\dvol_{\R^{1,n}}$. We deduce that 
    $$(\phi_t)^*\dvol_{g_\ell^+} =\frac{(e^t)^{n-\ell+1}}{(e^t)^{n+1}\Vert v\Vert^{n+1}}\dvol_{\R^{1,n}}
        =e^{-t\ell}\dvol_{g_\ell^+}.$$
    Now the flow $\phi_t$ centralizes $\Conf(\Omega_\ell)$, hence it pulls back to a flow $(\tilde\phi_t)$ centralizing $\pi_1(M)$. In particular, we obtain a flow $(\phi_t)$ on the closed manifold $M$. Now $M$ has finite volume with respect to $g=\dev^*g_\ell^+$, and for all $t\in\R$, one has
    $$\text{Vol}(M)=\int_M\dvol_{g}=\int_M(\phi_t)^*\dvol_{g}=e^{-t\ell}\text{Vol}(M).$$
    Hence $\ell=0$. Therefore, the only possibility is that $\smash{\widetilde M}$ is conformally diffeomorphic to $I^+(0)\subset \R^{1,n}$. Now $I^+(0)$ is conformally equivalent to $(\R\times\H^n,-dt^2+\ghyp)$ and under that identification, the conformal group of $I^+(0)$ is equal to $G=\Iso(\R)\times\Iso(\H^n)$. Now $M\simeq (\R\times\H^n)/\Gamma$ for some cocompact lattice $\Gamma<G$. The lattice $\Gamma$ virtually splits as a product $\Z\times \Gamma^\prime$, where $\Gamma^\prime<\Iso(\H^n)$ is a cocompact lattice (see for instance \cite{morris2015}), so $M$ is finitely covered by a product $\S^1\times (\H^n/\Gamma^\prime)$. 
\end{proof}

\subsection{Declaration} This research was funded in part by the Luxembourg National Research
Fund (FNR), grant reference O24/18936913/RiGA.  For the purpose of open access, and
in fulfilment of the obligations arising from the grant agreement, the author has applied
a Creative Commons Attribution 4.0 International (CC BY 4.0) license to any Author
Accepted Manuscript version arising from this submission.

\bibliographystyle{alpha}
\bibliography{biblioplus}

\end{document}